\def\l@subsection{\@tocline{2}{0pt}{2.5pc}{5pc}{}}
\renewcommand\tocchapter[3]{%
  \indentlabel{\@ifnotempty{#2}{\ignorespaces#2.\quad}}#3%
}
\newcommand\@dotsep{4.5}
\def\@tocline#1#2#3#4#5#6#7{\relax
  \ifnum #1>\c@tocdepth % then omit
  \else
    \par \addpenalty\@secpenalty\addvspace{#2}%
    \begingroup \hyphenpenalty\@M
    \@ifempty{#4}{%
      \@tempdima\csname r@tocindent\number#1\endcsname\relax
    }{%
      \@tempdima#4\relax
    }%
    \parindent\z@ \leftskip#3\relax \advance\leftskip\@tempdima\relax
    \rightskip\@pnumwidth plus1em \parfillskip-\@pnumwidth
    #5\leavevmode\hskip-\@tempdima{#6}\nobreak
    \leaders\hbox{$\m@th\mkern \@dotsep mu\hbox{.}\mkern \@dotsep mu$}\hfill
    \nobreak
    \hbox to\@pnumwidth{\@tocpagenum{#7}}\par
    \nobreak
    \endgroup
  \fi}
\renewcommand\csname r@tocindent0\endcsname{0pt}
\def\l@subsection{\@tocline{2}{0pt}{2.5pc}{5pc}{}}
\theoremstyle{plain}
\newtheorem{theorem}{Theorem}[section]
\newtheorem{lemma}[theorem]{Lemma}
\newtheorem{corollary}[theorem]{Corollary}
\newtheorem{conjecture}[theorem]{Conjecture}
\newtheorem{proposition}[theorem]{Proposition}
\theoremstyle{definition}
\newtheorem{definition}[theorem]{Definition}
\newtheorem{construction}[theorem]{Construction}
\newtheorem{remark}[theorem]{Remark}
\newtheorem{example}[theorem]{Example}
\numberwithin{equation}{section}
\DeclareMathOperator{\Hom}{Hom}
\DeclareMathOperator{\Spec}{Spec}
\DeclareMathOperator{\Aut}{Aut}
\title{Scattering diagrams for generalized cluster algebras}
\author{Lang Mou}
\address{Lang Mou\newline
Mathematisches Institut, Universit\"at zu K\"oln, Weyertal 86-90, 50931 K\"oln, Germany}
\email{langmou@math.uni-koeln.de}
\date{}
\begin{document}

\begin{abstract}
	We construct scattering diagrams for Chekhov--Shapiro's generalized cluster algebras where exchange polynomials are factorized into binomials, generalizing the cluster scattering diagrams of Gross, Hacking, Keel and Kontsevich. They turn out to be natural objects arising in Fock and Goncharov's cluster duality. Analogous features and structures (such as positivity and the cluster complex structure) in the ordinary case also appear in the generalized situation. With the help of these scattering diagrams, we show that generalized cluster variables are theta functions and hence have certain positivity property with respect to the coefficients in the binomial factors.
\end{abstract}

\maketitle

\tableofcontents

\section{Introduction}\label{introduction}

We study generalized cluster algebras in the sense of Chekhov and Shapiro \cite{chekhov2011teichmller}. These algebras are generalizations of the (ordinary) cluster algebras introduced by Fomin and Zelevinsky \cite{fomin2002cluster}, allowing more general exchange polynomials (as opposed to only binomials) in mutations.

We will see that generalized cluster algebras can not only be studied in a similar way as cluster algebras \cite{fomin2002cluster, fomin2003cluster, berenstein2005cluster, fomin2007cluster}, but that they also naturally appear in the context of the \emph{cluster duality} proposed by Fock and Goncharov \cite{fock2009cluster}. A modified version of Fock and Goncharov's cluster duality was formulated and proved by Gross, Hacking, Keel and Kontsevich in \cite{gross2018canonical}. In this paper, we extend the scheme therein to study generalized cluster algebras.

Generalized cluster algebras come in a family containing ordinary cluster algebras. Each algebra in this family can be viewed as (a subalgebra of) the algebra of regular functions of a generalized $\mathcal A$-cluster variety. The (generalized version of) cluster duality says this family is in a sense dual to another family of generalized $\mathcal X$-cluster varieties. In this paper, we demonstrate this duality by reconstructing a family of generalized cluster algebras with principal coefficients $\mathscr A^\mathrm{prin}$ from a general fibre of the corresponding dual family of $\mathcal X$-cluster varieties.

In the ordinary case, the reconstruction is done through a \emph{cluster scattering diagram}, the main technical tool developed in \cite{gross2018canonical}, which is a mathematical structure associated to the dual $\mathcal X$-cluster variety. For our purpose of studying generalized cluster algebras, we construct \emph{generalized cluster scattering diagrams}. This is done by allowing more general wall-crossing functions on the initial incoming walls. It turns out that many features (such as the positivity property of wall-crossings and the cluster complex structure) in the ordinary case still hold in the generalized situation; see \Cref{thm: generalized positivity} and \Cref{thm: wall crossing on cluster chamber}.

Using the techniques of scattering diagrams (and related objects such as broken lines) transplanted from \cite{gross2018canonical}, we are able to prove that generalized cluster monomials are theta functions. As a result, they have certain positivity property coming from that of the scattering diagram. We remark that this positivity is with respect to the coefficients appearing in the binomial factors of exchange polynomials, thus weaker than a conjectural positivity of Chekhov and Shapiro (\Cref{conj: chekhov shapiro}) with respect to the coefficients of exchange polynomials themselves; See \Cref{thm: cluster variable as theta function} and \Cref{subsection: further on positivity} for the precise statements.

We next describe the contents of the paper in more detail.

\subsection{Generalized cluster algebras}

Our way of generalizing cluster algebras is slightly different from \cite{chekhov2011teichmller}, in the way we deal with coefficients. In a sense, one can go from one formulation to the other, in particular when the coefficients are evaluated in some algebraically closed field; see \Cref{subsection: chekhov-shapiro}, \Cref{subsection: specialized coefficients} and also \Cref{subsection: further on positivity}. We replace Fomin and Zelevinsky's binomial exchange relation
\[
x_k'x_k = p_k^+ \prod_{i=1}^n x_i^{[b_{ik}]_+} + p_k^- \prod_{i=1}^n x_i^{[-b_{ik}]_+}
\]
with a more general polynomial exchange relation
\[
x_k'x_k = \prod_{j=1}^{r_k} \left(p_{k,j}^+ \prod_{i=1}^n x_i^{[b_{ik}/r_k]_+} + p_{k,j}^- \prod_{i=1}^n x_i^{[-b_{ik}/r_k]_+} \right).
\]
We require the coefficients $p_{k,j}^\pm$ (in some semifield $(\mathbb P, \oplus, \cdot)$) to satisfy the normalized condition $p_{k,j}^+ \oplus p_{k,j}^- = 1$. The normalization makes mutations deterministic and a particular choice of coefficients named \emph{principal coefficients} (as in \cite{fomin2007cluster}) available in the generalized situation.

It turns out many algebraic and combinatorial features of cluster algebras are also inherited by generalized cluster algebras. The same finite type classification as for cluster algebras \cite{fomin2003cluster} and the generalized Laurent phenomenon have already been obtained in \cite{chekhov2011teichmller}. We show that the dependence on coefficients in the generalized case behaves very much like the ordinary case \cite{fomin2007cluster}. In particular, a generalized version of the separation formula, \Cref{thm: separation formula}, is made available through an analogous notion of principal coefficients. The well-known sign coherence of $c$-vectors (see \Cref{subsection: principal coefficients}) is also extended to the generalized case in \Cref{prop: generalized sign coherence}. We note that Nakanishi has a rather different version of normalized generalized cluster algebras with a certain reciprocal restriction in \cite{nakanishi2015structure} where some results on the structures of seeds parallel to \cite{fomin2007cluster} were also established.

Another remarkable feature of an ordinary cluster algebra is the positivity of its cluster variables, i.e. they are all Laurent polynomials in the initial variables $x_i$ and coefficients $p_i^\pm$ with non-negative integer coefficients. This was proved by Lee and Schiffler \cite{lee2015positivity} for skew-symmetric types and by Gross, Hacking, Keel, and Kontsevich \cite{gross2018canonical} for the more general skew-symmetrizable types. We extend the positivity to our generalized case (see \Cref{thm: positivity introduction}), showing that the Laurent expression of any cluster variable in $x_i$ and $p_{k,j}^\pm$ has non-negative integer coefficients. We note that the positivity obtained here is (in the reciprocal case) a weak form of a positivity conjecture of Chekhov and Shapiro (which we reformulated in \Cref{conj: chekhov shapiro}); see \Cref{rmk: a stronger positivity} and \Cref{subsection: further on positivity}.

\subsection{Generalized cluster varieties}
Let $L$ be a lattice of finite rank. Fix an algebraically closed field $\Bbbk$ of characteristic zero. The (ordinary) cluster varieties of Fock and Goncharov \cite{fock2009cluster} are schemes of the form
\[
V = \bigcup_{\mathbf s} T_{L,\mathbf s}
\]
where each $T_{L, \mathbf s}$ is a copy of the torus $L \otimes \Bbbk^*$ and they are glued together via birational maps called \emph{cluster mutations}. Here $\mathbf s$ runs over a set of \emph{seeds} (a seed roughly being a labeled basis of $L$) iteratively generated by mutations. A cluster mutation is give by the following birational map
\[
\mu_{(m,n)} \colon T_L \dashrightarrow T_L, \quad \mu_{(m,n)}^*(z^\ell) = z^\ell(1 + z^m)^{\langle \ell, n\rangle},\quad \ell\in L^*,
\]
for a pair of vectors $(n,m)\in L\times L^*$, where $\langle \cdot, \cdot \rangle$ denotes the natural paring between $L^*$ and $L$. It has a natural dual by switching the roles of $m$ and $n$, $\mu_{(-n, m)}\colon T_{L^*} \rightarrow T_{L^*}$. Glueing $T_{L^*}$ via these maps gives the \emph{dual cluster variety} $V^\vee \coloneqq \bigcup_\mathbf s T_{L^*, \mathbf s}$.

Depending on the types of seeds and mutations chosen, one obtains either Fock--Goncharov's $\mathcal A$-cluster varieties or $\mathcal X$-cluster varieties, which are dual constructions as above. A cluster algebra $\mathscr A$ can be embedded into the upper cluster algebra $\overline{\mathscr A}$, defined to be the algebra of regular functions on the corresponding $\mathcal A$-variety, while the dual $\mathcal X$-variety encodes the so-called $Y$-variables; see \Cref{section: cluster varieties}.

One can actually encode coefficients in each cluster mutation, the above construction thus leading to families of cluster varieties. They mutate along with seeds under certain rules. In the $\mathcal A$-case, they mutate as $Y$-variables (see \cite{fomin2007cluster} and \cite{fock2009cluster}). In the $\mathcal X$-case, the mutation rule of the coeffcients has been worked out by Bossinger, Fr{\'\i}as-Medina, Magee and N\'ajera Ch\'avez in \cite{bossinger2020toric}.

We extend these dynamics of coefficients to the generalized situation for both the $\mathcal A$- and $\mathcal X$-cases. We define a \emph{generalized cluster mutation} as
\[
\mu^*(z^\ell) = z^\ell \prod_{j=1}^r \left( t_j^- + t_j^+ z^m \right)^{\langle \ell, n\rangle},
\]  
which depends on some coefficients $t_j^\pm$ in $\Bbbk^*$; see \Cref{section: cluster varieties}. Thus an ordinary cluster mutation can be viewed as a specialization of a generalized one. Generalized cluster varieties are then defined by glueing tori via the generalized mutations. We obtain two families of generalized cluster varieties
\[
\pi_\mathcal A \colon \mathcal A \rightarrow \mathrm{Spec}(R),\quad \pi_\mathcal X \colon \mathcal X \rightarrow \mathrm{Spec}(R),
\]
where the coefficients vary in some torus $\mathrm{Spec}(R) = (\mathbb G_m)^d$.

One key observation of Gross, Hacking and Keel in \cite{gross2013birational} is that a cluster variety can be investigated through its toric models, and mutations between seeds are basically switching between neighboring toric models. A toric model is a construction of a log Calabi--Yau variety by blowing up a hypersurface on the toric boundary of some toric variety. In the cluster situation, the toric variety depends on the choice of a seed $\mathbf s$ which also tells us where on the toric boundary to blow up. The resulting log Calabi--Yau variety is shown in \cite{gross2013birational} (under certain nice conditions) to be isomorphic to the corresponding cluster variety up to codimension two subsets. We extend this description to the generalized case, for both $\mathcal A$- and $\mathcal X$-type varieties; see \Cref{thm: mutation of toric model} and \Cref{subsection: toric model for cluster}.

\subsection{Scattering diagrams}
Cluster scattering diagrams are the main technical tool in \cite{gross2018canonical}. They have their origin in \cite{kontsevich2006affine} and \cite{gross2011real} in mirror symmetry. Roughly speaking, in the cluster case, a scattering diagram is a collection of walls in a real vector space with attached \emph{wall-crossing functions} (some of them giving information on mutations). Similar to a cluster algebra which starts with one cluster with information to perform mutations in $n$ directions iteratively, its scattering diagram can be constructed by initially setting up $n$ incoming walls and letting them propagate, generating only outgoing walls.

To get a generalized cluster scattering diagram, we replace ordinary wall-crossings (which correspond to ordinary cluster mutations) on the initial incoming walls by the generalized ones of the form
\[
f = \prod_{j=1}^r (1 + t_jz^m)
\]
where the $t_j$ are treated as formal parameters. Given a seed $\mathbf s$ (in the generalized sense), the associated data of incoming walls uniquely determines a consistent scattering diagram $\mathfrak D_\mathbf s$, which we call \emph{the generalized cluster scattering diagram}.

We show that the behavior of $\mathfrak D_\mathbf s$ under mutations is analogous to that of the ordinary case, in a way it is canonically associated to a mutation equivalence class of seeds. This is called the \emph{mutation invariance} in \cite[Theorem 1.24]{gross2018canonical}. See \Cref{thm: mutation invariance} for the precise description of the following theorem.

\begin{theorem}[\Cref{thm: mutation invariance}]
	There is a piecewise linear operation $T_k$ such that $T_k(\mathfrak D_\mathbf s)$ is equivalent to $\mathfrak D_{\mu_k(\mathbf s)}$ where $\mu_k(\mathbf s)$ denotes the mutation in direction $k$ of the seed $\mathbf s$.
\end{theorem}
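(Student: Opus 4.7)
The plan is to follow the template of Gross--Hacking--Keel--Kontsevich's mutation invariance proof \cite[Theorem~1.24]{gross2018canonical}. The idea is to construct an explicit piecewise linear self-map $T_k$ of the ambient vector space such that the pushforward $T_k(\mathfrak{D}_{\mathbf{s}})$ is a consistent generalized scattering diagram whose initial incoming walls match those of $\mathfrak{D}_{\mu_k(\mathbf{s})}$. Since a consistent scattering diagram is determined up to equivalence by its initial data, this will give $T_k(\mathfrak{D}_{\mathbf{s}}) \sim \mathfrak{D}_{\mu_k(\mathbf{s})}$.

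I would define $T_k$ by gluing two linear maps along the hyperplane dual to the $k$-th seed direction: on one half-space it is the identity, and on the other it is the linear mutation shear determined by $e_k$ and its attached covector. This is continuous and piecewise linear. Each wall $(\mathfrak{d}, f_{\mathfrak{d}})$ of $\mathfrak{D}_{\mathbf{s}}$, after being cut (up to equivalence) so that it lies on one side of the hyperplane, is pushed to $(T_k(\mathfrak{d}), T_k^{*} f_{\mathfrak{d}})$, with $T_k^{*}$ acting on the monomial exponents through the dual linear map in that region. Consistency of the resulting scattering diagram is a local check: inside either half-space $T_k$ restricts to a genuine linear map, so path-ordered products around closed loops transport without change; at the interface the matching is dictated by the wall-crossing on the $k$-th initial wall, and this is exactly the same local computation as in the ordinary case, where it depends only on the wall-crossing automorphism and the normal direction, not on the global piecewise structure.

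I would next identify the initial data of $T_k(\mathfrak{D}_{\mathbf{s}})$. For $i \neq k$, the $i$-th initial wall of $\mathfrak{D}_{\mathbf{s}}$ lies in a region where $T_k$ restricts to a single linear map, and the explicit mutation formulae for $c$- and $g$-vectors (together with the transformation of the normalized coefficients of $\mathbf{s}$ under $\mu_k$) show that its image is precisely the $i$-th initial wall of $\mathfrak{D}_{\mu_k(\mathbf{s})}$. The $k$-th initial wall flips across the hyperplane under $T_k$, and its attached function $\prod_{j=1}^{r_k}(1 + t_{k,j} z^{m_k})$ gets transformed to the corresponding product in $z^{-m_k}$ with the $t_{k,j}$ reindexed according to the mutation rule for generalized coefficients derived earlier in the paper. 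Uniqueness of the consistent generalized scattering diagram with prescribed incoming data then concludes the proof.

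The main obstacle is precisely this local check on the $k$-th initial wall. In the ordinary case the attached function is a single binomial whose behavior under the flip is transparent, but here one has a product of $r_k$ normalized binomials, and one must verify that after pushforward the resulting factorization agrees term by term with the initial data of $\mathfrak{D}_{\mu_k(\mathbf{s})}$. This amounts to an explicit factorization computation using the normalization $p_{k,j}^+ \oplus p_{k,j}^- = 1$ together with the generalized $Y$-variable mutation rule established in the preceding sections; once it is in place, the remaining global argument parallels the ordinary case essentially without change.
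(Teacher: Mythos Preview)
Your overall strategy is the same as the paper's, which indeed follows the template of \cite[Theorem~1.24]{gross2018canonical}: define $T_k$ piecewise linearly, push forward walls, verify that the incoming data matches that of $\mathfrak{D}_{\mu_k^+(\mathbf{s})}$, check consistency, and invoke uniqueness. However, you misplace where the genuine technical difficulty lies, and you omit a subtlety that the paper handles carefully.

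First, the monoid issue. The scattering diagram $\mathfrak{D}_{\mathbf{s}}$ lives over $\widehat{\Bbbk[P^\oplus_{\mathbf{s}}]}$ while $\mathfrak{D}_{\mu_k^+(\mathbf{s})}$ lives over $\widehat{\Bbbk[P^\oplus_{\mu_k^+(\mathbf{s})}]}$; these are different completions, so to compare them one must embed both into a common monoid $\bar P$. In that common monoid the $k$-th incoming wall no longer satisfies $f \equiv 1 \bmod \mathfrak{m}_{\bar P}$, so it is not a wall in the usual sense. The paper resolves this by introducing a notion of scattering diagram \emph{with a slab} and proving a separate existence-and-uniqueness statement (its Theorem~\ref{thm: sd with a slab}) in that enlarged framework. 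Your appeal to ``uniqueness of the consistent generalized scattering diagram with prescribed incoming data'' is not quite the right statement to invoke without this modification.

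Second, the interface check. The paper does not verify consistency by a factorization argument on $\prod_j(1+t_{k,j}z^{w_k})$ using the semifield normalization or the $Y$-mutation rule. Rather, the construction simply \emph{replaces} the old slab by the new one $\prod_j(1+t_{k,j}^{-1}z^{-w_k})$, and the consistency of $T_k(\mathfrak{D}_{\mathbf{s}})$ at the hyperplane reduces to the algebraic identity
\[
\tilde T_{k,+}^{-1}\circ \mathfrak p_{\mathfrak d_k'} = \mathfrak p_{\mathfrak d_k},
\]
which is checked directly on monomials and does not use the tropical addition at all. The product structure of the generalized wall-crossing function plays no special role in this step; the computation is formally identical to the binomial case. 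So the ``main obstacle'' you identify is not in fact an obstacle, while the one you pass over (the slab/monoid bookkeeping) is where the actual work sits.
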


In analogy with the ordinary case, the mutation invariance leads to the \emph{cluster complex structure} of $\mathfrak D_\mathbf s$.

\begin{theorem}[{\Cref{thm: wall crossing on cluster chamber}}]
	There is the cluster cone complex $\Delta_\mathbf s^+$ such that $\mathfrak D_\mathbf s$ is a union of codimension one cones of $\Delta_\mathbf s^+$ (with explicit attached wall-crossing functions) and walls outside $\Delta_\mathbf s^+$.
\end{theorem}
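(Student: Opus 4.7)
The plan is to mimic the GHKK strategy (their Theorem~2.13) for building the cluster complex inside the scattering diagram, using the mutation invariance theorem as the engine and the generalized sign-coherence result to control signs. Concretely, I will index the maximal cones of $\Delta_\mathbf{s}^+$ by seeds $\mathbf{s}'$ mutation-equivalent to $\mathbf{s}$, and produce each cone by pulling back the positive chamber of $\mathfrak{D}_{\mathbf{s}'}$ through a composition of the piecewise linear maps $T_k$ coming from a sequence of mutations connecting $\mathbf{s}$ to $\mathbf{s}'$.

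First I would take $\mathcal{C}_\mathbf{s}^+$ to be the positive orthant in the $g$-vector chart of $\mathbf{s}$, which is cut out by the supports of the $n$ initial incoming walls of $\mathfrak{D}_\mathbf{s}$ and on whose closure only those $n$ walls are adjacent, with wall-crossing functions of the generalized product form $\prod_{j=1}^{r_k}(1+t_{k,j}z^{e_k})$. For a general seed $\mathbf{s}' = \mu_{k_\ell}\cdots\mu_{k_1}(\mathbf{s})$, define $\mathcal{C}_{\mathbf{s}'} := T_{k_1}\cdots T_{k_\ell}(\mathcal{C}_{\mathbf{s}'}^+)$, where $\mathcal{C}_{\mathbf{s}'}^+$ is the positive chamber of $\mathfrak{D}_{\mathbf{s}'}$. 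By iterated application of \Cref{thm: mutation invariance}, the composite $T_{k_1}\cdots T_{k_\ell}$ sends $\mathfrak{D}_{\mathbf{s}'}$ to a scattering diagram equivalent to $\mathfrak{D}_\mathbf{s}$, so $\mathcal{C}_{\mathbf{s}'}$ is a chamber of $\mathfrak{D}_\mathbf{s}$ and its bounding walls are exactly the pullbacks of the $n$ initial incoming walls of $\mathfrak{D}_{\mathbf{s}'}$. An induction on $\ell$ shows the definition is independent of the chosen mutation sequence, giving a simplicial cone complex whose rays are spanned by $g$-vectors of cluster variables of the ambient generalized cluster algebra.

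Next I would identify the wall-crossing functions on the boundary of each $\mathcal{C}_{\mathbf{s}'}$ explicitly. Each $T_k$ is piecewise linear and acts on monomials by a linear substitution on either side of its bending hyperplane, so when it is applied to a factor $(1+t z^{e_i})$ of an initial wall of $\mathfrak{D}_{\mathbf{s}'}$ the monomial $z^{e_i}$ is transported to $z^{c}$, where $c$ is the $c$-vector of $\mathbf{s}'$ in the $k_\ell$-th (then $k_{\ell-1}$-th, etc.) direction relative to $\mathbf{s}$. Iterating, the wall attached to the $k$-th direction of $\mathbf{s}'$ acquires the wall-crossing function $\prod_{j=1}^{r_k}(1+ t_{k,j} z^{c_k^{\mathbf{s}';\mathbf{s}}})$ with support the hyperplane orthogonal to the corresponding $g$-vector; the exponent $r_k$ is preserved throughout because $T_k$ acts on each factor of the product separately.

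The main obstacle will be showing that the cones $\mathcal{C}_{\mathbf{s}'}$ associated to distinct seeds have disjoint interiors, so that the resulting collection is actually a (simplicial fan) cone complex and its codimension-one faces account for exactly the walls of $\mathfrak{D}_\mathbf{s}$ lying inside $\Delta_\mathbf{s}^+$. This is where the generalized sign-coherence of $c$-vectors from \Cref{prop: generalized sign coherence} is essential: it guarantees that at each mutation step the piecewise linear map $T_k$ bends along a genuine hyperplane of $\mathfrak{D}_\mathbf{s}$, so that $\mathcal{C}_{\mathbf{s}'}$ and $\mathcal{C}_{\mu_k(\mathbf{s}')}$ land in opposite half-spaces across that hyperplane, giving injectivity of the assignment $\mathbf{s}' \mapsto \mathcal{C}_{\mathbf{s}'}$. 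A secondary obstacle is checking that no \emph{other} wall of $\mathfrak{D}_\mathbf{s}$ intersects the interior of a $\mathcal{C}_{\mathbf{s}'}$; by mutation invariance it suffices to rule this out for $\mathcal{C}_\mathbf{s}^+$ itself, which follows from the construction of $\mathfrak{D}_\mathbf{s}$ by starting from the incoming walls and adding only outgoing walls (which by definition do not enter $\mathcal{C}_\mathbf{s}^+$).
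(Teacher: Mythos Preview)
Your overall strategy is the same as the paper's: build the cluster complex by iterating the mutation invariance theorem to pull the positive chamber of each $\mathfrak D_{\mathbf s'}$ back into $\mathfrak D_\mathbf s$, then read off the wall-crossing functions on the facets. A few points need tightening before this matches what actually gets proved.

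First, the direction of the piecewise linear maps is off: one should set $\mathcal C_{\mathbf s'}^+ = (T_{k_1}\cdots T_{k_\ell})^{-1}(\mathcal C^+_{\mathbf s'})$ (pull back, not push forward), since $T_k$ carries $\mathfrak D_\mathbf s$ to $\mathfrak D_{\mu_k(\mathbf s)}$ rather than the reverse. Second, and more substantively, your formula $\prod_{j=1}^{r_k}(1+t_{k,j}z^{c_k})$ for the pulled-back wall-crossing function is not correct: the lifted maps $\tilde T_k$ act on the monoid $M\oplus \mathbb P$, so the coefficient parameters $t_{k,j}$ themselves mutate along the way. The actual function on the $i$-th facet of $\mathcal C_v^+$ is
\[
f_{i;v}=\prod_{j=1}^{r_i}\bigl(1+p_{i,j;v}^{\,\varepsilon_{i;v}}\,z^{\varepsilon_{i;v}\sum_{l}\beta_{li}^{v}g_{l;v}}\bigr),
\]
where $p_{i,j;v}$ are the mutated coefficients and $\varepsilon_{i;v}$ is the sign of the normal vector $g_{i;v}^*$. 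The paper organizes the induction through the device of \emph{signed mutations} $\mu_k^\varepsilon$ on the triple $(\mathbf g_v^*,\mathbf w_v,\mathbf p_v)$, with the sign $\varepsilon$ read off from which side of the slab the chamber lies; the two-case analysis (chambers separated by $e_{i_0}^\perp$ versus on the same side) is exactly what makes the induction close. Your sketch glosses over this, and that is where the real work sits.

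Finally, you invoke generalized sign coherence (\Cref{prop: generalized sign coherence}) to get disjointness of the $\mathcal C_{\mathbf s'}$, but in the paper the logic runs the other way: the chambers are \emph{a priori} closures of connected components of $M_\mathbb R\setminus\mathrm{Supp}(\mathfrak D_\mathbf s)$ (hence automatically with disjoint interiors), and sign coherence of the normal vectors $g_{i;v}^*$ is then a consequence of the fact that every wall of $\mathfrak D_\mathbf s$ has normal in $\pm N_\mathbf s^+$. Your route via \Cref{prop: generalized sign coherence} is not wrong, but it imports an external result where the scattering-diagram argument is self-contained.
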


We observe in \Cref{lemma: tropical vertex sd equals cluster sd} that $\mathfrak D_\mathbf s$ is equivalent to the \emph{tropical vertex scattering diagram} $\mathfrak D_{(X_\Sigma, H)}$ of Arg\"uz and Gross \cite{arguz2020higher} associated to the corresponding $\mathcal X$-type toric model associated to $\mathbf s$. It is shown in \cite[Theorem 6.1]{arguz2020higher} that $\mathfrak D_{(X_\Sigma, H)}$ is further equivalent (after a certain piecewise linear operation) to the \emph{canonical scattering diagram} $\mathfrak D_{(X,D)}$ (see \cite{gross2021canonical} and \cite[Section 2]{arguz2020higher}) of the log Calabi--Yau pair $(X,D)$ from the toric model. We thus see that $\mathfrak D_\mathbf s$ is canonically associated to the corresponding $\mathcal X$-cluster variety, with a different seed $\mathbf s'$ simply giving another representative $\mathfrak D_{\mathbf s'}$.

\subsection{Cluster dualities}
The cluster duality of Fock and Goncharov predicts that, in the ordinary case, the varieties $\mathcal A$ and $\mathcal X$ (see \Cref{section: cluster varieties} for our convention as we do not need the Langlands dual data) are dual in the sense that the upper cluster algebra $\overline{\mathscr A}$ has a basis parametrized by the tropical set $\mathcal X^\mathrm{trop}(\mathbb Z)$ (see \cite[Section 2]{gross2018canonical} for a definition) and vice versa. A modified version of this duality (and when it is true) is the main subject of study of \cite{gross2018canonical}.

The strategy of \cite{gross2018canonical} to get the desired basis is as follows. First the tropical set $\mathcal X^\mathrm{trop}(\mathbb Z)$ (resp. $\mathcal X^\mathrm{trop}(\mathbb R)$) can be identified with the cocharacter lattice $M$ (resp. $M_\mathbb R \coloneqq M\otimes \mathbb R$) of a chosen seed torus $T_{M,\mathbf s} = M \otimes \Bbbk^*$ contained in the variety $\mathcal X$. By the mutation invariance, the ordinary cluster scattering diagram $\mathfrak D_\mathbf s^\mathrm{ord}$ (see \Cref{subsection: ordinary cluster sd}) naturally lives in $\mathcal X^\mathrm{trop}(\mathbb R)$. Denote by $\Delta^+$ the set of integral points inside the cluster complex (which is again a canonical subset of $\mathcal X^\mathrm{trop}(\mathbb Z)$ by mutation invariance). 

For any integral point $m \in \mathcal X^\mathrm{trop}(\mathbb Z)$, using the scattering diagram $\mathfrak D_\mathbf s^\mathrm{ord}$, one can construct the \emph{theta function} $\vartheta_m$, which in general is only a formal power series in a completion $\widehat{\Bbbk[M]}_\mathbf s$ which depends on $\mathbf s$. However, it is shown in \cite[Theorem 4.9]{gross2018canonical} that for $m\in \Delta^+$, $\vartheta_m$ is indeed a Laurent polynomial in $\Bbbk[M]$ and corresponds to a cluster monomial. Furthermore, there is a canonically defined (i.e. independent of $\mathbf s$) subset $\Theta$ of $\mathcal X^\mathrm{trop}(\mathbb Z)$ containing $\Delta^+$ such that for any $m\in \Theta$, $\vartheta_m$ is a Laurent polynomial on every seed torus. It is also shown in \cite{gross2018canonical} that the vector space
\[
\mathrm{mid}(\mathcal A) \coloneqq \bigoplus_{m\in \Theta} \vartheta_m
\]
has an associative algebra structure whose structure constants are defined through \emph{broken lines}. This algebra $\mathrm{mid}(\mathcal A)$ can be embedded in $\overline{\mathscr A}$ so that for $m\in \Delta^+$, $\vartheta_m$ is sent to the corresponding cluster monomial. While we do not know in general when $\mathrm{mid}(\mathcal A)$ equals $\overline{\mathscr A}$ (see \cite[Theorem 0.3]{gross2018canonical}), we do have a basis of $\mathrm{mid}(\mathcal A)$ parametrized by the subset $\Theta$. Strictly speaking, this process is done through the principal coefficients case.

Our insight is that it is natural to consider the above cluster duality for generalized cluster varieties. In the principal coefficients case, we take a general fiber $\mathcal X^\mathrm{prin}_\lambda \coloneqq \pi_\mathcal X^{-1}(\lambda)$ of the family
\[
    \pi_\mathcal X \colon \mathcal X^\mathrm{prin} \rightarrow \mathrm{Spec}(R).
\]
The generalized cluster scattering diagram $\mathfrak D_\mathbf s$ then lives in the tropical set $(\mathcal X^\mathrm{prin}_{\lambda})^\mathrm{trop}(\mathbb R)$ which is identified with $M_\mathbb R$ given a chosen seed $\mathbf s$. Towards a generalized version of the cluster duality, we show

\begin{theorem}[{\Cref{thm: cluster variable as theta function}}]
	For any $m\in \Delta_\mathbf s^+$, the theta function $\vartheta_m$ constructed from the generalized cluster scattering diagram $\mathfrak D_\mathbf s$ corresponds to the cluster monomial of the generalized cluster algebra $\mathscr A^\mathrm{prin}(\mathbf s)$ whose $g$-vector is $m$. Moreover, it is a Laurent polynomial in the initial cluster variables $x_i$ and coefficients $p_{i,j}$ with non-negative integer coefficients.
\end{theorem}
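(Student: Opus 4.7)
The plan is to adapt the strategy of Theorem~4.9 of \cite{gross2018canonical} to the generalized setting, relying on the tools already assembled in the paper: the mutation invariance theorem, the cluster complex structure of $\mathfrak D_\mathbf s$, the generalized separation formula, and the generalized positivity of wall-crossings. The first step is to reduce to a cluster chamber. Given $m\in\Delta_\mathbf s^+$, the cluster complex structure says $m$ lies in the closure of some cluster chamber $\mathcal C_{\mathbf s'}$, where $\mathbf s'=\mu_{k_\ell}\cdots\mu_{k_1}(\mathbf s)$ is reached from $\mathbf s$ by a finite sequence of mutations. Iterating mutation invariance, the piecewise linear composition $T_{k_\ell}\cdots T_{k_1}$ carries (an equivalent representative of) $\mathfrak D_\mathbf s$ to $\mathfrak D_{\mathbf s'}$, and broken lines transform correspondingly. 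If the basepoint is chosen inside $\mathcal C_{\mathbf s'}$, then the only broken line with initial monomial $z^m$ and endpoint in $\mathcal C_{\mathbf s'}$ is the straight unbent line, so $\vartheta_m|_{\mathcal C_{\mathbf s'}} = z^m$.

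To identify $\vartheta_m$ expanded in the initial $\mathbf s$-chart with the desired generalized cluster monomial, transport this identity back to $\mathcal C_\mathbf s$ using consistency of the scattering diagram: the expansion of $\vartheta_m$ in the $\mathbf s$-coordinates is obtained by applying the composition of the corresponding generalized cluster mutation birational maps to $z^m$. This matches precisely the content of the generalized separation formula, which expresses any generalized cluster monomial as $z^{g}\cdot F(\hat y)$, where $g$ is its $g$-vector and $F$ is its $F$-polynomial. Hence $\vartheta_m$ is the generalized cluster monomial of $\mathscr A^\mathrm{prin}(\mathbf s)$ whose $g$-vector is $m$.

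The positivity assertion then follows immediately. By generalized positivity, every wall-crossing function of $\mathfrak D_\mathbf s$ is a formal power series with non-negative integer coefficients in the parameters $p_{i,j}^\pm$ and the monomials $z^{m_i}$; every broken line---being an ordered product of monomials selected from such wall-crossings---contributes a term with non-negative integer coefficient to $\vartheta_m$. The main obstacle I anticipate is verifying that broken lines behave correctly under the piecewise linear operations $T_k$ when the initial wall-crossing functions are products with more than two factors. In the ordinary case this is done by a direct broken line comparison, but in the generalized case each factor of $f=\prod_j(1+t_jz^m)$ must be tracked separately, and one must check that the resulting combinatorics matches the generalized $F$-polynomial recursion underlying the separation formula.
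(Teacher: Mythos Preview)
Your proposal is essentially correct and follows the same strategy as the paper. A few remarks on the comparison.

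The paper's argument is slightly more streamlined than yours in two places. First, rather than transporting to $\mathfrak D_{\mathbf s'}$ and back, the paper stays entirely in $\mathfrak D_\mathbf s$: it proves directly (\Cref{prop: only one broken line}) that for $Q'\in\mathrm{int}(\mathcal C_v^+)$ and $m\in\mathcal C_v^+\cap M$ one has $\vartheta_{Q',m}=z^m$, using the mutation invariance of broken lines (\Cref{prop: mutation of broken line}) only as a tool inside that proof. Then consistency (\Cref{thm: theta function consistency}) gives $\vartheta_{Q,m}=\mathfrak p_\gamma(z^m)$ for a path $\gamma$ from $\mathcal C_v^+$ to $\mathcal C_\mathbf s^+$. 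Second, to identify $\mathfrak p_\gamma(z^m)$ with the cluster monomial, the paper does not invoke the separation formula; instead it uses \Cref{prop: cluster variable by path ordered product}, which shows directly by induction that $x_{i;v}=\mathfrak p_{v,v_0}(z^{g_{i;v}})$, the key input being the explicit wall-crossing functions on facets of cluster chambers (\Cref{thm: wall crossing on cluster chamber}). Your phrasing ``applying the composition of the corresponding generalized cluster mutation birational maps'' is slightly imprecise: the path-ordered product is not literally a composition of $\mathcal A$-mutations (those also involve a linear change of chart), and the identification of the two is exactly what \Cref{prop: cluster variable by path ordered product} establishes.

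Your anticipated obstacle about tracking broken lines through the piecewise linear maps $T_k$ when the wall-crossing functions are products $\prod_j(1+t_jz^m)$ is handled by \Cref{prop: mutation of broken line}; no separate factor-by-factor bookkeeping or matching against an $F$-polynomial recursion is needed. The positivity argument is exactly as you outline, via \Cref{thm: generalized positivity}, with the Laurent phenomenon (\Cref{thm: generalized laurent}) used at the end to conclude the theta function is a genuine Laurent polynomial rather than a formal series.
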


It follows from the above theorem that the family
\[
    \pi_\mathcal A\colon \mathcal A^\mathrm{prin} \rightarrow \mathrm{Spec}(R)
\]
can be reconstructed from a general fibre $\mathcal X_\lambda^\mathrm{prin}$ (through any of its toric models); see \Cref{prop: reconstruct cluster variety}.

In principle, in the generalized case, one could consider the subset $\Theta$ as in \cite{gross2018canonical} and the corresponding generalized middle cluster algebra $\mathrm{mid}(\mathcal A^\mathrm{prin})$. This would lead to a formulation of generalized cluster duality similar to the ordinary case in \cite[Theorem 0.3]{gross2018canonical}. Then the usual problem on when the full Fock--Goncharov conjecture is true remains and can be discussed as in \cite[Section 8]{gross2018canonical}.

\subsection{Relations to other works}

There are examples of generalized cluster scattering diagrams from representation theory, where they are realized as Bridgeland's stability scattering diagrams \cite{bridgeland2016scattering} for quivers (with loops) with potentials; see \cite{labardini2019} of Labardini-Fragoso and the author for such examples arising from surfaces with orbifold points.

In rank two, the scattering diagram $\mathfrak D_{\mathbf s}$ has already appeared in \cite{gross2010tropical} and \cite{gross2010quivers} from origins other than cluster algebras. There the wall-crossing functions are shown to encode relative Gromov--Witten invariants on certain log Calabi--Yau surfaces. Some conjectural wall-crossing functions in \cite{gross2010quivers} are later verified in \cite{reineke2013refined} using techniques from quiver representations; see \Cref{ex: example for k2}.

The recent paper \cite{cheung2023cluster} of Cheung, Kelley and Musiker (outlined in \cite{cheungcluster}) and some part of Kelley's PhD thesis \cite{kelley2021structural} have significant overlaps with this paper and the author's PhD thesis \cite[Chapter 8]{mou2020wall}. We in the following highlight the differences and relationships concerning scattering diagrams.

In \cite[Chapter 8]{mou2020wall}, a class of generalized cluster scattering diagrams were constructed and properties including mutation invariance and cluster complex structure were proved. In that work, a palindromic and monic restriction (termed \emph{reciprocal} in \cite{chekhov2011teichmller}) on the coefficients was imposed. Such a scattering diagram can be obtained from applying to $\mathfrak D_{\mathbf s}$ of the current paper an evaluation $\lambda$ such that the initial wall-crossings are specialized to reciprocal polynomials, i.e., of the form
\[
    f = 1 + a_1 z^w + \cdots + a_{r-1} z^{(r-1)w} + z^{rw}
\]
where $r\in \mathbb Z_{\geq 0}$, $w\in M$, and $a_{k} = a_{r - k}$ in some ground field $\Bbbk$; see \Cref{subsection: sd with special coefficients}. Scattering diagrams amost identical to these (with the reciprocal restriction) were later considered by Cheung, Kelley and Musiker in the announcement \cite{cheungcluster}, with more details provided in \cite{kelley2021structural}. The authors treat the coefficients $a_i$ as formal variables. They also outlined the construction of theta functions, following \cite{gross2018canonical}.

The current paper aims to fill in gaps and missing details in \cite{mou2020wall}, enhance the setup therein to include more general coefficients, and discuss the positivity of generalized cluster algebras using scattering diagram techniques. Shortly after this paper was posted on the arXiv, \cite{cheung2023cluster} appeared on the arXiv, completing the program \cite{cheungcluster}. Despite many similarities between the current paper and \cite{cheung2023cluster}, our approaches of treating coefficients differ somewhat. In \cite{cheung2023cluster}, the $\mathbf y$-variables in a generalized seed and the coefficients $\mathbf a = (a_i)$ in a generalized exchange polynomial are treated separately. The coefficients $\mathbf a$ are assumed to be reciprocal and remain unchanged under mutations. In contrast, we view the coefficients $\mathbf a$ as deriving from the $\mathbf y$-variables (denoted as $\mathbf p$ in our notation) by factorizing an exchange polynomial into binomials, with each binomial governed by one coefficient in the style of Fomin and Zelevinsky. This approach allows us to realize more general exchange polynomials (beyond just reciprocal ones), at least for an algebraically closed ground field, by specialization from principal coefficients (see \Cref{subsection: specialized coefficients} and \Cref{subsection: further on positivity}). This setup also enables us to formulate and prove a form of positivity for generalized cluster algebras, a topic not extensively discussed in \cite{cheung2023cluster}.

\section*{Acknowledgements}

I would like to thank Daniel Labardini-Fragoso for showing me an example of generalized cluster scattering diagram in the beginning; Fang Li and Siyang Liu for helpful discussions and the hospitality during my visit at Zhejiang University. During the preparation of the manuscript, I was supported by the Royal Society through the Newton International Fellowship NIF\textbackslash R1\textbackslash 201849.

\section{Preliminaries}\label{section: preliminaries}

In this section, we review some preliminaries commonly used in the theory of cluster algebras \cite{fomin2002cluster}.

\subsection{Semifields}\label{subsection: semifield}

\begin{definition}\label{def: semifield}
	A semifield $(\mathbb P, \oplus, \cdot)$ is a torsion free (multiplicative) abelian group $\mathbb P$ with a binary operation \emph{addition} $\oplus$ which is commutative, associative and distributive.
\end{definition}

We denote by $\mathbb {ZP}$ the group ring of $\mathbb P$ and by $\mathbb {NP}\subset \mathbb {ZP}$ the subset of linear combinations with coefficients in $\mathbb N$. Denote by $\mathbb {QP}$ the field of fractions of $\mathbb {ZP}$.

For an element $p\in \mathbb P$, we define in $\mathbb P$ two elements:
\[
    p^+ \coloneqq \frac{p}{p\oplus 1}\quad \text{and} \quad p^- \coloneqq \frac{1}{p\oplus 1}.
\]

\begin{definition}\label{def: tropical semifield}
    Let $I$ be a finite set. We define $\mathrm{Trop}(s_i\mid i\in I)$ to be the (multiplicative) abelian group with free generators $s_i$ indexed by $I$, with the operation addition $\oplus$:
\[
    \prod_{i\in I}s_i^{a_i} \oplus \prod_{i\in I}s_i^{b_i} \coloneqq \prod_{i\in I}s_i^{\min\{a_i, b_i\}}.
\]
It is clear that $\mathrm{Trop}(s_i\mid i\in I)$ is a semifield. Such a semifield is called a \emph{tropical semifield}.
\end{definition}

For $n\in \mathbb Z$, we write $[n]_+ \coloneqq \max \{n, 0\}$. The elements $s^\pm$ for $$s = \prod\limits_{i\in I}s_i^{a_i}\in \mathrm{Trop}(s_i\mid i\in I)$$ has the following simple expressions:
\[
	s^+ = \prod_{i\in I} s_i^{[a_i]_+}\quad \text{and} \quad s^{-} = \prod_{i\in I} s_i^{[-a_i]_+}.
\]

\begin{definition}\label{def: universal semifield}
Denote by $\mathbb Q_{\mathrm{sf}}(u_1, \dots, u_l)$ the set of all rational functions in $l$ independent variables which can be written as subtraction-free rational expressions in $u_1, \dots, u_l$. Then the set $\mathbb Q_{\mathrm{sf}}(u_1, \dots, u_l)$ is a semifield with respect to the usual addition and multiplication.
\end{definition}

We call such $\mathbb Q_{\mathrm{sf}}(u_1, \dots, u_l)$ a \emph{universal semifield} since for another arbitrary semifield $\mathbb P$ and its elements $p_1, \dots, p_l$, there exists a unique map of semifields from $\mathbb Q_{\mathrm{sf}}(u_1, \dots, u_l)$ to $\mathbb P$ sending $u_i$ to $p_i$; see \cite[Lemma 2.1.6]{berenstein1996parametrizations}.

\subsection{Mutations of matrices}

\begin{definition}\label{def: skew-sym}
	A matrix $B\in \mathrm{Mat}_{n\times n}(\mathbb Z)$ is called (left) \emph{skew-symmetrizable} if there exists a diagonal matrix $D = \mathrm{diag}(d_i\mid 1\leq i \leq n)$ with $d_i\in \mathbb Z_{>0}$ such that
	\[
		D B + (D B)^T = 0.
	\]
	Such a matrix $D$ is called a (left) \emph{skew-symmetrizer} of $B$.
\end{definition}

\begin{definition}[{\cite[Definition 4.2]{fomin2002cluster}}]
	Let $B = (b_{ij})\in \mathrm{Mat}_{n \times n}(\mathbb Z)$ be a skew-symmetrizable matrix. For $k = 1, \dots, n$, we define $\mu_k(B) = (b_{ij}')\in \mathrm{Mat}_{n \times n}(\mathbb Z)$ the \emph{mutation of $B$ in direction $k$} by setting
	\begin{enumerate}
		\item $b'_{ik} = -b_{ik}$ and $b'_{kj} = -b_{kj}$ for $1\leq i, j \leq n$;
		\item For $i\neq k$ and $j\neq k$, \[ b'_{ij} = \begin{cases}
			b_{ij} + b_{ik} b_{kj} \quad &\text{if $b_{ik}>0$ and $b_{jk}<0$};\\
			b_{ij} - b_{ik} b_{kj} \quad &\text{if $b_{ik}<0$ and $b_{jk}>0$};\\
			b_{ij} \quad &\text{otherwise.}
		\end{cases} \] 
	\end{enumerate}
\end{definition}

It is clear that the matrix $\mu_k(B)$ is again skew-symmetrizable with the same set of skew-symmetrizers of $B$. One can easily check that a mutation is involutive in the same direction, i.e. $\mu_k\circ \mu_k (B) = B$.

\section{Generalized cluster algebras}

\subsection{Generalized cluster algebras}
Cluster algebras were originally invented by Fomin and Zelevinsky in \cite{fomin2002cluster}, which we later refer to as \emph{ordinary cluster algebras}. A generalization of cluster algebras has been provided by Chekhov and Shapiro in \cite{chekhov2011teichmller}. Our definition of generalized cluster algebras below may be considered as a special case (of a slight generalization) of theirs but with a normalization analogous to the one in \cite[Definition 5.3]{fomin2002cluster} for ordinary cluster algebras. The relation and difference will be explained in \Cref{subsection: chekhov-shapiro}. 

We follow the pattern of \cite{fomin2007cluster} to define generalized cluster algebras. Most of the key notions here are the generalized versions of their correspondents in the ordinary case.

\begin{definition}\label{def: seed}
	A \emph{(generalized) labeled seed} $\Sigma$ of rank $n\in \mathbb N$ is a triple $(\mathbf x, \mathbf p, B)$, where
	\begin{enumerate}
	\item[$\bullet$] $\mathbf p = (\mathbf p_1, \dots, \mathbf p_n)$ is an $n$-tuple of collections of elements, where each $\mathbf p_i = \left ( p_{i,1},\dots, p_{i,r_i} \right)$ is a $r_i$-tuple of elements in a semifield $(\mathbb P, \oplus, \cdot)$ for some positive integer $r_i$.
	
	\item[$\bullet$] $\mathbf x = \{ x_1, \dots, x_n\}$ is a collection of algebraically independent rational functions of $n$ variables over $\mathbb {QP}$. In other words, the $x_1, \dots x_n$ are elements in some field of rational functions $\mathcal F = \mathbb {QP}(u_1, \dots, u_n)$ such that $\mathcal F = \mathbb {QP}(x_1, \dots, x_n)$.
	
	\item[$\bullet$] $B\in \mathrm{Mat}_{n\times n}(\mathbb Z)$ is skew-symmetrizable such that for any $i = 1, \dots, n$, its $i$-th column is divisible by $r_i$. The diagonal matrix $D = \mathrm{diag}(r_i)$ is not necessarily a skew-symmetrizer of $B$.
	\end{enumerate}
\end{definition}

For convenience, let $I$ be the index set $\{1, \dots, n\}$. For an arbitrary positive integer $k$, we use the interval $[1,k]$ to represent the set $\{1,\dots, k\}$. We will often call a labeled seed simply a \emph{seed} if there is no confusion.

Associated to a labeled seed $\Sigma = (\mathbf x, \mathbf p, B)$, for each $i\in I$, there is the \emph{exchange polynomial}
\[
\theta_i(u,v) = \theta[\mathbf p_i](u,v) \coloneqq \prod_{l = 1}^{r_i} \left( p_{i,l}^+ u + p_{i,l}^- v \right)\in \mathbb {ZP}[u,v].
\]
Write $\beta_{ij} = b_{ij}/r_j\in \mathbb Z$. We put
\[
u_{j;+} \coloneqq \prod_{i\colon b_{ij}>0}x_i^{\beta_{ij}},\quad u_{j;-} \coloneqq \prod_{i\colon b_{ij}<0} x_i^{-\beta_{ij}}
\]
and
\[
p_{i;+} \coloneqq \prod_{l = 1}^{r_i} p_{i,l}^+,\quad p_{i;-} \coloneqq \prod_{l = 1}^{r_i} p_{i,l}^- \in \mathbb P.
\]
Note that all the above notions are with respect to $\Sigma$.

\begin{definition}\label{def: mutation of seed}
	For any $k\in I$, we define the \emph{mutation of a seed $\Sigma = (\mathbf x, \mathbf p, B)$ in direction $k$} as a new labeled seed $\mu_k(\mathbf x, \mathbf p, B) \coloneqq ((x_i'), (\mathbf p_i'), B')$ where $\mathbf p_i' = \left(p_{i,j}'\mid j\in [1,r_i]\right)$ in the following way:
	\begin{enumerate}
		\item 
	\begin{equation*}
		B' = \mu_k(B);
	\end{equation*}
		\item
	$$p_{k,j}' = p_{k,j}^{-1}\quad  \text{for $j \in [1,r_k]$};$$
	\item 
\begin{equation*}
	\text{for $i\neq k$, $j\in [1, r_i]$}\quad p'_{i,j} = \begin{dcases}
	p_{i,j}\cdot \left( p_{k;-}\right)^{\beta_{ki}}\quad &\text{if $\beta_{ik}>0$}\\
	p_{i,j}\cdot \left( p_{k;+}\right)^{\beta_{ki}} \quad &\text{if $\beta_{ik}\leq 0$},
	\end{dcases}
\end{equation*}
or equivalently
\begin{equation*}
	\text{for $i\neq k$,}\quad p'_{i,j} = 
	p_{i,j}\left( \prod_{l=1}^{r_k} \left( 1 \oplus p_{k,l}^{\mathrm{sgn}(\beta_{ik})} \right) \right)^{-\beta_{ki}};	
	\end{equation*}
	
	\item
	\begin{equation*}
		x_i' = \begin{dcases}
			x_i \quad &\text{if $i\neq k$}\\
			x_k^{-1}\theta[\mathbf p_k](u_{k;+}, u_{k;-}) \quad &\text{if $i = k$}.
		\end{dcases}
	\end{equation*}
		\end{enumerate}
\end{definition}

\begin{lemma}\label{lemma: mutation is involutive}
The mutation $\mu_k$ is involutive, i.e.	 $\mu_k\circ\mu_k(\Sigma) = \Sigma$.
\end{lemma}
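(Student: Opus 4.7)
The plan is to verify involutivity component by component: matrix, coefficient tuples, and cluster variables. The matrix part, $\mu_k \circ \mu_k(B) = B$, is already flagged as standard in \Cref{def: skew-sym}, so I would just cite it. The cluster variables $x_i$ with $i \neq k$ are fixed by $\mu_k$, so the only nontrivial checks are on $p'_{i,j}$ and on $x''_k$.

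First I would establish a small semifield identity that will be used repeatedly: for any $p \in \mathbb P$, $(p^{-1})^+ = p^-$ and $(p^{-1})^- = p^+$. This follows from $p^{-1} \oplus 1 = p^{-1}(1 \oplus p)$ by distributivity. As an immediate consequence, after a mutation in direction $k$ (so $p'_{k,j} = p_{k,j}^{-1}$), one gets $p'_{k;+} = p_{k;-}$ and $p'_{k;-} = p_{k;+}$.

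Next I would handle the coefficients $p'_{i,j}$ for $i \neq k$. Observe that under the matrix mutation both $\beta_{ik}$ and $\beta_{ki}$ change sign. I split into three cases: (i) $\beta_{ik} = 0$, where $\beta_{ki} = 0$ by skew-symmetrizability and the formula leaves $p_{i,j}$ fixed in both steps; (ii) $\beta_{ik} > 0$, where the first mutation yields $p'_{i,j} = p_{i,j}(p_{k;-})^{\beta_{ki}}$, and since $\beta'_{ik} = -\beta_{ik} < 0$ the second mutation multiplies by $(p'_{k;+})^{\beta'_{ki}} = (p_{k;-})^{-\beta_{ki}}$, giving back $p_{i,j}$; (iii) $\beta_{ik} < 0$, handled symmetrically using $p'_{k;-} = p_{k;+}$.

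Finally, for the cluster variable $x''_k$: compute $u'_{k;\pm}$ using $B' = \mu_k(B)$ and $x'_i = x_i$ for $i \neq k$, which immediately gives $u'_{k;+} = u_{k;-}$ and $u'_{k;-} = u_{k;+}$. Combined with $\theta[\mathbf p'_k](u,v) = \prod_{l}\bigl(p_{k,l}^- u + p_{k,l}^+ v\bigr)$, this yields
\[
\theta[\mathbf p'_k](u'_{k;+}, u'_{k;-}) = \prod_{l}\bigl(p_{k,l}^- u_{k;-} + p_{k,l}^+ u_{k;+}\bigr) = \theta[\mathbf p_k](u_{k;+}, u_{k;-}),
\]
and so $x''_k = (x'_k)^{-1}\theta[\mathbf p_k](u_{k;+}, u_{k;-}) = (x'_k)^{-1}(x_k x'_k) = x_k$.

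The main obstacle is purely bookkeeping: keeping track of the sign conventions in part (3) of \Cref{def: mutation of seed} and verifying that the two alternative formulations given there are consistent (which is really what the semifield identity $(p^{-1})^\pm = p^\mp$ encodes). Once that identity is in hand, the rest is a matching exercise between the mutated data and the original data.
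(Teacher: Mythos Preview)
Your proposal is correct and follows essentially the same approach as the paper: verify involutivity component by component, using the key identities $p'_{k;\pm} = p_{k;\mp}$ (equivalently $(p^{-1})^\pm = p^\mp$), $u'_{k;\pm} = u_{k;\mp}$, and $\theta[\mathbf p'_k](u,v) = \theta[\mathbf p_k](v,u)$. The only difference is cosmetic---you make the semifield identity explicit and split off the $\beta_{ik}=0$ case separately, whereas the paper folds it into $\beta_{ik}\leq 0$; also, the matrix involutivity is stated after the matrix-mutation definition rather than in \Cref{def: skew-sym}, so adjust that reference.
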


\begin{proof}
We check that $\mu_k$ is involutive on each component of a seed. We denote $$\mu_k\circ\mu_k(\Sigma) = \left((x_i''), (p_{i,j}''\mid j \in [1, r_i])_{i\in I}, B''\right).$$
For this seed, we simply denote the relevant objects appearing in \Cref{def: mutation of seed} by adding a double prime to the old symbols, while for $\mu_k(\Sigma)$, we add a single prime.

	\begin{enumerate}
		\item First of all, the matrix mutation $\mu_k$ is an involution, which was already shown in \cite{fomin2002cluster}.
		\item We have for $j\in [1, r_k]$, $$p''_{k,j} = \left(p'_{k,j}\right)^{-1} = p_{k,j}.$$
		\item For $i\neq k$, we have for $j\in [1, r_i]$, \begin{align*}
			\quad p''_{i,j} &= \begin{dcases}
	p_{i,j}'\cdot \left( p_{k;-}'\right)^{\beta_{ki}'}\quad &\text{if $\beta_{ik}'>0$}\\
	p_{i,j}'\cdot \left( p_{k;+}'\right)^{\beta_{ki}'} \quad &\text{if $\beta_{ik}'\leq 0$}
	\end{dcases}\\
			&= \begin{dcases}
	p_{i,j}\cdot \left( p_{k;+}\right)^{\beta_{ki}}\cdot \left(p_{k;-}' \right)^{-\beta_{ki}}\quad &\text{if $\beta_{ik}<0$}\\
	p_{i,j}\cdot \left( p_{k;-}\right)^{\beta_{ki}}\cdot \left(p_{k;+}' \right)^{-\beta_{ki}} \quad &\text{if $\beta_{ik}\geq 0$}
	\end{dcases}\\
			&= p_{i,j}.
		\end{align*}
		The last equality is because $p'_{k;+} = p_{k;-}$ and $p'_{k;-} = p_{k;+}$.
	\item Finally for the $\mathbf x$ part, we have \begin{align*}
		x_i'' & = \begin{cases}
			x_i' \quad &\text{if $i\neq k$}\\
			(x_k')^{-1}\theta[\mathbf p_k']\left(u'_{k;+},u'_{k;-}\right)\quad &\text{if $i = k$}
		\end{cases}\\
		& = \begin{cases}
			x_i \quad &\text{if $i\neq k$}\\
			x_k\cdot \theta[\mathbf p_k]\left(u_{k;+}, u_{k;-}\right)^{-1}\theta[\mathbf p_k']\left(u'_{k;+},u'_{k;-}\right)\quad &\text{if $i = k$}
		\end{cases}\\
		& = x_i.
	\end{align*}
	The last equality is because that $\theta[\mathbf p_k'](u,v) = \theta[\mathbf p_k](v,u)$ and $u_{k;\pm}' = u_{k;\mp}$.	
	\end{enumerate}
	So overall we have proven that $\mu_k\circ \mu_k(\Sigma) = \Sigma$.
\end{proof}

Fix a positive integer $n$. We consider the (non-oriented) \emph{$n$-regular tree} $\mathbb T_n$ whose edges are labeled by the numbers $1,\dots,n$ as in \cite{fomin2002cluster}. \Cref{lemma: mutation is involutive} makes the following definition well-defined.

\begin{definition}
	A \emph{(generalized) cluster pattern} is an assignment of a labeled seed $\Sigma_t = \left ( \mathbf x_t, \mathbf p_t, B^t\right)$ to every vertex $t\in \mathbb T_n$, such that for any $k$-labeled edge with endpoints $t$ and $t'$, the seed $\Sigma_{t'}$ is the mutation in direction $k$ of $\Sigma_t$, i.e. $\Sigma_{t'} = \mu_k(\Sigma_t)$.
	\end{definition}

The elements in $\Sigma_t$ are written as follows:
	\[
	\mathbf x_t = (x_{i;t}\mid i\in I),\quad \mathbf p_{i;t} = \left (p_{i,j;t}\mid j\in [1, r_i]\right), \quad B^t = (b_{ij}^t).
	\]
The part $\mathbf x$ of a labeled seed is called a (labeled) \emph{cluster}, elements $x_i$ are called \emph{cluster variables}, elements $p_{i,j}$ are called \emph{coefficients} and $B$ is called \emph{exchange matrix}.

Two seeds are \emph{mutation-equivalent} if one is obtained from the other by a sequence of mutations.  If a seed $\Sigma$ appears in a cluster pattern, then by definition any seeds mutation-equivalent to $\Sigma$ must also appear. On the other hand, assigning a seed of rank $n$ to any vertex of $\mathbb T_n$ would uniquely determine a cluster pattern.

By definition, all cluster variables live in some ambient field $\mathcal F$ of rational functions of $n$ variables. One may identify $\mathcal F$ with $\mathbb {QP}(x_1, \cdots, x_n)$ where $(x_1,\dots, x_n)$ is a cluster.

\begin{definition}
	Given a generalized cluster pattern, the \emph{(generalized) cluster algebra} $\mathscr A$ is defined to be the $\mathbb {ZP}$-subalgebra of the ambient field $\mathcal F$ generated by all cluster variables $x_{i;t}$ in all seeds that appear in the cluster pattern. Since a cluster pattern is determined by any of its seed, we denote $\mathscr A = \mathscr A(\Sigma)$ where $\Sigma = (\mathbf x, \mathbf p, B)$ is any seed in this cluster pattern.
\end{definition}

\begin{remark}
	In the case where $r_i = 1$ for any $i\in I$, all the above generalized notions recover the original versions of Fomin and Zelevinsky \cite{fomin2007cluster}.
\end{remark}

For convenience, one can specify one vertex $t_0\in \mathbb T_n$ to be \emph{initial}, thus the associated seed being called the \emph{initial seed} with the \emph{initial} cluster, cluster variables, coefficients and exchange matrix. All other seeds are obtained by applying mutations iteratively to the initial one. For the following two theorems, we denote by $(x_1, \dots, x_n)$ the initial cluster. 

\begin{theorem}[Generalized Laurent phenomenon, cf. \cite{fomin2002cluster} and \cite{chekhov2011teichmller}]\label{thm: generalized laurent}
 Let $(\mathbf x, \mathbf p, B)$ be a generalized labeled seed. Then any cluster variable of $\mathscr A(\mathbf x, \mathbf p, B)$ is a Laurent polynomial over $\mathbb {ZP}$ in the initial cluster variables $x_i$, i.e. an element in $\mathbb {ZP}[x_1^\pm, \dots, x_n^\pm]$.
\end{theorem}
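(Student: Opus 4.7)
The plan is to adapt Fomin and Zelevinsky's proof of the ordinary Laurent phenomenon \cite[Theorem 3.1]{fomin2002cluster} to the generalized setting. Writing $(x_1,\ldots,x_n)$ for the initial cluster at $t_0\in\mathbb{T}_n$, I would proceed by induction on the distance $d(t_0,t)$ in $\mathbb{T}_n$ to the vertex $t$ where a given cluster variable $y = x_{i;t}$ lives. The base case $d=1$ is immediate from the mutation formula
\[
x_k' = x_k^{-1}\,\theta[\mathbf{p}_k](u_{k;+},u_{k;-}),
\]
because the generalized exchange polynomial
\[
\theta[\mathbf{p}_k](u_{k;+},u_{k;-}) = \prod_{l=1}^{r_k}\bigl(p_{k,l}^+\,u_{k;+} + p_{k,l}^-\,u_{k;-}\bigr)
\]
is a polynomial in the $x_i$ with $i\neq k$ and coefficients in $\mathbb{ZP}$; the other cluster variables at $\mu_k(t_0)$ coincide with the original $x_i$.

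For the inductive step, I would use a caterpillar-style decomposition in the spirit of \cite{fomin2002cluster}: any finite path in $\mathbb{T}_n$ starting at $t_0$ can be broken into segments in which a mutation in some direction $k$ is followed by several mutations in directions other than $k$ and then another mutation in direction $k$. The essential coprimality input for each such segment is that the exchange polynomial used in any mutation lies in the sub-polynomial-ring over $\mathbb{ZP}$ generated by the cluster variables not being replaced, and hence is coprime with the variable that is being replaced. For the generalized exchange polynomial this property is transparent: $\theta[\mathbf{p}_k](u_{k;+},u_{k;-})$ does not involve $x_k$ at all, and each of its binomial factors has disjoint monomial supports $u_{k;+}$ and $u_{k;-}$ by construction. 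The denominator-tracking argument along a caterpillar in \cite{fomin2002cluster} then carries over unchanged.

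The main potential obstacle is the interaction between the $r_k$ binomial factors of $\theta[\mathbf{p}_k]$ and the denominators produced by iterated mutations, since in the ordinary case one has only a single binomial. However, since all coprimality assertions depend only on monomial supports in the $x_i$ and not on the number of binomial factors, this obstacle turns out to be superficial. An alternative route would be to invoke directly the generalized Laurent phenomenon of Chekhov and Shapiro \cite{chekhov2011teichmller}, after observing that our normalized formulation of seeds differs from theirs only by a specialization of coefficients, as will be explained in \Cref{subsection: chekhov-shapiro}; either approach completes the proof.
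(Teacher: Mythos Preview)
Your high-level strategy matches the paper's: both reduce to the caterpillar machinery of \cite[Section 3]{fomin2002cluster}, and the paper also notes that one can alternatively quote \cite{chekhov2011teichmller} directly. However, your sketch misidentifies the hypothesis that actually requires work. You emphasize coprimality (that $\theta[\mathbf p_k]$ does not involve $x_k$), but the nontrivial condition in \cite[Theorem 3.2]{fomin2002cluster} that must be checked is the substitution relation along a three-step path
\[
t_0 \overset{i}{\text{---}} t_1 \overset{j}{\text{---}} t_2 \overset{i}{\text{---}} t_3:
\]
writing $P,Q,R$ for the exchange polynomials at $t_0,t_1,t_2$ and $Q_0 = Q|_{x_i=0}$, one must show that $R$ and $P|_{x_j\leftarrow Q_0/x_j}$ agree up to a monomial factor in $\mathbb{NP}[x_1,\ldots,x_n]$. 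This is exactly where the product structure of $\theta[\mathbf p_i]$ and the generalized mutation rule for the coefficients $p_{i,l}$ enter, and it is what the paper verifies case-by-case according to the sign of $\beta_{ij}^{t_1}$.

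Your remark that ``all coprimality assertions depend only on monomial supports'' is true but does not establish this substitution identity; the $r_k$ binomial factors genuinely interact with the shift $x_j\leftarrow Q_0/x_j$, and one needs the coefficient mutation formula to see that the ratio of the two sides is a monomial. So while your outline is not wrong, it would not pass as a proof without carrying out that computation (or, as you say at the end, simply invoking \cite{chekhov2011teichmller} after the translation in \Cref{subsection: chekhov-shapiro}).
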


\begin{proof}
	The generalized Laurent phenomenon was already obtained in \cite[Theorem 2.5]{chekhov2011teichmller}. Since our setting is slightly different, we give a proof for completeness.
	
	The proof completely follows from the discussion in \cite[Section 3]{fomin2002cluster}. The generalized Laurent property is a direct corollary of \cite[Theorem 3.2]{fomin2002cluster}. One only needs to check the following hypothesis required by \cite[Theorem 3.2]{fomin2002cluster}: 
for any subgraph
	\[
		t_0 \overset{i}{\text{ ------ }} t_1 \overset{j}{\text{ ------ }} t_2 \overset{i}{\text{ ------ }} t_3
	\]
	in the tree $\mathbb T_n$, if we define the following three \emph{exchange polynomial} in $n$ variables $x_1,\dots, x_n$ by writing
	\[
	P(\mathbf x_{t_0}) = \theta[\mathbf p_{i;t_0}](u_{i;+}^{t_0}, u_{i;-}^{t_0}),\quad Q(\mathbf x_{t_1}) = \theta[\mathbf p_{j;t_1}](u_{j;+}^{t_1}, u_{j;-}^{t_1}), \quad R(\mathbf x_{t_2}) = \theta[\mathbf p_{i;t_2}](u_{i;+}^{t_2}, u_{i;-}^{t_2}),
	\]
	then they satisfy $R = C \cdot \left( P \vert_{x_j\leftarrow Q_0/x_j} \right)$ where $Q_0 = Q\vert_{x_i = 0}$ for some $C\in \mathbb {NP}[x_1, \dots x_n]$.	
	
	Notice that since $t_0 \frac{i}{\quad\quad} t_1$, we have
	\[
	P = \prod_{l=1}^{r_i} \left( p_{i,l;t_1}^+ \prod_{k} x_k^{[\beta_{ki}^{t_1}]_+} + p_{i,l;t_1}^- \prod_{k} x_k^{[-\beta_{ki}^{t_1}]_+} \right).
	\]
	
	When $\beta_{ij}^{t_1} = 0$, $\beta_{ji}^{t_0} = -\beta_{ij}^{t_1} = 0$. So $x_j$ does not appear in $P$, implying $P = P \vert_{x_j\leftarrow Q_0/x_j}$. In this case, we have for any $l\in [1, r_i]$
	\[
	p_{i,l;t_0} = p_{i,l;t_2}^{-1},\quad \beta_{li}^{t_0} = -\beta_{li}^{t_2}.
	\]
	Thus we have $R = P$.
	
	When $\beta_{ij}^{t_1} < 0$ (implying $\beta_{ji}^{t_1} >0$), then
	\[
	Q_0/x_j = p_{j;+;t_1} x_j^{-1} \prod_{k} x_k^{[b_{kj}^{t_1}]_+}
	\]
	and
	\[
		P \vert_{x_j\leftarrow Q_0/x_j} = 
				\prod_{l=1}^{r_i} \left(  p_{i,l;t_1}^+ p_{j;+;t_1}^{\beta_{ji}^{t_1}} x_j^{-\beta_{ji}^{t_1}}\prod_{k\neq j} x_k^{[\beta_{ki}^{t_1}]_+ + \beta_{ji}^{t_1} \cdot [b_{kj}^{t_1}]_+} +p_{i,l;t_1}^- \prod_{k} x_k^{[-\beta_{ki}^{t_1}]_+}  \right)
	\]
	We take the ratio of the two monomials in each factor of the above product to obtain monomials
	\[
	p_{i,l;t_1}\cdot p_{j;+;t_1}^{\beta_{ji}^{t_1}}\cdot \prod_{k} x_k^{\beta_{ki}^{t_2}}.
	\]
	We get exactly the same monomials if we do the same for $R$. So $R$ and $P \vert_{x_j\leftarrow Q_0/x_j}$ only differ by a monomial factor in $\mathbb {NP}[x_1, \dots, x_n]$. The case when $\beta_{ij}^{t_1}>0$ is analogous. Hence the hypothesis is checked and the Laurent phenomenon follows from \cite[Theorem 3.2]{fomin2002cluster}.
\end{proof}

The following \Cref{thm: positivity introduction} is a generalization of the well-known positivity for ordinary cluster algebras. In the case of ordinary cluster algebras, the positivity was originally conjectured by Fomin and Zelevinsky \cite{fomin2002cluster}. It has been proved by Lee and Schiffler \cite{lee2015positivity} when the exchange matrix $B$ is skew-symmetric and by Gross, Hacking, Keel, and Kontsevich \cite{gross2018canonical} when $B$ is more generally skew-symmetrizable.

\begin{theorem}[Positivity]\label{thm: positivity introduction}
	In a generalized cluster algebra, each of the coefficients in the Laurent polynomial corresponding to any cluster variable from \Cref{thm: generalized laurent} is a non-negative integer linear combination of elements in $\mathbb P$. In other words, any cluster variable is an element in $\mathbb {NP}[x_1^\pm, \dots, x_n^\pm ]$.
\end{theorem}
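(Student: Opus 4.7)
The plan is to adapt the broken-line strategy of Gross--Hacking--Keel--Kontsevich \cite{gross2018canonical} to the generalized setting, using the generalized cluster scattering diagram $\mathfrak D_\mathbf s$ announced in the introduction. First I would reduce the statement to the principal coefficients case. By the generalized separation formula (\Cref{thm: separation formula}), every cluster variable $x_{i;t}\in \mathscr A(\Sigma)$ is obtained from the corresponding principal-coefficients cluster variable $X_{i;t}^{B}\in \mathscr A^\mathrm{prin}(\mathbf s)$ by specializing the formal parameters $p_{i,j}$ to their values in $\mathbb P$ and multiplying by a monomial in those same $p_{i,j}$. Since such a specialization sends non-negative integer combinations of monomials in the $p_{i,j}$ into $\mathbb {NP}$, it suffices to show that each $X_{i;t}^B$ lies in $\mathbb N[p_{i,j}][x_1^{\pm}, \dots, x_n^{\pm}]$.

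Next I would translate the problem into one about theta functions. By \Cref{thm: cluster variable as theta function}, each generalized cluster monomial in $\mathscr A^\mathrm{prin}(\mathbf s)$ with $g$-vector $m \in \Delta_\mathbf s^+$ equals the theta function $\vartheta_m$ attached to $\mathfrak D_\mathbf s$. Because $m$ lies in the cluster complex, $\vartheta_m$ is a genuine Laurent polynomial, expressible as a finite sum of monomial contributions $c(\gamma)\, z^{s(\gamma)}$, one for each broken line $\gamma$ with asymptotic direction $m$ and a fixed base point in the initial cluster chamber; here $c(\gamma)$ is a product of monomial terms selected from the wall-crossing functions along the walls that $\gamma$ crosses.

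Positivity then follows from the generalized positivity of $\mathfrak D_\mathbf s$ itself (\Cref{thm: generalized positivity}), which says that every wall-crossing function is a polynomial whose coefficients are non-negative integer combinations of monomials in the $p_{i,j}$. The initial walls manifestly have this form, and the Kontsevich--Soibelman consistency procedure at each joint preserves it. Since each broken line selects exactly one monomial from every crossed wall-function, $c(\gamma)$ is automatically a non-negative integer monomial in the $p_{i,j}$; summing over the finite set of relevant broken lines gives $\vartheta_m \in \mathbb N[p_{i,j}][x_1^{\pm}, \dots, x_n^{\pm}]$, and the theorem follows after specialization.

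The hard part will not be this final bookkeeping but the structural prerequisites being invoked. I expect the main obstacle to be establishing the consistency and positivity of $\mathfrak D_\mathbf s$ when the initial walls carry functions of the form $\prod_{j=1}^{r}(1 + t_j z^m)$ rather than simple binomials; in particular I must check that every new wall introduced to enforce consistency at a joint still carries a polynomial with coefficients in $\mathbb N[p_{i,j}]$. The identification of generalized cluster monomials with theta functions and the generalized separation formula are the other two nontrivial inputs, and together they form the technical core of the paper; once they are in place, the positivity theorem drops out by the broken-line sum above.
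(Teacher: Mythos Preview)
Your proposal is correct and follows essentially the same route as the paper: reduce to principal coefficients via the separation formula (\Cref{thm: separation formula} together with \Cref{rmk: positivity for general coefficients}), then invoke \Cref{thm: cluster variable as theta function}, whose own proof rests on the positivity of $\mathfrak D_\mathbf s$ (\Cref{thm: generalized positivity}) and the broken-line description of theta functions. Your identification of the technical prerequisites---scattering-diagram positivity, the cluster-monomial/theta-function identification, and the separation formula---matches exactly what the paper develops in the later sections before this theorem can be cashed in.
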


\begin{proof}
	By the separation formula \Cref{thm: separation formula} and \Cref{rmk: positivity for general coefficients}, we only need to show the positivity in the principal coefficients case (to be defined in \Cref{def: principal coefficients}). In this case, we prove the positivity in \Cref{thm: cluster variable as theta function}. 
\end{proof}

\begin{remark}\label{rmk: a stronger positivity}
	Chekhov and Shapiro conjectured \cite[Conjecture 5.1]{chekhov2011teichmller} a positivity for generalized cluster algebras under a reciprocal condition; see also the formulation in \Cref{conj: chekhov shapiro}. In the reciprocal case, this positivity implies \Cref{thm: positivity introduction}. We do not know how to show this stronger positivity in general; see the discussion in \Cref{subsection: further on positivity}.
\end{remark}

\subsection{Relation to Chekhov--Shapiro's definition}\label{subsection: chekhov-shapiro}

In \cite{chekhov2011teichmller}, Chekhov and Shapiro defined generalized cluster algebras by considering more general exchange polynomials. Precisely, a labeled seed in that setting is a triple $(\mathbf x, (\overline {\alpha_i} \mid i\in I), B)$ where $\mathbf x$ and $B$ are the same as in \Cref{def: seed} and $\overline \alpha_i = (\alpha_{i,k}\in \mathbb {P} \mid 0\leq k \leq r_i)$ for $i\in I$. Here we only take $\mathbb P$ as a multiplicative abelian group. The coefficients $\alpha_{i,k}$ are responsible for expressing the exchange polynomial defined as $$\theta_i(u,v)  \coloneqq \sum_{k = 0}^{r_i} \alpha_{i,k}u^{r_i-k}v^{k}\in \mathbb{ZP}[u,v].$$

The mutation $(\mathbf x', (\overline \alpha_i'), B') = \mu_k(\mathbf x, (\overline {\alpha_i}), B)$ is defined in the following way.
\begin{enumerate}
	\item $B' = \mu_k(B)$;
	\item $\alpha_{k,j}' = \alpha_{k, r_k-j}$ and for $i\neq k$, the coefficients satisfy
	\[
		\alpha_{i,j}'/\alpha_{i,0}' = \begin{cases}
			\alpha_{k,0}^{j\beta_{ki}} \cdot \alpha_{i,j}/\alpha_{i,0} \quad \text{if $\beta_{ik}>0$}\\
			\alpha_{k,r_k}^{j\beta_{ki}} \cdot \alpha_{i,j}/\alpha_{i,0} \quad \text{if $\beta_{ik}\leq 0$};
		\end{cases}
	\]
	\item $x_i' = x_i$ for $i\neq k$ and
	\[
	x_k'x_k = \theta_i\left( u_{k;+}, u_{k;-} \right).
	\]
\end{enumerate}

\begin{remark}
	In this setting, it does no harm to allow the coefficients $\alpha_{i,k}$ to be elements of $\mathbb {ZP}$, as long as the above rule (2) is satisfied. For example, one may check that the Laurent phenomenon still holds for cluster variables. 
\end{remark}

Now assume the multiplicative abelian group $\mathbb P$ has an addition $\oplus$ so that it is a semifield. In our setting the exchange polynomials are given by $\theta[\mathbf p_i](u,v)$, thus the coefficients $\alpha_{i,j}$ corresponding to the coefficients of $\theta[\mathbf p_i](u,v)$ when expanded as polynomial of $u$ and $v$. Under \Cref{def: mutation of seed}, the polynomials $\theta[\mathbf p_i](u,v)$ mutate in a way satisfying the rule (2) above. In fact, when talking about coefficients $\alpha_{i,j}/\alpha_{i,0}$, we can normalize our polynomial
\[
\tilde \theta[\mathbf p_i](u,v) = \prod_{j\in [1, r_i]}(p_{i,j}u + v).
\]
So when expanded as a sum of monomials in $u$ and $v$, the coefficients of $\tilde \theta[\mathbf p_i]$ are $\prod_{j\in J} p_{i,j}$ for a subset $J\subset [1,r_i]$. According to the mutation formula in \Cref{def: mutation of seed}, under $\mu_k$, we have
\[
\prod_{j\in J}p'_{i,j} = p_{k;\pm}^{|J|\beta_{ki}} \prod_{j\in J} p_{i,j},
\]
which satisfies the rule $(2)$. So our definition of generalized cluster algebras can be viewed as a special case of \cite{chekhov2011teichmller} if we ease the condition $\alpha_{i,k}\in \mathbb P$ into $\alpha_{i,k}\in \mathbb {ZP}$.

We note that the above rule (2) in \cite{chekhov2011teichmller} is not enough to uniquely determine the coefficients $(\overline \alpha_i')$ after mutation, whereas the mutation formula in \Cref{def: mutation of seed} is deterministic because of the normalization condition $p_{i,j}^+ \oplus p_{i,j}^- = 1$.

One advantage of our definition is the availability of principal coefficients analogous to \cite[Definition 3.1]{fomin2007cluster}, to be discussed in the next section.

\subsection{Principal coefficients}\label{subsection: principal coefficients}
 As in \cite{fomin2007cluster} for ordinary cluster algebras, we have the notion of principal coefficients for generalized cluster algebras.

\begin{definition}\label{def: geometric type}
	We say a generalized cluster algebra $\mathscr A$ is of geometric type if $\mathbb P$ is a tropical semifield $$\mathrm{Trop}(s_a\mid a\in I')$$ where $I'$ is a finite index set. 
\end{definition}

\begin{proposition}\label{prop: matrix of coefficient mutation}
	Let $\mathscr A$ be a generalized cluster algebra of geometric type. For each seed $\Sigma$ in $\mathscr A$'s cluster pattern and $i\in I$, we introduce matrices $$C^{(i)} = C^{(i)}_\Sigma =  \left( c^{(i)}_{aj} \right) \in \mathrm{Mat}_{|I'|\times r_i} (\mathbb Z)$$ to encode the coefficients $p_{i,j}$ by columns of $C^{(i)}$:
	\[
	p_{i,j} = \prod_{a\in I'} s_a^{c_{aj}^{(i)}}\in \mathbb P.
	\]
	Denote by $\left(\bar c^{(i)}_{aj}\right)$ the matrices corresponding to the seed $\mu_k(\Sigma)$ for some $k\in I$. Then we have

	\[
	\bar c^{(i)}_{aj} = \begin{dcases}
		-c^{(i)}_{aj} \quad &\text{if $i = k$};\\
		c^{(i)}_{aj} + \left ( \sum_{j = 1}^{r_k}\left[-c^{(k)}_{aj}\right]_+\right)\beta_{ki} &\text{if $i\neq k$ and $\beta_{ik}>0$};\\
		c^{(i)}_{aj} + \left ( \sum_{j = 1}^{r_k}\left[c^{(k)}_{aj}\right]_+\right)\beta_{ki} &\text{if $i\neq k$ and $\beta_{ik}\leq 0$}.
	\end{dcases}
	\]
\end{proposition}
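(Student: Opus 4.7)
The plan is to translate the mutation rule from \Cref{def: mutation of seed} into the exponent lattice of the tropical semifield $\mathbb P = \mathrm{Trop}(s_a \mid a\in I')$, by writing out $p_{k;\pm}$ in terms of the columns of $C^{(k)}$ and then extracting the exponent of each $s_a$ from the mutated coefficient $p'_{i,j}$.

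First, using the general formulas $s^+ = \prod_a s_a^{[a_i]_+}$ and $s^- = \prod_a s_a^{[-a_i]_+}$ recalled after \Cref{def: tropical semifield}, for each $l\in[1,r_k]$ I would write
\[
p^+_{k,l} = \prod_{a\in I'} s_a^{[c^{(k)}_{al}]_+},\qquad p^-_{k,l} = \prod_{a\in I'} s_a^{[-c^{(k)}_{al}]_+}.
\]
Taking the product over $l$ gives
\[
p_{k;+} = \prod_{a\in I'} s_a^{\sum_{l=1}^{r_k}[c^{(k)}_{al}]_+},\qquad p_{k;-} = \prod_{a\in I'} s_a^{\sum_{l=1}^{r_k}[-c^{(k)}_{al}]_+}.
\]

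Next I would do the case analysis dictated by \Cref{def: mutation of seed}(2)--(3). For $i=k$, part (2) gives $p'_{k,j} = p_{k,j}^{-1}$, which by taking exponents immediately yields $\bar c^{(k)}_{aj} = -c^{(k)}_{aj}$, matching the first case of the proposition. For $i\neq k$ and $\beta_{ik}>0$, part (3) says $p'_{i,j} = p_{i,j}\cdot (p_{k;-})^{\beta_{ki}}$; substituting the expression for $p_{k;-}$ above and reading off the exponent of $s_a$ gives exactly
\[
\bar c^{(i)}_{aj} = c^{(i)}_{aj} + \Bigl(\sum_{l=1}^{r_k}[-c^{(k)}_{al}]_+\Bigr)\beta_{ki},
\]
and the symmetric computation with $p_{k;+}$ covers the case $\beta_{ik}\leq 0$.

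There is no real obstacle here: the proposition is a direct unpacking of the mutation rule in the tropical setting. The only thing to be careful about is bookkeeping — keeping the sign conventions of $[\,\cdot\,]_+$ straight and matching the dichotomy between $p_{k;+}$ and $p_{k;-}$ to the sign of $\beta_{ik}$ exactly as it appears in \Cref{def: mutation of seed}(3). Once this is aligned, the three cases of the claimed formula fall out immediately from the three subcases of the mutation rule.
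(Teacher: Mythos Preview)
Your proof is correct and follows essentially the same approach as the paper: both compute $p_{k,l}^{\pm}$ in the tropical semifield via the $[\cdot]_+$ formulas and then read off exponents from the mutation rule in \Cref{def: mutation of seed}. Your version is simply more explicit than the paper's one-line sketch.
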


\begin{proof}
In the tropical semifield $\mathrm{Trop}(s_a\mid a\in I')$, we have the following expressions:
    \[
	p_{i,j}^+ = \prod_{a\in I'} s_a^{\left[c^{(i)}_{aj}\right]_+}\quad \text{and} \quad p_{i,j}^- = \prod_{a\in I'} s_a^{\left[-c^{(i)}_{aj}\right]_+}.
	\]
Then the result follows by spelling out the mutation formula of coefficients ((3) of \Cref{def: mutation of seed}).
\end{proof}

The matrices and their dynamics in \Cref{prop: matrix of coefficient mutation} have led to a remarkable combinatorial phenomenon in cluster theory known as \emph{the sign coherence of $c$-vectors}. We shall explain it below.

\begin{definition}\label{def: principal coefficients}
	We say a generalized cluster algebra $\mathscr A$ has \emph{principal coefficients} at a vertex $t_0\in \mathbb T_n$ if $\mathbb P$ is the tropical semifield
	\[
	\mathrm{Trop}(\mathbf p) \coloneqq \mathrm{Trop} \left( p_{i,j}\mid i\in I, j \in [1, r_i]  \right),
	\]
	and the seed $\Sigma_{t_0}$ has coefficients $\mathbf p_i = (p_{i,1}, \dots p_{i,r_i})$. In this case, the cluster algebra, denoted as $\mathscr A^\mathrm{prin}(\Sigma_{t_0})$, is by definition a subalgebra of $$\mathbb Z[x_{i;t_0}^\pm, p_{i,j}^\pm \mid i\in I, j\in [1, r_i]].$$
\end{definition}

In the case of principal coefficients, the columns of the matrices $C^{(i)}_{\Sigma_t}$ are called \emph{generalized $c$-vectors}. At the initial seed $\Sigma = \Sigma_{t_0}$ with principal coefficients, each $C^{(i)}_\Sigma$ is an identity matrix $I_{r_i}$ extended (vertically) by zeros.

\begin{theorem}[Sign coherence of generalized $c$-vectors]\label{thm: generalized sign coherence}
	In the principal coefficients case, for any $t\in \mathbb T_n$, for any $i\in I$ and any $j\in [1, r_i]$, the entries of the $j$-th column of $C^{(i)}_{\Sigma_t}$ are either all non-negative or all non-positive. 
\end{theorem}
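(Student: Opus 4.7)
My plan is to reduce sign coherence of generalized $c$-vectors to a geometric property of the generalized cluster scattering diagram $\mathfrak D_{\Sigma_{t_0}}$ constructed later in the paper. This parallels the strategy of Gross--Hacking--Keel--Kontsevich for ordinary cluster algebras, where sign coherence is obtained as a consequence of the positivity of wall-crossing functions.

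The proof would proceed in two steps. First, using the mutation invariance \Cref{thm: mutation invariance} together with the cluster complex structure \Cref{thm: wall crossing on cluster chamber}, I would identify the cluster chamber $\mathcal C_t^+$ associated to each seed $\Sigma_t$ and show that the primitive integral normal vectors to the walls bounding $\mathcal C_t^+$ are precisely (up to signs controlled by whether the wall is outgoing or incoming) the columns of the generalized $c$-matrices $C^{(i)}_{\Sigma_t}$ for $i \in I$. The key computation is to track the generalized mutations through the piecewise linear transformations $T_k$ of \Cref{thm: mutation invariance} and match them with the combinatorial mutation of the matrices $C^{(i)}$ in \Cref{prop: matrix of coefficient mutation}. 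Second, I would invoke the positivity of the scattering diagram (\Cref{thm: generalized positivity}): every wall of $\mathfrak D_{\Sigma_{t_0}}$ has a wall-crossing function whose supporting monomials $z^m$ are sign coherent in the initial coordinates, so the normal vectors, and hence the $c$-vectors, are sign coherent.

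The main obstacle is the first step: rigorously setting up the identification between generalized $c$-vectors and wall normals. A naive attempt to induct on the distance from $t_0$ in $\mathbb T_n$ using only \Cref{prop: matrix of coefficient mutation} fails, because for $i \neq k$ the update
\[
\bar c^{(i)}_{aj} = c^{(i)}_{aj} + \beta_{ki}\sum_{l=1}^{r_k} [\pm c^{(k)}_{al}]_+
\]
does not obviously preserve sign coherence within a column: even assuming sign coherence of all columns at $\Sigma_t$, the two summands on the right can have opposite signs with comparable magnitudes, and the resulting sign pattern is not forced by the hypothesis. This is why the proof must use the scattering diagram (or an equivalent input such as positivity of generalized $F$-polynomials combined with a generalized separation formula \Cref{thm: separation formula}) as an external ingredient; the combinatorial mutation formula alone is not enough.
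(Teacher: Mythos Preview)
Your approach is substantially different from the paper's, and as stated it has a genuine gap. The paper does not use the generalized scattering diagram at all for this theorem. Instead it introduces a projection
\[
\pi\colon \mathbb P=\mathrm{Trop}(p_{i,j})\longrightarrow \overline{\mathbb P}=\mathrm{Trop}(\bar p_i),\qquad p_{i,j}\mapsto \bar p_i,
\]
and shows (\Cref{lemma: mutation of p_i}) that the products $p_{i;t}=\prod_j p_{i,j;t}$, once pushed to $\overline{\mathbb P}$, mutate by exactly the \emph{ordinary} coefficient rule. Hence the images $p_{i;t}$ are sign coherent by the already-established ordinary sign coherence (DWZ, GHKK). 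The passage from sign coherence of $p_{i;t}$ back to sign coherence of each individual $p_{i,j;t}$ is done with an elementary structural lemma (\Cref{lemma: property generalized c matrix}): within a block $I'_k$ of $C^{(i)}_{\Sigma_t}$ all entries are equal (or differ by a fixed $\pm 1$ on the diagonal when $k=i$), so the sign of the sum determines the sign of each entry. This is a short combinatorial reduction and needs none of the machinery from Sections~6--7.

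The gap in your plan is the sentence ``the primitive integral normal vectors to the walls bounding $\mathcal C_t^+$ are precisely \dots\ the columns of the generalized $c$-matrices $C^{(i)}_{\Sigma_t}$.'' This cannot be literally true: the walls of $\mathfrak D_\mathbf s$ live in $M_\mathbb R$ with $\operatorname{rank} N=n$, so their normal vectors are elements of $N\cong\mathbb Z^n$, whereas a column of $C^{(i)}_{\Sigma_t}$ has $|I'|=\sum_k r_k$ entries. What the wall normals actually recover are the \emph{projected} $c$-vectors $\bar p_{i;t}\in\overline{\mathbb P}$ (this is exactly \Cref{cor: from cone complex to c vectors}); the individual $p_{i,j;t}$ are encoded not in the normals but in the wall-crossing \emph{functions} (\Cref{thm: wall crossing on cluster chamber}). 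One can indeed extract generalized sign coherence from the fact that those functions lie in $\widehat{\Bbbk[P^\oplus_\mathbf s]}$, but in the paper's own logical order the proof of \Cref{thm: wall crossing on cluster chamber} already invokes \Cref{prop: generalized sign coherence} via \Cref{lemma: signed mutation for coefficients}, so using it here would be circular. Either way you still need the structural input of \Cref{lemma: property generalized c matrix} to bridge the gap between the rank-$n$ wall data and the rank-$\sum r_i$ generalized $c$-vectors; your proposal does not mention this step.
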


When $r_i = 1$ for each $i\in I$, i.e. in the case of ordinary cluster algebras, each $C^{(i)} = C^{(i)}_{\Sigma_t}$ is just a column vector with $n$ entries, altogether forming a matrix $C = (C^{(1)}, \dots, C^{(n)})$. They are the so-called \emph{$C$-matrices} in \cite{fomin2007cluster} whose columns are \emph{$c$-vectors}. In this case, \Cref{thm: generalized sign coherence} then says that each column of any $C$ is either non-negative or non-positive. This is well-known in the theory of cluster algebras as \emph{the sign coherence of $c$-vectors}, which has already been proved by Derksen, Weyman and Zelevinsky \cite{derksen2010quivers} for skew-symmetric exchange matrices and by Gross, Hacking, Keel and Kontsevich \cite{gross2018canonical} for skew-symmetrizable ones. We will see in \Cref{prop: generalized sign coherence} that \Cref{thm: generalized sign coherence} follows from the already established sign coherence of $c$-vectors.

We set the index set
\[
I' = \bigsqcup_{i\in I} I'_i,\quad I_i'\coloneqq \{(i,j)\mid j\in [1, r_i]\}.
\]

\begin{lemma}\label{lemma: property generalized c matrix} 
Let $\Sigma = \Sigma_{t_0}$ be a seed with principal coefficients. We have the following properties for the matrices $C^{(i)}_{\Sigma_t}$ for any seed $\Sigma_t$, $t\in \mathbb T_n$.

	\begin{enumerate}
		\item Let $i, k\in I$ such that $k\neq i$. Then for any $a, a'\in I'_k$ and any $1\leq j,j'\leq r_i$, we have
	\[
	c_{a,j}^{(i)} = c_{a',j'}^{(i)}.
	\]
		\item Let $i\in I$. We have
	\[
	c_{(i,1),1}^{(i)} = c_{(i,2),2}^{(i)} = \cdots = c_{(i,r_i),r_i}^{(i)} = c\pm 1
	\]
	and
	\[
	c_{(i,k),j}^{(i)} = c \quad \text{for $k\neq j$}
	\]
	for some integer $c$.
	\end{enumerate}
\end{lemma}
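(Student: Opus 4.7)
The plan is to prove both statements simultaneously by induction on the distance from $t_0$ in $\mathbb{T}_n$. At the base seed $t_0$, the matrix $C^{(i)}_{\Sigma_{t_0}}$ is the $r_i \times r_i$ identity extended vertically by zero rows, so statement (1) holds trivially (all entries in the rows indexed by $I'_k$ with $k \neq i$ vanish) and statement (2) holds with $c = 0$ and sign $+$ (the submatrix on rows $I'_i$ has diagonal entries $1 = c + 1$ and off-diagonal entries $0 = c$).

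For the inductive step, I assume both (1) and (2) at $\Sigma_t$ and set $\Sigma_{t'} = \mu_{k'}(\Sigma_t)$, invoking the mutation formula of \Cref{prop: matrix of coefficient mutation}. When $i = k'$ every entry is negated, $\bar c^{(i)}_{aj} = -c^{(i)}_{aj}$; this preserves constancy on each row block $I'_\ell$ for $\ell \neq k' = i$ (proving (1)), while on rows $I'_i$ the new off-diagonal entries become $-c$ and the new diagonal entries become $-(c \pm 1) = (-c) \mp 1$, of the required shape with $c \mapsto -c$ and sign flipped (proving (2)). When $i \neq k'$, the update takes the form
\[
\bar c^{(i)}_{aj} = c^{(i)}_{aj} + S_a \cdot \beta_{k'i}, \qquad S_a = \sum_{j=1}^{r_{k'}} \bigl[\mp c^{(k')}_{aj}\bigr]_+,
\]
where the sign is determined by that of $\beta_{ik'}$. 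The key point is that $S_a$ depends only on the row $a$ (not on the column $j$ of $C^{(i)}$) and, moreover, is constant as $a$ varies over any single row block $I'_\ell$. For $\ell \neq k'$, this is inductive hypothesis (1) applied to $C^{(k')}_{\Sigma_t}$: the entries $c^{(k')}_{aj}$ agree for all $a \in I'_\ell$ and $j$. For $\ell = k'$, inductive hypothesis (2) says the $a$-row of $C^{(k')}_{\Sigma_t}$, as $a$ ranges over $I'_{k'}$, always consists of $r_{k'} - 1$ copies of some fixed integer $c_{k'}$ together with a single $c_{k'} \pm 1$; thus $S_a$ depends only on this unordered multiset and is again independent of $a$. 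Denote the common value by $S_\ell$.

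With this sublemma in hand, the induction closes immediately. For $\ell \neq i$ the new entries $c^{(i)}_{aj} + S_\ell \beta_{k'i}$ in rows $I'_\ell$ are constant in $(a, j)$ by the inductive hypothesis (1) on $C^{(i)}_{\Sigma_t}$, giving (1) at $\Sigma_{t'}$. On the rows $I'_i$, the old submatrix has off-diagonal entries $c$ and diagonal entries $c \pm 1$; adding the constant $S_i \beta_{k'i}$ to every entry yields off-diagonal $c + S_i \beta_{k'i}$ and diagonal $(c + S_i \beta_{k'i}) \pm 1$, again of the required shape with the same sign, giving (2). The main obstacle, if one can call it that, is simply recognizing the sublemma that $S_a$ is constant on each block $I'_\ell$; once this is isolated, both parts fall out of the mutation rule and the mutually supporting induction.
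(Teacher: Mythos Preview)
Your proof is correct and follows exactly the same approach as the paper: induction on the distance from $t_0$, with the base case being the extended identity matrix and the inductive step verifying that the mutation formula of \Cref{prop: matrix of coefficient mutation} preserves both properties. The paper's proof is only a two-line sketch (``the properties stated in the lemma are preserved under the mutation formula''), whereas you have carefully spelled out the key sublemma that $S_a$ is constant on each row block $I'_\ell$ by invoking the inductive hypotheses (1) and (2) applied to $C^{(k')}$; this is precisely the verification the paper leaves to the reader.
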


\begin{proof}
	We prove this lemma by induction on the distance from $t$ to $t_0$ in $\mathbb T_n$. The base case is for $C_\Sigma^{(i)}$ where the entries in $(1)$ are all zeroes and the ones in $(2)$ are $1$ when $k = j$ and $0$ otherwise. Then the properties stated in the lemma are preserved under the mutation formula given in \Cref{prop: matrix of coefficient mutation}.
\end{proof}

Let $\overline {\mathbb P}$ be the tropical semifield $\mathrm{Trop}(\bar p_i\mid i\in I)$. There is a group homomorphism
\[
\pi\colon \mathbb P \rightarrow \overline{\mathbb P}, \quad p_{i,j}\mapsto \bar p_i.
\]
For $t\in \mathbb T_n$, denote the image of $p_{i,j;t}$ in $\overline {\mathbb P}$ by $\bar p_{i;t}$ (which is independent of $j$ by \Cref{lemma: property generalized c matrix}) and the image of $\prod\limits_{j=1}^{r_i}p_{i,j;t}$ in $\overline {\mathbb P}$ by $ p_{i;t} =  \bar p_{i;t}^{r_i}$. 

\begin{lemma}\label{lemma: mutation of p_i}
	The elements $p_{i;t}$ behave in the following way under the mutation $\mu_k$. If $t' \frac{k}{\quad\quad} t$ and we write $p'_i = p_{i;t'}$ and $p_i = p_{i;t}$, then we have
	\[
	p'_i = \begin{cases}
		p_i^{-1} \quad & \text{if $i=k$};\\
		p_i \cdot (p_{k}^-)^{b_{ki}} \quad &\text{if $i\neq k$ and $\beta_{ik}>0$};\\
		p_i \cdot (p_k^+) ^{b_{ki}} \quad &\text{if $i\neq k$ and $\beta_{ik} \leq 0$}.
	\end{cases}
	\]
	So they behave under mutations in the same way as $p_{i,1;t}$ in the case where $r_i=1, i\in I$, i.e. the case of ordinary cluster algebras.
\end{lemma}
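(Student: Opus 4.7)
The strategy is to apply the multiplicative group homomorphism $\pi\colon \mathbb P \to \overline{\mathbb P}$ to the coefficient mutation rule (3) of \Cref{def: mutation of seed} after taking products over $j \in [1, r_i]$. The case $i = k$ is immediate: from $p'_{k,j} = p_{k,j}^{-1}$ we get $\prod_j p'_{k,j} = \left(\prod_j p_{k,j}\right)^{-1}$, whose image under $\pi$ is $p'_k = p_k^{-1}$. For $i \neq k$, taking the product over $j$ turns the factor $(p_{k;\pm})^{\beta_{ki}}$ into $(p_{k;\pm})^{r_i\beta_{ki}} = (p_{k;\pm})^{b_{ki}}$ (using $r_i\beta_{ki} = b_{ki}$), so the lemma reduces to establishing the single identity
\[
\pi(p_{k;\pm}) = p_k^{\pm} \quad \text{in } \overline{\mathbb P}.
\]

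This identity is the heart of the proof. Since $\pi$ is only a group homomorphism (it does not respect $\oplus$), one cannot apply it directly to the semifield formula $p^{\pm} = p^{\pm 1}/(p \oplus 1)$. Instead, I would exploit that both $\mathbb P = \mathrm{Trop}(s_a)$ and $\overline{\mathbb P} = \mathrm{Trop}(\bar p_i)$ are tropical: every element $q = \prod_a s_a^{n_a}$ satisfies $q^{\pm} = \prod_a s_a^{[\pm n_a]_+}$. Thus both $\pi(p_{k;\pm})$ and $p_k^{\pm}$ are monomials in the $\bar p_i$ whose exponents are computable directly from the entries of $C^{(k)}_{\Sigma_t}$. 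Using the structural description in \Cref{lemma: property generalized c matrix}, namely that $c^{(k)}_{(i,j'),l}$ depends only on $i$ when $i \neq k$, while on the $i = k$ block it equals some integer $c$ off-diagonal and $c + \epsilon$ on the diagonal for a fixed $\epsilon \in \{-1, +1\}$, the exponents of $\bar p_i$ for $i \neq k$ on the two sides match by inspection, and the exponent of $\bar p_k$ reduces to the elementary arithmetic identity
\[
(r_k - 1)[\pm c]_+ + [\pm(c + \epsilon)]_+ = [\pm(r_k c + \epsilon)]_+,
\]
valid for all $c \in \mathbb Z$, $\epsilon \in \{-1, +1\}$, and $r_k \in \mathbb Z_{>0}$, which I would dispatch by a short case analysis on the signs of $c$ and $c + \epsilon$.

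The main obstacle is exactly this bookkeeping with $c$-vector data. It is entirely elementary but requires carefully matching exponents across the two tropical semifields. In particular, sign coherence of generalized $c$-vectors (\Cref{thm: generalized sign coherence}) is not used, which is crucial because the present lemma is one of the ingredients that will later be invoked to deduce sign coherence. The final assertion of the lemma, that $p_{i;t}$ mutates in the same way as $p_{i,1;t}$ does in the ordinary $r_i = 1$ setting, is then just an immediate comparison of formulas and requires no separate argument.
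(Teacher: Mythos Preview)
Your proposal is correct and follows essentially the same route as the paper: both reduce to the identity $\pi(p_{k;\pm}) = p_k^\pm$ (the paper also records the slightly stronger version $\prod_j p_{k,j}^\pm = (\prod_j p_{k,j})^\pm$ in $\mathbb P$), and both derive it from the structural description of $C^{(k)}$ in \Cref{lemma: property generalized c matrix}. The paper simply asserts the identity follows from that lemma without writing out the arithmetic, whereas you make the elementary identity $(r_k-1)[\pm c]_+ + [\pm(c+\epsilon)]_+ = [\pm(r_kc+\epsilon)]_+$ explicit; your remark that sign coherence is not invoked is also in line with the paper, which indeed uses this lemma as input to \Cref{prop: generalized sign coherence}.
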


\begin{proof}
	By the generalized mutation formula of coefficients, we have
	\[
	\prod_{j=1}^{r_i}p_{i,j}' = \begin{cases}
		\prod\limits_{j=1}^{r_i}p_{i,j}^{-1} \quad & \text{if $i=k$};\\
		\prod\limits_{j=1}^{r_i}p_{i,j} \cdot \left (\prod\limits_{j=1}^{r_k} p_{k,j}^- \right)^{b_{ki}} \quad &\text{if $i\neq k$ and $\beta_{ik}>0$};\\
		\prod\limits_{j=1}^{r_i}p_{i,j} \cdot \left ( \prod\limits_{j=1}^{r_k} p_{k,j}^+ \right)^{b_{ki}} \quad &\text{if $i\neq k$ and $\beta_{ik} \leq 0$}.
	\end{cases}
	\]
	By the matrix description of the elements $p_{k,j}$ in \Cref{lemma: property generalized c matrix}, we have that
	\[
	\prod\limits_{j=1}^{r_k} p_{k,j}^\pm  =  \left(\prod\limits_{j=1}^{r_k} p_{k,j} \right)^\pm \in \mathbb P,\quad 
	\pi \left( \prod\limits_{j=1}^{r_k} p_{k,j} ^\pm \right) = p_k^\pm \in \overline{\mathbb P}.
	\] 
	The result then follows.
\end{proof}

\begin{proposition}\label{prop: generalized sign coherence}
	The sign coherence of $c$-vectors implies the sign coherence of generalized $c$-vectors.
\end{proposition}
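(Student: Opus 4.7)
The plan is to reduce generalized sign coherence to the already-established ordinary sign coherence by constructing a semifield map between the two coefficient semifields. I would define
\[
\psi \colon \overline{\mathbb{P}} = \mathrm{Trop}(\bar p_i \mid i \in I) \longrightarrow \mathbb{P} = \mathrm{Trop}(p_{i,j}), \qquad \psi(\bar p_i) \coloneqq \prod_{j=1}^{r_i} p_{i,j}.
\]
A direct monomial computation shows that $\psi(\bar p \oplus \bar q) = \psi(\bar p) \oplus \psi(\bar q)$ (both produce $\prod_{i,j} p_{i,j}^{\min(x_i,y_i)}$), so $\psi$ is a homomorphism of semifields.

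In parallel with the given generalized cluster algebra I would run the ordinary principal-coefficient cluster algebra with the same initial exchange matrix $B$ and initial coefficients $\bar p_i$, denoting its ordinary $C$-matrix by $(c_{ki}^t)$, so that $\hat p_{i;t} = \prod_k \bar p_k^{c^t_{ki}} \in \overline{\mathbb{P}}$. Because ordinary $Y$-mutation is built entirely from $\cdot$, inverse and $\oplus$, it is transported by $\psi$. On the other hand, \Cref{lemma: mutation of p_i} says that the products $p_{i;t} \coloneqq \prod_j p_{i,j;t}$ satisfy exactly the same mutation rule inside $\mathbb{P}$, with the matching initial condition $\psi(\hat p_{i;t_0}) = \psi(\bar p_i) = p_{i;t_0}$. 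Induction on the distance from $t_0$ in $\mathbb{T}_n$ then gives $\psi(\hat p_{i;t}) = p_{i;t}$ for every vertex $t$.

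Equating the two sides of this identity monomially in the free generators $p_{k,l}$ of $\mathbb{P}$, and using \Cref{lemma: property generalized c matrix} to evaluate the column sum $\sum_{j=1}^{r_i} c^{(i)}_{(k,l),j;t}$, one obtains
\[
c^t_{ki} = r_i \, c^{(i)}_k \ \text{ for } k \neq i, \qquad c^t_{ii} = r_i \, c^{(i)}_i + \epsilon,
\]
where $c^{(i)}_k$ is the common value of the entries in the $I'_k$-block of the $j$-th column of $C^{(i)}_{\Sigma_t}$ and $\epsilon \in \{+1, -1\}$ is the uniform sign provided by that lemma. The $j$-th column of $C^{(i)}_{\Sigma_t}$ consists of the values $c^{(i)}_k$ for $k \neq i$ (each repeated $r_k$ times), together with $c^{(i)}_i$ (repeated $r_i - 1$ times) and $c^{(i)}_i + \epsilon$ (once, at row $(i,j)$).

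Finally, the hypothesis that the $i$-th column $(c_{ki}^t)_{k \in I}$ is sign-coherent concludes the argument. Suppose $c^t_{ki} \geq 0$ for all $k$; then $c^{(i)}_k \geq 0$ for $k \neq i$, and $r_i c^{(i)}_i \geq -\epsilon$. A quick split on $\epsilon = \pm 1$ using $c^{(i)}_i \in \mathbb{Z}$ and $r_i \geq 1$ forces both $c^{(i)}_i \geq 0$ and $c^{(i)}_i + \epsilon \geq 0$, so the whole $j$-th column is nonnegative; the nonpositive case is symmetric. The only slightly delicate point is this last case split when $\epsilon = -1$, where one must use integrality to promote $c^{(i)}_i \geq 1/r_i$ to $c^{(i)}_i \geq 1$; everything else is routine.
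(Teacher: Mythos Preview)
Your proof is correct and follows essentially the same strategy as the paper: both reduce to ordinary sign coherence via \Cref{lemma: mutation of p_i} (the products $\prod_j p_{i,j;t}$ mutate as ordinary coefficients) and then invoke the block structure of \Cref{lemma: property generalized c matrix} to pass sign information down to individual entries. The only cosmetic difference is that you work with the section $\psi\colon \overline{\mathbb P}\to\mathbb P$ and extract the explicit relations $c^t_{ki}=r_i c^{(i)}_k$, $c^t_{ii}=r_i c^{(i)}_i+\epsilon$, whereas the paper works with the projection $\pi\colon\mathbb P\to\overline{\mathbb P}$ and argues more tersely; note also that \Cref{lemma: mutation of p_i} is literally stated in $\overline{\mathbb P}$, but its proof already establishes the identity $\prod_j p_{k,j}^\pm=(\prod_j p_{k,j})^\pm$ in $\mathbb P$, which is what you actually need.
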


\begin{proof}
	In the case where all $r_i = 1$, the sign coherence then says each column of the matrix $C = (C^{(1)}, \dots, C^{(n)})$ is either non-negative or non-positive.
	
	On the other hand, by \Cref{lemma: mutation of p_i}, the elements $p_i$ behave under mutations in the exact same way as the coefficients in seeds when all $r_i = 1$ (thus we only have one $p_i$ for each $i$). Thus the column $C^{(i)}$ of $\Sigma_t$ serves as the coordinates of $p_{i;t}$ in terms of the initial coefficients $p_i$. Then the sign coherence tells that one of $p_i^+$ and $p_i^-$ is $1$. It then follows from \Cref{lemma: property generalized c matrix} that the corresponding $p_{i,j}^+$ or $p_{i,j}^-$ for each $j\in [1, r_i]$ is also $1$, hence the generalized sign coherence.
\end{proof}

The following lemma will be useful  later.

\begin{lemma}\label{lemma: generalized c vectors form basis}
	In the principal coefficient case, for any $t\in \mathbb T_n$, the set of coefficients in seed $\Sigma_t$ $$\{p_{i,j;t}\mid i\in I, j\in [1, r_i]\}$$ form a $\mathbb Z$-basis of $\mathbb P \cong \mathbb Z^{d}$ where $d = \sum_{i\in I} r_i$.
\end{lemma}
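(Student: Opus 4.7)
The plan is to reduce the statement to computing the determinant of the concatenated $d\times d$ integer matrix $C_t\coloneqq\bigl(C^{(1)}_{\Sigma_t}\mid\cdots\mid C^{(n)}_{\Sigma_t}\bigr)$, whose columns are the exponent vectors of the $p_{i,j;t}$ in the basis $\{s_a\}_{a\in I'}$ of $\mathbb{P}\cong\mathbb{Z}^d$. I would then prove by induction on the distance from $t_0$ to $t$ in $\mathbb{T}_n$ that $\det C_t=\pm 1$, which is equivalent to the desired basis property. The base case is immediate: at $t_0$ the principal-coefficients hypothesis makes $C_{t_0}$ the identity matrix $I_d$, so it suffices to show that each one-step mutation $t\mapsto t'=\mu_k(t)$ changes $\det C_t$ only by a sign.

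For the inductive step, the first task is to combine \Cref{thm: generalized sign coherence} with the structural constraints of \Cref{lemma: property generalized c matrix} to verify that all $r_k$ columns of the block $C^{(k)}_{\Sigma_t}$ share a single common sign $\sigma_k\in\{+,-\}$. Part (1) of that lemma forces every row-block $I'_m$ of $C^{(k)}_{\Sigma_t}$ with $m\neq k$ to consist of a single constant $d_{m,k}$ independent of both row and column, and part (2) forces the $(k,k)$-block to take only two values, namely $c$ off the diagonal and $c\pm 1$ on the diagonal, across all columns. Sign coherence of each individual column therefore imposes the same combined constraint---that $d_{m,k}$, $c$, and $c\pm 1$ be simultaneously $\geq 0$ or simultaneously $\leq 0$---and since this constraint does not depend on which column one looks at, all columns share a common sign. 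Consequently the expressions $\sum_{j'}[\pm c^{(k)}_{a,j'}]_+$ appearing in \Cref{prop: matrix of coefficient mutation} reduce uniformly in $a$ either to $0$ or to $\pm\sum_{j'} c^{(k)}_{a,j'}$.

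With that uniformity established, the mutation formula of \Cref{prop: matrix of coefficient mutation} can be read as a sequence of elementary integer column operations on $C_t$: for each $i\neq k$, one adds a fixed integer multiple---depending only on $\beta_{ki}$ and $\sigma_k$---of each individual column of the block $C^{(k)}_{\Sigma_t}$ to each column of the block $C^{(i)}_{\Sigma_t}$, which is a composition of standard ``add a multiple of one column to another'' moves and preserves $\det C_t$; then one negates every column of the block $C^{(k)}_{\Sigma_t}$, multiplying $\det C_t$ by $(-1)^{r_k}$. The net effect is $\det C_{t'}=\pm\det C_t$, and the induction closes.

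The main obstacle I anticipate is the uniform-sign step in the middle paragraph: one has to use \emph{both} parts of \Cref{lemma: property generalized c matrix} together with column-wise sign coherence to conclude the single-sign statement for the entire block $C^{(k)}_{\Sigma_t}$, since it is precisely this uniformity that legitimizes rewriting the mutation as genuine elementary integer column operations rather than as something column-dependent. Once this is nailed down, the remainder is bookkeeping directly parallel to the well-known fact that ordinary $C$-matrices are invertible over $\mathbb Z$ as a consequence of the sign coherence of $c$-vectors.
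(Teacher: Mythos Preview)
Your proposal is correct and is essentially a careful fleshing-out of the paper's one-line proof, which simply says ``It follows directly from the mutation formula \Cref{prop: matrix of coefficient mutation} and \Cref{lemma: property generalized c matrix}.'' The only substantive difference is that you make explicit the use of generalized sign coherence (\Cref{thm: generalized sign coherence}), established just before this lemma via \Cref{prop: generalized sign coherence}, whereas the paper's terse proof does not cite it; your observation that the uniform sign of the entire block $C^{(k)}$ is what legitimizes reading the mutation as honest elementary integer column operations is exactly the point that makes the argument go through.
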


\begin{proof}
	It follows directly from the mutation formula \Cref{prop: matrix of coefficient mutation} and \Cref{lemma: property generalized c matrix}.
\end{proof}

\subsection{Separation formula}
In this section, we describe the separation formula for generalized cluster variables, which can be derived in the exact same way as \cite[Theorem 3.7]{fomin2007cluster}, so we omit the proof.

\begin{definition}
	Let $\mathscr A^\mathrm{prin}(\Sigma_{t_0})$ be a generalized cluster algebra with principal coefficients at $\Sigma_{t_0} = (\mathbf x, \mathbf p, B)$. We define the rational function
	\[
	X_{l;t}\in \mathbb {Q}_\mathrm{sf}(\mathbf x, \mathbf p)
	\]
	corresponding to the subtraction-free rational expression of the cluster variable $x_{l;t}$ by iterating exchange relations. Here $(\mathbf x, \mathbf p)$ denote the set of all variables in $\mathbf x$ and $\mathbf p$.
	
	Define the rational function
	\[
	F_{l;t}(\mathbf p) = X_{l;t}((1,\dots, 1), \mathbf p)\in \mathbb Q_\mathrm{sf}(\mathbf p).
	\]
\end{definition}

In general, for a subtraction free expression $F$ in $\mathbb Q_\mathrm{sf}(x_1, \dots, x_n)$ and an arbitrary semifield $\mathbb P$, we use the notation
\[
	F\mid_{\mathbb P}(y_1,\dots y_n)\in \mathbb P
\] 
for the evaluation at $x_i = y_i$. This evaluation is well-defined (i.e. independent of the expression used) because of the universal property of the semifield $\mathbb Q_\mathrm{sf}(x_1, \dots, x_n)$; see \Cref{subsection: semifield}.

\begin{theorem}[cf. {\cite[Proposition 3.6, Theorem 3.7]{fomin2007cluster}}]\label{thm: separation formula}
\
\begin{enumerate}
	\item We have
	\[
	X_{l;t}\in \mathbb Z[x_i^\pm; p_{i,j}\mid i\in I, j\in [1,r_i]],\quad F_{l;t}\in \mathbb Z[p_{i,j}\mid i\in I, j\in [1,r_i]].
	\]
	\item Let $\mathscr A$ be a generalized cluster algebra over an arbitrary semifield $\mathbb P'$, with an initial seed $\Sigma_{t_0} = (\mathbf x, \mathbf p, B)$. Then the cluster variables in $\mathscr A$ can be expressed in the initial cluster as
	\[
		x_{l;t} = \frac{X_{l;t}\mid_{\mathcal F}(\mathbf x, \mathbf p)}{F_{l;t}\mid_{\mathbb P'}(\mathbf p)}
	\]
	where $\mathcal F$ is the ambient field for $\mathscr A$.
\end{enumerate}
\end{theorem}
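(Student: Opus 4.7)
The plan is to argue by induction on the distance $d(t_0, t)$ in the tree $\mathbb{T}_n$, following verbatim the strategy of \cite[Proposition 3.6, Theorem 3.7]{fomin2007cluster} adapted to the generalized exchange polynomials $\theta[\mathbf{p}_i](u,v)$. The base case $t = t_0$ is trivial: $X_{l;t_0} = x_l$ and $F_{l;t_0} = 1$, so both parts hold.

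For the inductive step, take $t'$ adjacent to $t$ via a $k$-labeled edge with $d(t_0, t') = d(t_0, t) + 1$. The mutation formula from \Cref{def: mutation of seed} gives
\[
x_{k;t'} = x_{k;t}^{-1} \prod_{j=1}^{r_k}\left(p_{k,j;t}^+ u_{k;+}^t + p_{k,j;t}^- u_{k;-}^t\right).
\]
In the principal-coefficient setting, the generalized sign coherence (\Cref{prop: generalized sign coherence}) ensures that each $p_{k,j;t}^\pm$ is a monomial in $\{p_{i,j}\}$ with \emph{non-negative} integer exponents. Rewriting the normalization as $p^+ = p/(p\oplus 1)$ and $p^- = 1/(p\oplus 1)$ and substituting the inductive expressions $x_{i;t} = X_{i;t}/F_{i;t}|_{\mathrm{Trop}(\mathbf{p})}(\mathbf{p})$ assembles a subtraction-free rational expression $X_{k;t'} \in \mathbb{Q}_{\mathrm{sf}}(\mathbf{x},\mathbf{p})$ together with its denominator $F_{k;t'}$. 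The polynomial integrality claim in part (1) then follows from the generalized Laurent phenomenon (\Cref{thm: generalized laurent}) combined with a direct inspection showing that sign coherence prevents any $p_{i,j}$ from acquiring a negative exponent during the mutation procedure; setting $x_i = 1$ then yields the assertion for $F_{l;t}$.

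For part (2), the essential tool is the universal property of $\mathbb{Q}_{\mathrm{sf}}(\mathbf{p})$ recalled in \Cref{subsection: semifield}: for any target semifield $\mathbb{P}'$ with chosen initial coefficients, there is a unique semifield homomorphism $\mathbb{Q}_{\mathrm{sf}}(\mathbf{p}) \to \mathbb{P}'$ realizing the substitution. Running the identical subtraction-free mutation procedure in parallel for the principal case and for the $\mathbb{P}'$ case, the normalization condition $p_{k,j}^+ \oplus p_{k,j}^- = 1$ in $\mathbb{P}'$ forces the factors of $\prod_j(p_{k,j;t} \oplus 1)$ accumulated along the path $t_0 \to t$ to combine into exactly $F_{l;t}|_{\mathbb{P}'}(\mathbf{p})$, giving the stated formula.

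The main obstacle is managing the two parallel algebraic worlds simultaneously: the formal subtraction-free expression $X_{l;t}$ in the universal semifield, where every addition is kept distinct, versus its evaluation in the ambient field $\mathcal{F}$, where normalizations collapse sums into units. Sign coherence (\Cref{prop: generalized sign coherence}) is what makes the bookkeeping tractable, since it guarantees that each $p_{k,j;t}^\pm$ is truly a monomial; this is precisely the input that lets the same subtraction-free procedure be specialized uniformly to any coefficient semifield, reducing the generalized case to a line-by-line transcription of \cite[Theorem 3.7]{fomin2007cluster}.
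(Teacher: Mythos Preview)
The paper itself omits the proof entirely, stating only that it ``can be derived in the exact same way as \cite[Theorem 3.7]{fomin2007cluster}.'' Your proposal follows precisely that outline---induction on the distance in $\mathbb{T}_n$, tracking the subtraction-free expression and using the universal property of $\mathbb{Q}_{\mathrm{sf}}$---so for part~(2) you are exactly on target.

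For part~(1) there is a small divergence worth noting. Fomin--Zelevinsky's original argument for \cite[Proposition 3.6]{fomin2007cluster} does not use sign coherence; instead it appeals to the \emph{sharpened} Laurent phenomenon for geometric type (\cite[Proposition 11.2]{fomin2003cluster}), which gives polynomiality in the frozen variables directly. Your route via \Cref{prop: generalized sign coherence} is a legitimate alternative and is not circular (ordinary sign coherence is established independently via \cite{derksen2010quivers,gross2018canonical}), but the phrase ``direct inspection showing that sign coherence prevents any $p_{i,j}$ from acquiring a negative exponent'' hides a nontrivial step: knowing that each $p_{k,j;t}^{\pm}$ is a monomial with non-negative exponents does not by itself rule out negative exponents appearing after dividing by $x_{k;t}$. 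What actually makes your argument work is that the caterpillar-lemma proof of \Cref{thm: generalized laurent} (following \cite[Theorem 3.2]{fomin2002cluster}) produces coefficients in the subring generated by the coefficients of all exchange polynomials along the path, and sign coherence forces those to lie in $\mathbb{Z}[p_{i,j}]$. If you make that link explicit, the argument is complete.
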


\begin{remark}\label{rmk: positivity for general coefficients}
Suppose the positivity for $x_{l;t}$ in the principal coefficients case (where we denote the semifield by $\mathbb P$) has been established. This means $X_{l;t}$ has a subtraction free expression as a Laurent polynomial (i.e. whose coefficients are in $\mathbb {NP}$). Then the evaluation $X_{l;t}\mid_{\mathcal F}(\mathbf x, \mathbf p)$ also has positive coefficients in $\mathbb{NP}'$, while $F_{l;t}\mid_{\mathbb P'}(\mathbf p)$ is an element in $\mathbb P'$. Thus it follows by \Cref{thm: separation formula} that $x_{l;t}$ has positive coefficients in the case of arbitrary $\mathbb P'$.
\end{remark}

\subsection{Cluster algebras with specialized coefficients} \label{subsection: specialized coefficients}
We fix a field $\Bbbk$ of characteristic $0$ and consider the case of geometric coefficients. In this case, the generalized cluster algebra $\mathscr A(\Sigma)$ for $\Sigma = (\mathbf x, \mathbf p, B)$ can be viewed as a subring of $\Bbbk \mathbb P[x_1^\pm, \dots, x_n^\pm]$ where $\Bbbk \mathbb P$ is the group algebra of $\mathbb P$ over $\Bbbk$.

Let $\lambda\colon \mathbb P \rightarrow \Bbbk^*$ be a group homomorphism (which we will later refer to as an \emph{evaluation}). It extends to a $\Bbbk$-algebra homomorphism $$\lambda\colon \Bbbk \mathbb P[x_1^\pm, \dots, x_n^\pm] \rightarrow \Bbbk[x_1^\pm, \dots, x_n^\pm].$$ We denote the image of $\mathscr A(\Sigma)\otimes \Bbbk$ by $\mathscr A(\Sigma, \lambda)$. So we have a family of $\Bbbk$-algebras parametrized by $(\Bbbk^*)^l$ if the free abelian group $\mathbb P$ is of rank $l$. Each $\mathscr A(\Sigma, \lambda)$ is in fact the $\Bbbk$-subalgebra generated by cluster variables (with coefficients specialized by $\lambda$) within $\Bbbk[x_1^\pm, \dots, x_n^\pm]$. These are what we call (generalized) \emph{cluster algebras with specialized coefficients}.

We point out that an ordinary cluster algebra with trivial coefficients (i.e. when $\mathbb P$ is trivial) is actually a {generalized cluster algebra with specialized coefficients}. Suppose $B$ is a skew-symmetrizable matrix and let $r_i$ be the gcd of the $i$-th column (if that column is non-zero). Let $\mathscr A^\mathrm{prin}(\Sigma)$ be the generalized cluster algebra with principal coefficients where $\Sigma$ has exchange matrix $B$. Choose a group homomorphism $\lambda\colon \mathrm{Trop}(\mathbf p)\rightarrow \Bbbk^*$ such that the specialized exchange polynomials equals the usual cluster exchange binomial, i.e.
\[
    \prod_{j = 1}^{r_i}\left( \lambda(p_{i,j})u+v\right) = u^{r_i} + v^{r_i}.
\]
Of course such $\lambda$ always exists assuming $\Bbbk$ is algebraically closed. Then it is easy to check that every generalized mutation becomes an ordinary mutation: if $t\frac{k}{\quad\quad} t'$,
\[
    x_{k;t'} = x_{k;t}^{-1} \left( \prod_{i\in I} x_i^{[b_{ik}^t]_+}+ \prod_{i\in I} x_i^{[-b_{ik}^t]_+} \right).
\]
Thus the algebra $\mathscr A^\mathrm{prin}(\Sigma, \lambda)$ has the exact same cluster variables as the ordinary cluster algebra with trivial coefficients, and can thus be viewed as an ordinary cluster algebra.

\subsection{An example in type $B_2$ with principal coefficients}

We consider $\mathscr A^\mathrm{prin}(\mathbf x, \mathbf p, B)$ with principal coefficients for $B = \begin{bmatrix}
	0 & -2\\
	1 & 0
\end{bmatrix}$ which is of type $B_2$ in the finite type classification \cite{fomin2003cluster} and \cite[Theorem 2.7]{chekhov2011teichmller}. We write $x_{i;t_0} = A_i$, and $p_{i,j;t_0} = t_{ij}$. For the subgraph
\[
t_0 \frac{_1}{\quad\quad}t_1 \frac{_2}{\quad\quad}t_2 \frac{_1}{\quad\quad}t_3 \frac{_2}{\quad\quad}t_4 \frac{_1}{\quad\quad}t_5 \frac{_2}{\quad\quad}t_6
\]
of $\mathbb T_2$, we have the associated labeled seeds calculated in \Cref{tab: labeled seeds example}

\begin{table}[ht]
    \centering
\begin{tabular}{|c |c |c |c |c |c |c |} 
 \hline
 $t$ & $B^t$ & $p_{1,1;t}$ & $p_{2,1;t}$ & $p_{2,2;t}$ & $x_{1;t}$ & $x_{2;t}$ \\ 
\hline
&&&&&&\\

 0&$\begin{bmatrix}
 	0 & -2\\
 	1 & 0
 \end{bmatrix}$ & $t_{11}$ & $t_{21}$ & $t_{22}$ & $A_1$ & $A_2$  \\

&&&&&&\\

1&$\begin{bmatrix}
 	0 & 2\\
 	-1 & 0
 \end{bmatrix}$ & $t_{11}^{-1}$ & $t_{21}$ & $t_{22}$ & $A_1^{-1}(1 + t_{11}A_2)$ & $A_2$  \\

&&&&&&\\
 
 2&$\begin{bmatrix}
 	0 & -2\\
 	1 & 0
 \end{bmatrix}$ & $t_{11}^{-1}$ & $t_{21}^{-1}$ & $t_{22}^{-1}$ & $A_1^{-1}(1 + t_{11}A_2)$ & $
 	\substack{A_2^{-1}\left(1+t_{21}A_1^{-1}(1+t_{11}A_2)\right)
 	\\\left(1+t_{22}A_1^{-1}(1+t_{11}A_2)\right)}$\\

&&&&&&\\

 3&$\begin{bmatrix}
 	0 & 2\\
 	-1 & 0
 \end{bmatrix}$ & $t_{11}$ & $t_{11}^{-1}t_{21}^{-1}$ & $t_{11}^{-1}t_{22}^{-1}$ & $\substack{A_1A_2^{-1} \big( \left(1+t_{21}A_1^{-1} \right) \left( 1+t_{22}A_1^{-1} \right) \\
 + t_{11}t_{21}t_{22}A_1^{-2}A_2 \big)}$ & $
 	\substack{A_2^{-1}\left(1+t_{21}A_1^{-1}(1+t_{11}A_2)\right)
 	\\\left(1+t_{22}A_1^{-1}(1+t_{11}A_2)\right)}$\\

&&&&&&\\

 4&$\begin{bmatrix}
 	0 & -2\\
 	1 & 0
 \end{bmatrix}$ & $t_{11}^{-1}t_{21}^{-1}t_{22}^{-1}$ & $t_{11}t_{21}$ & $t_{11}t_{22}$ & $\substack{A_1A_2^{-1} \big( \left(1+t_{21}A_1^{-1} \right) \left( 1+t_{22}A_1^{-1} \right) \\
 + t_{11}t_{21}t_{22}A_1^{-2}A_2 \big)}$ & $
 	A_2^{-1}\left(t_{21} + A_1\right)
 	\left(t_{22} + A_1\right)$\\

&&&&&&\\

5&$\begin{bmatrix}
 	0 & 2\\
 	-1 & 0
 \end{bmatrix}$ & $t_{11}t_{21}t_{22}$ & $t_{22}^{-1}$ & $t_{21}^{-1}$ & $A_1$ & $
 	A_2^{-1}\left(t_{21}+A_1\right)
 	\left(t_{22}+A_1\right)$\\

&&&&&&\\

6&$\begin{bmatrix}
 	0 & -2\\
 	1 & 0
 \end{bmatrix}$ & $t_{11}$ & $t_{22}$ & $t_{21}$ & $A_1$ & $
 	A_2$\\
&&&&&&\\
\hline
\end{tabular}
    \caption{Labeled seeds of $\mathscr A^\mathrm{prin}$}
    \label{tab: labeled seeds example}
\end{table}

We note that the $\Sigma_{t_6}$ is not exactly the same as the $\Sigma_{t_0}$ but up to a switch of $p_{2,1}$ and $p_{2,2}$.

\subsection{Generalized $Y$-seeds}
We define generalized $Y$-seeds (with coefficients) and their mutations. The formulation to including coefficients in $Y$-seeds comes from \cite{bossinger2020toric}. The following definition is a generalization of \cite[Definition 2.15]{bossinger2020toric}, which is an enhancement of a $Y$-seed of \cite{fomin2007cluster}.

\begin{definition}\label{def: y-seed}
	A \emph{generalized labeled $Y$-seed} (with coefficients) $\Delta$ is a triple $(\mathbf y, \mathbf q, B)$, where
	\begin{enumerate}
	\item[$\bullet$] $\mathbf q = (\mathbf q_1, \dots, \mathbf q_n)$ is an $n$-tuple of $r_i$-tuples $\mathbf q_i = (q_{i,1},\dots q_{i,r_i})$ of elements in a semifield $\mathbb P$ for positive integers $r_i$, $1\leq i\leq n$.
	
	\item[$\bullet$] $\mathbf y = \{ y_1, \dots, y_n\}$ is a collection of elements in some universal semifield $\mathbb {QP}_\mathrm{sf}(u_1, \cdots, u_l)$.
	
	\item[$\bullet$] $B$ is a left skew-symmetrizable integer matrix such that the $i$-th column is divisible by $r_i$ for every $i$.
	\end{enumerate}
\end{definition}

\begin{definition}
For $k\in \{1,\dots, n\}$, we define the \emph{mutation} of a $Y$-seed $(\mathbf y, \mathbf q, B)$ in direction $k$ as a new $Y$-seed $\mu_k(\mathbf y, \mathbf q, B) \coloneqq ((y_i'), (\mathbf q_i'), B')$ in the following way:
	\begin{equation}
		B' = \mu_k(B);
	\end{equation}
	
	$$q_{k,j}' = q_{k,j}^{-1}\quad  \text{for $j\in [1, r_k]$};$$
\begin{equation}
	\text{for $i\neq k$,  $j\in [1, r_i]$,} \quad q'_{i,j} = \begin{dcases}
	q_{i,j}\cdot \left( \prod_{l=1}^{r_k}q_{k,l}^- \right)^{-\beta_{ik}}\quad &\text{if $-\beta_{ki}>0$}\\
	q_{i,j}\cdot \left( \prod_{l=1}^{r_k}q_{k,l}^+ \right)^{-\beta_{ik}} \quad &\text{if $-\beta_{ki}\leq 0$},
	\end{dcases}
\end{equation}
or equivalently
\begin{equation*}
	\text{for $i\neq k$,}\quad q'_{i,j} = 
	q_{i,j} \prod_{l=1}^{r_k} \left( 1 \oplus q_{k,l}^{\mathrm{sgn}(-\beta_{ki})} \right) ^{\beta_{ik}};	
	\end{equation*}

	\begin{equation}
		y_i' = \begin{dcases}
			y_i \prod_{l=1}^{r_k} \left( q_{k,l}^{\mathrm{sgn}(\beta_{ik})}  y_k^{\mathrm{sgn}(\beta_{ik})}  + q_{k,l}^{\mathrm{sgn}(-\beta_{ik})} \right)^{\beta_{ik}} \quad &\text{if $i\neq k$}\\
			y_k^{-1} \quad &\text{if $i = k$}.
		\end{dcases}
	\end{equation}

\end{definition}

As in \Cref{lemma: mutation is involutive}, it is straightforward to check that the mutation $\mu_k$ on a generalized $Y$-seed is involutive in the same direction.

\begin{definition}
	A \emph{generalized $Y$-pattern} is an association $$t\mapsto \Delta_t = (\mathbf y_t, {\mathbf q}_t, B^t)$$ to every vertex $t\in \mathbb T_n$ a generalized labeled $Y$-seed $\Delta_t$ such that if $t$ and $t'$ are connected by an edge labeled by $k\in I$, then we have
	\[
	\Delta_{t'} = \mu_k(\Delta_t).
	\]
\end{definition}

\begin{definition}
	We say that a generalized $Y$-pattern has principal coefficients at a vertex $t_0\in \mathbb T_n$ if $\mathbb P$ is the tropical semifield
\[
\mathrm{Trop}\left ( q_{i,j;t_0} \mid i\in I, j\in [1, r_i]\right).
\]
\end{definition}

Given a $Y$-pattern, the elements $y_{i;t}$ for $t\in \mathbb T_n$ are called \emph{$Y$-variables}.

\begin{remark}
	In the case that for any $i\in I$,
	\[
	q_{i,1} = q_{i,2} = \cdots = q_{i,r_i},
	\]
	a generalized $Y$-seed with coefficients as in \Cref{def: y-seed} becomes a labeled $Y$-seed with coefficients in \cite{bossinger2020toric}. In this case, the mutation formula of $Y$-variables is independent of the choice $r_i$. So we get back to the non-generalized version by letting the coefficients $q_{i,j}$, $j\in [1, r_i]$, equal. While in the cluster case, one recovers the non-generalized seed mutation by choosing $r_i = 1$. This asymmetry suggests that our generalization is a natural one.
\end{remark}

To the best knowledge of the author, the generalized version of $Y$-patterns has not been considered in the literature. It is interesting to see if these generalized patterns appear naturally anywhere.

\section{Generalized cluster varieties}\label{section: cluster varieties}
Cluster varieties were introduced by Fock and Goncharov \cite{fock2009cluster}, giving a geometric view to cluster algebras (of geometric types). We follow \cite{gross2013birational} to define relevant notions such as fixed data and seeds. However, in order to deal with generalized coefficients, some new gadgets are needed.

\begin{definition}
	We recall the \emph{fixed data} $\Gamma$ from \cite{gross2013birational}. The fixed data $\Gamma$ consists of 
	\begin{itemize}
		\item a lattice $N$ of finite rank with a skew-symmetric bilinear form $\omega \colon N\times N \rightarrow \mathbb Q$;
		\item an \emph{unfrozen sublattice} $N_\mathrm{uf}\subset N$, a saturated sublattice of $N$;
		\item an index set $I = \{1, \dots, \operatorname{rank} N\}$ and a subset $I_\mathrm{uf} = \{1, \dots, \operatorname{rank} N_\mathrm{uf}\}$;
		\item positive integers $d_i$ for $i\in I$ with greatest common divisor $1$;
		\item a sublattice $N^\circ \subset N$ of finite index such that $\omega( N_\mathrm{uf}, N^\circ )\subset \mathbb Z$, $\omega( N, N_\mathrm{uf}\cap N^\circ )\subset \mathbb Z$;
		\item $M = \Hom(N, \mathbb Z)$, $M^\circ = \Hom(N^\circ, \mathbb Z)$;
	\end{itemize}
\end{definition}

\subsection{Generalized $\mathcal A$-cluster variety}

\begin{definition}
	Given fixed data $\Gamma$, an \emph{$\mathcal A$-seed with (generalized) coefficients} is a pair $\mathbf s = ( \mathbf e, \mathbf p)$ consisting of a \emph{seed} $\mathbf e = (e_i)_{i\in I}$ which is a labeled collection of elements in $N$ and a labeled collection of tuples of coefficients $\mathbf p = (\mathbf p_i)_{i\in I_\mathrm{uf}}$ where $\mathbf p_i = (p_{i,j})_{j \in [1, r_j]}$ and $p_{i,j}$ belongs to some tropical semifield $\mathbb P$ such that
	\begin{enumerate}
		\item $\{ e_i \mid i\in I\}$ is a basis for $N$;
		\item $\{ e_i \mid i\in I_\mathrm{uf}\}$ is a basis for $N_\mathrm{uf}$;
		\item $\{d_ie_i\mid i\in I\}$ is a basis for $N^\circ$;
		\item for $i\in I_\mathrm{uf}$, the elements $w_i \coloneqq \omega(-,d_i e_i)/r_i$ belong to $M$. 
	\end{enumerate}
\end{definition}

For such a seed $\mathbf s$, we define two matrices $B = B(\mathbf s) = (b_{ij})$ and $\tilde B = \tilde B(\mathbf s) = (\beta_{ij})$ by setting
\[
b_{ij} \coloneqq \omega( e_i, d_j e_j)\quad \text{and} \quad \beta_{ij} \coloneqq \langle e_i, w_j \rangle = b_{ij}/r_j.
\]

\begin{definition}\label{def: mutation of geometric seed}
	Given $\mathbf s$ an $\mathcal A$-seed with coefficients, for $k\in I_\mathrm{uf}$, we define the \emph{mutation in direction $k$}, $\mu_k(\mathbf s) = (\mathbf e',  {\mathbf p}')$ by
\[
e_i' = \begin{dcases}
	-e_k\quad &\text{if $i = k$}\\
	e_i + [\langle e_i, - r_kw_k \rangle]_+ e_k &\text{if $i\neq k$}; 
\end{dcases}
\]
and
$$p_{k,j}' = p_{k,j}^{-1}\quad  \text{for $j \in [1, r_k]$};$$
\begin{equation*}
	\text{for $i\neq k, j \in [1, r_i]$,}\quad p'_{i,j} = \begin{dcases}
	p_{i,j}\cdot \left( p_{k;-}\right)^{\beta_{ki}}\quad &\text{if $\beta_{ik}>0$}\\
	p_{i,j}\cdot \left( p_{k;+}\right)^{\beta_{ki}} \quad &\text{if $\beta_{ik}\leq 0$},
	\end{dcases}
\end{equation*}
\end{definition}

\begin{remark}
If we write $w_i' = \omega (-, \dfrac{d_i}{r_i}e_i')$ as the mutations of $w_i$, then they are given by 
\[
w_i'= \begin{dcases}
	-w_k,\quad &\text{if $i = k$};\\
	w_i + [\langle r_ke_k, w_i\rangle]_+ w_k, \quad &\text{if $i\neq k$}.
\end{dcases}
\]
Denote the dual basis of $(e_i)$ by $(e_i^*)$ and the dual of $(e_i') = \mu_k(\mathbf e)$ by $(e_i^{\prime,*})$. We have
\[
e_i^{\prime,*} = \begin{cases}
	-e_k^* + \sum_{j} [\langle e_j, -r_kw_k\rangle]_+e_j^* \quad &\text{if $i = k$};\\
	e_i^* \quad &\text{if $i\neq k$}.
\end{cases}
\] 
	
\end{remark}

If there is no confusion, we will call an $\mathcal A$-seed with coefficients simply a seed.

Let $R = \Bbbk \mathbb P$, the group algebra of $\mathbb P$ over the ground field $\Bbbk$. To any $\mathcal A$-seed $\mathbf s$, we associate a copy of the $R$-torus $T_{N, \mathbf s}(R)\coloneqq \Spec(\Bbbk[M] \otimes_\Bbbk R)$.

\begin{definition}
To the mutation $\mu_k$ from $\mathbf s$ to $\mu_k(\mathbf s)$, there is an associated birational morphism (over $R$)
\[
	\mu_k \colon T_{N, \mathbf s}(R) \dasharrow T_{N, \mu_k(\mathbf s)}(R), \quad \mu_k^*(z^m) = z^m f_k^{-\langle e_k, m \rangle},
\]
where 
\[
	f_k \coloneqq \prod_{j = 1}^{r_k} \left( p_{k,j}^- + p_{k,j}^+ z^{w_k}\right)\in R[M].
\]
We call this birational transformation the \emph{$\mathcal A$-cluster mutation} associated to the mutation $\mu_k$ of seeds.
\end{definition}

\begin{definition}\label{def: rooted tree GHK}
	We define the \emph{oriented rooted tree} $\mathfrak T_n$ (where $n = |I_\mathrm{uf}|$) as in \cite{gross2013birational}. It is the infinite tree generated from a root $v_0$ such that
\begin{enumerate}
	\item $v_0$ has outgoing edges labeled by $I_\mathrm{uf} = \{1,\dots, n\}$;
	\item any other vertex has one unique incoming edge, and outgoing edges labeled by $I_\mathrm{uf}$.
\end{enumerate}

\end{definition}

Let $v_0\in \mathfrak T_n$ be the root. Then for any other vertex $v\in \mathfrak T_n$, there is a unique oriented path from $v_0$ to $v$. We associate a seed $\mathbf s$ to the root $v_0$, the unique path from $v$ to $v_0$ determines a seed $\mathbf s_{v}$ by applying the mutations in directions of the labelings in the path to the initial seed $\mathbf s$. Therefore we have an association $v\mapsto \mathbf s_v$ for $v\in \mathfrak T_n\setminus \{v_0\}$ and $v_0\mapsto \mathbf s$ such that for an edge $v \xrightarrow{k} v'$ in $\mathfrak T_n$, then
\[
\mathbf s_{v'} = \mu_k(\mathbf s_v).
\]

Suppose the unique path from $v_0$ to $v$ walks through edges labeled by $k_1,k_2,\dots,k_l$.   There is then the birational map
\[
\mu_{v_0,v}\coloneqq \mu_{k_l}\circ\cdots \circ \mu_{k_2}\circ\mu_{k_1}\colon T_{N,\mathbf s}(R) \dasharrow T_{N,\mathbf s_{v}}(R).
\]
For arbitrary two vertices $v$ and $v'$ in $\mathfrak T_n$, there is the birational map
\[
\mu_{v,v'}\coloneqq \mu_{v_0,v'}\circ \mu_{v_0,v}^{-1}\colon T_{N,\mathbf s_{v}}(R) \dasharrow T_{N,\mathbf s_{v'}}(R).
\]
These birational maps surely satisfy the cocycle condition. We use the following lemma to glue $T_{N,\mathbf s_v}$ together.

\begin{lemma}[{\cite[Lemma 3.10]{bossinger2020toric}}, {\cite[Proposition 2.4]{gross2013birational}}]\label{lemma: glue schemes along birational maps}
	Let $\mathcal I$ be a set and $\{S_i\mid i\in I\}$ be a collection of integral separated schemes of finite type over a locally Noetherian ring $R$, with birational maps (of $R$-schemes) $f_{ij}\colon S_i \dasharrow S_j$ for all $i,j$, verifying the cocycle condition $f_{jk}\circ f_{ij} = f_{ik}$ as rational maps and such that $f_{ii}$ is the identity map. Let $U_{ij}\subset S_i$ be the largest open subscheme such that $f_{ij}\colon U_{ij}\rightarrow f_{ij} (U_{ij})$ is an isomorphism. Then there is an $R$-scheme
	\[
		S= \bigcup_{i\in \mathcal I}S_i
	\]
obtained by gluing the $S_i$ along the open sets $U_{ij}$ via the maps $f_{ij}$.
\end{lemma}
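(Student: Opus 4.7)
The plan is to reduce the statement to the standard gluing construction for schemes, by verifying that the data $\{S_i, U_{ij}, f_{ij}\}$ forms a valid gluing datum. Concretely, I would establish three facts: (i) $U_{ij}$ is a well-defined open subscheme of $S_i$; (ii) $f_{ij}$ restricts to an isomorphism $U_{ij}\xrightarrow{\sim} U_{ji}$, with $f_{ij}(U_{ij}) = U_{ji}$; and (iii) the triple-overlap compatibility $f_{ij}(U_{ij}\cap U_{ik}) = U_{ji}\cap U_{jk}$ holds, together with $f_{jk}\circ f_{ij} = f_{ik}$ on this open subscheme. Once these are in place, the usual scheme-theoretic gluing yields the desired $R$-scheme $S$ with open immersions $S_i\hookrightarrow S$ covering $S$ whose transition maps are exactly the $f_{ij}$.

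For (i), since $S_i$ is integral and $f_{ij}$ is birational, its domain of definition $\mathrm{dom}(f_{ij})\subseteq S_i$ is an open dense subscheme. Inside $\mathrm{dom}(f_{ij})$, the locus on which $f_{ij}$ restricts to an open immersion into $S_j$ is open (the non-isomorphism locus of a finite-type morphism between integral schemes being closed in the source when tested on stalks), so intersecting with $f_{ij}^{-1}(\mathrm{dom}(f_{ji}))$ produces the largest such open $U_{ij}$. For (ii), applying the cocycle condition to $(i,j,i)$ gives $f_{ji}\circ f_{ij} = f_{ii} = \mathrm{id}_{S_i}$ as rational maps, so on $U_{ij}$ the image lies in $\mathrm{dom}(f_{ji})$ and the composition is the identity. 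By the symmetric role of $i$ and $j$ together with the maximality of $U_{ij}$ and $U_{ji}$, one concludes $f_{ij}(U_{ij}) = U_{ji}$ and the restricted map is an isomorphism.

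Step (iii) is the main obstacle. The cocycle identity $f_{jk}\circ f_{ij} = f_{ik}$ as rational maps tells us that both sides agree wherever both are defined as morphisms. What needs to be verified is that $f_{ij}$ carries $U_{ij}\cap U_{ik}$ not merely into $U_{ji}$ (which is immediate from (ii)) but further into $U_{jk}$, an open defined in terms of $f_{jk}$ rather than any composition. The key is to use the cocycle identity to transport the isomorphism locus of $f_{ik}$ through $f_{ij}$: on $f_{ij}(U_{ij}\cap U_{ik})\subseteq U_{ji}$, the composition $f_{ik}\circ f_{ij}^{-1}$ is defined and is an open immersion (being the restriction of $f_{ik}$ precomposed with an isomorphism), hence coincides with $f_{jk}$ on this open and witnesses it as a subset of $U_{jk}$. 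Equality follows by applying the symmetric argument with the roles of $i$ and $j$ swapped. Once (i)-(iii) are established, the standard gluing construction produces the $R$-scheme $S$, and the $R$-scheme structure is inherited from the fact that each $f_{ij}$ is an $R$-morphism.
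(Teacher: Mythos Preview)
The paper does not actually prove this lemma; it simply cites it from \cite{bossinger2020toric} and \cite{gross2013birational} and uses it as a black box. Your proposal is therefore not comparable to anything in the paper itself, but it is the correct strategy: verify that $(S_i, U_{ij}, f_{ij}|_{U_{ij}})$ is a gluing datum and invoke the standard construction.

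Your outline is sound, but one step in (i) is phrased too loosely to stand as written. You assert that ``the locus on which $f_{ij}$ restricts to an open immersion is open'' by appealing to a stalk-local criterion, but being an open immersion is not a pointwise property. What actually makes $U_{ij}$ well-defined is that the union of all opens $U\subseteq \mathrm{dom}(f_{ij})$ on which $f_{ij}$ is an open immersion is again such an open. This is where separatedness of the $S_i$ enters and should be made explicit: if $f_{ij}$ is an isomorphism from $U_1$ onto $V_1$ and from $U_2$ onto $V_2$, then on $V_1\cap V_2$ the two inverse morphisms $(f_{ij}|_{U_1})^{-1}$ and $(f_{ij}|_{U_2})^{-1}$ are both restrictions of the rational map $f_{ji}$, hence agree because $S_i$ is integral and separated. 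This glues them to a morphism $V_1\cup V_2\to U_1\cup U_2$ inverse to $f_{ij}|_{U_1\cup U_2}$. With that clarification, parts (ii) and (iii) go through as you describe, and the standard gluing lemma finishes the argument.
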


\begin{definition}
	Let $\Gamma$ be fixed data and $\mathbf s$ be an $\mathcal A$-seed with coefficients. We apply \Cref{lemma: glue schemes along birational maps} to glue together the collection of tori indexed by $\mathfrak T_n$ to get the \emph{generalized $\mathcal A$-cluster variety} associated to $\mathbf s$ (as an $R$-scheme)
	\[
	\mathcal A_\mathbf s = \mathcal A_{\Gamma, \mathbf s} \coloneqq \bigcup_{v\in \mathfrak T} T_{N, \mathbf s_v}(R).
	\]
\end{definition}

We now explain how to obtain a generalized cluster pattern from $\mathcal A_\mathbf s$, justifying the name generalized $\mathcal A$-cluster variety. We assume $N_\mathrm{uf} = N$, thus $I_\mathrm{uf} = I$.\footnote{This is because we do not define cluster patterns with frozen directions. This can be done by making mutations only available at a subset of a given cluster, leaving the rest variables frozen. However, one can always treat the frozen variables as making up coefficients in a cluster pattern.}

Recall we have the association $v \mapsto \mathbf s_v = \mu_{v_0, v}(\mathbf s)$ for $v\in \mathfrak T_n$. We write $\mathbf s_{v} = (\mathbf e_v,  \mathbf p_{v})$ where $\mathbf e_v = (e_{i;v}\mid i\in I)$, $\mathbf p_{v} = (\mathbf p_{i;v}\mid i\in I)$ and $\mathbf p_{i;v} = (p_{i,j;v} \mid j\in [1, r_i])$.

Sending $v_0$ to any vertex $t_0$ in the $n$-regular tree $\mathbb T_n$ gives a unique surjective map
\[
\pi\colon \mathfrak T_n \rightarrow \mathbb T_n, \quad v_0\mapsto t_0
\]
such that the labeling on edges is preserved.

For any seed $v\in \mathfrak T_n$, there is the corresponding labeled seed with coefficients (in the sense of \Cref{def: seed})
\[
	\Sigma_v = \Sigma(\mathbf s_v)\coloneqq  (\mathbf x_v, \mathbf p_v, B^v)
\]
where
\[
x_{i,v} \coloneqq \mu_{v_0,v}^*\left( z^{e_{i;v}^*} \right)\in \mathbb {QP}(x_1, \dots, x_n), \quad b^v_{ij} \coloneqq \omega(e_{i;v}, d_je_{j;v}),
\]
where $x_i = x_{i,v_0}$.

\begin{lemma}
	If two vertices $v$ and $v'$ vertices of $\mathfrak T_n$ descend to the same vertex in $\mathbb T_n$, i.e. $\pi(v) = \pi(v')$, then their corresponding labeled seeds with coefficients are identical, i.e. $\Sigma_v = \Sigma_{v'}$.
\end{lemma}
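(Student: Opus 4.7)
The plan is to reduce the claim to the involutivity of mutation already established in \Cref{lemma: mutation is involutive}. The key structural fact to exploit is that $\mathbb T_n$ is a simply connected $n$-regular tree, while $\mathfrak T_n$ is an oriented rooted cover of it in which each vertex carries a unique path back to the root $v_0$. Every forward path $v_0 \to v$ in $\mathfrak T_n$ descends under $\pi$ to a walk in $\mathbb T_n$ starting at $t_0$, and if $\pi(v)=\pi(v')$ then the two induced walks have the same endpoint.

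First, I would record the combinatorial lemma that in any tree, two walks with the same endpoints differ by a finite sequence of insertions and deletions of pairs of consecutive identical edge-labels (this is just free reduction of walks, using simple connectedness of $\mathbb T_n$). Concretely, if the label sequences are $(k_1,\dots,k_\ell)$ and $(k'_1,\dots,k'_{\ell'})$, both can be reduced to the same geodesic word, using only the rewriting rule $(\dots,k,k,\dots)\leftrightarrow(\dots,\dots)$.

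Second, the construction $v\mapsto \mathbf s_v$ reads off the label sequence of the unique path $v_0\to v$ and applies the corresponding composition of seed mutations to the initial seed $\mathbf s$. Inserting or deleting a pair $(k,k)$ in this sequence corresponds to inserting or deleting $\mu_k\circ\mu_k$ in the composition of seed mutations. By \Cref{lemma: mutation is involutive} this operation is the identity on generalized seeds, so the resulting $\mathbf s_v$ is unchanged: all of $\mathbf e_v$, $\mathbf p_v$, and $B^v$ depend only on the underlying reduced walk in $\mathbb T_n$, hence only on $\pi(v)$. This already gives equality of the coefficient tuples and the exchange matrices of $\Sigma_v$ and $\Sigma_{v'}$, and equality of the dual bases $e^*_{i;v}=e^*_{i;v'}$.

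Third, for the cluster-variable part $\mathbf x_v$ one needs the same statement at the level of the composed birational maps $\mu_{v_0,v}\colon T_{N,\mathbf s}(R)\dasharrow T_{N,\mathbf s_v}(R)$. The mutation formula $\mu_k^*(z^m)=z^m f_k^{-\langle e_k,m\rangle}$, applied twice in direction $k$ (using that $e_k$ and $f_k$ transform correctly under $\mu_k$), yields the identity rational map; this is the geometric counterpart of \Cref{lemma: mutation is involutive} and can be checked directly by the same case analysis already performed there, or by noting that $\mu_{v_0,v}^*$ is determined by its effect on the Laurent monomials $z^{e^*_i}$, which are precisely the cluster variables treated in step (4) of the proof of \Cref{lemma: mutation is involutive}. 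Consequently $\mu_{v_0,v}=\mu_{v_0,v'}$ as rational maps, so $x_{i;v}=\mu_{v_0,v}^*(z^{e^*_{i;v}})=\mu_{v_0,v'}^*(z^{e^*_{i;v'}})=x_{i;v'}$, completing the proof that $\Sigma_v=\Sigma_{v'}$. The only delicate point is thus this passage from involutivity of seeds to involutivity of the associated birational maps; everything else is book-keeping about walks in a tree.
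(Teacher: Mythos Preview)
Your overall strategy—reduce to free reduction of walks in $\mathbb T_n$ and then invoke involutivity of mutation—is the right one, and it is also the paper's strategy. But your execution conflates two different mutation operations and this creates a genuine gap.

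The mutation $\mu_k$ of \emph{geometric} seeds $\mathbf s=(\mathbf e,\mathbf p)$ in \Cref{def: mutation of geometric seed} is \emph{not} involutive on the basis part $\mathbf e$: a direct check gives, for $i\neq k$,
\[
e_i''=e_i + \bigl([\langle e_i,-r_kw_k\rangle]_+ - [\langle e_i,r_kw_k\rangle]_+\bigr)e_k = e_i - \langle e_i,r_kw_k\rangle e_k,
\]
which is not $e_i$ in general. So your assertion that $\mathbf e_v$, and hence the dual basis $e^*_{i;v}$, depends only on $\pi(v)$ is false. Likewise the composed birational map is not the identity: one computes
\[
(\mu_k^{(2)}\circ\mu_k^{(1)})^*(z^m)=z^{\,m-\langle e_k,m\rangle r_k w_k},
\]
a nontrivial monomial automorphism. (The two discrepancies are dual to one another, which is why everything works out in the end, but neither vanishes on its own.)

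What \emph{is} involutive is the mutation of \emph{labeled} seeds $\Sigma=(\mathbf x,\mathbf p,B)$ from \Cref{def: mutation of seed}, and that is exactly \Cref{lemma: mutation is involutive}. The missing step in your argument—and the substance of the paper's proof—is to show that for an edge $v_1\xrightarrow{k}v_2$ in $\mathfrak T_n$ one has $\Sigma_{v_2}=\mu_k(\Sigma_{v_1})$ \emph{as labeled seeds}. For $B$ and $\mathbf p$ this is easy; for the cluster variables it requires an explicit computation expressing $x_{k;v_2}=\mu_{v_0,v_2}^*(z^{e^*_{k;v_2}})$ in terms of the $x_{i;v_1}$ and checking it matches the exchange relation. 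This computation absorbs precisely the monomial twist above against the change of dual basis. Once that intertwining is established, your free-reduction argument and \Cref{lemma: mutation is involutive} finish the proof. Your sketch asserts this bridge but does not supply it, and the two individual claims you offer in its place are both incorrect.
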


\begin{proof}
	Suppose the unique path in $\mathfrak T_n$ from $v_0$ to $v$ goes through edges labeled by $k_1,\dots, k_l$ in order. We show in the following by induction that
	\[
	\mu_{k_l}\circ \cdots \circ \mu_{k_1} (\Sigma_{v_0}) = \Sigma_v.
	\]
	where the operation $\mu_k$ is the mutation in direction $k$ of labeled seeds with coefficients in the sense of \Cref{def: mutation of seed}.
	
	Let $v_1 \overset{k}{\longrightarrow} v_2$ be in $\mathfrak T_n$. Then one checks $B^{v_2} = \mu_k(B^{v_1})$ using the fact that $\mathbf e_{v_2} = \mu_k(\mathbf e_{v_1})$, which is standard from \cite{gross2013birational}.
	The coefficients parts $\mathbf p_{v_1}$ and $\mathbf p_{v_2}$ are related by the mutation $\mu_k$ by definition. So we only need to check that $\mathbf x_{v_1}$ and $\mathbf x_{v_2}$ are also related by $\mu_k$.
	
	Note that $\mu_{v_0,v_2}^* =  \mu_{v_0,v_1}^* \circ \mu_k^*$. So we have for $i\neq k$ $$x_{i; v_2} = \mu_{v_0, v_1}^*\left(\mu_k^*\left(z^{e_{i;v_2}^*}\right)\right) = \mu_{v_0, v_1}^* \left(z^{e_{i;v_1}^*}\right) = x_{i;v_1}$$
	and \begin{align*}
		x_{k; v_2} &= \mu_{v_0,v_1}^*\left( \mu_k^*\left( z^{e_{k;v_2}^*}\right) \right)\\
		&= \mu_{v_0,v_1}^*\left( z^{-e_{k;v_1}^* + \sum_{}[-b_{ik}^{v_1}]_+e_{i;v_1}^*} \prod_{j = 1}^{r_k} \left( p_{k,j;v_1}^- + p_{k,j;v_1}^+ z^{w_{k;v_1}}\right) \right)\\
	&= \mu_{v_0,v_1}^*\left( z^{-e_{k;v_1}^*} \prod_{j = 1}^{r_k} \left( p_{k,j;v_1}^-z^{w_{k;v_1}^-} + p_{k,j;v_1}^+z^{w_{k;v_1}^+} \right) \right)\\
	&= \mu_{v_0,v_1}^*\left(z^{-e_{k;v_1}^*}\right) \prod_{j = 1}^{r_k} \left( p_{k,j;v_1}^-\mu_{v_0,v_1}^*\left(z^{w_{k;v_1}^-}\right) + p_{k,j;v_1}^+\mu_{v_0,v_1}^*\left(z^{w_{k;v_1}^+}\right) \right)\\
	&= x_{k;v_1}^{-1}\prod_{j = 1}^{r_k}\left(p_{k,j;v_1}^- \prod_{i\in I} x_{i;v_1}^{[-\beta_{ik}]_+} + p_{k,j;v_1}^+\prod_{i\in I} x_{i;v_1}^{[\beta_{ik}]_+} \right).
	\end{align*} 
	The only unexplained notation in the above equations is that for any $w = \sum_{i\in I} a_i e_i^*\in M$, we write
	\[
	w^- \coloneqq \sum_{i\in I} [-a_i]_+e_i^*\quad \text{and} \quad w^+ \coloneqq \sum_{i\in I} [a_i]_+e_i^*.	
	\]
	Now we have checked that $\mu_k\left(\Sigma_{v_1}\right) = \Sigma_{v_2}$. By induction on the distance from $v$ to the root $v_0$, we conclude that $\mu_{k_l}\circ \cdots \circ \mu_{k_1}\left( \Sigma_{v_0} \right) = \Sigma_v$ for any $v\in \mathfrak T_n$. Since $\mu_k$ is involutive, we can reduce the sequence $(k_1,\cdots, k_l)$ by deleting pairs of consecutive identical indices until there is none. So $\Sigma_v$ only depends on the reduced sequence of edge labels from $v_0$ to $v$. Now notice that two vertices $v$ and $v'$ in $\mathfrak T_n$ have the same projection $t$ in $\mathbb T_n$ if and only if they have the same reduced sequence of edge labels from $v_0$, meaning the same labeled seed with coefficients $\Sigma_t \coloneqq \Sigma_v = \Sigma_{v'}$.
\end{proof}

\begin{proposition}\label{prop: cluster variety to cluster pattern}
	According to the above lemma, we have that the labeled seeds $\Sigma_v$ and $\Sigma_{v'}$ are equal if $\pi(v) = \pi(v') = t\in \mathbb T_n$. So we can denote them all by $\Sigma_t$. The association $t\mapsto \Sigma_t$ for every $t\in \mathbb T_n$ is a cluster pattern.
\end{proposition}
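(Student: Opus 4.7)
The plan is to unpack the definition of a cluster pattern and verify it directly using the preceding lemma, which does essentially all the work. Recall that by definition I must check two things for the assignment $t \mapsto \Sigma_t$: that each $\Sigma_t$ is a well-defined labeled seed of rank $n$, and that for any edge $t \frac{k}{\;\;\;} t'$ in $\mathbb{T}_n$ one has $\Sigma_{t'} = \mu_k(\Sigma_t)$, where $\mu_k$ is the mutation in the sense of \Cref{def: mutation of seed}. Well-definedness is exactly the content of the preceding lemma, which shows that $\Sigma_v$ depends only on $\pi(v)$.

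First I would fix an edge $t \frac{k}{\;\;\;} t'$ in $\mathbb{T}_n$ and lift it to $\mathfrak{T}_n$. Choose any $v \in \mathfrak{T}_n$ with $\pi(v) = t$, and let $v' \in \mathfrak{T}_n$ be the target of the unique outgoing edge from $v$ labeled by $k$. Since $\pi$ preserves edge labels and the $k$-edge at $t$ is unique, we have $\pi(v') = t'$. The preceding lemma identifies $\Sigma_t = \Sigma_v$ and $\Sigma_{t'} = \Sigma_{v'}$, and its inductive step established precisely the identity $\Sigma_{v'} = \mu_k(\Sigma_v)$ along the oriented edge $v \xrightarrow{k} v'$ of $\mathfrak{T}_n$. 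Combining these gives $\Sigma_{t'} = \mu_k(\Sigma_t)$, which is the desired cluster pattern axiom.

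Finally, I would observe that non-orientedness of edges in $\mathbb{T}_n$ versus the oriented structure of $\mathfrak{T}_n$ poses no issue: the relation $\Sigma_{t'} = \mu_k(\Sigma_t)$ is symmetric in $t$ and $t'$ because mutation is involutive by \Cref{lemma: mutation is involutive}, so $\Sigma_t = \mu_k(\Sigma_{t'})$ is automatic. There is no real obstacle here; the genuine content of the statement (well-definedness, and compatibility of $\Sigma_v$ with $\mu_k$ along oriented edges) was already absorbed into the preceding lemma, and the proposition is simply the packaging of these facts as the cluster pattern axiom on $\mathbb{T}_n$.
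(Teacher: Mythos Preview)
Your proof is correct and takes essentially the same approach as the paper. The paper's argument is more terse: it simply invokes the formula $\Sigma_t = \mu_{k_l}\circ \cdots \circ \mu_{k_1}(\Sigma_{t_0})$ established in the proof of the preceding lemma and observes that this is by definition a cluster pattern, whereas you spell out the edge-by-edge verification and the symmetry from \Cref{lemma: mutation is involutive}; both rely on the same key fact $\mu_k(\Sigma_{v_1}) = \Sigma_{v_2}$ along oriented edges proved in that lemma.
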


\begin{proof}
Suppose the unique path from $t_0$ to some $t\in \mathbb T_n$ walks through edges in order of $k_1,\dots, k_l$. Then already in the proof of the above lemma, we have
\[
\Sigma_t = \mu_{k_l}\circ \cdots \circ \mu_{k_1} \left( \Sigma_{t_0} \right).
\]
This association by definition gives a cluster pattern.
\end{proof}

\begin{definition}
	The (generalized) \emph{upper cluster algebra} $\overline {\mathscr A}(\mathbf s)$ (of an $\mathcal A$-seed $\mathbf s$ with coefficients) is defined to be the $R$-algebra
	\[
	H^0 \left(\mathcal A_{\mathbf s}, \mathcal O_{\mathcal A_{\mathbf s}}\right) = \bigcap_{v\in \mathfrak T_n} H^0 \left(T_{N, \mathbf s_v}(R), \mathcal O_{T_{N, \mathbf s_v}(R)} \right),
	\]
	the ring of regular functions on the (generalized) $\mathcal A$-cluster variety $\mathcal A_{\mathbf s}$.
\end{definition}

By definition the upper cluster algebra is the algebra of all Laurent polynomials that remains Laurent polynomials after an arbitrary sequence of mutations. It follows from the Laurent phenomenon that all cluster variables are elements in the upper cluster algebra, thus the inclusion
\[
\mathscr A(\mathbf s) \subset \overline{\mathscr A}(\mathbf s),
\]
where the former denotes the subalgebra generated by cluster variables, i.e. the cluster algebra (over $R$).

The notion of principal coefficients can be easily translated into the current setting.

\begin{definition}
An $\mathcal A$-seed $\mathbf s$ is said to have \emph{principal coefficients} if the associated labeled seed $\Sigma(\mathbf s)$ has principal coefficients.	
\end{definition}

The associated cluster pattern with $t_0\mapsto \Sigma(\mathbf s)$, $t\mapsto \Sigma(\mathbf s_v)$ (where $t = \pi(v)$) then has principal coefficients at $t_0$. In this case, we denote the corresponding cluster variety by $\mathcal A_\mathbf s^\mathrm{prin}$.

\subsection{Generalized $\mathcal X$-cluster variety} Given fixed data $\Gamma$ as in the last section, we define the notion of (generalized) $\mathcal X$-seeds with coefficients.

\begin{definition}
An $\mathcal X$-seed with (generalized) coefficients $\mathbf s = (\mathbf e, \mathbf q)$ is the same as an $\mathcal A$-seed. We use the symbol $\mathbf q$ instead of $\mathbf p$ to stress that it is an $\mathcal X$-seed.	
\end{definition}

What distinguish $\mathcal X$-seeds with $\mathcal A$-seeds is the mutation.

\begin{definition}\label{def: mutation for x-seed}
	Given an $\mathcal X$-seed $\mathbf s = (\mathbf e, \mathbf q)$, we define the \emph{mutation in direction $k$}, $\mu_k(\mathbf s) = (\mathbf e', \mathbf q')$ by
\[
e_i' = \begin{dcases}
	-e_k\quad &\text{if $i = k$}\\
	e_i + [\langle e_i, - r_kw_k \rangle]_+ e_k &\text{if $i\neq k$}; 
\end{dcases}
\]
and
$$q_{k,j}' = q_{k,j}^{-1}\quad  \text{for $j \in [1,  r_k]$};$$
\begin{equation*}
	\text{for $i\neq k, j\in [1, r_i]$,}\quad q'_{i,j} = \begin{dcases}
	q_{i,j}\cdot \left( q_{k;-}\right)^{-\beta_{ik}}\quad &\text{if $-\beta_{ki}>0$}\\
	q_{i,j}\cdot \left( q_{k;+}\right)^{-\beta_{ik}} \quad &\text{if $-\beta_{ki}\leq 0$},
	\end{dcases}
\end{equation*}
\end{definition}

So the pure seed part $\mathbf e$ behaves in the same way under mutation as in an $\mathcal A$-seed while the coefficients part $\mathbf q$ mutates differently, but same as the coefficients in a labeled $Y$-seed. Roughly, if in $\mathcal A$-seeds, the matrix $B$ governs the mutation of coefficients, then in $\mathcal X$-seeds, $-B^T$ does the job.

\begin{definition}
	Let $\mathbf s = (\mathbf e, \mathbf q)$ be an $\mathcal X$-seed with coefficients. Then there is the associated \emph{$\mathcal X$-cluster mutation}
\[
\mu_k\colon T_M(R) \dasharrow T_M(R),\quad \mu_k^*(z^n) = z^n \cdot \left( \prod_{l=1}^{r_k} \left( q_{k,l}^- + q_{k,l}^+ z^{e_k} \right) \right)^{-\langle n , -w_k \rangle},
\]
where $T_M(R)$ is the $R$-torus $\Spec(\Bbbk[N]\otimes R)$.
\end{definition}

\begin{definition}
	Let $\mathbf s$ be an $\mathcal X$-seed for $\Gamma$. Then there is a unique association $v\mapsto \mathbf s_v$ for every $v\in \mathfrak T_n$ such that $v_0\mapsto \mathbf s$ and adjacent associated seeds are related by mutations of $\mathcal X$-seeds in corresponding directions. Define the \emph{generalized $\mathcal X$-cluster variety} associated to $\mathbf s$ to be the $R$-scheme
	\[
	\mathcal X_\mathbf s = \mathcal X_{\Gamma, \mathbf s} \coloneqq \bigcup_{v\in \mathfrak T_n} T_{M, \mathbf s_v}(R)
	\]
	obtained by glueing $T_{M, \mathbf s_v}(R)$ via $\mathcal X$-cluster mutations using \Cref{lemma: glue schemes along birational maps}.
\end{definition}

Write $\mathbf s_v = ((e_{i;v}), (\mathbf q_{i;v}))$. Let us keep track of the monomials $z^{e_{i;v}}\in \Bbbk[N]$ (instead of $z^{e_{i;v}^*}$ in the $\mathcal A$-case). We define
\[
y_{i;v}\coloneqq \mu_{v,v_0}^*(z^{e_{i;v}})\in \mathrm{Frac}(\Bbbk[N]\otimes R).
\]
It turns out that these $y_{i;v}$ are the $Y$-variables of the $Y$-pattern induced by the $\mathcal X$-seed $\mathbf s$ described as follows. We take $\mathbf s$ as the initial seed. Analogous to the $\mathcal A$-situation, any vertex $v\in \mathfrak T_n$ descends to a vertex $t = \pi(v)\in \mathbb T_n$.

\begin{proposition}\label{prop: cluster variety to y pattern}
	For $v\in \mathfrak T_n$, define the generalized labeled $Y$-seed $\Delta_v = ((y_{i;v}), (\mathbf q_{i;v}), B^v)$. Then we have $\Delta_v = \Delta_{v'}$ if $\pi(v) = \pi(v') = t\in \mathbb T_n$. Then the association $t\mapsto \Delta_t$ for $t\in \mathbb T_n$ is a generalized $Y$-pattern with coefficients where $\Delta_t \coloneqq \Delta_v$ for any $v$ such that $t = \pi(v)$.
\end{proposition}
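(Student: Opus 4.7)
The plan is to transfer the argument of \Cref{prop: cluster variety to cluster pattern} from the $\mathcal{A}$-side to the $\mathcal{X}$-side. I will prove by induction on the distance from the root $v_0$ to $v$ in $\mathfrak{T}_n$ that for every edge $v_1 \xrightarrow{k} v_2$ of $\mathfrak{T}_n$ the $Y$-seed $\Delta_{v_2}$ is obtained from $\Delta_{v_1}$ by the generalized $Y$-seed mutation $\mu_k$ of \Cref{def: y-seed}. Iterating then yields $\Delta_v = \mu_{k_l}\circ\cdots\circ\mu_{k_1}(\Delta_{v_0})$ along the unique path from $v_0$ to $v$. Since each $\mu_k$ is involutive in the same direction, $\Delta_v$ depends only on the reduced sequence of edge labels, and two vertices $v, v' \in \mathfrak{T}_n$ satisfy $\pi(v) = \pi(v')$ precisely when their reduced words from $v_0$ coincide. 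This gives $\Delta_v = \Delta_{v'}$, and $t \mapsto \Delta_t$ is then a generalized $Y$-pattern by construction.

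For the inductive step, the matrix part $B^{v_2} = \mu_k(B^{v_1})$ is immediate from $\mathbf{e}_{v_2} = \mu_k(\mathbf{e}_{v_1})$, and the coefficient part $\mathbf{q}_{v_2} = \mu_k(\mathbf{q}_{v_1})$ in the $Y$-seed sense holds by direct comparison of \Cref{def: mutation for x-seed} with the coefficient rule in \Cref{def: y-seed}, which coincide term by term. The main computation concerns the $Y$-variables. Using $\mu_{v_0,v_2}^* = \mu_{v_0,v_1}^* \circ \mu_k^*$, the definition of the $\mathcal{X}$-cluster mutation, $e_{i;v_2} = e_{i;v_1} + r_k[-\beta_{ik;v_1}]_+ e_{k;v_1}$ for $i \neq k$, and $\langle e_k, w_k\rangle = \beta_{kk}/r_k = 0$, I would compute
\[
\mu_k^*(z^{e_{i;v_2}}) = z^{e_{i;v_1}} \cdot z^{r_k[-\beta_{ik}]_+ e_{k;v_1}} \cdot f_k^{\beta_{ik}},
\]
where $f_k = \prod_{l=1}^{r_k}(q_{k,l;v_1}^- + q_{k,l;v_1}^+ z^{e_{k;v_1}})$.

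From here I split into two cases. For $\beta_{ik} > 0$, the middle monomial is trivial and pulling back by $\mu_{v_0,v_1}^*$ (which sends $z^{e_{j;v_1}} \mapsto y_{j;v_1}$) yields $y_{i;v_2} = y_{i;v_1}\prod_l(q_{k,l;v_1}^- + q_{k,l;v_1}^+ y_{k;v_1})^{\beta_{ik}}$. For $\beta_{ik} < 0$, the factor $z^{-r_k\beta_{ik}e_k} \cdot f_k^{\beta_{ik}}$ is rewritten using the key per-factor identity $z^{-e_k}(q_{k,l}^- + q_{k,l}^+ z^{e_k}) = q_{k,l}^- z^{-e_k} + q_{k,l}^+$, producing $y_{i;v_2} = y_{i;v_1}\prod_l(q_{k,l;v_1}^- y_{k;v_1}^{-1} + q_{k,l;v_1}^+)^{\beta_{ik}}$. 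Both expressions assemble into the unified formula
\[
y_{i;v_2} = y_{i;v_1}\prod_{l=1}^{r_k}\bigl(q_{k,l;v_1}^{\mathrm{sgn}(\beta_{ik})}\, y_{k;v_1}^{\mathrm{sgn}(\beta_{ik})} + q_{k,l;v_1}^{-\mathrm{sgn}(\beta_{ik})}\bigr)^{\beta_{ik}}
\]
of the $Y$-seed mutation. For $i = k$, the identities $e_{k;v_2} = -e_{k;v_1}$ and $\beta_{kk} = 0$ give $\mu_k^*(z^{e_{k;v_2}}) = z^{-e_{k;v_1}}$ and hence $y_{k;v_2} = y_{k;v_1}^{-1}$ at once.

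The main obstacle I anticipate is bookkeeping: cleanly matching the $\beta_{ik} > 0$ and $\beta_{ik} \leq 0$ cases appearing in the $\mathcal{X}$-mutation rule with the unified $\mathrm{sgn}$-expression of the $Y$-seed mutation, and doing the analogous case-split for the coefficient part, both require careful sign analysis. Once the reflection identity for each factor of $f_k$ displayed above is in hand, the matching is mechanical and no further conceptual input is needed beyond what already appeared in the $\mathcal{A}$-analogue.
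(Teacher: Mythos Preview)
Your proposal is correct and follows precisely the approach the paper indicates: the paper's proof simply states that the argument is completely analogous to \Cref{prop: cluster variety to cluster pattern} and leaves the details to the reader, and you have supplied exactly those details. The one small point the paper adds that you do not explicitly mention is the observation that the $y_{i;v}$ lie in the universal semifield $\mathbb{QP}_{\mathrm{sf}}(y_1,\dots,y_n)$ (so that $\Delta_v$ is a generalized labeled $Y$-seed in the sense of \Cref{def: y-seed}); this follows immediately since the $\mathcal X$-mutation formula and hence your recursive expressions for $y_{i;v}$ are subtraction-free.
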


\begin{proof}
	We first note that the $Y$-variables $y_{i;v}$ live in the universal semifield $\mathbb{QP}_\mathrm{sf}(y_1, \dots, y_n)$ where $y_i = z^{e_i}$ are the initial $Y$-variables. The proof is completely analogous to \cref{prop: cluster variety to cluster pattern}. We leave the details to the reader.
\end{proof}

\subsection{Special coefficients}

By construction, given an $\mathcal A$-seed (resp. $\mathcal X$-seed) $\mathbf s$, there is the flat family
\[
\pi_{\mathcal A} \colon \mathcal A_{\mathbf s} \rightarrow \Spec R\quad (\mathrm{resp.}\ \pi_{\mathcal X}\colon \mathcal X_{\mathbf s} \rightarrow \Spec R). 
\]
Let $\lambda$ be a $\Bbbk$-point of $\Spec R$. Then the special fiber $\pi^{-1}(\lambda)$ is a $\Bbbk$-scheme and can be viewed as a \emph{generalized cluster variety with special coefficients}, denoted by $\mathcal A_{\mathbf s, \lambda}$ (resp. $\mathcal X_{\mathbf s, \lambda}$). They are also glued together by tori via birational morphisms (namely the $\mathcal A$- or $\mathcal X$-mutations specialized at $\lambda$)
\[
	\mathcal A_{\mathbf s, \lambda} = \bigcup_{v\in \mathfrak T_n} T_{N,v},\quad \mathcal X_{\mathbf s, \lambda} = \bigcup_{v\in \mathfrak T_n} T_{M,v}.
\]
The $\mathcal A$-type varieties (resp. $\mathcal X$-type varieties) lead to cluster patterns (resp. $Y$-patterns) with specialized coefficients. We have as before in the $\mathcal A$-case the inclusion of algebras
\[
\mathscr A(\mathbf s, \lambda) \subset \overline {\mathscr A}(\mathbf s, \lambda) \coloneqq H^0(\mathcal A_{\mathbf s,\lambda}, \mathscr O_{\mathcal A_{\mathbf s,\lambda}}).
\]

\subsection{Cluster duality}
The cluster duality of Fock and Goncharov predicts, in the ordinary case, that the varieties $\mathcal A_\mathbf s$ and $\mathcal X_\mathbf s$ are dual in the sense that the upper cluster algebra $\overline{\mathscr A}(\mathbf s)$ has a basis parametrized by the tropical set $\mathcal X^\mathrm{trop}(\mathbb Z)$ (and vice versa). Note here $\mathbf s$ is viewed as a seed without coefficients so we do not need to distinguish between $\mathcal A$- and $\mathcal X$-seeds. Strictly speaking, this statement is not true as in some cases $\mathcal X_\mathbf s$ may have too few regular functions \cite{gross2013birational}. This duality (or named the Fock--Goncharov full conjecture) is the main subject of study (on a precise modified formulation and when it is true) in the paper \cite{gross2018canonical}.

Our point of view is that it is more natural to include generalized cluster varieties in cluster dualities, which we will demonstrate in the principal coefficients case. We denote the $\mathcal X$-cluster variety with principal coefficients by $\mathcal X^\mathrm{prin}_\mathbf s$, where the coefficient group is the tropical semifield $$\mathbb P = \mathrm{Trop}(q_{ij}\mid i\in I, j \in [1, r_i]).$$ The scheme $\mathcal X^\mathrm{prin}_\mathbf s$ is over $\mathrm{Spec}(R)$ where $R = \Bbbk \mathbb P$. There are evaluations $\lambda$ sending $q_{ij}$ to $\lambda_{ij}\in \Bbbk^*$. Each $\lambda$ specifies an $\mathcal X$-cluster variety with special coefficients as in the following diagram
\[\begin{tikzcd}
\mathcal X^{\mathrm{prin}}_{\mathbf s,\lambda} \ar[r, hook]\ar[d] &	 \mathcal X^{\mathrm{prin}}_{\mathbf s} \ar[d, "\pi_\mathcal X"]\\
\mathrm{Spec}(\Bbbk) \ar[r, hook, "\lambda"] & \mathrm{Spec}(R).
\end{tikzcd}\]
With a general choice coefficients, $\mathcal X^{\mathrm{prin}}_{\mathbf s,\lambda}$ should be considered mirror dual to the family
\[
	\pi_\mathcal A \colon \mathcal A^{\mathrm{prin}}_\mathbf s \rightarrow \mathrm{Spec}(R),
\]
where $\mathbf s$ is viewed as an $\mathcal A$-seed with coefficients. We shall not fully justify this statement in this paper, but instead will show that the family $\pi_\mathcal A\colon \mathcal A^{\mathrm{prin}}_\mathbf s \rightarrow \mathrm{Spec}(R)$ (as well as the generalized cluster algebra $\mathscr A^\mathrm{prin}(\mathbf s)$) can be reconstructed from $\mathcal X_{\mathbf s, \lambda}^\mathrm{prin}$, through a consistent wall-crossing structure (or scattering diagram) $\mathfrak D_\mathbf s$ associated to $\mathcal X_{\mathbf s, \lambda}^\mathrm{prin}$; see \Cref{section: reconstruct cluster algebra}.

\section{Toric models and mutations}

This section is a generalization of \cite[Section 3]{gross2013birational} aiming for generalized cluster varieties. A \emph{log Calabi--Yau pair} $(X,D)$ is a smooth projective variety $X$ (over an algebraically closed field $\Bbbk$) with a reduced simple normal crossing divisor $D$ such that $K_X + D = 0$ where $K_X$ is the canonical divisor of $X$. A \emph{log Calabi--Yau variety} $U$ is the interior of a log Calabi--Yau pair $(X,D)$, i.e. $U = X\setminus D$. Described in \cite{gross2013birational}, particularly relevant in cluster theory are log Calabi--Yau pairs $(X,D)$ obtained from a blow-up $\pi\colon X \rightarrow X_\Sigma$
where $X_\Sigma$ is the toric variety associated to a fan $\Sigma$ in $\mathbb R^n$. The blow-up is along a hypersurface in the toric boundary of $X_\Sigma$, and $D$ is given by the strict transform of the toric boundary. We will see that both generalized $\mathcal X$- and $\mathcal A$-varieties can be realized as log Calabi--Yau varieties obtained this way (up to codimension two subsets).

\subsection{Toric models} Fix a lattice $N\cong \mathbb Z^n$ and let $M$ be its dual. Suppose for $i\in I = [1, l]$ we have pairs of vectors $(e_i, w_i)\in N\times M$ such that $\langle e_i, w_i\rangle =0$. We assume that all non-zero $e_i$ are primitive, but some of them may equal. For each $i$, we fix a positive integer $r_i$. We also take functions (elements in $\Bbbk[M]$)
\[
f_i = a_{i,0}+a_{i,1}z^{w_i}+\cdots+a_{i,r_i}z^{r_iw_i}
\]
with non-zero $a_{i,0}$ and $a_{i,r_i}$.

We construct in below a log Calabi--Yau variety $U_\Lambda$ using the data $$\Lambda \coloneqq ((e_i)_{i\in I}, (w_i)_{i\in I}, (f_i)_{i\in I}).$$ The following construction is what we mean by a \emph{toric model} for $U_\Lambda$ and we call such $\Lambda$ a \emph{toric model data}.

\begin{construction}[cf. {\cite[Construction 3.4]{gross2013birational}}]\label{construction: log calabi yau of a seed}
Given the data $\Lambda$, consider the fan 
\[
	\Sigma = \Sigma_\Lambda \coloneqq \{\mathbb R_{\geq 0}e_i\mid i\in I\} \cup \{0\}
\]
in $N_\mathbb R$. Let $X_\Sigma$ be the toric variety defined by $\Sigma$, and $D_i$ be the irreducible toric boundary divisor corresponding to $\mathbb R_{\geq 0}e_i$. Note that since $\langle e_i,w_i\rangle = 0$, $z^{w_i}$ does not vanish on $D_i$. Let $Z_i$ be the zero locus of $f_i$ on $D_i$, i.e. the closed subscheme $\bar V(f_i) \cap D_i$, which is a hypersurface. Blow up $X_\Sigma$ along $\bigcup_{i=1}^l Z_i$ to obtain 
\[	\pi \colon \tilde X_\Sigma \rightarrow X_\Sigma. 	\]
Let $\tilde D_i$ be the strict transform of $D_i$ in $\tilde X_\Sigma$. Then the open subscheme $U_\Lambda \coloneqq \tilde X_\Sigma \setminus \bigcup_i \tilde D_i$ is a log Calabi--Yau variety.
\end{construction}

\begin{definition}\label{def: k-mutable}
For $k\in I$, we say a toric model data $\Lambda$ \emph{$k$-mutable} if the pairs $(e_i, w_i)$ satisfy the condition $$\langle e_i, w_k \rangle = 0 \implies \langle e_k, w_i \rangle = 0 $$ for any $i\in I$. 	
\end{definition}

We define mutations of a $k$-mutable toric model data.

\begin{definition}\label{def: mutation equivalent data}
	Let $\Lambda$ be a $k$-mutable toric model data and $\Lambda' = ((e_i'), (w_i'), (f_i'))$ be another set of data. Write $\beta_{ij} = \langle e_i, w_j\rangle$. We write $\Lambda' = \mu_k(\Lambda)$ (or say they are \emph{$\mu_k$-equivalent}) if they satisfy the following conditions:
	\begin{itemize}
	\item $e_k' = -e_k$ and $w_k' = - w_k'$;
	\item if $i\neq k$ and $\beta_{ik} \geq  0$, $e_i' = e_i$ and $w'_i = w_i$;
	\item if $i\neq k$ and $\beta_{ik} \leq  0$, $e_i' = e_i-\langle e_i, r_k w_k\rangle e_k$ and $w'_i = w_i + \langle e_k, w_i \rangle r_kw_k$;
	\end{itemize}
	and if writing $f_i' = a_{i,0}'+ a_{i,1}'z^{w'_i}+ \cdots + a_{i,r_i}'z^{r_iw'_i}$,
	\begin{itemize}
	\item $a_{k,j}' = a_{k,r_k-j}$ for $j\in  [1, r_k]$;
	\item for $i\neq k, j\in [1, r_i]$,
	\begin{equation}
	\quad a'_{i,j}/a'_{i,0} = \begin{dcases}
	(a_{k,0})^{j\beta_{ki}}\cdot a_{i,j}/a_{i,0}\quad &\text{if $\beta_{ik}>0$}\\
	(a_{k,r_k})^{j\beta_{ki}} \cdot a_{i,j}/a_{i,0} \quad &\text{if $\beta_{ik}\leq 0$}.
\end{dcases}
	\end{equation}
	\end{itemize}
\end{definition}

We note that the mutation $\mu_k$ is not deterministic for the $(f_i)$ part, and is not involutive for the $((e_i),(w_i))$ part.

Applying \Cref{construction: log calabi yau of a seed} to $\Lambda' = \mu_k(\Lambda)$, we obtain another log Calabi--Yau variety $U_{\Lambda'}$. Note that both $U_\Lambda$ and $U_{\Lambda'}$ contain the torus $T_N$. Consider the birational morphism
\[
\mu_k\colon T_N\dasharrow T_N,\quad \mu_k^*(z^m) = z^m \cdot f_k^{-\langle m, e_k\rangle}.
\]

The following theorem is a generalization of the results in \cite[Section 3]{gross2013birational}.

\begin{theorem}\label{thm: mutation of toric model}
	The birational morphism $\mu_k$ extends to an isomorphism $\mu_k \colon U_\Lambda \rightarrow U_{\Lambda'}$ outside codimension two subsets if $\dim \bar V(f_k) \cap Z_i < \dim Z_i$ whenever $\langle e_i, w_k \rangle = 0$ for $i\in I$.
\end{theorem}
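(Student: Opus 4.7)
The plan is to adapt the strategy of Gross--Hacking--Keel \cite[Section 3]{gross2013birational} to the generalized setting where each $f_i$ is a degree-$r_i$ polynomial in $z^{w_i}$ rather than a binomial. The birational map $\mu_k$ is a \emph{monomial transformation} (it multiplies $z^m$ by powers of $f_k$), so it certainly extends to an isomorphism between the big tori $T_N \subset U_\Lambda$ and $T_N \subset U_{\Lambda'}$. The whole point is to track the exceptional divisors of the two blow-ups and match them.

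First I would build a common smooth toric refinement $X_{\bar\Sigma}$ of $X_\Sigma$ and $X_{\Sigma'}$. Because mutation sends $e_k \mapsto -e_k$ and, for $i \neq k$ with $\beta_{ik} \leq 0$, $e_i \mapsto e_i - \langle e_i, r_k w_k \rangle e_k$, the fan $\bar\Sigma$ obtained by inserting all rays $\mathbb R_{\geq 0} e_i$ and $\mathbb R_{\geq 0} e_i'$ admits toric birational morphisms $\pi \colon X_{\bar\Sigma} \to X_\Sigma$ and $\pi' \colon X_{\bar\Sigma} \to X_{\Sigma'}$ which are isomorphisms on $T_N$. The $k$-mutability hypothesis (\Cref{def: k-mutable}) guarantees that $w_i$ remains orthogonal to $e_i'$, so $f_i$ and $f_i'$ extend as regular, non-vanishing functions on the relevant toric strata.

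Second I would pull back the two blow-ups to $X_{\bar\Sigma}$ and compare the centers. For $i \neq k$ with $\beta_{ik} \geq 0$, one has $e_i = e_i'$, and the mutation rule for $a_{i,j}'$ in \Cref{def: mutation equivalent data} is precisely what is needed for $f_i'$ to differ from $f_i$ by a monomial (hence define the same subscheme of $D_i$ after the elementary transformation along $\mathbb R_{\geq 0} e_k$). For $\beta_{ik} < 0$, the corresponding toric elementary transformation across $D_k$ shifts $w_i$ by a multiple of $w_k$, and the mutation of coefficients again exactly absorbs the monomial factor introduced. For $i = k$, the involution $e_k \mapsto -e_k$ together with $a_{k,j}' = a_{k,r_k - j}$ identifies the two blow-up centers on $D_k \cong D_k'$ via the torus automorphism $z^w \mapsto z^{-w}$. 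This produces a canonical isomorphism between the two blow-ups away from any locus where the strict transform of $Z_i$ under $\pi$ or $\pi'$ differs from the scheme cut out by the pulled-back $f_i$.

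The hard part is the codimension-two estimate. The discrepancy locus lives inside (i) the indeterminacy of $\mu_k$, which sits in $Z_k \cup \bar V(f_k)$ intersected with divisors, already codimension $\geq 2$ in $X_\Sigma$, and (ii) for each $i \neq k$, the intersection $\bar V(f_k) \cap Z_i$ inside $D_i$ where the toric flip could create spurious components. It is precisely here that the hypothesis $\dim \bar V(f_k) \cap Z_i < \dim Z_i$ intervenes: it forces this anomalous intersection to have codimension at least two in $X_\Sigma$, so that after removing a codimension-two subset the strict transform of $Z_i$ under $\pi$ coincides with the proper transform of $Z_i'$ under $\pi'$. The main obstacle will be the careful bookkeeping required to verify that, across the toric elementary transformation, the higher-degree polynomial $f_k = \sum_{j} a_{k,j} z^{jw_k}$ induces on each $D_i$ exactly the monomial factor prescribed by the mutation rule for $a_{i,j}'/a_{i,0}'$; this replaces a short binomial computation in \cite{gross2013birational} by a polynomial identity in $r_k$ unknowns which must be checked term by term.
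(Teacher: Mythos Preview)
Your proposal is correct and takes essentially the same approach as the paper, which likewise adapts \cite[Section 3]{gross2013birational} by checking chart by chart that $\mu_k^*(f_i')\vert_{D_i}$ agrees with $f_i\vert_{D_i}$ up to a nonzero scalar via the mutation rule for the $a_{i,j}$. The paper's organization is slightly different---rather than a single common refinement $\bar\Sigma$, it introduces the two auxiliary fans $\Sigma^\pm = \Sigma \cup \{\mathbb R_{\geq 0}(\mp e_k)\}$, isolates the rank-two toric piece containing $\pm\mathbb R_{\geq 0}e_k$ (where \cite[Lemma 3.2]{gross2013birational} applies verbatim), and handles each remaining chart $U_i\setminus \bar V(f_k)$ separately---but the content is the same. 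One small correction: the $k$-mutability hypothesis is not what guarantees $\langle e_i', w_i'\rangle = 0$ (that identity is automatic from the mutation formulas); rather it is invoked only in the boundary case $\langle e_i, w_k\rangle = 0$, where it forces $\langle e_k, w_i\rangle = 0$ so that $\mu_k^*$ acts trivially on $f_i'$ and the centers $Z_i$, $Z_i'$ match without any monomial correction.
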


\begin{proof}
	We first make up some auxiliary varieties. Let $\Sigma^+ = \Sigma \cup \{\mathbb R_{\geq 0}e_k'\}$ and $\Sigma^- = \Sigma' \cup \{\mathbb R_{\geq 0}e_k\}$. We can blow up $X_{\Sigma^+}$ (resp. $X_{\Sigma^-}$) in the same way as we do so for $X_\sigma$ (resp. $X_{\Sigma'}$) to obtain $\tilde X_+$ (resp. $\tilde X_-$). Removing the strict transforms of the toric boundaries, we can still get $U_\Lambda$ and $U_{\Lambda'}$. Following Lemma 3.6 in \cite{gross2013birational}, we show that $\mu_k$ extends to an isomorphism (outside codimension two subsets) between $\tilde X_+$ and $\tilde X_-$, mapping the toric boundary of one to that of the other.
	
	Suppose we only blow up $X_{\Sigma^+}$ along $Z_k$ and $X_{\Sigma^-}$ along $Z_k'$. Then the blow-up $\tilde X_+$ has a covering of open subsets
	\begin{equation}\label{eq: a covering}
	\tilde X_+ = \tilde {\mathbb P}_+ \cup \left(\bigcup_{i\neq k} U_{i}\right)	
	\end{equation}
	where $\tilde {\mathbb P}_+$ is the blow-up along $Z_k$ of the toric variety of the fan $\{\mathbb R_{\geq 0} e_k', \mathbb R_{\geq 0} e_k\}$ and $U_i$ is the standard open toric chart corresponding to the ray $\mathbb R_{\geq 0} e_i$. Replacing $U_i$ with $U_i \setminus \bar V(f_k)$ for $i\neq k$, (\ref{eq: a covering}) is still a covering but up to codimension two (with $\bar V(f_k)\cap D_i$ missing). More precisely, $f_k$ is a regular function on $U_i$ if $\langle w_k, e_i \rangle \geq 0$. In this case, $\bar V(f_k) \cap D_i$ is just the zero locus of the restriction of $f_k$ on $D_i$, i.e. $V(f_k)\cap D_i$. As $z^{w_k}$ vanishes on $Z_i$ when $\langle w_k, e_i \rangle > 0$, $\bar V(f_k) \cap D_i = \emptyset$ since $f_k$ has non-zero constant term. When $\langle w_k, e_i \rangle < 0$, then $\bar V(f_k)\cap D_i = V(z^{-r_kw_k}f_k)\cap D_i$ where $z^{-r_kw_k}f_k = f'_k$ is a regular function on $U_i$. So $\bar V(f_k)\cap D_i$ is still empty since $f_k'$ has non-zero constant. Therefore we only fail to cover $\bar V(f_k)\cap D_i$ when $\langle w_k, e_i\rangle = 0$, which is a codimension two subset.
	
	By Lemma 3.2 of \cite{gross2013birational}, $\mu_k$ extends to a regular isomorphism from $\tilde{\mathbb P}_+$ to $\tilde {\mathbb P}_-$. Here $\tilde {\mathbb P}_-$ is the blow-up along $Z_k'$ of the toric variety defined by the fan $\{\mathbb R_{\geq 0} e_k', \mathbb R_{\geq 0} e_k\}$. We check that $\mu_k$ also extends to a regular isomorphism from $U_i\setminus \bar V(f_k)$ to $U_i'\setminus \bar V(f'_k)$. Note that these are affine schemes so we check that $\mu_k^*$ extends to an isomorphism between their rings of regular functions. There are two cases.
	\begin{enumerate}
		\item If $\langle e_i, w_k \rangle \geq 0$, then $e_i' = e_i$. Note that $f_k$ is a regular function on $U_i$ as well as on $U_i'$. Thus we have
			\[
				U_i\setminus \bar V(f_k) = U_i\setminus V(f_k)\quad \text{and}\quad U_i'\setminus \bar V(f_k') = U'_i \setminus V(f_k).
			\]
	For $\langle m, e_i\rangle \geq 0$, $z^m$ defines a regular function on $U_i'$ and
	\[
	\mu_k^*(z^m) = z^m f_k^{-\langle m, e_k\rangle}
	\]
	is a regular function on $U_i\setminus V(f_k)$.

		\item If $\langle e_i, w_k \rangle < 0$, then $e_i' = e_i - \langle e_i, r_kw_k \rangle e_k$. Instead of $f_k$, the function $f_k' = z^{-r_kw_k}f_k$ is a regular function on $U_i$ and $\bar V(f_k) = V(f'_k)$. For $\langle m, e_i'\rangle \geq 0$ and $z^m$ a regular function on $U_i'$, we have
		\[
		\mu_k^*(z^m) = z^m f_k^{-\langle m, e_k\rangle} = z^{m-r_kw_k\langle m, e_k\rangle} (f_k')^{-\langle m, e_k \rangle}.
		\]
	We check that $\langle  m-r_kw_k\langle m, e_k\rangle, e_i \rangle = \langle m-r_kw_k\langle m, e_k\rangle, e_i' + \langle e_i, r_kw_k\rangle e_k \rangle = \langle m, e_i'\rangle>0$. Thus $\mu_k^*(z^m)$ is a regular function on $U_i\setminus \bar V(f_k) = U_i\setminus V(f_k')$. 
	\end{enumerate}
Therefore $\mu_k^*$ is a morphism between regular functions. In all the cases above, one checks that  sending $z^m$ to $z^mf_k^{\langle m,e_k \rangle}$ is the inverse of $\mu_k^*$. Summarizing, we have so far proven that there is an isomorphism 
\[
\mu_k \colon U_+ \coloneqq \tilde{\mathbb P}_+ \cup \left(\bigcup_{i\neq k} U_i\setminus \bar V(f_k)\right) \longrightarrow U_-\coloneqq \tilde {\mathbb P}_- \cup \left(\bigcup_{i\neq k} U'_i\setminus \bar V(f'_k)\right) 
\]
extending the birational morphism $\mu_k$ between tori.

Now we analyze the impact of blowing up the hypersurfaces $Z_i$ (and $Z_i'$) for $i\neq k$. When $\langle w_k, e_i\rangle \neq 0$, as discussed $D_i \cap \bar V(f_k) = \emptyset$, so $Z_i\subset D_i$ is contained in $U_+$. Since $\langle w_k', e_i' \rangle = -\langle w_k, e_i \rangle \neq 0$, the same is true for $Z_i'$, i.e. $Z_i'\subset U_-$. We would like to show that $\mu_k(Z_i) = Z_i'$ when $\langle w_k, e_i\rangle \neq 0$. There are two cases.

\begin{enumerate}
	\item Suppose $\langle w_k, e_i\rangle >0$. In this case, $e_i' = e_i$ and $w_i' = w_i$. By definition $Z'_i = D'_i \cap V(f'_i) = V(z^{m_0}) \cap V(f'_i)\subset U'_i$ for some $m_0$ such that $\langle m_0, e_i'\rangle = 1$. Now we have $\mu_k^*(z^{m_0}) = z^{m_0}f_k^{-\langle m_0, e_k \rangle}$ and 
	\begin{align*}
	\mu_k^*(f_i') &= a_{i,0}' + a_{i,1}'z^{w_i}f_k^{-\langle w_i, e_k\rangle} + \cdots + a_{i,r_i}'z^{r_iw_i}f_k^{-\langle r_iw_i, e_k\rangle}.
	\end{align*}
Note that $f_k$ is invertible on $U_i\setminus\bar V(f_k)$ and restricts to constant $p_{k0}$ on $D_i$. So $V(\mu_k^*(z^{m_0}))$ is just the divisor $D_i$ and 
\[
\mu_k^*(f'_i)\vert_{D_i} = a_{i,0}' + a_{i,1}'a_{k,0}^{-\beta_{ki}}z^{w_i}+\cdots a_{i,r_i}'a_{k,0}^{-r_i\beta_{ki}}z^{r_iw_i} = \lambda\cdot f_i\vert_{D_i}. 
\]
for some non-zero $\lambda\in \Bbbk$ by the $\mu_k$-equivalence assumption on $\Lambda$ and $\Lambda'$. Therefore $\mu_k(Z_i) = Z_i'$.

	\item Suppse $\langle w_k, e_i \rangle <0$. In this case we have $e_i' = v_ i - \langle r_kw_k, e_i \rangle e_k$ and $w_i' = w_i + \langle w_i, e_k \rangle r_kw_k$. Still $Z_i' = V(z^{m_0}) \cap V(f_i')$. Now instead of $f_k$, the function $f_k' = z^{-r_kw_k}f_k$ is a regular function on $U_i$ and restricts to constant $a_{k,r_k}$ on $D_i$. First, $\mu_k^*(z^{m_0}) = z^{m_0 - \langle m_0, e_k\rangle r_kw_k} (f_k')^{\langle m_0, e_k\rangle}$. Since $f_k'$ is invertible on $U_i\setminus V(f_k)$, $V(\mu_k^*(z^{m_0})) = D_i$ as $\langle m_0 + \langle m_0, e_k\rangle r_kw_k, e_i\rangle = 1$. Secondly we have
	\begin{align*}
	\mu_k^*(f_i') &= a_{i,0}' + a_{i,1}'z^{w'_i}f_k^{-\langle w_i, e_k\rangle} + \cdots + a_{i,r_i}'z^{r_iw_i'}f_k^{-\langle r_iw_i, e_k\rangle}\\
	& = a_{i,0}' + a_{i,1}'z^{w'_i-\langle w_i, e_k\rangle r_kw_k}(f'_k)^{-\langle w_i, e_k\rangle} + \cdots + a_{i,r_i}'z^{r_iw'_i-\langle r_i w_i , e_k\rangle r_k w_k}(f'_k)^{-\langle r_i w_i, e_k\rangle}
	\end{align*}
	
	Hence
	\begin{align*}
		\mu_k^*(f_i')\mid_{D_i} &= a_{i,0}' + a_{i,1}' a_{k,r_k}^{-\beta_{ki}} z^{w_i}  + \cdots + a_{i,r_i}' a_{k,r_k}^{-r_i\beta_{ki}} z^{r_iw_i} = \lambda \cdot f_i\mid_{D_i}
	\end{align*}
	for some non-zero $\lambda\in \Bbbk$ again by the $\mu_k$-equivalence assumption. Therefore in this case we also have $\mu_k(Z_i) = Z_i'$.
\end{enumerate}

Finally we consider the case $\langle w_k, e_i \rangle =0$. The argument we need is exactly the same as in the last paragraph of the proof in \cite{gross2013birational}. By the assumption $\langle w_k , e_i \rangle = 0 \implies \langle w_i, e_k \rangle = 0$, so we have 
\[
	\mu_k^*(f_i') = f_i,
\]
and thus $\mu_k(Z_i) = Z_i'$. The problem is that $Z_i$ may not be fully contained in $D_i\setminus V(f_k)$, with $V(f_k)\cap Z_i$ missing. If $V(f_k)\cap Z_i$ contains a irreducible component of $Z_i$, then $U_\Lambda$ would contain the corresponding exceptional divisor while blowing up in $U_+$ does not. However the isomorphism $\mu_k\colon U_+\rightarrow U_-$ need not extend as isomorphism across this exceptional divisor. Now we need the further hypothesis $\dim V(f_k) \cap Z_i < \dim Z_i$ so that the missing part in the blow-up center is of at least codimension three in $U_i$. After blowing up the corresponding locus in $U_+$ and $U_+$, we have the following diagram
\[
\begin{tikzcd}
	\tilde U_+ \ar[r, "\mu_k"]\ar[d, "\pi"] & \tilde U_-\ar[d, "\pi"] \\
	U_+ \ar[r, "\mu_k"] & U_-
\end{tikzcd}
\]
where vertical arrows are blow-ups and horizontal arrows are genuine isomorphisms. Removing the strict transform of the toric boundary, we have immersions 
\[
\tilde U_+\setminus \tilde D \subset U_\Lambda \quad \text{and} \quad \tilde U_- \setminus \tilde D \subset U_{\Lambda'}
\]
missing codimension two loci. Summarizing, the birational map $\mu_k$ can be extended to an isomorphism $\mu_k\colon U_\Lambda \dashrightarrow U_{\Lambda'}$ outside sets of codimension two.
\end{proof}

We state a sufficient condition for the assumption in \Cref{thm: mutation of toric model},
\[
\forall \langle e_i, w_k\rangle = 0,\quad \dim \bar V(f_k) \cap Z_i < \dim Z_i,
\]
to hold.

\begin{definition}[cf. {\cite[Definition 1.4]{berenstein2005cluster}}]
	A toric model data $\Lambda = ((e_i),(w_i),(f_i))$ is said to be \emph{coprime} if the functions $f_i$ are pairwise coprime as elements in the ring $\Bbbk[M]$.
\end{definition}

\begin{corollary}
	The result in \Cref{thm: mutation of toric model} holds if $\Lambda$ is coprime.
\end{corollary}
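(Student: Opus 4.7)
The plan is to reduce the corollary to the dimension hypothesis of \Cref{thm: mutation of toric model}, namely
\[
\dim \bar V(f_k) \cap Z_i < \dim Z_i \quad \text{whenever } \langle e_i, w_k \rangle = 0,
\]
and to establish this hypothesis via a coprimality argument on the restrictions of $f_k$ and $f_i$ to the divisor $D_i$.

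First I would fix $i \in I$ with $\langle e_i, w_k \rangle = 0$ and observe that, since the fan $\Sigma$ consists only of rays and the origin, the toric divisor $D_i \subset X_\Sigma$ is itself a torus with coordinate ring $\Bbbk[M \cap e_i^\perp]$; in particular it has no toric boundary of its own. Because $\langle e_i, j w_k \rangle = 0$ for every $j$, every monomial appearing in $f_k$ already lies in $\Bbbk[M \cap e_i^\perp]$, and the same is true for $f_i$ thanks to the standing assumption $\langle e_i, w_i \rangle = 0$. Hence $f_k|_{D_i}$ and $f_i|_{D_i}$ are simply $f_k$ and $f_i$ viewed as elements of the subring $\Bbbk[M \cap e_i^\perp] \subset \Bbbk[M]$, and inside $D_i$ one has the equalities $\bar V(f_k) \cap D_i = V(f_k|_{D_i})$ and $Z_i = V(f_i|_{D_i})$.

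The core step is then to descend coprimality of $f_k$ and $f_i$ from $\Bbbk[M]$ to the subring $\Bbbk[M \cap e_i^\perp]$. The point is that any non-unit in a Laurent polynomial ring involves at least two distinct monomials, so a non-unit common factor $g \in \Bbbk[M \cap e_i^\perp]$ of $f_k$ and $f_i$ would remain a non-unit common factor in the bigger ring $\Bbbk[M]$, contradicting the coprimality hypothesis on $\Lambda$. With $f_k|_{D_i}$ and $f_i|_{D_i}$ coprime in $\Bbbk[M \cap e_i^\perp]$, their vanishing loci share no irreducible component of $D_i$, so their intersection has codimension at least two in $D_i$, hence strictly smaller dimension than the hypersurface $Z_i$. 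Applying \Cref{thm: mutation of toric model} then yields the corollary.

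The only delicate point, and the one I would pay closest attention to, is the descent of coprimality from $\Bbbk[M]$ to the Laurent polynomial subring, which rests on the monomial-count observation above. The remaining dimension comparison is routine, so I do not foresee a substantive obstacle.
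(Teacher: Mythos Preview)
Your proof is correct and follows essentially the same approach as the paper: both restrict $f_k$ and $f_i$ to the divisor $D_i$ and use coprimality to bound the dimension of the intersection. You are in fact more explicit than the paper about why coprimality descends from $\Bbbk[M]$ to $\Bbbk[M\cap e_i^\perp]$, and you correctly observe that under the hypothesis $\langle e_i,w_k\rangle=0$ no monomial adjustment is needed, whereas the paper includes that step as a precaution.
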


\begin{proof}
	Note that $Z_i = \bar V(f_i) \cap D_i$. If needed, multiply some monomial $z^m$ to $f_i$ so that $\tilde f_i = z^mf_i$ is a regular function on $D_i$. Do the same to $f_k$ to get $\tilde f_k$. By the coprime condition on $\Lambda$, $\tilde f_i$ and $\tilde f_k$ are still coprime, so we have
	\[
	\dim V(\tilde f_k)\cap V(\tilde f_i) < \dim V(\tilde f_i)
	\]
	where the above subschemes are taken inside $D_i$.
\end{proof}

The following is an easy-to-check condition on $\Lambda$ for the coprimeness to hold.

\begin{lemma}\label{lemma: full rank implies coprime}
	If the vectors $w_i$ are linear independent, then $\Lambda$ is coprime.
\end{lemma}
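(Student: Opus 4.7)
The plan is to use the Newton polytope (support) of elements of $\Bbbk[M]$. For a nonzero $h = \sum_{m\in M} c_m z^m \in \Bbbk[M]$, let $\mathrm{Newt}(h) \coloneqq \mathrm{Conv}\{m \mid c_m \neq 0\} \subset M_\mathbb{R}$. Because $\Bbbk[M]$ is a Laurent polynomial ring (in particular, a domain with no non-monomial zero divisors), Newton polytopes are additive under multiplication: $\mathrm{Newt}(gh) = \mathrm{Newt}(g) + \mathrm{Newt}(h)$ as a Minkowski sum. The units of $\Bbbk[M]$ are exactly the nonzero Laurent monomials, whose Newton polytopes are single points.

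By the assumption that $a_{i,0}$ and $a_{i,r_i}$ are nonzero, $\mathrm{Newt}(f_i)$ is the segment from $0$ to $r_i w_i$, a one-dimensional polytope lying in the line $\mathbb{R}w_i$. Now suppose $g\in \Bbbk[M]$ is a common divisor of $f_i$ and $f_j$ for some $i\neq j$. Then $\mathrm{Newt}(g)$ is a Minkowski summand of $\mathrm{Newt}(f_i)$, hence (up to translation) a sub-segment contained in the line $\mathbb{R}w_i$; similarly $\mathrm{Newt}(g)$ must (up to translation) lie in the line $\mathbb{R}w_j$.

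Since by hypothesis the family $(w_i)_{i\in I}$ is linearly independent, in particular $w_i$ and $w_j$ are linearly independent, so $\mathbb{R}w_i \cap \mathbb{R}w_j = \{0\}$. A polytope (up to translation) lying in two linearly independent lines must be a single point, so $\mathrm{Newt}(g)$ is a point and $g$ is a Laurent monomial, hence a unit of $\Bbbk[M]$. Therefore $f_i$ and $f_j$ share no non-unit factor, which is the definition of coprimeness. No serious obstacle is expected here; the only thing to be careful about is that Newton polytopes in a Laurent polynomial ring are well-defined only up to translation by elements of $M$, but this is precisely compensated by units being monomials, so the argument goes through cleanly.
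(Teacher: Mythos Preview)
Your argument is correct. The paper itself does not supply a proof for this lemma: it is stated immediately after the remark ``The following is an easy-to-check condition on $\Lambda$ for the coprimeness to hold,'' and no argument is given. Your Newton polytope approach is a clean and valid way to fill in this omitted check: the key facts you use---additivity of Newton polytopes under multiplication in $\Bbbk[M]$, that units are precisely monomials, and that a Minkowski summand of a segment is a parallel sub-segment---are all standard, and the conclusion follows exactly as you describe.

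An alternative phrasing that the paper may have had in mind is a change-of-variables argument: since the $w_i$ are linearly independent, one may (after extending to a basis of $M_\mathbb{Q}$) regard $f_i$ as a Laurent polynomial in a single variable $z^{w_i}$ and $f_j$ in the independent variable $z^{w_j}$, from which coprimeness is immediate. This is essentially the same observation as yours, just without the polytope language; your version has the advantage of not needing to worry about whether the $w_i$ extend to an integral basis of $M$.
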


\subsection{The upper bound}\label{subsection: upper bound}

Suppose we are given the data $\Lambda = ((e_i), (w_i), (f_i))$. Assume that $\Lambda$ is $i$-mutable for any $i\in I$. For $i\in I$, let $T_N^{(i)}$ be a copy of the torus $T_N$. Then we have birational maps for each $i\in I$,
\[
\mu_i\colon T_N \dasharrow T_N^{(i)},\quad \mu_i^*(z^m) = z^m f_i^{-\langle e_i, m\rangle}.
\]
We glue the $|I| + 1$ tori along the maps $\mu_i$ to obtain a scheme $X_\Lambda$.

In previous section, we know that not only the torus $T_N$, $U_\Lambda$ also contains the torus $T_N^{(i)}$, that is, we have the following diagram for every $i\in I$ : \[\begin{tikzcd}
	T_N \ar[r, "\mu_i", dashed]\ar[d, hook] &T_N^{(i)}\ar[ld, hook]\\
	U_\Sigma &
\end{tikzcd}\]
These diagrams determine a unique morphism $\psi \colon X_\Lambda \rightarrow U_\Lambda$.

\begin{lemma}[{\cite[Lemma 3.5]{gross2013birational}}]\label{lemma: GHK lemma 3.5}
	The morphism $\psi \colon X_\Lambda \rightarrow U_\Lambda$ satisfies the following properties
	\begin{enumerate}
		\item If $\dim Z_i \cap Z_j < \dim Z_i$ for all $i\neq j$, then $\psi$ is an isomorphism outside a set of codimension at least two.
		\item If $Z_i \cap Z_j = \emptyset$ for all $i\neq j$, then $\psi$ is an open immersion. In particular, in this case, $X_\Lambda$ is separated.
	\end{enumerate}
\end{lemma}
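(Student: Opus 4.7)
The plan is to verify both claims chart-by-chart using the toric-model mutation established in \Cref{thm: mutation of toric model}. First I would unpack the definition of $\psi$ on each of the $|I|+1$ charts of $X_\Lambda$: on the central torus $T_N$, $\psi$ is the canonical open immersion as the dense torus of $U_\Lambda$, while on $T_N^{(i)}$, the map $\psi$ is forced by the gluing along $\mu_i$. To make the latter precise, set $\Lambda'_i \coloneqq \mu_i(\Lambda)$. By \Cref{thm: mutation of toric model}, the birational map $\mu_i \colon U_\Lambda \dashrightarrow U_{\Lambda'_i}$ extends to an isomorphism outside a codimension two locus, and the dense torus $T_N \subset U_{\Lambda'_i}$ corresponds under $\mu_i^{-1}$ to an open subset $V_i \subset U_\Lambda$ whose complement has codimension at least two. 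Tracking how $\mu_i$ interchanges the exceptional divisor with the strict transform of the removed toric boundary, $V_i$ is the union of $T_N$ with a dense open subset of the exceptional divisor $E_i$ over $Z_i$ inside $\tilde X_\Sigma$.

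For part (1), it suffices to show that $T_N \cup \bigcup_{i \in I} V_i$ exhausts $U_\Lambda$ outside a codimension two locus. Indeed, $\psi$ is then an isomorphism onto each $V_i$ up to codimension two, and the gluing of the $V_i$'s inside $U_\Lambda$ along $T_N$ matches the gluing in $X_\Lambda$. The only locus potentially missed lies in intersections of distinct exceptional divisors $E_i \cap E_j$ for $i \neq j$, and these sit over $Z_i \cap Z_j$ under the blow-down. Under the hypothesis $\dim Z_i \cap Z_j < \dim Z_i$, a dimension count gives $\dim(E_i \cap E_j) < \dim E_i = \dim U_\Lambda - 1$, so the uncovered locus has codimension at least two, as required.

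For part (2), the stronger hypothesis $Z_i \cap Z_j = \emptyset$ makes the exceptional divisors $E_i$ and $E_j$ disjoint in $\tilde X_\Sigma$, so the open subsets $V_i \setminus T_N$ and $V_j \setminus T_N$ are disjoint inside $U_\Lambda$. Consequently $\psi$ is injective: the only overlap among the charts occurs on the dense torus, where the identification agrees with the gluing in $X_\Lambda$. Combined with the local-isomorphism statement, which now holds with no codimension two exceptions, this upgrades $\psi$ to an open immersion, and $X_\Lambda$ inherits separatedness from $U_\Lambda$. The main technical obstacle throughout is the precise identification of the image $V_i$: one must check that the mutation isomorphism of \Cref{thm: mutation of toric model} really sends the dense torus on the $\Lambda'_i$-side to $T_N$ together with the generic part of $E_i$, which amounts to tracking how ``exceptional divisor'' and ``strict transform of removed toric divisor'' swap roles under $\mu_i$ in the auxiliary varieties $\tilde X_\pm$ constructed in the proof of that theorem.
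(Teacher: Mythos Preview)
The paper does not actually prove this lemma; it is stated with a citation to \cite[Lemma~3.5]{gross2013birational} and used as a black box. So there is no in-paper proof to compare against, and your proposal is an attempt to supply one.

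Your geometric picture is correct: $\psi$ embeds $T_N^{(i)}$ as the dense torus together with a generic piece of the exceptional divisor $E_i$, and the two parts come down to controlling the overlaps $E_i \cap E_j$. But routing the argument through \Cref{thm: mutation of toric model} introduces a genuine gap. That theorem requires, for each $i$, the condition $\dim \bar V(f_i) \cap Z_j < \dim Z_j$ whenever $\langle e_j, w_i\rangle = 0$, whereas the lemma only assumes $\dim Z_i \cap Z_j < \dim Z_i$. These are not the same: $Z_i \cap Z_j$ lives in $D_i \cap D_j$, while $\bar V(f_i) \cap Z_j$ lives only in $D_j$ and need not meet $D_i$ at all. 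So the hypothesis of the lemma does not imply the hypothesis you need to invoke \Cref{thm: mutation of toric model}, and in part~(2) the same mismatch prevents you from upgrading ``isomorphism outside codimension two'' to an honest isomorphism on each chart.

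The fix is to avoid the global mutation comparison entirely. What you need is purely local to the single blow-up along $Z_i$: the map $\mu_i$ identifies $T_N^{(i)}$ with $T_N \cup (E_i \setminus \tilde D_i)$, and this is the rank-one computation behind \cite[Lemma~3.2]{gross2013birational} (also used inside the proof of \Cref{thm: mutation of toric model}). Once you have that description of each $V_i$ directly, your covering and disjointness arguments for (1) and (2) go through without needing the extraneous hypotheses of the full mutation theorem.
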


In the $\mathcal A$-cluster case to be explained later, the variety $X_\Lambda$ may be named \emph{the upper bound} according to \cite{fomin2007cluster}.

\subsection{Toric models for cluster varieties}\label{subsection: toric model for cluster}
In this section, we realize generalize cluster varieties as log Calabi--Yau varieties utilizing \Cref{construction: log calabi yau of a seed}.

\subsubsection{\textbf{$\mathcal A$-cluster cases}}

Suppose we have fixed data $\Gamma$ and an $\mathcal A$-seed with coefficients $\mathbf s = (\mathbf e, \mathbf p)$. We further choose an evaluation $\lambda\colon \mathbb P \rightarrow \Bbbk^*$. This amounts to pick a $\Bbbk$-point of $\Spec(\Bbbk \mathbb P)$. These lead to the generalized $\mathcal A$-cluster variety $\mathcal A_{\mathbf s, \lambda}$ with special coefficients.

Meanwhile consider the toric model data 
\[
	\Lambda(\mathbf s,\lambda) \coloneqq ((e_i)_{i\in I_\mathrm{uf}}, (w_i)_{i\in I_\mathrm{uf}}, (f_i)_{i\in I_\mathrm{uf}})
\]
defined as follows. The vectors $(e_i)_{i\in I_\mathrm {uf}}$ are taken from the seed $\mathbf s$. Recall that we have the exchange matrix $B = (b_{ij})$ where $b_{ij} \coloneqq \omega(e_i, d_je_j)$. Write $\beta_{ij} = b_{ij}/r_j$. Note that $\{e_i\mid i\in I\}$ form a basis of the lattice $N$ and we denote by $e_i^*$ the dual basis of $M$. Then define 
\[
w_i  \coloneqq \omega (- , d_ie_i/r_i)  = \sum_{j\in I} \beta_{ij}e_i^*\in M,\quad
f_i \coloneqq \lambda\left( \theta[\mathbf p_i](z^{w_i},1) \right) \in \Bbbk[M].
\]
Then \Cref{construction: log calabi yau of a seed} applies to the toric model data $\Lambda(\mathbf s, \lambda)$, and thus there is the associated log Calabi--Yau variety $U_{\Lambda(\mathbf s, \lambda)}$. Recall that we also have the scheme $X_{\Lambda(\mathbf s,\lambda)}$ obtained by glueing $n+1$ copies of the torus $T_N$ as in \Cref{subsection: upper bound}. We call $X_{\Lambda(\mathbf s,\lambda)}$ \emph{the upper bound} for $(\mathbf s, \lambda)$, which by definition is an open subscheme of $\mathcal A_{\mathbf s,\lambda}$.

The following lemma is easy to verify by direct computations.
\begin{lemma}\label{lemma: mutation of A data}
	We have $\mu_k(\Lambda(\mathbf s,\lambda)) = \Lambda(\mu_k(\mathbf s),\lambda)$ in the sense of \Cref{def: mutation equivalent data}. The later $\mu_k$ is the mutation of an $\mathcal A$-seed with coefficients.
\end{lemma}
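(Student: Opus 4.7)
The plan is to unpack both sides of the claimed equality component by component (the $(e_i)$ part, the $(w_i)$ part, and the $(f_i)$ part of the toric model data) and verify each agrees with the prescriptions of \Cref{def: mutation equivalent data}. Since the seed mutation of $\mathbf s$ and the toric‐data mutation of $\Lambda(\mathbf s,\lambda)$ are stated with the same case split on the sign of $\beta_{ik}$, the verification breaks naturally into these cases.

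For the $(e_i)$ part, the identity is immediate: the formula $e_i' = e_i + [\langle e_i, -r_k w_k\rangle]_+ e_k$ from \Cref{def: mutation of geometric seed} reduces to $e_i' = e_i$ when $\beta_{ik}\geq 0$ and to $e_i' = e_i - r_k\beta_{ik} e_k$ when $\beta_{ik}\leq 0$, which matches \Cref{def: mutation equivalent data} on the nose. For the $(w_i)$ part, I would compute $w_i' = \omega(-, d_i e_i'/r_i)$ using the mutated $e_i'$; the only nontrivial case is $\beta_{ik}\leq 0$, where one gets $w_i' = w_i - r_k\beta_{ik}\,\omega(-, d_i e_k/r_i)$, and a short rearrangement using $\omega(-, e_k) = (r_k/d_k) w_k$ together with the skew-symmetrizability identity $d_i b_{ik} = -d_k b_{ki}$ yields $w_i' = w_i + r_k\beta_{ki} w_k$, as required.

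The $(f_i)$ part is where the bulk of the computation lies. For $i=k$, the key observation is that in any semifield one has $(p^{-1})^+ = p^-$ and $(p^{-1})^- = p^+$, so
\[
\theta[\mathbf p_k'](z^{w_k'},1) = \prod_{l=1}^{r_k}\bigl(p_{k,l}^- z^{-w_k} + p_{k,l}^+\bigr) = z^{-r_k w_k}\,\theta[\mathbf p_k](z^{w_k},1),
\]
and $z^{-r_k w_k} f_k$ is precisely the polynomial in $z^{w_k''}=z^{-w_k}$ whose coefficients are $a_{k,j}'' = a_{k, r_k-j}$. For $i\neq k$, expanding $\theta[\mathbf p_i](z^{w_i},1) = \prod_l(p_{i,l}^+ z^{w_i} + p_{i,l}^-)$ gives $a_{i,0} = \prod_l p_{i,l}^-$ and $a_{i,j}/a_{i,0} = e_j(p_{i,1},\dots,p_{i,r_i})$ (the $j$-th elementary symmetric polynomial), using $p_{i,l}^+/p_{i,l}^- = p_{i,l}$. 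Since the seed mutation rescales every $p_{i,l}$ by the same factor $s = (p_{k;\mp})^{\beta_{ki}}$ (sign depending on $\mathrm{sgn}(\beta_{ik})$), one has $a_{i,j}'/a_{i,0}' = \lambda(s)^j\cdot a_{i,j}/a_{i,0}$, and $\lambda(p_{k;-}) = a_{k,0}$, $\lambda(p_{k;+}) = a_{k,r_k}$ matches the ratio prescribed in \Cref{def: mutation equivalent data}.

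The only step that is not a direct symbol chase is the $(w_i)$ computation, where one must invoke the skew-symmetrizability of $B$ to match the two formulations; the rest is bookkeeping. I expect no genuine obstacle, which is consistent with the paper's remark that the lemma is easy to verify.
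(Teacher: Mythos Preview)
The proposal is correct and amounts to precisely the direct computation the paper alludes to but omits. Your case-by-case check of the $(e_i)$, $(w_i)$, and $(f_i)$ components is the natural way to verify the lemma, and the only nontrivial step (using $d_i b_{ik} = -d_k b_{ki}$ to match the two formulas for $w_i'$) is identified correctly.
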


\begin{proposition}\label{prop: mutation of A cluster variety}
	 We have
	\begin{enumerate}
		\item the morphism $\psi\colon X_{\Lambda(\mathbf s,\lambda)} \rightarrow U_{\Lambda(\mathbf s,\lambda)}$ is an open immersion with image an open subset whose complement has codimension at least two;
		\item the birational map $\mu_k\colon U_{\Lambda(\mathbf s,\lambda)} \dasharrow U_{\Lambda(\mu_k(\mathbf s),\lambda)}$ is an isomorphism outside codimension two in each of the listed situations \begin{itemize}
			\item [A.] the functions $f_i$ have general coefficients;
			
			\item [B.] the seed $\mathbf s$ is mutation equivalent to one with principal coefficients, and $\lambda\in (\Bbbk^*)^{|I'|}$ is general enough.
			
		\end{itemize} 
	\end{enumerate}
\end{proposition}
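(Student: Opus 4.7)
\medskip

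\noindent\textbf{Part (1).} The plan is to invoke \Cref{lemma: GHK lemma 3.5}(2). The fan $\Sigma_{\Lambda(\mathbf s,\lambda)}$ consists only of rays (together with the origin), so the toric boundary divisors $D_i \subset X_{\Sigma}$ indexed by distinct rays are pairwise disjoint. Since $Z_i \subseteq D_i$ by construction, we get $Z_i \cap Z_j = \emptyset$ for $i \neq j$, which is precisely hypothesis (2) of \Cref{lemma: GHK lemma 3.5}, yielding the open immersion $\psi$. To see that the complement has codimension at least two in $U_{\Lambda(\mathbf s,\lambda)}$, I would observe that the image of the torus $T_N \hookrightarrow U_{\Lambda}$ misses exactly the exceptional divisors $E_i$ of the blow-up, while the image of $T_N^{(i)}$ covers, away from codimension two, a Zariski neighborhood of $E_i$ (the birational map $\mu_i$ is defined there because $f_i$ becomes a monomial times a unit on the relevant chart). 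Taking the union over all $i$ leaves out only codimension two strata, as in the argument of \cite[Proposition 3.7]{gross2013birational}.

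\medskip

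\noindent\textbf{Part (2).} The plan is to combine \Cref{thm: mutation of toric model} with \Cref{lemma: mutation of A data}. The latter identifies $\mu_k(\Lambda(\mathbf s,\lambda))$ with $\Lambda(\mu_k(\mathbf s),\lambda)$, so the birational map in the statement is literally the map analyzed in \Cref{thm: mutation of toric model}. It then suffices to verify the hypothesis that
\[
\dim\bigl(\bar V(f_k)\cap Z_i\bigr) < \dim Z_i \qquad \text{whenever } \langle e_i,w_k\rangle = 0.
\]
For case A, general coefficients make the Laurent polynomials $f_k$ and $f_i$ coprime in $\Bbbk[M]$ (a generic transverse-intersection argument on $D_i$), so $\bar V(f_k)$ cuts $Z_i = \bar V(f_i)\cap D_i$ in a proper subscheme. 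Alternatively, when the vectors $w_i$ are linearly independent, \Cref{lemma: full rank implies coprime} supplies coprimeness for free.

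\medskip

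For case B, I would use \Cref{lemma: mutation of A data} iteratively: if $\mathbf s = \mu_{k_m}\cdots\mu_{k_1}(\mathbf s^{\mathrm{prin}})$ with $\mathbf s^{\mathrm{prin}}$ having principal coefficients, then $\Lambda(\mathbf s,\lambda)$ is obtained from $\Lambda(\mathbf s^{\mathrm{prin}},\lambda)$ by the corresponding sequence of toric model mutations. At the principal seed, each $f_i = \prod_{j=1}^{r_i}(1 + \lambda(p_{i,j})z^{w_i})$, whose nonzero coefficients are $\Bbbk$-algebraically independent monomials in the $\lambda(p_{i,j})$. The mutation rule for the $(f_i)$ in \Cref{def: mutation equivalent data} multiplies these monomials by further monomials in the $\lambda(p_{k,0}),\lambda(p_{k,r_k})$, so after any number of mutation steps the resulting coefficients of the $f_i$'s are non-zero Laurent monomials in the independent parameters $\lambda(p_{i,j})$. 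Hence on a Zariski open subset of $(\Bbbk^*)^{|I'|}$ the $f_i$'s stay coprime and \Cref{thm: mutation of toric model} applies at every intermediate step.

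\medskip

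\noindent\textbf{Main obstacle.} The principal case (B) is where the real work is: one must keep careful track of how the non-binomial exchange polynomials transform under iterated mutations and argue that the generic non-vanishing of certain polynomial expressions in $\lambda(p_{i,j})$ is preserved throughout. This is more delicate than the corresponding argument in \cite{gross2013birational} because the products $\prod_j(p_{k,j}^- + p_{k,j}^+ z^{w_k})$ produce many more monomials than the binomial case; nevertheless, the algebraic independence of the principal parameters guarantees that the locus of ``bad'' $\lambda$'s is a proper closed subset, which is all that is needed.
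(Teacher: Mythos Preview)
Your approach is correct and, for part (1) and case A of part (2), essentially identical to the paper's proof (both invoke \Cref{lemma: GHK lemma 3.5}(2) and \Cref{thm: mutation of toric model} with the coprimeness criterion). One minor remark on part (1): the codimension-two statement already falls out of \Cref{lemma: GHK lemma 3.5}(1), since $Z_i\cap Z_j=\emptyset$ trivially implies $\dim(Z_i\cap Z_j)<\dim Z_i$; your extra argument via \cite[Proposition 3.7]{gross2013birational} is not needed.

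For case B your route diverges from the paper's. You propose to iterate from the principal seed and track how the coefficients of the $f_i$ transform under successive mutations of toric model data. The paper instead appeals directly to \Cref{lemma: generalized c vectors form basis}: at \emph{any} seed mutation-equivalent to one with principal coefficients, the elements $\{p_{i,j}\}$ already form a $\mathbb Z$-basis of $\mathbb P$, so one may rescale to $\tilde f_i=\prod_{j}(1+\lambda(p_{i,j})z^{w_i})$ with the $\lambda(p_{i,j})$ algebraically independent for general $\lambda$, and coprimeness is immediate. This is cleaner because it avoids any induction and sidesteps the bookkeeping you flag as the ``main obstacle.'' Your iterative argument would also go through, but note two slips: the nonzero coefficients of $f_i$ (expanded) are elementary symmetric polynomials in the $\lambda(p_{i,j})$, not monomials---it is the \emph{roots}, equivalently the factored parameters $\lambda(p_{i,j})$ themselves, that are Laurent monomials in the principal parameters; and your ``$\lambda(p_{k,0}),\lambda(p_{k,r_k})$'' should be the toric-model coefficients $a_{k,0},a_{k,r_k}$ of \Cref{def: mutation equivalent data}. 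Tracking the factored form (i.e.\ the individual $p_{i,j}$) rather than the expanded coefficients is precisely what \Cref{lemma: generalized c vectors form basis} packages for you.
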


\begin{proof}
	(1) follows from \Cref{lemma: GHK lemma 3.5}, part (2) - as we only need to check the hypothesis $Z_i \cap Z_j = \emptyset$ for all $i\neq j$. In fact, in $\mathcal A$-cluster case, since $e_i \neq e_j$, we have $T_{N/\langle e_i \rangle} \cap T_{N/\langle e_j\rangle} = \emptyset$ for all $i\neq j$ where $T_{N/\langle e_i \rangle}$ is viewed as the dense torus contained in the divisor $D_i$. As $Z_i$ is a closed subset of $T_{N/\langle e_i \rangle}$, the hypothesis holds.
	
	(2) follows from \Cref{thm: mutation of toric model}. We need to check that whenever $\langle e_i, u_k\rangle = 0$, \[\dim \bar V(f_k) \cap \bar V(f_i) \cap D_i < \dim \bar V(f_i) \cap D_i. \]
	A sufficient condition is the functions $f_i$ being coprime. Note that for $i\in I$,
	\[
	f_i = \prod_{j = 1}^{r_i}\left( \lambda(p_{i,j}^+) z^{w_i} + \lambda(p_{i,j}^-)\right).
	\]
	When these $f_i$ have general coefficients (case A), they are coprime. In case $B$, one may replace $f_i$ by
	\[
	\tilde f_i = \prod_{j = 1}^{r_i}\left( \lambda(p_{i,j}) z^{w_i} + 1\right).
	\]
	Since the elements $p_{i,j}$ for $i\in I$ and $j\in [1, r_i]$ form a $\mathbb Z$ basis in $\mathbb P$ (by \Cref{lemma: generalized c vectors form basis}) when $\mathbf s$ is mutation equivalent to one with principal coefficients, these $\tilde f_i$ are coprime as long as $\lambda$ is general.
	\end{proof}

\begin{remark}\label{rmk: Laurent phenomenon from geometry}
Suppose we are in the situation of case B of \Cref{prop: mutation of A cluster variety}, (2). Then we have isomorphisms of the rings of regular functions
\[
\Bbbk[X_{\Lambda(\mathbf s, \lambda)}] \cong \Bbbk[U_{\Lambda(\mathbf s, \lambda)}] \cong \Bbbk[U_{\Lambda(\mu_k(\mathbf s), \lambda)}].
\]
The equality then extends to any seed $\mathbf s_v$ that is mutation equivalent to $\mathbf s$. It then follows that they are all isomorphic to the upper cluster algebra
\[
\mathscr A(\mathbf s, \lambda) = \Bbbk [\mathcal A_{\mathbf s, \lambda}].
\]
The cluster variables in seed $\mathbf s$ are $x_{i,\mathbf s} \coloneqq z^{e_i^*}$. Each $x_{i,\mathbf s}$ extends to a regular function on the toric variety $X_\Sigma$ corresponding to the toric model data $\Lambda(\mathbf s, \lambda)$. Then $x_{i,\mathbf s}$ pulls back to the blow-up $\tilde X_\Sigma$ and restricts to a regular function on the open subvariety $U_{\Lambda(\mathbf s,\lambda)}$. It follows from (2) of \Cref{prop: mutation of A cluster variety} that $x_{i,\mathbf s}$ is also a regular function on $X_{\Lambda(\mathbf s_v, \lambda)}$ and in particular is a Laurent polynomial if restricted to $T_{N,\mathbf s_v}$. This explains the generalized Laurent phenomenon \Cref{thm: generalized laurent}, which was observed in \cite{gross2013birational} for the ordinary case.	
\end{remark}

\subsubsection{\textbf{$\mathcal X$-cluster cases}}\label{section: x toric model}
Suppose we have fixed data $\Gamma$ and an $\mathcal X$-seed with coefficients $\mathbf s = (\mathbf e,\mathbf q)$. Let us make the assumption that for any $j\in I_\mathrm{uf}$, $$r_j = \mathrm{gcd}(b_{ij}, i\in I).$$ This is equivalent to say that each $w_j$ for $j\in I_\mathrm{uf}$ is primitive as an element of $M = \Hom(N, \mathbb Z)$. Switching the roles of $(e_i)$ and $(w_i)$, we obtain the toric model data 
\[
	\Omega (\mathbf s,\lambda) = ((-w_i), (e_i), (g_i))
\]
for $M$ instead of $N$, where 
\[
	g_i \coloneqq \lambda(\theta[\mathbf q_i](z^{e_i},1))\in \Bbbk[N]
\]
with some chosen evaluation $\lambda$. Since the matrix $B$ is skew-symmetrizable, $\Omega(\mathbf s, \lambda)$ is $k$-mutable for any $k\in I_\mathrm{uf}$.

\begin{lemma}
	The assumption that $r_j = \operatorname{gcd}(b_{ij},i\in I)$ is invariant under mutations.
\end{lemma}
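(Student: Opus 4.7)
The plan is to prove invariance by combining an easy divisibility direction with the involutivity of matrix mutation (established in \Cref{lemma: mutation is involutive}). Write $B = (b_{ij})$ and $B' = \mu_k(B) = (b'_{ij})$, and fix $j \in I_\mathrm{uf}$. Note that the statement is purely about the integer matrix $B$ and the column-wise gcd, so the coefficient data $\mathbf q$ of the $\mathcal X$-seed plays no role; the argument uses only the matrix mutation formula recalled from \cite{fomin2002cluster}.

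First I would show $r_j \mid b'_{ij}$ for every $i \in I$. If $j = k$, this is immediate from $b'_{ik} = -b_{ik}$, which also gives $\gcd(b'_{ik}, i\in I) = r_k$ and finishes this case. If $j \neq k$, the mutation rule gives $b'_{kj} = -b_{kj}$ (divisible by $r_j$ by hypothesis), while for $i \neq k$ the rule reads either $b'_{ij} = b_{ij}$ or $b'_{ij} = b_{ij} \pm b_{ik} b_{kj}$; in all cases the right-hand side is divisible by $r_j$ because $r_j \mid b_{ij}$ and $r_j \mid b_{kj}$. Consequently $r_j$ divides $d_j \coloneqq \gcd(b'_{ij}, i \in I)$.

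For the reverse divisibility $d_j \mid r_j$, the plan is to re-run exactly the same argument on $B'$: since $d_j$ divides every entry of the $j$-th column of $B'$, the computation above (now with $B'$ in place of $B$ and $d_j$ in place of $r_j$) shows that $d_j$ divides every entry of the $j$-th column of $\mu_k(B')$. By \Cref{lemma: mutation is involutive} this latter matrix is $B$ itself, whose $j$-th column has gcd $r_j$ by hypothesis. Hence $d_j \mid r_j$, and combined with the previous paragraph this gives $d_j = r_j$, which is exactly the invariance claim.

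The proof is short and there is no serious obstacle. The only delicate point is conceptual: the integers $r_j$ are part of the seed data, so a priori one is asking that a fixed integer $r_j$ equals a gcd both before and after mutation; the symmetric role played by $B$ and $B' = \mu_k(B)$ under the involution $\mu_k$ is precisely what allows the one-sided divisibility computation to upgrade to equality.
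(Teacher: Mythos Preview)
Your proof is correct and follows exactly the same strategy as the paper: show that divisibility of the $j$-th column by $r_j$ is preserved under $\mu_k$, then invoke the involutivity of matrix mutation to obtain the reverse divisibility and hence equality of gcds. Your version simply spells out the case analysis for the mutation formula more explicitly than the paper does.
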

\begin{proof}
	This is because if the $j$-th column of $B$ is divisible by $r_j$ then the same is true for the matrix $\mu_k(B) = (b'_{ij})$. Thus we have
	\[
	\mathrm{gcd}(b_{ij}, i\in I) = \mathrm{gcd}(b'_{ij}, i\in I)
	\]
	as $\mu_k$ is involutive on $B$.
\end{proof}

The above lemma shows that we have well-defined data $\Omega(\mu_k(\mathbf s), \lambda)$. 
\begin{lemma}
	We have $\mu_k(\Omega(\mathbf s,\lambda)) = \Omega(\mu_k(\mathbf s), \lambda)$, where the later $\mu_k$ is the mutation for an $\mathcal X$-seed with coefficients.
\end{lemma}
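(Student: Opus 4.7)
The plan is to verify separately that (a) the vector data $(E_i, W_i) \coloneqq (-w_i, e_i)$ of $\Omega(\mathbf s,\lambda)$ transforms according to the toric mutation rule in \Cref{def: mutation equivalent data}, and (b) the functions $g_i$ transform according to the same rule. Since the toric-data mutation is governed by the sign of $\tilde\beta_{ij} \coloneqq \langle E_i, W_j\rangle = -\beta_{ji}$, whereas the $\mathcal X$-seed mutation is written in terms of $\beta_{ik}$, the first observation is that skew-symmetrizability of $B$ forces $\beta_{ij}$ and $\beta_{ji}$ to have opposite signs (and to vanish together), so $\tilde\beta_{ik}$ always has the same sign as $\beta_{ik}$ and the case splits in the two rules align. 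A direct substitution into \Cref{def: mutation for x-seed} then reproduces the required formulas for $-w_i'$ and $e_i'$.

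For the function part with $i = k$, I would use $q_{k,j}' = q_{k,j}^{-1}$ together with the semifield identities $(q^{-1})^\pm = q^\mp$ to compute
\[
g_k' \;=\; \lambda\!\left(\prod_{j=1}^{r_k}\left( q_{k,j}^- z^{-e_k} + q_{k,j}^+ \right)\right) \;=\; z^{-r_k e_k}\, g_k,
\]
whose coefficients in $z^{W_k'} = z^{-e_k}$ satisfy $a'_{k,j} = a_{k, r_k - j}$, matching the toric rule in direction $k$.

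For $i \neq k$, the cleanest route avoids computing $(q_{i,l}')^\pm$ in isolation, since positive and negative parts do not distribute over products in the tropical semifield without sign coherence. Instead, one uses the identity $q^+/q^- = q$ to write, after expanding $\theta[\mathbf q_i'](z^{e_i'}, 1)$ and applying $\lambda$,
\[
\frac{a'_{i,j}}{a'_{i,0}} \;=\; \lambda\!\left(\sum_{|S|=j}\, \prod_{l \in S} q_{i,l}'\right).
\]
Since $q_{i,l}' = q_{i,l}\cdot c_k$ with $c_k$ independent of $l$ (namely $c_k = (q_{k;-})^{-\beta_{ik}}$ when $\beta_{ik}>0$ and $c_k = (q_{k;+})^{-\beta_{ik}}$ when $\beta_{ik}\leq 0$, with $q_{k;\pm} \coloneqq \prod_l q_{k,l}^\pm$), this ratio factorizes as $\lambda(c_k)^j \cdot a_{i,j}/a_{i,0}$. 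Identifying $\lambda(q_{k;-}) = a_{k,0}$ and $\lambda(q_{k;+}) = a_{k,r_k}$ directly from the definition of $g_k$, the expression becomes $(a_{k,0})^{-j\beta_{ik}}\, a_{i,j}/a_{i,0}$ or $(a_{k,r_k})^{-j\beta_{ik}}\, a_{i,j}/a_{i,0}$, which is precisely the ratio prescribed in \Cref{def: mutation equivalent data} under the substitution $\tilde\beta_{ki} = -\beta_{ik}$.

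The only non-routine point is the observation that $q^+/q^- = q$, which collapses the ratio comparison from a delicate product-expansion in the semifield to a single evaluation under $\lambda$; without it, one would have to contend with the failure of positive/negative parts to multiply coherently. The remaining verifications are standard sign-bookkeeping matching \Cref{def: mutation for x-seed} against \Cref{def: mutation equivalent data}.
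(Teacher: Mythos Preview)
Your proof is correct and follows the same direct-verification approach as the paper: both check that the vector data $((-w_i),(e_i))$ and the function data $(g_i)$ transform under the toric rule of \Cref{def: mutation equivalent data} exactly as the $\mathcal X$-seed mutation of \Cref{def: mutation for x-seed} prescribes. Your treatment is in fact more detailed than the paper's on the coefficient side: the paper simply asserts that the $\mathcal X$-type coefficient mutation ``coincides with \Cref{def: mutation equivalent data}'' without working out the ratio $a'_{i,j}/a'_{i,0}$, whereas you supply the explicit computation via the identity $q^+/q^- = q$, which neatly sidesteps having to evaluate $(q_{i,l}')^{\pm}$ individually. Your observation that skew-symmetrizability forces $\tilde\beta_{ik} = -\beta_{ki}$ to share the sign of $\beta_{ik}$ is exactly what is needed to align the case splits, and matches the paper's (somewhat tersely stated) reasoning.
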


\begin{proof}
	This lemma is analogous to \Cref{lemma: mutation of A data} and is also easy to check. However, to show that the carefully chosen signs and conventions are the correct ones, we record some details here.
	
	In the notations of \Cref{def: mutation equivalent data}, for the data $\Omega(\mathbf s, \lambda)$, we take $e_i = -w_i$ and $w_i = e_i$. So after the mutation $\mu_k$ in sense of \Cref{def: mutation equivalent data}, for $i\neq k$
	\[
	(-w_i)' = \begin{cases}
		-w_i - \langle (-w_i), r_ke_k \rangle (-w_k) \quad &\text{if $\langle -w_i, e_k \rangle \leq 0$}\\
		-w_i &\text{if $\langle -w_i, e_k\rangle >0$}.
	\end{cases}
	\]
	Note that the two conditions are equivalent to $\beta_{ik}\leq 0$ and $\beta_{ik}>0$ respectively. And in these two cases, we have
	\[
	(-w_i)' = -w_i - \langle e_k, w_i\rangle r_kw_k\quad \text{and} \quad -w_i
	\]
	respectively. This is exactly $-w_i'$ for $w_i' = \omega (-, d_ie_i'/r_i)$ from the seed $\mu_k(\mathbf s)$. Similarly, one checks that the $\mathbf e$ part is also compatible with mutations.
	
	As for coefficients, for the data $\Omega(\mu_k(\mathbf s), \lambda)$, we have
	\[
	g_i'(u,v) = \lambda(\theta[\mathbf q_i'](u,v)).
	\]
	Here $q_{i,j}'$ is obtained from $\mathcal X$-type mutations for coefficients (see \Cref{def: mutation for x-seed}) which coincides with \Cref{def: mutation equivalent data}.
\end{proof}

Recall that $X_{\Omega(\mathbf s,\lambda)}$ is the upper bound for $\Omega(\mathbf s, \lambda)$ as defined in \Cref{subsection: upper bound}.

\begin{proposition}
	 We have (for the $\mathcal X$-type constructions)
	\begin{enumerate}
		\item the morphism $\psi\colon X_{\Omega(\mathbf s,\lambda)} \rightarrow U_{\Omega(\mathbf s,\lambda)}$ is an open immersion with image being an open subset whose complement has codimension at least two;
		\item the birational map $\mu_k\colon U_{\Omega(\mathbf s,\lambda)} \dasharrow U_{\Omega(\mu_k(\mathbf s),\lambda)}$ is an isomorphism outside codimension two subsets.
	\end{enumerate}
\end{proposition}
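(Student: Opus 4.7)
The plan is to mirror the proof of the preceding $\mathcal A$-type proposition, reducing both claims to \Cref{lemma: GHK lemma 3.5} and \Cref{thm: mutation of toric model}. The structural observation specific to the $\mathcal X$-case is that in the data $\Omega(\mathbf s,\lambda) = ((-w_i),(e_i),(g_i))$ the second components are the basis vectors $(e_i)_{i\in I}$ of $N$ and are therefore linearly independent; by \Cref{lemma: full rank implies coprime}, this alone forces the functions $g_i\in\Bbbk[N]$ to be pairwise coprime, with no hypothesis on $\lambda$. This is the reason part (2) below can be stated unconditionally, whereas the $\mathcal A$-case required either generic coefficients or the principal-coefficients setup.

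For (2) I would invoke \Cref{thm: mutation of toric model}, using the compatibility $\mu_k(\Omega(\mathbf s,\lambda)) = \Omega(\mu_k(\mathbf s),\lambda)$ already recorded in the preceding lemma. The theorem's hypothesis requires $\dim \bar V(g_k)\cap Z_i < \dim Z_i$ whenever $\langle -w_i, e_k\rangle = 0$, where $Z_i = \bar V(g_i)\cap D_{-w_i}$. Since $g_k$ and $g_i$ are polynomials in the distinct single variables $z^{e_k}$ and $z^{e_i}$, this is an immediate consequence of coprimeness, exactly as in the corollary following \Cref{thm: mutation of toric model}.

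For (1) I would apply \Cref{lemma: GHK lemma 3.5}. For $i\neq j$, $Z_i$ sits inside the open torus stratum of the boundary divisor corresponding to the ray $\mathbb R_{\geq 0}(-w_i)$, and similarly for $Z_j$; when these rays differ, the two open strata are disjoint, so $Z_i\cap Z_j = \emptyset$ and part (2) of the lemma gives the open-immersion claim, while part (1), together with the coprimeness established above, supplies the codimension-two complement.

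The main obstacle, and the only point at which the argument departs from the $\mathcal A$-case, is the possibility that two unfrozen $w_i, w_j$ are proportional, forcing their rays in $\Sigma_{\Omega}$ to coincide. In that event $Z_i$ and $Z_j$ lie in the same torus, so the disjointness input of \Cref{lemma: GHK lemma 3.5}(2) fails; one must instead rely on the dimension estimate from \Cref{lemma: GHK lemma 3.5}(1) (still valid by coprimeness of $g_i$ and $g_j$) together with a direct local toric computation on the affine charts $T_N^{(i)}$ and $T_N^{(j)}$ to recover the open immersion away from a codimension-two locus. Once this degeneracy is handled, the rest of the proof is bookkeeping parallel to the $\mathcal A$-case.
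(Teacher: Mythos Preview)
Your argument matches the paper's essentially verbatim: the paper's proof says part~(1) is ``completely analogous'' to the $\mathcal A$-case (i.e.\ via \Cref{lemma: GHK lemma 3.5}(2)) and that part~(2) follows because $\Omega(\mathbf s,\lambda)$ is coprime by \Cref{lemma: full rank implies coprime}, the vectors $e_i$ forming a basis of $N$. The degeneracy you flag---two unfrozen $w_i$ spanning the same ray---is not discussed in the paper at all, so your caution there is additional care beyond what the paper provides rather than a deviation from its approach.
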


\begin{proof}
	The proof of $(1)$ is completely analogous to the one of $(1)$ of \Cref{prop: mutation of A cluster variety}. For $(2)$, it follows from that for any $\mathcal X$-seed $\mathbf s$, the data $\Omega(\mathbf s,\lambda)$ is always coprime by \Cref{lemma: full rank implies coprime} as the vectors $e_i$ form a basis of $N$.
\end{proof}

\section{Scattering diagrams}

This section deals with scattering diagrams. Our main objects of study \emph{generalized cluster scattering diagrams} will be defined in \Cref{subsection: generalized cluster sd}.

\subsection{The tropical vertex}\label{section: the tropical vertex}

We start with a more general setup of scattering diagrams as in \cite[Section 5.1.1]{arguz2020higher}. Let $N$ be a lattice of finite rank, $M = \Hom_\mathbb Z( N, \mathbb Z)$ and $M_\mathbb R = M\otimes _\mathbb Z \mathbb R$. Let $P$ be a monoid with a monoid map $r\colon P\rightarrow M$. Denote by $P^\times$ the groups of units of $P$ and let $\mathfrak m_P = P\setminus P^\times$. An ideal of the monoid $P$ induces a monomial ideal of the ring $\Bbbk[P]$ where $\Bbbk$ is a ground field. So we use the same letter to denote both. For any monomial ideal $I\subset \Bbbk[P]$, define the ring
\[
R_I \coloneqq \Bbbk[P]/I.
\]
Denote by $\widehat{\Bbbk[P]}$ the completion of $\Bbbk[P]/\mathfrak m_P^n$ for $n\in \mathbb N$.

For $I$ such that its radical $\sqrt I$ is equal to $\mathfrak m_P$ (e.g. $I = \mathfrak m_P^n$ for some $n\in \mathbb N$), define the \emph{module of log derivations} $\Theta(R_I) \coloneqq R_I \otimes_\mathbb Z N$ as follows.

If we write the element $z^p\otimes n$ as $z^p\partial_n$ for $p\in P$ and $n\in N$, then it acts on $R_I$ by \[ z^p\partial_n(z^{p'}) = \langle n , r(p') \rangle z^{p+p'},\quad p'\in P.\] Then the submodule $\mathfrak m_P\Theta(R_I)$ is a Lie algebra with the commutator bracket \[ [z^{p_1}\partial_{n_1}, z^{p_2}\partial_{n_2}] = z^{p_1+p_2}\partial_{ \langle r(p_2), n_1 \rangle n_2 - \langle r(p_1), n_2 \rangle n_1}. \] Taking exponential of elements in this Lie algebra, we get group elements in $\mathrm{Aut}(R_I)$. There is a nilpotent Lie subalgebra of $\mathfrak m_P\Theta(R_I)$ defined by
\[
\mathfrak v_I \coloneqq \bigoplus_{m\in P\setminus I, r(m)\neq 0} z^m \left( \Bbbk \otimes r(m)^\perp \right).
\]
Since it is nilpotent, this Lie subalgebra (as a set) is in bijection with the corresponding algebraic group $\mathbb V_I \coloneqq \exp (\mathfrak v_I)\subset \mathrm{Aut}(R_I)$. Taking the projective limit with respect to the ideals $\mathfrak m_P^n$ for $n\in \mathbb N$, we get a pro-unipotent group $\widehat {\mathbb V}$, which is in bijection with the pro-nilpotent Lie algebra $\hat {\mathfrak v} \coloneqq \underset{\longleftarrow }{\lim} \mathfrak v_{\mathfrak m_P^n}$. The group $\widehat {\mathbb V}$ is called the \emph{higher dimensional tropical vertex group}, acts by automorphisms on $\widehat{\Bbbk[P]}$. We also denote (without completion)
\[
\mathfrak v \coloneqq \bigoplus_{m\in P, r(m)\neq 0} z^m(\Bbbk\otimes r(m)^\perp).
\]

\begin{definition}
	A \emph{scattering diagram} in $M_\mathbb R$ over $R_I$ is a finite set $\mathfrak D$ of \emph{walls} where each \emph{wall} $(\mathfrak d, f_\mathfrak d)$ is a rational polyhedral cone $\mathfrak d\subset M_\mathbb R$ of codimension one along with an attached element called \emph{wall-crossing function}
	\[ f_\mathfrak d = \sum_{\substack{m\in P\setminus I \\ r(m)\in \Lambda_\mathfrak d}} c_m z^m \in R_I, \]
	where $\Lambda_{\mathfrak d}\subset M$ is the integral tangent space of any point in $\mathfrak d$, i.e. $\Lambda_\mathfrak d = M \cap \mathbb R\langle \mathfrak d\rangle$. We require that $f_\mathfrak d \equiv 1 \mod \mathfrak m_P$.
\end{definition}

\begin{remark}\label{rmk: how to define wall-crossing automorphism}
	Upon choosing a generator $n_0$ of $\Lambda_{\mathfrak d}^\perp \cap N$, the wall-crossing function $f_\mathfrak d$ induces an element in $\mathbb V_I\subset \operatorname{Aut}(R_I)$ by the action $$z^p\mapsto z^p f_{\mathfrak d}^{\langle r(p), n_0\rangle}.$$
	
	So this wall-crossing automorphism depends on how one crosses the wall.
	One may view that this wall-crossing automorphism depends on the direction in which one transversally crosses the wall. With $n_0$ chosen, such an automorphism can be equivalently represented by the corresponding Lie algebra element $\log (f_{\mathfrak d})\partial_{n_0}\in \mathfrak v_I$.
\end{remark}

Let $\mathrm{Supp}(\mathfrak D)$ be the union of all walls in $\mathfrak D$. Let $\mathrm{Sing}(\mathfrak D)$ be the union of at least codimension two intersections of every pair of walls and the boundary of every wall. Let $\gamma\colon [0,1]\rightarrow M_\mathbb R$ be a piecewise smooth proper map such that the end points $\gamma(0)$ and $\gamma(1)$ avoid $\mathrm{Supp}(\mathfrak D)$ and whose image is disjoint from $\mathrm{Sing}(\mathfrak D)$. We also assume that $\gamma$ meets walls transversally. 

Suppose that $\gamma$ crosses walls $\mathfrak d_1,\dots, \mathfrak d_s$ in $\mathfrak D$ at times
\[ 0<t_1\leq t_2\leq \cdots \leq t_s < 1. \]
These numbers $t_i$ are obtained by considering the finite set $\gamma^{-1}(\mathrm{Supp}(\mathfrak D))\subset [0,1]$ as $\gamma$ is proper. It is possible that $t_i = t_j$ as walls may overlap. Suppose $\gamma$ crosses a wall $(\mathfrak d, f_\mathfrak d)$ at time $t$. Denote by $\xi_{\gamma, \mathfrak d}$ the element in $\mathbb V_I$ with the action
\[ z^p\mapsto z^p f_\mathfrak d^{\langle r(p), n_0\rangle}, \quad p\in P\setminus I \]
where $n_0$ is chosen such that $\langle n_0, \gamma'(t_i) \rangle >0.$

\begin{definition}\label{def: path-ordered product}
	We define the \emph{path-ordered product} of $\gamma$ in $\mathfrak D$ to be the element
	\[ \mathfrak p_{\gamma, \mathfrak D} \coloneqq \xi_{\gamma, \mathfrak d_s} \xi_{\gamma, \mathfrak d_{s-1}} \cdots \xi_{\gamma, \mathfrak d_1} \in \mathbb V_I.\]
\end{definition}

\begin{definition}
A scattering diagram $\mathfrak D$ over $R_I$ is \emph{consistent} if the path-ordered product $\mathfrak p_{\gamma, \mathfrak D}$ only depends on the endpoints $\gamma(0)$ and $\gamma(1)$ for any path $\gamma\colon [0,1]\rightarrow M_\mathbb R$ for which $\mathfrak p_{\gamma, \mathfrak D}$ is well-defined.
\end{definition}

Recall that we have the completed algebra $\widehat{\Bbbk[P]} \coloneqq \operatorname{\underset{\longleftarrow}{\lim}} R_{\mathfrak m_P^k}$. For an element $f\in \widehat{\Bbbk [P]}$, denote by $f^{<k}$ its projection in $R_{\mathfrak m_P^k}$.

\begin{definition}\label{def: tropical vertex sd}
	A \emph{scattering diagram} in $M_\mathbb R$ over $\widehat{\Bbbk[P]}$ is a (possibly infinite) set $\mathfrak D$ of walls $(\mathfrak d, f_\mathfrak d)$ with $\mathfrak d$ a rational polyhedral cone of codimension one and the wall-crossing function \[ f_\mathfrak d = \sum_{\substack{m\in P \\ r(m)\in \Lambda_\mathfrak d}} c_mz^m\in \widehat {\Bbbk[P]},\] such that modulo the ideal $\mathfrak m_p^n$, the collection $\mathfrak D^{<n} \coloneqq \{(\mathfrak d, f_\mathfrak d^{<n}) \}$ is a scattering diagram over $R_{\mathfrak m_P^n}$. A scattering diagram $\mathfrak D$ is consistent if $\mathfrak D^{<n}$ is consistent for any $n\in \mathbb N$.
\end{definition}

The path-ordered product for $\mathfrak D$ over $\widehat{\Bbbk[P]}$ is defined through the projective limit of path-ordered products for $\mathfrak D^{<n}$:
\[
\mathfrak p_{\gamma, \mathfrak D} \coloneqq \lim_{\longleftarrow} \mathfrak p_{\gamma, \mathfrak D^{<n}}\in  \widehat{\mathbb V} \subset \mathrm{Aut}(\widehat{\Bbbk[P]}).
\]

\begin{definition}
	We say two scattering diagrams $\mathfrak D$ and $\mathfrak D'$ (over the same algebra) are \emph{equivalent} if for any $\gamma$, we have $\mathfrak p_{\gamma, \mathfrak D} = \mathfrak p_{\gamma, \mathfrak D'}$ whenever both path-ordered products are well-defined. 
\end{definition}

\begin{definition}\label{def: direction of a wall}
We say a wall $\mathfrak d$ has \emph{direction} $m_0$ for some $m_0\in M$ if the attached wall-crossing function $f_\mathfrak d$ only contains monomials $z^p$ such that $r(p) = -km_0$ for some $k\in \mathbb N$. A wall $(\mathfrak d, f_\mathfrak d)$ with direction $m_0$ is called \emph{incoming} if $\mathfrak d = \mathfrak d - \mathbb R_{\geq 0} m_0$.
\end{definition}

Next we explain how to assign a scattering diagram to an $\mathcal X$-type toric model. We are actually in a particular situation within the more general framework of \cite{arguz2020higher}, which works for any log Calabi--Yau variety obtained from blowing-up a toric variety along hypersurfaces in the toric boundary. 

Let $\mathbf s = (\mathbf e, \mathbf q)$ be an $\mathcal X$-seed with principal coefficients for some fixed data $\Gamma$. We assume that $N_\mathrm{uf} = N$ to avoid frozen directions. As usual, write $\mathbf e = (e_i)$. We assume the condition that $r_j = \operatorname{gcd}(b_{ij}\mid i\in I)$ for any $j\in I$. This assumption implies any $w_i \coloneqq \frac{d_i}{r_i}\omega (-, e_i)\in M$ is primitive. Recall that we have used the fan
\[
\Sigma_0 \coloneqq \{0\}\cup  \{-\mathbb R_{\geq 0} w_i\}
\]
to describe the toric model of $U = U_{\Omega(\mathbf s, \lambda)}$. The functions (in the data $\Omega(\mathbf s, \lambda)$ to define $U$) are then 
\[ 
	g_i = \prod_{j = 1}^{r_i}\left( 1 + \lambda_{ij}z^{e_i}\right) \in \Bbbk[N].
\]
We pick a complete fan $\Sigma$ in $M_\mathbb R$ containing $\Sigma_0$. For example, we may take a refinement of (the cone complex induced by) the hyperplane arrangement $\{e_i^\perp \mid i\in I\}$. Let $X_\Sigma$ be the corresponding (complete) toric variety, with $D_i$ being the boundary toric divisor corresponding to the ray $-\mathbb R_{\geq 0}w_i$. Let $H = \cup_{i} H_i$ where $$H_i = \bigcup_{j\in [1, r_i]} H_{ij} \coloneqq \bigcup_{j \in [1, r_i]} \overline V(1+\lambda_{ij}z^{e_i}) \cap D_i$$ which is a union of disjoint hypersurfaces in $D_i$ (as the coefficients $\lambda_{ij}\in \Bbbk^*$ are general). These hypersurfaces are exactly where we blow up $X_\Sigma$ to obtain the log Calabi--Yau variety $U_{\Omega(\mathbf s, \lambda)}$. 

Take the monoid
\[
P \coloneqq M \oplus \prod_{i\in I} \mathbb N^{r_i},
\]
with the natural projection $r\colon P\rightarrow M$.
We write multiplicatively $t_{i,1}, t_{i, 2},\dots t_{i,r_i}$ for the generators of $\mathbb N^{r_i}$. For each ray $\rho_i \coloneqq -\mathbb R_{\geq 0} w_i$ and $H_{ij}$, there is a finite scattering diagram $\mathfrak D_{ij}$ called a \emph{widget} from a certain \emph{tropical hypersurface} \cite[Definition 5.3 and Section 5.1.3]{arguz2020higher}. In our case, they are given by

\begin{lemma}\label{lemma: widgets}
	The widget $\mathfrak D_{ij}$ consists of all codimension one cones of the fan $\Sigma$ contained in the hyperplane $e_i^\perp$ containing $\rho_i$, with the same wall-crossing function $(1 + t_{i,j} z^{w_i})$. In other words, we have
\[
\mathfrak D_{ij} = \{(\sigma, 1 + t_{i,j}z^{w_i}) \mid \sigma\in \Sigma,\ \dim \sigma = n-1,\ \sigma\subset e_i^\perp,\ \rho_i\subset \sigma\}.
\]
\end{lemma}

\begin{proof}
	By definition \cite[Definition 5.3 and Section 5.1.3]{arguz2020higher}, $\mathfrak D_{ij}$ consists of walls $(\sigma , (1+t_{i,j}z^{w_i})^{\omega_\sigma})$ where $\sigma$ runs through all codimension one cones in $\Sigma$ containing $\rho_i$ and $\omega_\sigma = H_{ij}\cdot D_{\sigma}$ is the intersection number computed in the divisor $D_i$. Here $D_\sigma$ is the one dimensional toric stratum in $D_i$ corresponding to $\sigma$. Note that if $e_i\notin \sigma^\perp$, then $z^{e_i}$ or $z^{-e_i}$ vanishes along $D_\sigma$. So $H_{ij} = \overline V(1 + \lambda_{ij}z^{e_i})$ does not intersect $D_\sigma$ and thus $\omega_\sigma = 0$. If $\sigma\subset e_i^\perp$, as $e_i$ is primitive, the intersection is at the point $z^{e_i} = -1/\lambda_{ij}$ where $z^{e_i}$ can be viewed as the coordinate on $D_\sigma$. Thus the multiplicity $\omega_\sigma$ is $1$.
\end{proof}

Note that by \Cref{def: direction of a wall} every wall in $\mathfrak D_{ij}$ is incoming since $-w_i$ is contained in every $\sigma$.

\begin{theorem}[{\cite[Theorem 5.6 and Section 5.1.3]{arguz2020higher}}]\label{thm: tropical vertex sd}
	Consider the scattering diagram (with only incoming walls)
	\[ \mathfrak D_{(X_\Sigma, H), \mathrm{in}} \coloneqq \bigcup_{i\in I}\bigcup_{j\in [1, r_i]} \mathfrak D_{ij}. \] There exists a unique (up to equivalence) consistent scattering diagram $\mathfrak D_{(X_\Sigma, H)}$ over $\widehat{\Bbbk[P]}$ containing $\mathfrak D_{(X_\Sigma, H),\mathrm{in}}$ such that $\mathfrak D_{(X_\Sigma, H)} \setminus \mathfrak D_{(X_\Sigma, H), \mathrm{in}} $ consists only of non-incoming walls.
\end{theorem}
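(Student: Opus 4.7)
The strategy is the classical order-by-order construction of Kontsevich and Soibelman, as extended to the higher-dimensional tropical vertex setting of Arg\"uz--Gross. I would build a compatible sequence of scattering diagrams $\mathfrak D^{(k)}$ over $R_{\mathfrak m_P^{k+1}}$ for $k\geq 1$, each containing the truncation of $\mathfrak D_{(X_\Sigma, H), \mathrm{in}}$, consistent modulo $\mathfrak m_P^{k+1}$, agreeing with $\mathfrak D^{(k-1)}$ modulo $\mathfrak m_P^k$, and whose walls outside $\mathfrak D_{(X_\Sigma, H), \mathrm{in}}$ are all non-incoming. The desired diagram $\mathfrak D_{(X_\Sigma, H)}$ is then obtained as the projective limit.

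The base case is immediate: modulo $\mathfrak m_P^2$ every initial wall-crossing automorphism has the form $\exp(t_{i,j} z^{w_i}\partial_{n_i})$ with $t_{i,j}\in \mathfrak m_P$, so all mutual commutators already lie in $\mathfrak m_P^2 \mathfrak v$ and therefore vanish at this order, making $\mathfrak D_{(X_\Sigma, H), \mathrm{in}}$ consistent modulo $\mathfrak m_P^2$. For the inductive step, the obstruction to consistency of $\mathfrak D^{(k-1)}$ at order $k+1$ is measured by the path-ordered products $\mathfrak p_{\gamma, \mathfrak D^{(k-1)}}$ for small loops $\gamma$ encircling the codimension-two strata of $\mathrm{Sing}(\mathfrak D^{(k-1)})$; by hypothesis each $\log \mathfrak p_{\gamma, \mathfrak D^{(k-1)}}$ lies in $\mathfrak m_P^k \mathfrak v$ modulo $\mathfrak m_P^{k+1}$, and decomposing it into a sum of monomial terms $c_p z^p \partial_{n_p}$ with $n_p \in r(p)^\perp$ and $r(p)\neq 0$ identifies, for each term, a unique non-incoming wall emanating from that stratum whose wall-crossing function at order $k+1$ cancels the corresponding contribution. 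A key sub-step is verifying that these logarithms genuinely lie in $\mathfrak v$ rather than in the larger module $R_I \otimes N$; this follows from the closure of $\mathfrak v$ under the Lie bracket
\[
[z^{p_1}\partial_{n_1}, z^{p_2}\partial_{n_2}] = z^{p_1+p_2}\partial_{\langle r(p_2), n_1\rangle n_2 - \langle r(p_1), n_2\rangle n_1},
\]
which one confirms by a direct pairing computation using $\langle r(p_i), n_i\rangle = 0$.

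Uniqueness is proved by an analogous induction: if two consistent extensions $\mathfrak D$ and $\mathfrak D'$ as in the statement agree modulo $\mathfrak m_P^k$, then their order-$(k+1)$ correction walls are both non-incoming and uniquely determined by the common order-$k$ obstruction cocycle, so $\mathfrak D$ and $\mathfrak D'$ are equivalent modulo $\mathfrak m_P^{k+1}$. The main obstacle in executing this plan is the combinatorial bookkeeping at each inductive step: one must confirm that the supports of the added walls can always be realized as genuine codimension-one rational polyhedral cones in $M_\mathbb R$ (possibly after refining the fan $\Sigma$), and that distinct obstruction terms at the same stratum can be simultaneously absorbed by a well-defined collection of non-incoming walls that do not accidentally collide with any existing incoming wall of $\mathfrak D_{(X_\Sigma, H), \mathrm{in}}$. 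These combinatorial checks are the content of \cite[Theorem 5.6 and Section 5.1.3]{arguz2020higher}, which adapts the analogous two-dimensional algorithm of \cite{gross2010tropical} to the higher-dimensional setting used here.
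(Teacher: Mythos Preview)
The paper does not prove this theorem at all: it is quoted verbatim from \cite[Theorem 5.6 and Section 5.1.3]{arguz2020higher} and used as a black box, so there is no ``paper's own proof'' to compare against. Your sketch of the Kontsevich--Soibelman order-by-order algorithm is the standard argument and is essentially what the cited reference carries out (and what the present paper later reproduces in a closely related setting in the proof of \Cref{thm: generalized positivity}); you yourself acknowledge this in your final paragraph. So the proposal is fine as an outline, but strictly speaking the correct ``proof'' here is simply the citation.
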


\subsection{Generalized cluster scattering diagrams}\label{subsection: generalized cluster sd}

Instead of applying \Cref{thm: tropical vertex sd} to $(X_\Sigma, H)$, there is another way to obtain the same scattering diagram by generalizing the construction of cluster scattering diagrams in \cite{gross2018canonical}.

Given fixed data $\Gamma$ and an $\mathcal A$-seed $\mathbf s = (\mathbf e, \mathbf p)$ with principal coefficients, we are going to define the \emph{generalized cluster scattering diagram} $\mathfrak D_\mathbf s$. 

Recall that we have the semifield $\mathbb P = \mathrm{Trop}(\mathbf p)$, isomorphic to $\prod_{i\in I}\mathbb Z^{r_i}$ as an abelian group. Let $P = P_\mathbf s$ as before be $M\oplus \prod_{i \in I} \mathbb N^{r_i}$, but regarded as a submonoid of $M\oplus \mathbb P$ generated by $M$ and $\mathbf p$. There is a submonoid $P^\oplus = P^\oplus_\mathbf s \subset P$ generated by elements
\[
	\{(w_i, p_{i,j})\mid i\in I, j\in [1, r_i]\}.
\]
One could take the completion of $P^\oplus$ with respect to the ideal $P^+ \coloneqq P^\oplus \setminus\{0\}$, resulting $\widehat{\Bbbk[P^\oplus]}\subset \widehat{\Bbbk[P]}$. In $N$, there is a submonoid $N^\oplus_\mathbf s = N^\oplus$ generated by $\{e_i\mid i\in I\}$. Denote $N^+ = N^\oplus \setminus \{0\}$. We also consider the monoid map $$\pi\colon P^\oplus \rightarrow N^\oplus,\quad (w_i, p_{i,j})\mapsto e_i. $$

Let $n = \sum_{i\in I} \alpha_i e_i\in N$. Define 
\[
\bar n  \coloneqq \sum_{i\in I} \alpha_i \cdot \frac{d_i}{r_i}e_i\in N_\mathbb R.
\]
These $\bar n$ form a sublattice $\overline N$ of $N_\mathbb R$ isomorphic to $N$. We have the similar notion $\overline N^+$, the monoid generated by $\bar e_i$.

There is a subspace $\mathfrak g$ of the tropical vertex lie algebra $\mathfrak v$ defined as
\[ \mathfrak g = \mathfrak g_\mathbf s \coloneqq \bigoplus_{n\in N^+} \mathfrak g_n, \quad  \mathfrak g_n  \coloneqq  \bigoplus_{\substack{\pi(p) = n\\p\in P^+}} z^p \cdot \left( \Bbbk\otimes \bar n \right). \]
\begin{lemma}
	The subspace $\mathfrak g$ is an $N^+$-graded Lie subalgebra of $\mathfrak v$.
\end{lemma}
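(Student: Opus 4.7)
The plan is to verify two things: first, that the prescribed subspace $\mathfrak g$ is actually contained in $\mathfrak v$ (i.e. that $\bar n \in r(p)^\perp$ whenever $\pi(p) = n$), and second, that the Lie bracket of $\mathfrak v$ sends $\mathfrak g_{n_1} \times \mathfrak g_{n_2}$ into $\mathfrak g_{n_1 + n_2}$. The direct-sum decomposition $\mathfrak g = \bigoplus_{n \in N^+} \mathfrak g_n$ is built into the definition, so the grading statement reduces to this closure under bracket. Both parts will come from the same observation: under the map sending $e_i \mapsto \bar e_i = (d_i/r_i) e_i$ and $p \in P^\oplus$ to $\pi(p) \in N^\oplus$, the pairing between $r(p)$ and $\bar n$ unpacks into a double sum involving $\omega(e_k, e_i)$ with a coefficient symmetric in $(i,k)$, which the skew-symmetry of $\omega$ makes vanish (or flips sign as needed).

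For the first point, any $p \in P^+$ with $\pi(p) = n = \sum_i \alpha_i e_i$ satisfies $r(p) = \sum_i \alpha_i w_i$, so
\[
\langle r(p), \bar n \rangle = \sum_{i,k} \alpha_i \alpha_k \frac{d_k}{r_k} \langle w_i, e_k \rangle = \sum_{i,k} \alpha_i \alpha_k \frac{d_k d_i}{r_k r_i} \omega(e_k, e_i),
\]
using $\langle e_k, w_i \rangle = \omega(e_k, d_i e_i)/r_i$. The coefficient $\alpha_i \alpha_k d_k d_i/(r_k r_i)$ is symmetric in $i,k$ while $\omega(e_k,e_i)$ is skew, so the sum vanishes. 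Thus $z^p \otimes \bar n \in z^p(\Bbbk \otimes r(p)^\perp) \subset \mathfrak v$, giving $\mathfrak g \subset \mathfrak v$.

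For the bracket, take generators $z^{p_1}\partial_{\bar n_1} \in \mathfrak g_{n_1}$ and $z^{p_2}\partial_{\bar n_2} \in \mathfrak g_{n_2}$. Using the Lie bracket formula from Section~6.1,
\[
[z^{p_1}\partial_{\bar n_1}, z^{p_2}\partial_{\bar n_2}] = z^{p_1+p_2}\partial_{\langle r(p_2), \bar n_1 \rangle \bar n_2 - \langle r(p_1), \bar n_2 \rangle \bar n_1}.
\]
Writing $\pi(p_a) = n_a = \sum_i \alpha_i^a e_i$ as before, the same computation gives
\[
\langle r(p_2), \bar n_1 \rangle = \sum_{i,k} \alpha_i^2 \alpha_k^1 \frac{d_k d_i}{r_k r_i} \omega(e_k, e_i),
\]
and swapping the summation indices together with skew-symmetry of $\omega$ yields $\langle r(p_1), \bar n_2 \rangle = -\langle r(p_2), \bar n_1 \rangle$. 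Setting $A := \langle r(p_2), \bar n_1 \rangle$, the bracket becomes $A \cdot z^{p_1+p_2}\partial_{\bar n_1 + \bar n_2}$.

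Finally, $\pi(p_1 + p_2) = n_1 + n_2$ and $\bar n_1 + \bar n_2 = \overline{n_1 + n_2}$ by $\mathbb Z$-linearity of $n \mapsto \bar n$, so the bracket lies in $\mathfrak g_{n_1+n_2}$. No step is a serious obstacle: the only bookkeeping to watch is the sign identity $\langle r(p_1), \bar n_2 \rangle = -\langle r(p_2), \bar n_1 \rangle$, which is really the whole content and which is forced by $\omega$ being skew-symmetric. Everything else is routine unpacking of definitions.
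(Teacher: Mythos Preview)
Your proof is correct and follows essentially the same approach as the paper's: both compute the bracket of two generators and use the skew-symmetry of $\omega$ to show the result lands in $\mathfrak g_{n_1+n_2}$. The paper's version is slightly more compact because it observes directly that $r(p) = \omega(-,\bar n)$ (written there as $p^*(n)$), so the pairings $\langle r(p_2),\bar n_1\rangle$ and $\langle r(p_1),\bar n_2\rangle$ become $\omega(\bar n_1,\bar n_2)$ and $\omega(\bar n_2,\bar n_1)$ without expanding in coordinates; your double-sum computation is the same identity unpacked. One point in your favor: you explicitly verify the containment $\mathfrak g\subset\mathfrak v$ (i.e.\ that $\bar n\in r(p)^\perp$), which the paper's proof leaves implicit.
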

\begin{proof}
	For any $n = \sum_{i\in I} \alpha_ie_i\in N^+$, consider the elements
\[
\prod_{i,j} p_{i,j}^{c_{i,j}} \cdot z^{p^*(n)}
\]	
such that $\sum_{j\in [1, r_i]} c_{i,j} = \alpha_i$ and	
\[
p^*(n) \coloneqq \omega (- , \bar n)  = \sum_{i\in I} \alpha_i\omega(-, d_i e_i/r_i) = \sum_{i\in I} \alpha_i w_i.
\]
Those elements form a basis of the vector space $\mathfrak g_n$. We check that for two such elements
\begin{align*}
	\left[p_1 z^{p^*(n_1)} \partial_{\bar n_1}, p_2 z^{p^*(n_2)} \partial_{\bar n_2}\right] &= p_1p_2 \cdot z^{p^*(n_1+n_2)}\partial_{\omega(\bar n_1, \bar n_2)\bar n_2 - \omega(\bar n_2, \bar n_1)\bar n_1}\\
	& = \omega(\bar n_1, \bar n_2) p_1p_2 \cdot z^{p^*(n_1+n_2)}\partial_{\bar n_1 + \bar n_2}\in \mathfrak g_{n_1+n_2}.
\end{align*}
\end{proof}

\begin{remark}
	One may also view the above Lie algebra $\mathfrak g$ as being $\overline N^+$-graded where both $\overline N$ and $N$ are sublattices of $N_\mathbb R$. When later considering a scattering diagram $\mathfrak D$ over an $\overline N^+$-graded Lie algebra $\mathfrak g$ (instead of $N^+$-graded), the walls live in $M_\mathbb R$ with integral normal vectors in $\overline N^+$.
\end{remark}

Consider the ideals $(N^+)^k\subset N^+$ for $k\geq 1$. These correspond to the monomial ideals $(P^+)^k$. Then we have quotient Lie algebras (and their corresponding groups $G^{<k}$)
\[
\mathfrak g^{<k} \coloneqq \mathfrak g/\mathfrak g_{(N^+)^k} = \bigoplus_{n\in N^+\setminus (N^+)^k} \mathfrak g_n,
\]
and their projective limits
\[
\hat {\mathfrak g} = \prod_{n\in N^+} \mathfrak g_n\quad \text{and}\quad G \coloneqq \exp(\hat {\mathfrak g}). 
\]
The group $G^{<k}$ acts on $\Bbbk[P^\oplus]/(P^+)^k$ by automorphisms as in \Cref{rmk: how to define wall-crossing automorphism}.

For $n_0\in N^+$ primitive, we define as in \cite{gross2018canonical} a Lie algebra (and its corresponding pro-unipotent group)
\[ \mathfrak g_{n_0}^\parallel \coloneqq \bigoplus_{k>0} \mathfrak g_{k\cdot n_0} \subset \mathfrak g \quad \text{and} \quad G_{n_0}^\parallel \coloneqq \exp (\hat{\mathfrak g}_{n_0}^\parallel)\subset G. \]
There is a general framework for scattering diagrams over an $N^+$-graded Lie algebra (as opposed to the tropical vertex case); see \cite[Section 2.1]{kontsevich2014wall} and \cite[Section 1.1]{gross2018canonical}. In this case, one could make use of an existence-and-uniqueness theorem of \cite{kontsevich2014wall} (see also \cite[Theorem 1.21]{gross2018canonical}) to obtain a consistent scattering diagram with certain prescribed \emph{incoming data}. The cluster scattering diagram of \cite{gross2018canonical} can be defined this way, which we will extend to the generalized case in \Cref{def: generalized cluster scattering}.

\begin{definition}\label{def: wall}
	A \emph{wall} in $M_\mathbb R$ (for $N^+$ and an $N^+$-graded Lie algebra $\mathfrak g$) is a pair $(\mathfrak d, g_\mathfrak d)$ such that
	\begin{enumerate}
		\item $g_\mathfrak d$ belongs to $G_{n_0}^\parallel$ for some primitive $n_0\in N^+$;
		\item $\mathfrak d\subset n_0^\perp \subset M_\mathbb R$ is a codimension one convex rational polyhedral cone.
	\end{enumerate}
\end{definition}

\begin{remark}
	The above definition works for general $N^+$-graded Lie algebras. In the case that $\mathfrak g$ is a Lie subalgebra of the tropical vertex Lie algebra $\mathfrak v$, the group $G_{n_0}^\parallel$ is embedded in $\Aut(\widehat{\Bbbk[P^\oplus]})$. Then the wall-crossing element $g_\mathfrak d$ can be equivalently represented by a function $f_\mathfrak d\in \widehat{\Bbbk [P^\oplus]}$.
\end{remark}

Now every wall has a direction $-p^*(n_0)\in M$ in the sense of \Cref{def: direction of a wall}. We call a wall $(\mathfrak d, g_\mathfrak d)$ with direction $m_0$ \emph{incoming} if $\mathfrak d = \mathfrak d - \mathbb R_{\geq 0}m_0$ and \emph{non-incoming} (or \emph{outgoing}) otherwise.

\begin{definition}
	A \emph{scattering diagram} over an $N^+$-graded algebra $\mathfrak g$ in $M_\mathbb R$ is a collection of walls such that for every degree $k>0$, there are only a finite number of $(\mathfrak d, g_\mathfrak d)\in \mathfrak D$ with the image of $g_\mathfrak d$ in $G^{<k}$ not being identity.
\end{definition}

The path-ordered product of a path $\gamma \colon [0,1]\rightarrow M_\mathbb R$ for a scattering diagram $\mathfrak D$ over $\mathfrak g$ can be defined similarly as in \Cref{def: path-ordered product}. We note that when $\gamma$ crosses a wall $(\mathfrak d, g_\mathfrak d)$ at time $t$, then the element $\xi_{\gamma, \mathfrak d}$ also depends on $\gamma'(t)$:
\[
\xi_{\gamma, \mathfrak d} = \begin{cases}
	g_\mathfrak d \quad & \text{if $\langle n_0, \gamma'(t)\rangle>0$}\\
	g_\mathfrak d^{-1} \quad & \text{if $\langle n_0, \gamma'(t) \rangle <0$}.
\end{cases}
\]
The consistency for these scattering diagrams is defined using path-ordered products in the same way as \Cref{def: path-ordered product}.

\begin{theorem}[{\cite[Proposition 2.1.12]{kontsevich2014wall}}, {\cite[Theorem 1.21]{gross2018canonical}}]\label{thm: KS reconstruction theorem}
	Let $\mathfrak D_\mathrm{in}$ be a scattering diagram over $\mathfrak g$ consisting only of incoming walls. Then there exists a unique (up to equivalence) consistent scattering diagram $\mathfrak D$ containing $\mathfrak D_\mathrm{in}$ such that $\mathfrak D\setminus \mathfrak D_\mathrm{in}$ consists only of outgoing walls.
\end{theorem}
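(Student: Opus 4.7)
The plan is an induction on the $N^+$-grading of $\mathfrak g$, constructing a compatible tower of consistent scattering diagrams $\mathfrak D^{<k}$ modulo $(N^+)^k$ and passing to the inverse limit. Each $\mathfrak D^{<k}$ is to contain the reduction of $\mathfrak D_\mathrm{in}$ modulo $(N^+)^k$, and all walls of $\mathfrak D^{<k} \setminus \mathfrak D_\mathrm{in}$ are to be outgoing. The base case $k = 2$ is automatic: the quotient $\mathfrak g^{<2}$ is abelian since every bracket lands in $\mathfrak g_{(N^+)^2}$, so the path-ordered product around any loop reduces to the unordered product, in which the contribution of each incoming wall cancels with its opposite crossing.

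For the inductive step, suppose a consistent $\mathfrak D^{<k}$ has been built. Lift $\mathfrak D^{<k}$ arbitrarily to a scattering diagram modulo $(N^+)^{k+1}$ by reinterpreting its wall-crossing elements in $G^{<k+1}$. At each codimension-two joint $\mathfrak j$, a small transversal loop $\gamma_\mathfrak j$ yields a path-ordered product that projects to the identity in $G^{<k}$ by the inductive hypothesis, so it determines an abelian defect
\[
\theta_\mathfrak j \in \bigoplus_{n \in (N^+)^k \setminus (N^+)^{k+1}} \mathfrak g_n \;=\; \ker\bigl(G^{<k+1} \to G^{<k}\bigr).
\]
Decompose $\theta_\mathfrak j = \sum_{n_0} \theta_{\mathfrak j, n_0}$ by grouping according to primitive $n_0 \in N^+$ with a positive multiple appearing; each piece lies in $\mathfrak g_{n_0}^\parallel$ and can be absorbed by adding a single outgoing wall $(\mathfrak d_{\mathfrak j, n_0}, \exp(-\theta_{\mathfrak j, n_0}))$, where $\mathfrak d_{\mathfrak j, n_0} \subset n_0^\perp$ is a codimension-one rational polyhedral cone emanating from $\mathfrak j$ into the half-space where $\gamma_\mathfrak j$ exits.

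The main obstacle is verifying that these local corrections assemble into a globally consistent $\mathfrak D^{<k+1}$, i.e.\ that the newly added degree-$k$ walls do not reintroduce defects at any other joint. The key fact is that any mutual interaction of two newly added walls is registered by a commutator, hence lives in degree at least $2k$ and so vanishes modulo $(N^+)^{k+1}$ as soon as $k \geq 1$; therefore cancellations at distinct joints are independent and the inductive step closes. Since at each degree only finitely many walls of $\mathfrak D^{<k}$ meet any compact region, the inverse limit $\mathfrak D \coloneqq \varprojlim \mathfrak D^{<k}$ is a well-defined scattering diagram of the required form. Uniqueness up to equivalence follows by running the same induction on two candidate solutions: at each order the outgoing wall-crossings are uniquely forced by the joint defects, while any extraneous outgoing wall would itself create an uncancellable defect at the next order.
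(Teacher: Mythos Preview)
The paper does not give its own proof of this theorem; it is quoted as a black box from Kontsevich--Soibelman and from \cite[Theorem 1.21]{gross2018canonical}. Your sketch follows the standard order-by-order construction used in those references, and is the same algorithm the paper later invokes in the proof of \Cref{thm: generalized positivity}. So in outline you are on the right track.

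That said, there is a genuine gap in the inductive step. Your construction adds, at each joint $\mathfrak j$ with nonzero defect $\theta_{\mathfrak j}$, an outgoing wall $\mathfrak d_{\mathfrak j, n_0}$ emanating from $\mathfrak j$. But this only produces a codimension-one cone when the direction $-p^*(n_0)$ is \emph{not} tangent to $\mathfrak j$. When every wall through $\mathfrak j$ has its direction tangent to $\mathfrak j$ (a \emph{parallel} joint in the terminology used in the proof of \Cref{thm: generalized positivity}), the set $\mathfrak j - \mathbb R_{\geq 0}p^*(n_0)$ has codimension two and cannot serve as a wall. At such joints one must instead prove that the defect $\theta_{\mathfrak j}$ vanishes automatically; this is the content of \cite[Lemma C.6]{gross2018canonical} and is not a consequence of your commutator argument. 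Your sentence ``cancellations at distinct joints are independent and the inductive step closes'' also hides the verification (\cite[Lemma C.7]{gross2018canonical}) that the walls added at perpendicular joints do not spoil consistency elsewhere: the point is not merely that new--new commutators vanish, but that each new wall meets every other joint either not at all or through its interior (hence is crossed twice by the loop and cancels), never along its boundary except at the joint where it was born.

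A smaller imprecision: the base case $k=2$ is not automatic unless the incoming walls are full hyperplanes. An incoming wall with boundary already produces a degree-one defect at that boundary, which must be corrected by an outgoing wall even in the abelian quotient. It is cleaner to start the induction at $k=1$, where $\mathfrak g^{<1} = 0$, and let the first nontrivial step absorb this.
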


Now we get back to the cluster situation. Suppose given fixed data $\Gamma$ and $\mathbf s$ an $\mathcal A$-seed with principal coefficients. Unlike the previous section, here we do not assume the maximality of the positive integers $r_i$, i.e. $r_i$ needs not to be $\mathrm{gcd}(b_{ki}\mid k\in I)$.

We calculate in the following how the group $G_{n_0}^\parallel$ is embedded in $\Aut (\widehat{ \Bbbk[P^\oplus]})$. Suppose $n_0 = \sum_{i\in I} \alpha_i e_i$, a primitive element in $N^+$. Consider any element
\[ x = \sum_{k>0} \sum_{\substack {p\in \mathbb P^\oplus \\ \pi(p) = k n_0 }} c_{p} \cdot p\cdot z^{kp^*(n_0)}\partial_{k\bar n_0} \in \hat {\mathfrak g}_{n_0}^\parallel,\quad c_{p}\in \Bbbk.\]
For non-zero $n\in N_\mathbb Q$, denote by $\operatorname{ind}(n)$ the largest number in $\mathbb Q_{\geq 0}$ such that $n/\operatorname{ind}(n)\in N$. Thus $n/\operatorname{ind}(n)$ is primitive in $N$.

\begin{lemma}\label{lemma: wall-crossing by multiply function}
	The group element $\exp(x) \in G_{n_0}^\parallel$ acts on $\widehat {\Bbbk[P^\oplus]}$ as an automorphism by
\[ 
	z^m \mapsto z^m \exp \left( \sum_{k>0} \sum_{\substack {p\in \mathbb P^\oplus \\ \pi(p) = k n_0 }} \mathrm{ind}(\bar n_0) kc_{p} \cdot p\cdot z^{kp^*(n_0)}\right)^{\langle r(m), \bar n_0/\mathrm{ind}(\bar n_0) \rangle},\quad m\in P^\oplus.
\]
\end{lemma}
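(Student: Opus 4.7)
The plan is to observe that all summands of the series defining $x$ pairwise commute in $\hat{\mathfrak g}_{n_0}^\parallel$, which reduces $\exp(x)$ to a product of exponentials of single Lie algebra elements, all with derivation $\partial_{\bar n_0}$. Given this, one can compute the action on each monomial $z^m$ using the classical identity $\exp(a z^q \partial_n)(z^m) = z^m \exp(a\langle n, r(m)\rangle z^q)$, valid whenever $\langle n, r(q)\rangle = 0$, and extend by continuity to $\widehat{\Bbbk[P^\oplus]}$.

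First I would rewrite $\partial_{k \bar n_0} = k\, \partial_{\bar n_0}$ so that every summand of $x$ has the same derivation direction $\bar n_0$. For two such summands $X_1, X_2$ with underlying monomials of $r$-images $k_1 p^*(n_0)$ and $k_2 p^*(n_0)$, the commutator formula from the preliminaries expresses $[X_1, X_2]$ in terms of the pairings $\langle k_i p^*(n_0), \bar n_0\rangle = k_i\, \omega(\bar n_0, \bar n_0)$, both of which vanish. Hence $[X_1, X_2] = 0$, and exponentiation is multiplicative: $\exp(x) = \prod_{k,p} \exp(X_{k,p})$ as an automorphism of $\widehat{\Bbbk[P^\oplus]}$, which is well-defined because modulo each power $(P^+)^N$ only finitely many factors act nontrivially.

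Next, for a single term $X = az^q \partial_{\bar n_0}$ with $\langle \bar n_0, r(q)\rangle = 0$, I would verify the identity $X^j(z^m) = a^j \langle \bar n_0, r(m)\rangle^j z^{m+jq}$ by induction, using the vanishing pairing to see that the scalar factor is preserved under iteration; summing over $j$ yields $\exp(X)(z^m) = z^m \exp(\langle \bar n_0, r(m)\rangle\, a z^q)$. Multiplying this over all commuting summands (noting that each $\exp(X_{k,p})$ fixes the wall-parallel monomials produced by the others) gives
\[
 \exp(x)(z^m) = z^m \exp\!\Bigl( \langle \bar n_0, r(m)\rangle \sum_{k,p} k c_p\, p\, z^{k p^*(n_0)} \Bigr).
\]
Finally, factoring $\langle \bar n_0, r(m)\rangle = \operatorname{ind}(\bar n_0)\, \langle \bar n_0/\operatorname{ind}(\bar n_0), r(m)\rangle$ and pulling out this now-integral exponent via the formal identity $(\exp A)^B = \exp(AB)$ recovers the formula stated in the lemma. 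There is no serious obstacle here: the conceptual point is the commutativity-plus-wall-parallelism observation in the first step, after which the computation is mechanical.
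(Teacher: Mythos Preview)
Your proof is correct and follows essentially the same approach as the paper. The paper is more terse: it simply rewrites $x$ in one stroke as $F\,\partial_{\bar n_0/\operatorname{ind}(\bar n_0)}$ with $F = \sum_{k,p} \operatorname{ind}(\bar n_0)\,k\,c_p\,p\,z^{kp^*(n_0)}$, and then the stated formula is the standard action of $\exp(F\partial_n)$ when $\partial_n F = 0$; your decomposition into commuting summands and termwise exponentiation spells out the same computation in more detail but is not a different idea.
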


\begin{proof}
	This follows by rewriting $x$ as
	\[
		 x = \left( \sum_{k>0} \sum_{\substack {p\in \mathbb P^\oplus \\ \pi(p) = k n_0 }} \mathrm{ind}(\bar n_0) kc_p \cdot p \cdot z^{kp^*(n_0)} \right) \partial_{\bar n_0/\mathrm{ind}(\bar n_0)}.
	\]
\end{proof}

Due to \Cref{lemma: wall-crossing by multiply function}, any $\exp(x)\in G_{n_0}^\parallel$ can be represented by a function $f$ as in \Cref{lemma: wall-crossing by multiply function} such that the action of $\exp(x)$ sends $z^m$ to $z^m f^{\langle r(m), \bar n_0/\mathrm{ind}(\bar n_0) \rangle}$.

Given $\mathbf s = (\mathbf e, \mathbf p)$, for each $i\in I$, consider the hyperplane $e_i^\perp$ with the attached wall-crossing function
\[
f_i = \prod_{j = 1}^{r_i} \left(1 + p_{i,j} z^{w_i} \right) \in \Bbbk[P^\oplus].
\]
As discussed, the function $f_i$ represents an element in $G_{e_i}^\parallel$.

\begin{definition}\label{def: generalized cluster scattering}
Let $\mathfrak D_{\mathbf s, \mathrm{in}}$ be the scattering diagram over $\mathfrak g$ in $M_\mathbb R$ consisting only of the incoming walls of the form $\mathfrak d_i \coloneqq (e_i^\perp, f_i)$, i.e.
	\[
		\mathfrak D_{\mathbf s, \mathrm{in}} \coloneqq \{(e_i^\perp, f_i)\mid i\in I\}.
	\]
	We define \emph{the generalized cluster scattering} $\mathfrak D_\mathbf s$ to be the unique (up to equivalence) consistent scattering diagram associated to $\mathfrak D_{\mathbf s,\mathrm{in}}$ guaranteed by \Cref{thm: KS reconstruction theorem}.
\end{definition}

\begin{remark}\label{rmk: change of lattice}
One may tend to think of $\mathfrak D_\mathbf s$ as a scattering diagram over $\widehat{\Bbbk[P^\oplus]}$ or over $\widehat{\Bbbk[P]}$ (as $\mathfrak g$ is a Lie subalgebra of $\mathfrak v$) in \Cref{def: tropical vertex sd}. However there is one subtle issue. Suppose that there is a wall $\left(\mathfrak d\subset n_0^\perp, f_{\mathfrak d} \right)$ in $\mathfrak D_\mathbf s$ for some $n_0\in N^+$ primitive. Then the wall-crossing action is given by
	\[
	\xi_{f_\mathfrak d}(z^p) = z^p f_{\mathfrak d}^{\langle \bar n_0/\mathrm{ind}(\bar n_0), r(p)\rangle}.
	\]
	Since in general $\bar n_0$ may not be proportional to $n_0$, the cone $\mathfrak d$ may not be contained in $\bar n_0^\perp$. In this case, the wall $(\mathfrak d, f_\mathfrak d)$ does not qualify as a wall in \Cref{def: tropical vertex sd}. This issue can be resolved in the following two ways (so that one can view $\mathfrak D_\mathbf s$ as a scattering diagram of \Cref{def: tropical vertex sd}). 
	
	\begin{enumerate}
		\item We could regard $\mathfrak g$ as graded by $\overline N^+\subset N_\mathbb Q$ (rather than $N^+$-graded) and modify \Cref{def: wall} (the definition of a wall $(\mathfrak d, g_\mathfrak d)$) so that $\mathfrak d$ is a codimension one cone in some hyperplane $n_0^\perp$ for $n_0\in \overline N^+$ and $g_\mathfrak d$ belongs to $G_{n_0}^\parallel$.
		\item Another way to resolve the issue is to consider the dual $\eta^* \colon M_\mathbb R\rightarrow M_\mathbb R$ of the linear map
		\[
		\eta \colon N_\mathbb R \rightarrow N_\mathbb R, \quad n\mapsto \bar n.
		\]
		We then apply $(\eta^*)^{-1}$ to every wall $(\mathfrak d, f_\mathfrak d)$ to get the collection
		\[ (\eta^*)^{-1}(\mathfrak D_\mathbf s) \coloneqq \{\left((\eta^*)^{-1}(\mathfrak d), f_\mathfrak d \right) \mid (\mathfrak d, f_\mathfrak d)\in \mathfrak D_\mathbf s \} \]
		Then the cone $(\eta^*)^{-1}(\mathfrak d)$ is indeed contained in $\bar n_0^\perp$. So this collection of walls is a scattering diagram in \Cref{def: tropical vertex sd}.
	\end{enumerate} 
From now on, to avoid any further confusion, the notation $\mathfrak D_\mathbf s$ is reserved for the consistent scattering diagram $(\eta^*)^{-1}(\mathfrak D_\mathbf s)$ over $\widehat{\Bbbk[P^\oplus]}$.
\end{remark}

\begin{lemma}\label{lemma: tropical vertex sd equals cluster sd}
	Let $\mathbf s$ be a seed with principal coefficients for some generalized fixed data $\Gamma$ (viewed of both $\mathcal A$- and $\mathcal X$-type) with the condition that for each $i\in I$, the element $$w_i = \omega(-, d_ie_i/r_i)$$ is primitive in $M$. In this case, we have defined both scattering diagrams $\mathfrak D_{(X_\Sigma, H)}$ (with a chosen general evaluation $\lambda$) and $\mathfrak D_\mathbf s$. Identify the parameters $t_{i,j}$ with $p_{i,j}$. Then $\mathfrak D_{(X_\Sigma, H)}$ and $\mathfrak D_\mathbf s$ are equivalent
	as scattering diagrams over $\widehat{ \Bbbk[P]}$.
\end{lemma}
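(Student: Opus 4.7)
The strategy is to apply uniqueness after matching the incoming data of the two diagrams. Both $\mathfrak D_\mathbf s$ and $\mathfrak D_{(X_\Sigma, H)}$ are obtained by the same abstract procedure---consistently extending prescribed incoming walls by adding only non-incoming walls (\Cref{thm: KS reconstruction theorem} and \Cref{thm: tropical vertex sd} respectively). So it will suffice to work in a common tropical-vertex framework over $\widehat{\Bbbk[P]}$ and to verify that the incoming data $\mathfrak D_{\mathbf s, \mathrm{in}}$ and $\mathfrak D_{(X_\Sigma, H),\mathrm{in}}$ are equivalent after the identification $t_{i,j} = p_{i,j}$.

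First I would place $\mathfrak D_\mathbf s$ inside the tropical-vertex framework. Since each $w_i$ is primitive in $M$, the element $\bar e_i = (d_i/r_i)\, e_i$ is parallel to $e_i$ and so $\bar e_i^\perp = e_i^\perp$; the initial walls $(e_i^\perp, f_i)$ therefore qualify as walls in the sense of \Cref{def: tropical vertex sd}. Since $\mathfrak g \subset \mathfrak v$, all wall-crossing functions of $\mathfrak D_\mathbf s$ lie in $\widehat{\Bbbk[P^\oplus]} \subset \widehat{\Bbbk[P]}$, and the $(\eta^*)^{-1}$ correction from \Cref{rmk: change of lattice} (already incorporated into the definition of $\mathfrak D_\mathbf s$) aligns the supports of all outgoing walls with the tropical-vertex convention. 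Next I would subdivide each initial wall $(e_i^\perp, f_i)$ into the pieces $(\sigma, f_i)$ indexed by the codimension-one cones $\sigma$ of $\Sigma$ lying in $e_i^\perp$; the linear factors of $f_i$ commute (same direction $w_i$, same normal $e_i$), so the subdivision yields an equivalent scattering diagram. A piece $(\sigma, f_i)$ is incoming iff $\rho_i = -\mathbb R_{\geq 0} w_i \subset \sigma$ by \Cref{def: direction of a wall}, and on such a cone the widgets $\mathfrak D_{i,1}, \ldots, \mathfrak D_{i,r_i}$ jointly contribute a wall with wall-crossing function $\prod_j (1 + t_{i,j} z^{w_i}) = f_i$ under $t_{i,j} = p_{i,j}$. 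Hence the subdivided cluster incoming data coincides up to equivalence with $\mathfrak D_{(X_\Sigma, H), \mathrm{in}}$.

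Finally, the subdivided $\mathfrak D_\mathbf s$ is a consistent extension of (a diagram equivalent to) $\mathfrak D_{(X_\Sigma, H), \mathrm{in}}$ whose extra walls are all non-incoming: the subdivided pieces $(\sigma, f_i)$ with $\rho_i \not\subset \sigma$ are non-incoming by \Cref{def: direction of a wall}, while all further walls of $\mathfrak D_\mathbf s$ are non-incoming by construction. By the uniqueness clause of \Cref{thm: tropical vertex sd}, it coincides up to equivalence with $\mathfrak D_{(X_\Sigma, H)}$. The main delicate step is the rigorous verification that the subdivision of $(e_i^\perp, f_i)$ is genuinely an equivalence of scattering diagrams---a direct path-ordered-product computation that hinges on the commutativity of the linear factors of $f_i$ and on the fact that $\Sigma$ can be chosen so that each $e_i^\perp$ is a union of cones of $\Sigma$.
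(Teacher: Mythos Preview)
Your proposal is correct and follows essentially the same route as the paper: subdivide the hyperplane walls of $\mathfrak D_{\mathbf s,\mathrm{in}}$ along the cones of $\Sigma$, observe that the incoming pieces coincide (under $t_{i,j}=p_{i,j}$) with $\mathfrak D_{(X_\Sigma,H),\mathrm{in}}$, and then invoke uniqueness. The only cosmetic difference is that the paper passes through an auxiliary diagram $\tilde{\mathfrak D}$ (the consistent diagram over $\mathfrak g$ built from $\mathfrak D_{(X_\Sigma,H),\mathrm{in}}$) and applies both \Cref{thm: KS reconstruction theorem} and \Cref{thm: tropical vertex sd} to identify it with each side, whereas you apply \Cref{thm: tropical vertex sd} directly to the subdivided $\mathfrak D_\mathbf s$; the content is the same.
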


\begin{proof}
We require $\omega_i$ to be primitive so that $\mathfrak D_{(X_\Sigma, H)}$ is defined. According to \Cref{rmk: change of lattice}, $\mathfrak D_\mathbf s$ is viewed as a scattering diagram over $\widehat{ \Bbbk[P]}$ in the same $M_\mathbb R$ as $\mathfrak D_{(X_\Sigma, H)}$ so it is legitimate to compare them. Let $\tilde {\mathfrak D}$ be the consistent scattering diagram over $\mathfrak g$ obtained using the initial data $\mathfrak D_{(X_\Sigma, H), \mathrm{in}}$. Notice that the walls in $\mathfrak D_{(X_\Sigma, H), \mathrm{in}}$ are parts of the hyperplanes $e_i^\perp$. We then subdivide the walls in $\mathfrak D_{\mathbf s, \mathrm{in}}$ so that $\mathfrak D_{(X_\Sigma, H), \mathrm{in}}$ becomes the subset of incoming walls. Thus $\tilde {\mathfrak D}$ is equivalent to $\mathfrak D_\mathbf s$ by \Cref{thm: KS reconstruction theorem}. 

On the other hand, $\tilde {\mathfrak D}$ is also a scattering diagram over $\widehat{\Bbbk[P]}$. By \Cref{thm: tropical vertex sd}, It is also equivalent to $\mathfrak D_{(X_\Sigma, H)}$ since they have the same incoming walls. Therefore we have $\mathfrak D_{(X_\Sigma, H)} \cong \tilde {\mathfrak D} \cong  \mathfrak D_\mathbf s$.
\end{proof}

\subsection{The cluster scattering diagrams of GHKK}\label{subsection: ordinary cluster sd}

The ordinary cluster scattering diagram $\mathfrak D_\mathbf s^\mathrm{ord}$ corresponds to the case where $r_i = 1$ for each $i\in I$. Thus there is only one parameter $p_i \coloneqq p_{i,1}$ for each $i\in I$. The lattice $\overline N$ is generated by $\bar e_i = d_i e_i$. The initial incoming walls are then
\[ \{\left( e_i^\perp, 1 + p_iz^{w_i} \right) \mid i\in I\},\]
where $w_i = \omega(-, \bar e_i)\in M$. 

This scattering diagram is closely related to the \emph{cluster scattering diagram} $\mathfrak D_\mathbf s^\mathrm{GHKK}$ of Gross, Hacking, Keel and Kontsevich \cite[Theorem 1.12]{gross2018canonical}. We explain the difference and relation here. The scattering diagram $\mathfrak D_\mathbf s^\mathrm{GHKK}$ is actually defined for $\overline N$ and $\overline M \coloneqq \Hom(\overline N, \mathbb Z)$ (in the ordinary case equal to $N^\circ$ and $M^\circ$ respectively). Under the injectivity assumption \cite[Section 1.1]{gross2018canonical}, the incoming walls are
\[
	\left \{\left( e_i^\perp, 1 + z^{\omega(e_i, -)} \right) \mid i\in I \right \}
\]
where $\omega(e_i, -)$ is in $M^\circ$. The injectivity assumption means that $\omega(e_i, -)$ generate a strict convex cone. If this is not the case, we may extend $M^\circ$ to $M^\circ \oplus \mathbb P$ (identified with $M^\circ \oplus N$ in \cite{gross2018canonical}) and let incoming walls be
\[
\left \{\left( e_i^\perp, 1 + p_iz^{\omega(e_i, -)} \right) \mid i\in I \right \}.
\]
It lives in $(M^\circ \oplus N)\otimes \mathbb R$, or in $M^\circ \otimes \mathbb R$ if regarding $p_i$ as formal parameters as we do. Then $\mathfrak D_\mathbf s^\mathrm{GHKK}$ is defined to be the unique consistent scattering diagram over $\widehat{\Bbbk[P]}$ with only these incoming walls, where $P\subset M^\circ \oplus N$ is a submonoid contained in a strictly convex cone and containing the cone generated by $(p_i, \omega(e_i, -))$. The Lie algebra $\mathfrak g$, however, is naturally graded by $N^+$ (generated by $e_i$'s), not $\overline N^+$ (generated by $\overline e_i$'s). Thus if one uses \Cref{thm: KS reconstruction theorem} to define $\mathfrak D_\mathbf s^\mathrm{GHKK}$, the same rescaling issue in \Cref{rmk: change of lattice} still exists and can be resolved in a similar way. In \cite{gross2018canonical}, $\mathfrak D_\mathbf s^\mathrm{GHKK}$ is regarded as living in $M^\circ_\mathbb R$ with the integral normal vectors of walls being in $N^\circ$.

The structures of $\mathfrak D_\mathbf s^\mathrm{GHKK}$ and $\mathfrak D_\mathbf s^\mathrm{ord}$ are very much alike. For example, they both admit cluster complex structures; see \cite[Theorem 2.13]{gross2018canonical} and \Cref{thm: wall crossing on cluster chamber}. It turns out that in the convention of \cite{fomin2007cluster} (e.g. the definition of $g$-vectors), $\mathfrak D_\mathbf s^\mathrm{GHKK}$ corresponds to the cluster algebra associated to $-B^T$ while $\mathfrak D_\mathbf s^\mathrm{ord}$ corresponds to the one associated to $B$, where $B = (b_{ij})$ with $b_{ij} = \omega(e_i, \bar e_j)$.

\subsection{Scattering diagrams with special coefficients}\label{subsection: sd with special coefficients}

Just as specializing a cluster algebra $\mathscr A$ at some evaluation $\lambda\colon \mathbb P\rightarrow \Bbbk^*$, one can do the same to $\mathfrak D_\mathbf s$, obtaining a consistent scattering diagram with special coefficients.

We consider another monoid $Q = M\oplus \prod_{i\in I} \mathbb N$ (with $t_i$ being the standard generators of $\prod_{i\in I} \mathbb N$). Let $\lambda \colon \mathbb P \rightarrow \Bbbk^*,\ p_{i,j}\mapsto \lambda_{i,j}$ be an evaluation. Define the map (abusing the same notation $\lambda$)
\[
	\lambda \colon \Bbbk[P] \rightarrow \Bbbk[Q], \quad z^m\mapsto z^m \text{ for $m\in M$},\quad p_{i,j}\mapsto \lambda_{i,j}t_i.
\]

\begin{lemma}
The collection
	\[ \lambda(\mathfrak D_\mathbf s)\coloneqq \{(\mathfrak d, \lambda(f_\mathfrak d)) \mid (\mathfrak d, f_\mathfrak d)\in \mathfrak D_\mathbf s\} \]
		obtained by applying the algebra homomorphism $\lambda$ to every wall-crossing function $ f_\mathfrak d $ is a consistent scattering diagram over $\widehat {\Bbbk[Q]}$.
\end{lemma}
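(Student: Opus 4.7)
The plan is to reduce the consistency of $\lambda(\mathfrak D_\mathbf s)$ to that of $\mathfrak D_\mathbf s$ by showing that the evaluation $\lambda$ intertwines the path-ordered products on the two sides. First I would verify that $\lambda$ extends to a continuous $\Bbbk$-algebra map
\[
\lambda \colon \widehat{\Bbbk[P^\oplus]} \longrightarrow \widehat{\Bbbk[Q^\oplus]},
\]
where $Q^\oplus \subset Q$ is the submonoid generated by $(w_i, t_i)$ for $i \in I$. The key point is that each generator $(w_i, p_{i,j})$ of $P^\oplus$ is sent to $\lambda_{i,j}(w_i, t_i)$, so $\lambda$ sends $(P^+)^k$ into $(Q^+)^k$ for every $k \geq 1$; hence $\lambda$ descends to ring maps on each truncation $\Bbbk[P^\oplus]/(P^+)^k \to \Bbbk[Q^\oplus]/(Q^+)^k$, and passing to the inverse limit gives the continuous map above. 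In particular $\lambda(f_\mathfrak d)$ is still a well-defined element of $\widehat{\Bbbk[Q^\oplus]}$ with constant term $1$.

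Next I would show that each truncation $\lambda(\mathfrak D_\mathbf s)^{<k}$ is a legitimate finite scattering diagram over $\Bbbk[Q^\oplus]/(Q^+)^k$. Since $\mathfrak D_\mathbf s^{<k}$ is finite by \Cref{def: tropical vertex sd}, and since $\lambda(f_\mathfrak d) \equiv 1 \bmod (Q^+)^k$ whenever $f_\mathfrak d \equiv 1 \bmod (P^+)^k$, only finitely many walls of $\lambda(\mathfrak D_\mathbf s)$ act nontrivially modulo $(Q^+)^k$. Moreover, the wall-crossing automorphism attached to $(\mathfrak d, \lambda(f_\mathfrak d))$ is, by \Cref{lemma: wall-crossing by multiply function} and the naturality of the assignment $f \mapsto \xi_f$ (which depends only on multiplication by a power of $f$), precisely the image under the group homomorphism $\lambda_* \colon \mathbb V_{(P^+)^k} \to \mathbb V_{(Q^+)^k}$ of the wall-crossing automorphism attached to $(\mathfrak d, f_\mathfrak d)$.

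The main technical point, and the heart of the lemma, is the identity
\[
\mathfrak p_{\gamma, \lambda(\mathfrak D_\mathbf s)^{<k}} \;=\; \lambda_*\bigl(\mathfrak p_{\gamma, \mathfrak D_\mathbf s^{<k}}\bigr),
\]
for every admissible path $\gamma$. This follows because $\lambda_*$ is a group homomorphism and the path-ordered product on each side is the same ordered composition of wall-crossings (the ordering and the chosen normals $n_0$ only depend on $\gamma$ and the underlying cones $\mathfrak d$, which are unchanged by $\lambda$). Given this intertwining, the consistency of $\mathfrak D_\mathbf s$ (\Cref{def: generalized cluster scattering}) gives $\mathfrak p_{\gamma, \mathfrak D_\mathbf s^{<k}} = \mathrm{id}$ for loops, and thus $\mathfrak p_{\gamma, \lambda(\mathfrak D_\mathbf s)^{<k}} = \lambda_*(\mathrm{id}) = \mathrm{id}$, establishing consistency modulo $(Q^+)^k$ for every $k$. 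Passing to the inverse limit yields the consistency of $\lambda(\mathfrak D_\mathbf s)$ over $\widehat{\Bbbk[Q]}$.

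The only point that requires some care is the verification that the map $f \mapsto \xi_f$ is natural with respect to ring homomorphisms $\lambda_*$ preserving the monoid grading; this is essentially the observation that applying a coefficient specialization to $z^m \mapsto z^m f^{\langle r(m), n_0 \rangle}$ yields $z^m \mapsto z^m \lambda(f)^{\langle r(m), n_0 \rangle}$. Once this bookkeeping is in place, the proof is a direct transport of structure.
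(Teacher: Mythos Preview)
Your proposal is correct and follows essentially the same approach as the paper's proof, just in considerably more detail. The paper compresses your argument into two sentences: first that $\lambda$ respects the completions (so each $(\mathfrak d,\lambda(f_\mathfrak d))$ is a wall over $\widehat{\Bbbk[Q]}$), and second that consistency transfers because $\lambda$ is an algebra homomorphism---which is exactly your intertwining identity $\mathfrak p_{\gamma,\lambda(\mathfrak D_\mathbf s)} = \lambda_*(\mathfrak p_{\gamma,\mathfrak D_\mathbf s})$ left implicit.
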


\begin{proof}
	The algebra homomorphism $\lambda$ respects the completions of $\Bbbk[P]$ and $\Bbbk[Q]$. So $\lambda(f_\mathfrak d)$ belongs to $\widehat{\Bbbk[Q]}$. Recall we have the monoid map $r\colon P \rightarrow M$ which forgets the components in $\bigoplus_{i\in I}\mathbb N^{r_i}$. We use the same notation $r\colon Q\rightarrow M$ for the analogous map on $Q$. Then $(\mathfrak d, \lambda(f_\mathfrak d))$ becomes a wall over $\widehat{\Bbbk[Q]}$, and $\lambda(\mathfrak D_\mathbf s)$ is a scattering diagram over $\widehat{\Bbbk[Q]}$.
	
	The consistency of $\lambda(\mathfrak D_\mathbf s)$ follows from the consistency of $\mathfrak D_\mathbf s$ as $\lambda$ is an algebra homomorphism.
\end{proof}

We call $\lambda(\mathfrak D_\mathbf s)$ the (generalized) cluster scattering diagram of $\mathbf s$ with special coefficients $\lambda$. In fact, the ordinary cluster scattering diagram $\mathfrak D_\mathbf s$ when $r_i = 1$ can be obtained this way. We denote the ordinary one by $\mathfrak D_\mathbf s^\mathrm{ord}$. Its incoming walls are
\[
(e_i^\perp, 1 + p_i z^{\omega(-, d_ie_i)}).
\]
If there exist coefficients $\lambda_{ij}\in \Bbbk^*$  such that
\[
	\prod_{j = 1}^{r_i} \left( 1 + \lambda_{ij} t_i z^{w_i} \right) = 1 + t_i^{r_i} z^{r_iw_i} = 1 + t_i^{r_i} z^{\omega(-, d_ie_i)},
\]
then we can apply the corresponding morphism $\lambda \colon \Bbbk [P] \rightarrow \Bbbk[Q]$ to $\mathfrak D_\mathbf s$ so that
\[
\lambda(\mathfrak D_\mathbf s)  \cong \mathfrak D^\mathrm{ord}_\mathbf s
\]
as they have the exact same set of incoming walls. Here $t_i^{r_i}$ is identified with $p_i$. The existence of such an evaluation $\lambda$ amounts to find the $r_i$ roots of the polynomial $1 + x^{r_i}$ in $\Bbbk$, which is always possible if $\Bbbk$ is algebraically closed.

\subsection{Examples}
We illustrate some examples of generalized cluster scattering diagrams in this section.
\begin{example}\label{ex: example for b2}
Consider the fixed data $\Gamma$ consisting of
\begin{itemize}
		\item the lattice $N = \mathbb Z^2$ with with the standard basis $e_1 = (1,0)$ and $e_2 = (0,1)$, and the skew-symmetric form $\omega$ be determined by $\omega(e_1, e_2) = -1$;
		\item $N_\mathrm{uf} =  N$;
		\item the rank $r = 2$ and $I = I_\mathrm{uf} = \{1 ,2 \}$;
		\item positive integers $d_1 = 1$ and $d_2 = 2$;
		\item the sublattice $N^\circ$ generated by $e_1$ and $2e_2$;
		\item $M = \Hom(N, \mathbb Z)$, $M^\circ = \Hom(N^\circ, \mathbb Z)$;
	\end{itemize}

Let $\mathbf s$ be a seed consisting of $\mathbf e = (e_1, e_2)$ and $\mathbf p_1 = (t_{11})$, $\mathbf p_2 = (t_{21}, t_{22})$. We have matrices
\[
B = \begin{pmatrix}
	0 & -2\\
	1 & 0
\end{pmatrix}\quad \text{and} \quad \beta = \begin{pmatrix}
	0 & -1\\
	1 & 0
\end{pmatrix}.
\]
In this case we have $\bar e_i = d_ie_i/r_i = e_i$. So $\overline N = N$ and we shall not worry about the rescaling issue. Then $w_1 = e_2^*$ and $w_2 = -e_1^*$. We write $A_i = z^{e_i^*}$ for $i = 1, 2$. The coefficients group is $\mathbb P = \mathbb Z^3$ with generators $\{t_{11}, t_{21}, t_{22}\}$. The initial incoming scattering diagram is
\[ \mathfrak D_{\mathbf s, \mathrm{in}} = \{ (e_1^\perp, 1 + t_{11}A_2), (e_2^\perp, (1 + t_{21}A_1^{-1})(1 + t_{22} A_1^{-1})) \}.\]
The resulting generalized cluster scattering diagram is
\[
    \mathfrak D_\mathbf s = \mathfrak D_{\mathbf s, \mathrm{in}} \cup \left \{ \left(\mathbb R_{>0}(1,-1), f_{(1,-1)} \right), \left(\mathbb R_{>0}(2,-1), f_{(2,-1)} \right) \right \}
\]
where
\[
    f_{(1,-1)} = \left( 1 + t_{11}t_{21} A_1^{-1}A_2 \right) \left( 1 + t_{11}t_{22} A_1^{-1}A_2 \right) \quad \text{and} \quad f_{(2,-1)} = 1 + t_{11}t_{21}t_{22}A_1^{-2} A_2.
\]

The scattering diagram $\mathfrak D_\mathbf s$ is depicted in Figure \ref{figure: example for b2}.
\end{example}

\begin{figure}[h!]
\centering
\begin{tikzpicture}[scale = 0.9]

\node (A) at (0,3) {};
\node (B) at (3, 0) {};
\node (C) at (6, -3) {};
\node (D) at (4, -4) {};
\node (E) at (0, -3) {};
\node (F) at (-3, 0) {};

\draw[thick] (A) -- (0,0);
\draw[thick] (B) -- (0,0);
\draw[thick] (C) -- (0,0);
\draw[thick] (D) -- (0,0);
\draw[thick] (E) -- (0,0);
\draw[thick] (F) -- (0,0);
 
\draw (0,3) node[anchor = south]
{
	$1 + t_{11}A_2$
};
\draw (0,-3)  node[anchor = north]
{
	$1+t_{11}A_2$
};
\draw (3,0) node[anchor = west]
{
	$\left( 1 + t_{21} A_1^{-1} \right) \left( 1 + t_{22} A_1^{-1} \right)$
};
\draw (-3,0) node[anchor = east]
{
	$\left( 1 + t_{21} A_1^{-1} \right) \left( 1 + t_{22} A_1^{-1} \right)$
};
\draw (4,-4)  node[anchor = north]
{
	$\left( 1 + t_{11} t_{21} A_1^{-1}A_2 \right) \left( 1 + t_{11}t_{22} A_1^{-1}A_2 \right)$
};
\draw (6,-3)  node[anchor = north]
{
	$1 + t_{11}t_{21}t_{22}A_1^{-2} A_2$
};
\end{tikzpicture}
\caption{}
\label{figure: example for b2}	
\end{figure}

\begin{example}\label{ex: example for k2}
	Consider the fixed data $\Gamma$ consisting of
\begin{itemize}
		\item the lattice $N = \mathbb Z^2$ with with the standard basis $e_1 = (1,0)$ and $e_2 = (0,1)$, and the skew-symmetric form $\omega$ be determined by $\omega(e_1, e_2) = -1$;
		\item $N_\mathrm{uf} =  N$;
		\item the rank $r = 2$ and $I = I_\mathrm{uf} = \{1 ,2 \}$;
		\item positive integers $\lambda_1 = 1$ and $\lambda_2 = 1$;
		\item the sublattice $N^\circ$ generated by $e_1$ and $e_2$;
		\item $M = \Hom(N, \mathbb Z)$, $M^\circ = \Hom(N^\circ, \mathbb Z)$;
	\end{itemize}	
	
	The seed is given by $\mathbf e = (e_1, e_2)$ and $\mathbf p_1 = (s_1, s_2)$, $\mathbf p_2 = (t_1, t_2)$. The corresponding $\mathfrak D_\mathbf s$ is depicted in \Cref{figure: example for kronecker}. We write $X = z^{e_2^*}$ and $Y = z^{-e_1^*}$. The five rays depicted in the fourth quadrant are in the directions $(2,-1)$, $(3,-2)$, $(1,-1)$, $(2,-3)$ and $(1,-2)$ in clockwise order. In fact, in the fourth quadrant there are additional non-trivial walls whose underlying cones are $\mathbb R_{\geq 0} (n, -(n+1))$ and $\mathbb R_{\geq 0} (n+1, -n)$ for each positive integer $n\geq 3$ (which we omit in the figure below). The wall-crossing function, for example for $\mathbb R_{\geq 0} (2k, -(2k+1))$ for $k\in \mathbb Z_{>0}$, is
	\[
	f_{\mathbb R_{\geq 0} (2k, -(2k+1))} = \left(1 + s_1^{k+1}s_2^k t_1^kt_2^kX^{2k+1}Y^{2k}\right) \left(1 + s_1^{k}s_2^{k+1} t_1^kt_2^kX^{2k+1}Y^{2k} \right),
	\]
	which can be obtained using \Cref{thm: wall crossing on cluster chamber}.
	
	The wall-crossing function attached to the ray $\mathbb R_{\geq 0}(1,-1)$
	\[
	f_{\mathbb R_{\geq 0}(1,-1)} = \frac{(1 + s_1t_1XY)(1 + s_1t_2XY)(1 + s_2t_1XY)(1 + s_2t_2XY)}{(1-s_1s_2t_1t_2X^2Y^2)^4}
	\]
	is much more difficult to calculate. This was explicitly obtained by Reineke and Weist \cite{reineke2013refined} by relating the wall-crossing functions to quiver representations.
	
\begin{figure}[h!]
\centering
	\begin{tikzpicture}[scale = 1.7]
	
		\draw[thick] (0,0) -- (0,1);
		\draw[thick] (0,0) -- (0,-1);
		\draw[thick] (0,0) -- (1,0);
		\draw[thick] (0,0) -- (-1,-0);
		\draw[thick] (0,0) -- (2,-1);
		\draw[thick] (0,0) -- (2.4,-1.6);
		\draw[thick] (0,0) -- (1,-2);
		\draw[thick] (0,0) -- (1.6,-2.4);
		\draw[thick] (0,0) -- (2,-2);
		
		\draw (0,1) node[anchor = south] {$(1+s_1X)(1+s_2X)$};
		\draw (-1,0) node[anchor = east] {$(1 + t_1 Y) (1 + t_2 Y)$};
		\draw (1,-2) node[anchor = north east] {$(1+s_1s_2t_1 X^2Y)(1+s_1s_2t_2 X^2Y)$};
		\draw (1.6,-2.4) node[anchor = north] {$(1+s_1^2s_2t_1t_2 X^3Y^2)(1+s_1s_2^2t_1t_2 X^3Y^2)$};
		\draw (2.4,-1.6) node[anchor = north west] {$(1 + s_1s_2t_1^2t_2 X^2Y^3)( 1 + s_1s_2t_1t_2^2 X^2Y^3)$};
		\draw (2,-1) node[anchor = west] {$\left( 1 + s_1t_1t_2 XY^2 \right)\left( 1 + s_2 t_1t_2 XY^2\right)$};
		\draw (1,0) node[anchor = west] {$(1 + t_1 Y) (1 + t_2 Y)$};
		\draw (0,-1) node[anchor = north east] {$(1+s_1X)(1+s_2X)$};
		\draw (2,-2) node[anchor = north west] {$f_{\mathbb R_{\geq 0}(1,-1)}$};
		\draw (2.2, -1.8) node[anchor = center] {$\cdots$};
		\draw (1.8, -2.2) node[anchor = center] {$\cdots$};

	\end{tikzpicture}
	\caption{}
	\label{figure: example for kronecker}
\end{figure}
\end{example}

\subsection{Mutation invariance of $\mathfrak D_\mathbf s$}\label{subsection: mutation invariance}

A first step to investigate the structure of $\mathfrak D_\mathbf s$ is through a comparison with $\mathfrak D_{\mu_k(\mathbf s)}$. For the ordinary case, this is called the \emph{mutation invariance} in \cite{gross2018canonical}. In the generalized situation, we show an analogous mutation invariance still holds. One just needs to take care of the generalized coefficients $p_{i,j}$.

Notice that the definition of $\mathfrak D_\mathbf s$ does not involve the semifield structure of $\mathbb P$. So one can view that the coefficients part $\mathbf p$ actually provides a $\mathbb Z$-basis of the mutiplicative abelian group $\mathbb P$ (grouped and labeled in a certain way). Thus even though $\mu_k(\mathbf s)$ no longer has principal coefficients in $\mathbb P$, $\mathfrak D_{\mu_k(\mathbf s)}$ is still defined. To stress that the coefficients are no longer semifield elements, we use $t_{i,j}$ instead of $p_{i,j}$.

Now $\mathbf s = (\mathbf e, \mathbf t)$ consists of $\mathbf e$ a labeled basis of $N$ and tuples of coefficients $\mathbf t = (\mathbf t_i)$. 
\begin{definition}\label{def: a new definition of mutation}
Define the mutation $\mu_k^+(\mathbf s) = (\mathbf e', \mathbf t')$ such that $\mathbf e' = \mu_k(\mathbf e)$ as before and for the coefficients,
\[
	t_{i,j}' = \begin{cases}
			t_{k,j}^{-1} \quad &\text{if $i = k$}\\
 			t_{i,j} \cdot \prod\limits_{l=1}^{r_k} t_{k,l}^{[\beta_{ki}]_+} \quad &\text{if $i\neq k$}.
 			\end{cases}
\]	
\end{definition}

\begin{remark}
Note that this mutation does not depend on any semifield structure on $\mathbb P$. So it is different from the $\mu_k$ from \Cref{def: mutation of geometric seed} for mutations of many steps. For this reason, we call $\mathbf s = (\mathbf e, \mathbf p)$ a seed with coefficients (avoiding the type $\mathcal A$- or $\mathcal X$-) and use the new symbol $\mu_k^+$ for mutations in this context (as we will see in \Cref{subsection: cluster complex} the meaning of the sign $+$).	
\end{remark}

\begin{definition} We set
\[
\mathcal H_{k,+} \coloneqq \{ m\in M_\mathbb R \mid \langle e_k, m \rangle \geq 0 \},\quad \mathcal H_{k,-} \coloneqq \{ m\in M_\mathbb R \mid \langle e_k, m \rangle \leq 0 \}. 
\]
For $k\in I$, define the piecewise linear transformation $T_k \colon M_\mathbb R \rightarrow M_\mathbb R$ by
\[
T_k(m) \coloneqq  \begin{cases}
	m +  \langle e_k , m\rangle r_kw_k, \quad & m\in \mathcal H_{k,+}\\
	m,\quad & m\in H_{k,-}. 
\end{cases}
\]
\end{definition}

One sees that in the two half spaces, the map $T_k$ is actually the restriction of two linear maps $T_{k,+}$ and $T_{k,-}$ respectively. The map $T_k$ is with respect to the seed $\mathbf s$ and thus sometimes will be denoted as $T_k^\mathbf s$. The vector $r_kw_k$ can also be expressed as $r_kw_k = \omega (-, d_ke_k) = \sum_{i=1}^n b_{ik}e_i^*$. One checks that
\[
T_{k,+}(w_i) = w_i + \beta_{ki}r_kw_k.
\]

Recall we have the projection $r\colon M\oplus \mathbb P \rightarrow M$. The transformation $T_k$ can be lifted to $M\oplus \mathbb P$ by
\[
\tilde T_k(m,p) \coloneqq  \begin{cases}
	\left(m +  \langle e_k , m\rangle r_kw_k,\ p\cdot t_k^{\langle e_k, m\rangle}\right), \quad & m\in \mathcal H_{k,+}\\
	(m, p),\quad & m\in \mathcal H_{k,-},
\end{cases}
\]
where $t_k = \prod\limits_{l=1}^{r_k} t_{k,l}$. Note that $\tilde T_k$ on its domain of linearity is the restriction of two linear transformations $\tilde T_{k,\varepsilon}$ respectively.

\begin{construction}\label{construction: T_k action on sd}
We define the scattering diagram $T_k(\mathfrak D_\mathbf s)$ as in \cite[Definition 1.22]{gross2018canonical} (but taking care of the parameters $t_{i,j}$ here) in the following steps.

\begin{enumerate}
	\item Replace each wall in $\mathfrak D_\mathbf s$ not fully contained in $e_k^\perp$ if necessary by splitting it into two new walls
\[
	\left (\mathfrak d \cap \mathcal H_{k,+}, f_\mathfrak d\right) \quad \text{and} \quad \left (\mathfrak d \cap \mathcal H_{k,-}, f_\mathfrak d\right).
\]
Regard this new collection of walls as the current representative of $\mathfrak D_\mathbf s$.
	\item For a wall $(\mathfrak d, f_\mathfrak d)$ contained in $\mathcal H_{k,\varepsilon}$, define the wall $T_{k,\varepsilon}(\mathfrak d, f_\mathfrak d) = (T_{k,\varepsilon}(\mathfrak d), \tilde T_{k,\varepsilon}(f_\mathfrak d))$ where the new wall-crossing function $\tilde T_{k,\varepsilon}(f_\mathfrak d)$ is the one obtained from $f_\mathfrak d$ by replacing each monomial of the form
\[ 
pz^m \quad \text{by} \quad \tilde T_{k,\varepsilon}(pz^m),
\]
where the later is the monomial corresponding to $\tilde T_{k,\varepsilon}(m, p)\in M\oplus \mathbb P$. For example, we have
\[
\tilde T_{k,+}(t_{i,j}z^{w_i}) = t_{i,j}t_k^{\beta_{ki}} z^{w_i + \beta_{ki}r_kw_k},\quad \text{while} \quad \tilde T_{k,-}(t_{i,j}z^{w_i}) = t_{i,j}z^{w_i}.
\]
We call these walls uniformly by $T_k(\mathfrak d, f_\mathfrak d)$ no matter which half they belong to. We stress that the sign $\varepsilon$ in $T_{k,\varepsilon}$ is determined by which half space the wall $\mathfrak d$ lies in.
	\item Consider the collection of walls
\[
	T_k(\mathfrak D_\mathbf s) \coloneqq \left\{ T_k(\mathfrak d, f_\mathfrak d)\ \middle |\ (\mathfrak d, f_\mathfrak d)\in \mathfrak D(\mathbf s) \setminus \left(e_k^\perp, \prod_{j=1}^{r_k}(1+t_{k,j}z^{w_k}) \right) \right\} \cup  \left \{ \left( e_k^\perp, \prod_{j = 1}^{ r_k }  \left( 1 + t_{k,j}^{-1} z^{-w_k} \right) \right) \right\}.
\]
\end{enumerate} 
\end{construction}

Denote the monoid $(P')^\oplus \coloneqq P^\oplus_{\mu_k^+(\mathbf s)} \subset M \oplus \mathbb P$. While $\mathfrak D_\mathbf s$ is over $\widehat{\Bbbk[P^\oplus_\mathbf s]}$, $\mathfrak D_{\mu_k^+(\mathbf s)}$ is over $\widehat{\Bbbk[(P')^\oplus]}$.

\begin{theorem}[cf. {\cite[Theorem 1.24]{gross2018canonical}}]\label{thm: mutation invariance}
		The set of walls $T_k (\mathfrak D_\mathbf s)$ is indeed a consistent scattering diagram over $\widehat{\Bbbk[(P')^\oplus]}$, and furthermore is equivalent to $\mathfrak D_{\mu_k^+(\mathbf s)}$.
\end{theorem}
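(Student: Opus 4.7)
The plan is to invoke the uniqueness part of \Cref{thm: KS reconstruction theorem}: since $\mathfrak D_{\mu_k^+(\mathbf s)}$ is, up to equivalence, the unique consistent scattering diagram whose incoming walls form $\mathfrak D_{\mu_k^+(\mathbf s),\mathrm{in}}$ and whose remaining walls are outgoing, it suffices to verify that $T_k(\mathfrak D_\mathbf s)$ (i) is a well-defined scattering diagram over $\widehat{\Bbbk[(P')^\oplus]}$, (ii) is consistent, and (iii) has incoming walls equivalent to $\mathfrak D_{\mu_k^+(\mathbf s),\mathrm{in}}$ with all other walls outgoing. Property (i) is essentially bookkeeping: the lift $\tilde T_k$ carries the generators $(w_i,p_{i,j})$ of $P^\oplus_\mathbf s$ to $(w_i',t'_{i,j})$, which by a direct comparison with \Cref{def: a new definition of mutation} are exactly the generators of $(P')^\oplus$. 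Hence $\tilde T_{k,\varepsilon}$ sends monomials legitimately, and each transformed wall sits in the appropriate $G^\parallel_{n_0}$ for $n_0$ the image normal vector, so $T_k(\mathfrak D_\mathbf s)$ is a scattering diagram over the new monoid ring.

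For (iii), the replacement wall $\bigl(e_k^\perp,\prod_j(1+t_{k,j}^{-1}z^{-w_k})\bigr)$ is by inspection the direction-$k$ incoming wall of $\mathfrak D_{\mu_k^+(\mathbf s)}$, because $(e_k')^\perp=e_k^\perp$, $w_k'=-w_k$, and $t'_{k,j}=t_{k,j}^{-1}$. For $i\neq k$, the original incoming wall $(e_i^\perp,f_i)$ gets split by $e_k^\perp$ and its two halves are transformed by $\tilde T_{k,\pm}$; a short calculation using $e_i'=e_i+r_k[-\beta_{ik}]_+e_k$, $w_i'=w_i+[\beta_{ki}]_+r_kw_k$ and $t'_{i,j}=t_{i,j}\,t_k^{[\beta_{ki}]_+}$ shows that exactly one of the two bent halves lies in $(e_i')^\perp$ with wall-crossing function $\prod_j(1+t'_{i,j}z^{w_i'})$, while the other half lies on a different hyperplane and qualifies as an outgoing wall of $T_k(\mathfrak D_\mathbf s)$. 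The remaining walls of $T_k(\mathfrak D_\mathbf s)$ come from outgoing walls of $\mathfrak D_\mathbf s$, which $T_k$ preserves as outgoing (since the piecewise-linear transformation respects the half-space condition $\mathfrak d=\mathfrak d-\mathbb R_{\geq 0}m_0$ in the relevant direction).

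The main technical step is (ii). Following the strategy of \cite[Theorem 1.24]{gross2018canonical}, I would test consistency on a small loop $\gamma$ around an arbitrary codimension-two joint of $T_k(\mathfrak D_\mathbf s)$. If the joint lies in the interior of $\mathcal H_{k,+}$ or $\mathcal H_{k,-}$, then $\gamma$ lifts via $T_{k,\pm}^{-1}$ to a small loop around a joint of $\mathfrak D_\mathbf s$, and triviality of the path-ordered product for $\gamma$ is transported from triviality for the lifted loop through the algebra automorphism induced by $\tilde T_{k,\pm}^{\,\ast}$. If the joint lies on $e_k^\perp$, split $\gamma$ at its two crossings with $e_k^\perp$ into an arc in $\mathcal H_{k,+}$ and an arc in $\mathcal H_{k,-}$, lift each separately by $\tilde T_{k,\pm}^{-1}$, and close up by travelling along $e_k^\perp$ in $\mathfrak D_\mathbf s$; the path-ordered product along this lifted loop is trivial by consistency of $\mathfrak D_\mathbf s$, and the desired triviality for $\gamma$ follows upon exchanging the two wall-crossings along $e_k^\perp$ in $\mathfrak D_\mathbf s$ for the single wall-crossing by $\prod_j(1+t_{k,j}^{-1}z^{-w_k})$ in $T_k(\mathfrak D_\mathbf s)$.

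The hard part will be the last step, namely the algebraic identity that justifies this exchange. Concretely, one needs to verify that for a monomial $pz^m$ with $\langle e_k,m\rangle>0$, the composition
\[
\tilde T_{k,-}^{\,\ast}\;\circ\;\xi^{\mathrm{old}}_{e_k^\perp}\;\circ\;\bigl(\tilde T_{k,+}^{\,\ast}\bigr)^{-1}
\]
coincides with the wall-crossing automorphism $\xi^{\mathrm{new}}_{e_k^\perp}$ attached to $\prod_j(1+t_{k,j}^{-1}z^{-w_k})$ in $T_k(\mathfrak D_\mathbf s)$; this reduces to the elementary identity $\prod_j(1+t_{k,j}^{-1}z^{-w_k})=t_k^{-1}z^{-r_kw_k}\prod_j(1+t_{k,j}z^{w_k})$ together with the formula $\tilde T_{k,+}(pz^m)=p\,t_k^{\langle e_k,m\rangle}z^{m+\langle e_k,m\rangle r_kw_k}$, but requires a careful tracking of signs according to the direction in which $\gamma$ traverses $e_k^\perp$ and of the primitivity normalization in \Cref{lemma: wall-crossing by multiply function}. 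Once this identity is established, consistency transports from $\mathfrak D_\mathbf s$ to $T_k(\mathfrak D_\mathbf s)$ and the proof is complete.
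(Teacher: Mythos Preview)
Your strategy—verify consistency, identify incoming walls, invoke uniqueness—is correct and matches the paper's, and your treatment of (ii) and (iii) is essentially right. The gap is in (i), and it is not just bookkeeping. The claim that ``$\tilde T_k$ carries the generators $(w_i,p_{i,j})$ of $P^\oplus_\mathbf s$ to $(w_i',t'_{i,j})$'' is false as stated: $\tilde T_{k,-}$ is the identity, which certainly does not send $(w_k,t_{k,j})$ to $(-w_k,t_{k,j}^{-1})$, and $\tilde T_{k,+}(w_i,t_{i,j})=(w_i+\beta_{ki}r_kw_k,\,t_{i,j}t_k^{\beta_{ki}})$ agrees with $(w_i',t'_{i,j})$ only when $\beta_{ki}\geq 0$. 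Concretely, a wall of $\mathfrak D_\mathbf s$ lying in $\mathcal H_{k,-}$ keeps its wall-crossing function unchanged under $T_k$, and that function may involve factors $t_{k,j}z^{w_k}$; since $(w_k,t_{k,j})=-(w_k',t'_{k,j})$, such monomials need not lie in $(P')^\oplus$. Likewise its normal vector $n_0\in N^+_\mathbf s$ need not lie in $N^+_{\mu_k^+(\mathbf s)}$. So one cannot conclude a priori that $T_k(\mathfrak D_\mathbf s)$ is a scattering diagram for the $N^+_{\mu_k^+(\mathbf s)}$-graded Lie algebra $\mathfrak g_{\mu_k^+(\mathbf s)}$, and \Cref{thm: KS reconstruction theorem} is not directly applicable.

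The paper circumvents this by enlarging to a monoid $\bar P\supset P^\oplus_\mathbf s\cup(P')^\oplus$ and introducing scattering diagrams \emph{with a slab} (\Cref{def: sd with a slab}): walls have normal vectors in the common set $\overline N^{+,k}_\mathbf s=\overline N^{+,k}_{\mu_k^+(\mathbf s)}$, and the hyperplane $e_k^\perp$ carries a distinguished slab whose function is exempt from the condition $f\equiv 1\bmod\mathfrak m_{\bar P}$. Over $\bar P$ the linear maps $\tilde T_{k,\pm}$ do preserve the filtration by powers of $\mathfrak m_{\bar P}$ (this is item~(2) in the paper's proof), so $T_k(\mathfrak D_\mathbf s)$ is a legitimate slab-type diagram for $\mathbf s'$; your arguments for (ii) and (iii) then go through in this framework, and the algebraic identity you isolate is exactly the paper's $\tilde T_{k,+}^{-1}\circ\mathfrak p_{\mathfrak d_k'}=\mathfrak p_{\mathfrak d_k}$. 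The uniqueness statement one invokes is \Cref{thm: sd with a slab}, whose ``Furthermore'' clause then identifies the result with $\mathfrak D_{\mu_k^+(\mathbf s)}$ as a diagram over $\widehat{\Bbbk[(P')^\oplus]}$.
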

	
	We find it most natural to understand the mutation invariance by making connection to the \emph{canonical wall structure} (or canonical scattering diagram) \cite{gross2021canonical} via \cite[Theorem 6.1]{arguz2020higher}, where $\mathfrak D_\mathbf s$ can be viewed as associated to the toric model $U_{\Omega(\mathbf s, \lambda)}$ for general $\lambda$. However, as in \Cref{section: x toric model}, this would require the condition
	\[
	r_i = \mathrm{gcd}(b_{ij},i\in I).
	\]
	Fortunately, we can prove the mutation invariance following the same strategy in \cite{gross2018canonical} without this condition. The proof occupies the rest of the section.

First define a monoid $\bar P$ containing both $P^\oplus$ and $(P')^\oplus$. Let $\sigma$ be the cone in $(M\oplus \mathbb P)_\mathbb R$ generated by $\{(w_i, t_{i,j}) \mid i\in I, j \in [1, r_i]\} \cup \{(-w_k, -t_{k,j})\mid 1\leq j\leq r_k\}$. Take $\bar P = \sigma\cap (M\oplus \mathbb P)$ and we tend to talk about scattering diagrams over $\widehat {\Bbbk[\bar P]}$. However the ideal $\mathfrak m_{\bar P}$ misses the elements $(w_k, t_{k,j})$. This means  a wall such as
\[
(e_k^\perp, (1 + t_{k,j}z^{w_k}))
\]
in $\mathfrak D_\mathbf s$ does not qualify as a wall over $\widehat {\Bbbk[\bar P]}$. For this reason, we extend the definition of scattering diagram as in \cite[Definition 1.27]{gross2018canonical} (slightly generalizing the \emph{slab} for our needs).

Define 
\[
	\overline N_\mathbf s^{+,k} \coloneqq \left\{ \sum_{i\in I} a_i\bar e_i \ \middle | \ a_i\in \mathbb Z_{\geq 0} \text{ for $i\neq k$, $a_k\in \mathbb Z$, and $\sum_{i\in I\setminus\{k\}}a_i>0$}  \right\} \subset \overline N.
\]
Since $\overline N_\mathbf s^{+,k} = \overline N_{\mu_k^+(\mathbf s)}^{+,k}$, we denote them by $\overline N^{+,k}$.

\begin{definition}[cf. {\cite[Definition 1.27]{gross2018canonical}}]\label{def: sd with a slab}
	A \emph{wall} for $\bar P$ is a pair $(\mathfrak d, f_\mathfrak d)$ with $\mathfrak d$ as before but with primitive normal vector $n_0$ in $\overline N^{+,k}$ and 
	\[
		f_\mathfrak d = 1 + \sum_{k\geq 1, \pi(t) = kn_0} c_{k,t} \cdot tz^{k\omega(-, n_0)} \equiv 1 \mod  \mathfrak m_{\bar P}.
	\]
	The \emph{slab} for $\mathbf s$ and $k\in I$ means the pair 
	\[
	\mathfrak d_k \coloneqq \left( e_k^\perp, \prod_{j=1}^{r_k}(1 + t_{k,j}z^{w_k}) \right).
	\]
	A scattering diagram $\mathfrak D$ for $\bar P$ is a collection of walls and possibly this single slab, with the condition that for each $k>0$, $f_\mathfrak d \equiv 1 \mod \mathfrak m_{\bar P}^k$ for all but finitely many walls in $\mathfrak D$.
\end{definition}

We quote the following very hard theorem from \cite{gross2018canonical}. The objects here are understood in our definitions so there are minor differences. However, one can still prove the theorem in the exact same way. So we omit its proof here.

\begin{theorem}[{\cite[Theorem 1.28]{gross2018canonical}}]\label{thm: sd with a slab}
	There exists a unique (up to equivalence) consistent scattering diagram $\overline {\mathfrak D}_\mathbf s$ in the sense of \Cref{def: sd with a slab} such that
	\begin{enumerate}
		\item $\overline {\mathfrak D}_\mathbf s \supseteq \mathfrak D_{\mathbf s, \mathrm{in}}$,
		\item $\overline {\mathfrak D}_\mathbf s \setminus \mathfrak D_{\mathbf s, \mathrm{in}}$ consists only of outgoing walls.
	\end{enumerate}
Furthermore, $\overline {\mathfrak D}_\mathbf s$ is also a scattering diagram for the $\overline N_\mathbf s^+$-graded Lie algebra $\mathfrak g_\mathbf s$. As such, it is equivalent to $\mathfrak D_\mathbf s$.
\end{theorem}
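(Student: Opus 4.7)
The plan is to follow the order-by-order strategy of \cite[Theorem 1.28]{gross2018canonical}, adapted to the present setup. I would set up an induction over the filtration by powers $\mathfrak m_{\bar P}^k$: at each order one has a finite truncated scattering diagram modulo $\mathfrak m_{\bar P}^k$, and consistency is tested by computing the path-ordered product around each codimension-two cone in $\mathrm{Sing}$. The initial data are the walls in $\mathfrak D_{\mathbf s,\mathrm{in}}$, which already includes the slab $\mathfrak d_k$, even though its attached function $\prod_{j=1}^{r_k}(1+t_{k,j}z^{w_k})$ is \emph{not} congruent to $1$ modulo $\mathfrak m_{\bar P}$ because $(w_k, t_{k,j})$ is a unit of $\bar P$.

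At the inductive step, assuming $\overline{\mathfrak D}_\mathbf s^{(k)}$ is consistent modulo $\mathfrak m_{\bar P}^k$ and already contains all outgoing walls needed up to that order, I extend to order $k+1$ by a Kontsevich--Soibelman factorization argument. For each codimension-two cone $\tau$ of $\mathrm{Sing}(\overline{\mathfrak D}_\mathbf s^{(k)})$ and a small loop $\gamma_\tau$ around it, the obstruction to consistency at order $k+1$ is the class of $\log \mathfrak p_{\gamma_\tau, \overline{\mathfrak D}_\mathbf s^{(k)}}$ in the graded piece $\mathfrak m_{\bar P}^k/\mathfrak m_{\bar P}^{k+1}$. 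A factorization lemma analogous to the one underlying \Cref{thm: KS reconstruction theorem} then says that this obstruction decomposes uniquely into contributions supported on codimension-one half-hyperplanes emanating from $\tau$; these contribute the outgoing walls to be adjoined at order $k+1$. Taking the direct limit yields $\overline{\mathfrak D}_\mathbf s$, and uniqueness is proved by the same inductive comparison: two diagrams satisfying the hypotheses and agreeing modulo $\mathfrak m_{\bar P}^k$ must have equivalent outgoing data at the next order.

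The main obstacle — and the reason \Cref{thm: KS reconstruction theorem} cannot be quoted verbatim — is precisely the slab. Because $\log \prod_{j=1}^{r_k}(1+t_{k,j}z^{w_k})$ does not lie in the completed Lie algebra associated to $\mathfrak m_{\bar P}$, crossing the slab must be interpreted as a birational automorphism of $\mathrm{Frac}(\Bbbk[\bar P])$ sending $z^p$ to $z^p f_{\mathfrak d_k}^{\pm \langle e_k, r(p)\rangle}$. When a codimension-two cone $\tau$ meets the slab, I would argue that in a small loop $\gamma_\tau$ the two transverse slab crossings contribute mutually inverse automorphisms in an appropriate localization where $f_{\mathfrak d_k}$ is invertible, and that conjugation by these slab automorphisms preserves the relevant pro-nilpotent Lie subalgebra. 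With these compatibilities established, the Kontsevich--Soibelman factorization applies without change and produces genuine outgoing walls whose wall-crossing functions lie in $\widehat{\Bbbk[\bar P]}$ and reduce to $1$ modulo $\mathfrak m_{\bar P}$.

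For the \emph{furthermore} assertion, I would observe that the slab function $\prod_j(1+p_{k,j}z^{w_k})$ is congruent to $1$ modulo $\mathfrak m_{P^\oplus}$, because $(w_k, p_{k,j}) \in P^+$ — the enlargement $\bar P \supset P^\oplus$ is obtained precisely by adjoining the inverses of these elements. Consequently, when re-read over the $\overline N_\mathbf s^+$-graded Lie algebra $\mathfrak g_\mathbf s$, the diagram $\overline{\mathfrak D}_\mathbf s$ is a consistent scattering diagram in the sense of \Cref{def: wall} that contains $\mathfrak D_{\mathbf s,\mathrm{in}}$ and whose remaining walls are all outgoing. The uniqueness statement in \Cref{thm: KS reconstruction theorem} then forces $\overline{\mathfrak D}_\mathbf s$ to be equivalent to $\mathfrak D_\mathbf s$.
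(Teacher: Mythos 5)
The order-by-order Kontsevich--Soibelman construction you describe cannot be run directly over $\bar P$ as written, and the patch you propose is exactly where the proof would break. The crux is that the slab function $f_{\mathfrak d_k}=\prod_j(1+t_{k,j}z^{w_k})$ has its leading terms entirely in $\Bbbk[\bar P^\times]$, and $1+t_{k,j}z^{w_k}$ is not a unit of $\Bbbk[\bar P^\times]$ (it has two vertices in its Newton polytope), hence not a unit of $\widehat{\Bbbk[\bar P]}$. So the slab automorphism $z^p\mapsto z^p f_{\mathfrak d_k}^{\pm\langle e_k,r(p)\rangle}$ does not act on $\widehat{\Bbbk[\bar P]}$ at all when the exponent has the wrong sign. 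Your fix --- pass to a localization where $f_{\mathfrak d_k}$ is invertible and argue that conjugation preserves the pro-nilpotent Lie algebra --- is not a small wrinkle but precisely the content of the theorem: one must show that the localized conjugates of $\hat{\mathfrak v}$ land back in the original completion, and that the $\mathfrak m_{\bar P}$-adic topology is not destroyed by the localization. You assert this rather than prove it, so the inductive step ("the obstruction lies in $\mathfrak m_{\bar P}^k/\mathfrak m_{\bar P}^{k+1}$") and the claimed BCH-style factorization around joints meeting the slab are left unjustified.

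The paper's intended route (it cites \cite[Theorem 1.28]{gross2018canonical} verbatim and states the argument transports) inverts your logic. The $\overline N_\mathbf s^+$-graded Lie algebra $\mathfrak g_\mathbf s$ is taken as the \emph{primary} framework: there $e_k\in N^+$ is not a unit, so the slab function $\prod_j(1+p_{k,j}z^{w_k})$ is congruent to $1$ modulo the relevant ideal and the slab is an honest wall. One invokes \Cref{thm: KS reconstruction theorem} to get $\mathfrak D_\mathbf s$ at once --- no slab gymnastics. One then checks (this is the nontrivial point) that every wall of this $\mathfrak D_\mathbf s$, when re-read over $\bar P$, has a normal vector in $\overline N^{+,k}$ and wall-crossing function $\equiv 1\bmod\mathfrak m_{\bar P}$, and that only finitely many walls survive modulo each $\mathfrak m_{\bar P}^j$; this gives existence of $\overline{\mathfrak D}_\mathbf s$ as a scattering diagram with a slab. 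Uniqueness over $\bar P$ is then deduced by transporting a competing $\overline{\mathfrak D}'$ back into the $\overline N^+$-graded picture and using uniqueness there. Your "furthermore" paragraph contains exactly this correct observation, but you present it as a coda instead of making it the engine of the whole argument. Reordering so that the $N^+$-graded reconstruction comes first, and then deducing the $\bar P$-statement, both closes the gap and matches the proof the paper is pointing to.
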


\begin{proof}[Proof of {\Cref{thm: mutation invariance}}]
	First we choose a representative for $\mathfrak D_\mathbf s$ given by \Cref{thm: sd with a slab}. Now $T_k(\mathfrak D_\mathbf s)$ becomes a scattering diagram in the sense of \Cref{def: sd with a slab} for the seed $\mathbf s' = \mu_k^+(\mathbf s)$. This is because that
	\begin{enumerate}
		\item the operation $T_k$ removes the old slab $\mathfrak d_k$ and adds the new slab 
			\[
				\mathfrak d_k' \coloneqq \left(e_k^\perp, \prod_{j=1}^{r_k}(1 + t_{k,j}^{-1}z^{-w_k})\right); 
			\]
		\item for a wall (contained in either $\mathcal H_{k,+}$ or $\mathcal H_{k,-}$), $\tilde T_k$ sends a monomial of the form $\prod_{i,j}(t_{i,j}z^{w_i})^{a_{ij}}$ in its wall-crossing function to
		\[
		\prod_{i,j} \left(t_{i,j}t_k^{\beta_{ki}} z^{w_i + \beta_{ki}r_kw_k} \right)^{a_{ij}}\quad \text{or}\quad \prod_{i,j}(t_{i,j}z^{w_i})^{a_{ij}}.
		\]So if $tz^m \in \mathfrak m_{\bar P}^i$ for some $i$, so is $\tilde T_k(tz^m)$.
	\end{enumerate}
	We next show that (1) $T_k(\mathfrak D_\mathbf s)$ and $\mathfrak D_{\mathbf s'}$ have the same set of slabs and incoming walls; (2) $T_k(\mathfrak D_\mathbf s)$ is consistent as a scattering diagram with a slab. Then by the uniqueness statement of \Cref{thm: sd with a slab}, $T_k(\mathfrak D_\mathbf s)$ and $\mathfrak D_{\mathbf s'}$ are equivalent.
	
	The statement (1) follows from the same argument in \emph{Step I} of \cite[\emph{Proof of Theorem} 1.24]{gross2018canonical}.
	
	For (2), we check the consistency of $T_{k}(\mathfrak D_\mathbf s)$, that is, for any loop $\gamma$, $\mathfrak p_{\gamma, T_k(\mathfrak D_\mathbf s)} = \mathrm{id}$ whenever it is defined.
	
	If $\gamma$ is confined in one of the half spaces, the path-ordered product is identity because of the consistency of $\mathfrak D_\mathbf s$. So we assume that $\gamma$ crosses the slab $\mathfrak d_k'$. Split $\gamma$ into four subpaths $\gamma_1$, $\gamma_2$, $\gamma_3$ and $\gamma_4$ such that
	\begin{enumerate}
		\item $\gamma_1$ starts at a point in $\mathcal H_{k,-}$ and only crosses the slab $\mathfrak d_k'$;
		\item $\gamma_2$ is contained entirely in $\mathcal H_{k,+}$;
		\item $\gamma_3$ only crosses $\mathfrak d_k'$ back to $\mathcal H_{k,-}$;
		\item $\gamma_4$ is contained entirely in $\mathcal H_{k,-}$.
	\end{enumerate}
	
Let ${\tilde T_{k,+}}\colon \Bbbk[M\oplus \mathbb P] \rightarrow \Bbbk[M\oplus \mathbb P]$ be the algebra automorphism induced by $\tilde T_{k,+}$ (see (2) in the \Cref{construction: T_k action on sd} the action of ${\tilde T_{k,+}}$ on monomials). Denote by $\mathfrak p_{\mathfrak d_k'}$ the wall-crossing automorphism
\[
z^m\mapsto z^m\prod_{j=1}^{r_k}(1+t_{k,j}^{-1}z^{-w_k})^{-\langle e_k, m\rangle}.
\]
So we have
\begin{align}
	\mathfrak p_{\gamma_1, T_k(\mathfrak D_\mathbf s)} &= \mathfrak p_{\mathfrak d_k'}\\
	\label{1} \mathfrak p_{\gamma_2, T_k(\mathfrak D_\mathbf s)} &= {\tilde T_{k,+}} \circ \mathfrak p_{\gamma_2, \mathfrak D_\mathbf s} \circ {\tilde T_{k,+}}^{-1}\\ 
	\mathfrak p_{\gamma_3, T_k(\mathfrak D_\mathbf s)} &= \mathfrak p_{\mathfrak d_k'}^{-1}\\
	\mathfrak p_{\gamma_4, T_k(\mathfrak D_\mathbf s)} &= \mathfrak p_{\gamma_4, \mathfrak D_\mathbf s}.
\end{align}
All the above equalities except (\ref{1}) are by definitions. To show (\ref{1}), we see that it suffices to show the case where $\gamma_2$ only crosses one wall $\mathfrak d$ contained in $n_0^\perp$ with the wall-crossing function $f(m_0)$. We write $\tilde T = \tilde T_{k,+}$ and $T = T_{k,+}$. Then we compute the action of the right-hand side of (\ref{1}) on $z^m$:
\[
z^m \mapsto \tilde T^{-1} (z^m) \mapsto \tilde T^{-1} (z^m) f(z^{m_0})^{\langle T^{-1}(m), n_0 \rangle} \mapsto z^m f(\tilde T(z^{m_0}))^{\langle m, (T^{-1})^*(n_0)\rangle}.
\]
Note that the wall $\mathfrak d$ gets transformed under $T_k$ to be contained in $(T^{-1})^*(n_0)$ with $f(\tilde T(z^{m_0}))$. So the above action is the same as $\mathfrak p_{\gamma_2, T_k(\mathfrak D_\mathbf s)}(z^m)$.

To show $\mathfrak p_{\gamma, T_k(\mathfrak D_\mathbf s)} = \mathrm{id}$, it suffices to show that
\begin{equation}\label{eq: a mutation invariance equation}
{\tilde T_{k,+}}^{-1}\circ \mathfrak p_{\mathfrak d_k'} = \mathfrak p_{\mathfrak d_k},	
\end{equation}
so that $\mathfrak p_{\gamma, T_k(\mathfrak D_\mathbf s)} = \mathfrak p_{\gamma, T_k(\mathfrak D_\mathbf s)} = \mathrm{id}$.

Let the left-hand side act on some monomial, we have
\begin{align}\label{eq: an identity for mutation invariance}
	\begin{split}
	{\tilde T_{k,+}}^{-1}\circ \mathfrak p_{\mathfrak d_k'} (tz^m) & = {\tilde T_{k,+}}^{-1}\left( tz^m \prod_{j=1}^{r_k} (1 + t_{k,j}^{-1}z^{-w_k})^{-\langle e_k, m\rangle} \right )\\
	& = t \cdot t_k^{-\langle e_k, m\rangle} \cdot z^{m - \langle e_k, m \rangle r_kw_k} \prod_{j=1}^{r_k} (1 + t_{k,j}^{-1}z^{-w_k})^{-\langle e_k, m\rangle}\\
	& = tz^m \prod_{j=1}^{r_k}(1 + t_{k,j}^{-1}z^{w_k})^{-\langle e_k, m\rangle}\\
	& = \mathfrak p_{\mathfrak d_k}(tz^m).
\end{split}
\end{align}
This finishes the proof.
\end{proof}

\begin{example}
In this example we compute $T_2(\mathfrak D_\mathbf s)$ for the scattering diagram $\mathfrak D_\mathbf s$ in \Cref{ex: example for b2}. Recall that the exchange matrix for $\mathbf s$ is $B = \begin{psmallmatrix}
	0 & -2\\
	1 & 0
\end{psmallmatrix}.$ So we have $T_{2,+}(e_2^*) = e_2^* - 2 e_1^*$, which determines the ray $\mathbb R_{\geq 0}(e_2^*-2e_1^*)$ of the diagram below.

	\begin{figure}[h!]\label{figure: b2 example sd}
\centering
\begin{tikzpicture}[scale = 0.9]

\node (A) at (-3,1.5) {$1 + t_{11}t_{21}t_{22}A_1^{-2}A_2$};
\node (B) at (3, 0) {};
\node (C) at (6, -3) {};
\node (D) at (4, -4) {};
\node (E) at (0, -3) {};
\node (F) at (-3, 0) {};

\draw[thick] (A) -- (0,0);
\draw[thick] (B) -- (0,0);
\draw[thick] (C) -- (0,0);
\draw[thick] (D) -- (0,0);
\draw[thick] (E) -- (0,0);
\draw[thick] (F) -- (0,0);

\draw (0,-3)  node[anchor = north]
{
	$1+t_{11}A_2$
};
\draw (3,0) node[anchor = west]
{
	$\left( 1 + t_{21}^{-1} A_1 \right) \left( 1 + t_{22}^{-1} A_1 \right)$
};
\draw (-3,0) node[anchor = east]
{
	$\left( 1 + t_{21}^{-1} A_1 \right) \left( 1 + t_{22}^{-1} A_1 \right)$
};
\draw (4,-4)  node[anchor = north]
{
	$\left( 1 + t_{11} t_{21} A_1^{-1}A_2 \right) \left( 1 + t_{11}t_{22} A_1^{-1}A_2 \right)$
};
\draw (6,-3)  node[anchor = north]
{
	$1 + t_{11}t_{21}t_{22}A_1^{-2} A_2$
};
\end{tikzpicture}
\caption{}
\label{figure: scattering diagram for B2}	
\end{figure}

\end{example}

\subsection{Positivity} 
The scattering diagram $\mathfrak D_\mathbf s$ has the following positivity.

\begin{theorem}[cf. {\cite[Theorem 1.28]{gross2018canonical}}]\label{thm: generalized positivity}
	The scattering diagram $\mathfrak D_{\mathbf s}$ is equivalent to a scattering diagram all of whose walls $(\mathfrak d, f_\mathfrak d)$ satisfy $f_\mathfrak d = (1 + tz^m)^c$ for some $m = \omega(-, \bar n)$, $n\in N^+$, some $t\in \mathbb P$ such that $\pi(t) = n$, and $c$ being a positive integer. In other words, if we write $n = \sum_{i\in I} \alpha_i e_i$, then
	\begin{enumerate}
		\item $\mathfrak d$ is contained in $\bar n^\perp \subset M_\mathbb R$ where $\bar n = \sum_{i\in I}\alpha_i \frac{d_i}{r_i} e_i$;
		\item $ m = \sum_{i\in I} \alpha_i w_i = \omega(-, \bar n)$;
		\item if writing $t = \prod\limits_{i,j} t_{i,j}^{\alpha_{i,j}}$, then $\sum\limits_{j = 1}^{r_i} \alpha_{ij} = \alpha_i$.
	\end{enumerate}
\end{theorem}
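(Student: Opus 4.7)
The plan is to adapt the strategy of \cite[Theorem 1.28]{gross2018canonical} to the generalized setting, combining the mutation invariance (\Cref{thm: mutation invariance}) with the $N^+$-grading of $\mathfrak g$ and a rank-two reduction.

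\textbf{Step 1: Grading encodes conditions (1)--(3).} First I would observe that conditions (1), (2), (3) of the theorem together say precisely that the wall-crossing element attached to $\mathfrak d$ lies in the graded piece $\mathfrak g_n$ for some $n \in N^+$: condition (2) identifies the direction as $p^*(n) = \omega(-,\bar n)$, condition (1) identifies the support with $\bar n^\perp$, and condition (3) is the statement that the $P^\oplus$-monomial $t\,z^{p^*(n)}$ has $\pi(t) = n$ in the monoid map $\pi\colon P^\oplus \to N^\oplus$. The initial incoming walls $(e_i^\perp, \prod_{j=1}^{r_i}(1+p_{i,j}z^{w_i}))$ can be replaced by the equivalent representative splitting each into $r_i$ walls $(e_i^\perp, 1+p_{i,j}z^{w_i})$, each lying in $\mathfrak g_{e_i}$ with $c=1$. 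Since the commutator formula computed in the lemma following \Cref{def: generalized cluster scattering} shows $[\mathfrak g_{n_1},\mathfrak g_{n_2}]\subset \mathfrak g_{n_1+n_2}$, the Kontsevich--Soibelman order-by-order construction producing $\mathfrak D_\mathbf s$ from $\mathfrak D_{\mathbf s,\mathrm{in}}$ (\Cref{thm: KS reconstruction theorem}) automatically yields walls whose attached group elements lie in $\bigsqcup_n G_{n_0}^\parallel$ with the correct grading, verifying (1)--(3).

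\textbf{Step 2: Multiplicative form via perturbation and rank-two reduction.} The substantive content is showing that in some equivalent representative each wall-crossing function is actually a \emph{product} of factors $(1+tz^m)^c$, rather than a general exponential of a Lie element. Here I would follow GHKK: by a generic perturbation of the initial walls (splitting each incoming wall into many parallel copies translated by generic vectors), one reduces the construction to one in which only two walls meet at each codimension-two joint. The local picture at such a joint is a rank-two scattering problem where two walls, whose functions are assumed by induction to be products of $(1+tz^m)^c$-factors, interact to produce outgoing walls.

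\textbf{Step 3: Rank-two base case.} In rank two, the required positivity and multiplicative form is classical. By \Cref{lemma: tropical vertex sd equals cluster sd}, when specialized appropriately the rank-two generalized cluster scattering diagram coincides with a tropical vertex scattering diagram of Arg\"uz--Gross; the positivity and product decomposition $\prod(1+tz^m)^c$ for the outgoing walls then follows from the Gromov--Witten theoretic interpretation on log Calabi--Yau surfaces (Gross--Pandharipande, with explicit formulas verified by Reineke--Weist as discussed around \Cref{ex: example for k2}). The parameters $t \in \mathbb P$ are tracked through this identification, and condition (3) is preserved by the rank-two computation because the underlying Lie bracket preserves the $N^+$-grading.

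\textbf{Main obstacle.} The main difficulty is ensuring the positivity and multiplicative form survives in the generalized setting, where each initial wall carries a product of $r_i$ factors rather than a single one. The grading in Step 1 is formal and easy; the order-by-order perturbation in Step 2 is standard; but the rank-two base case (Step 3) requires care, since one must verify that the Gross--Pandharipande/Reineke--Weist machinery extends to initial walls with multiple factors. This is essentially done by applying their results to each factor separately and using that the product of commuting group elements in $G_{n_0}^\parallel$ translates to a product of wall-crossing functions; alternatively, when the $w_i$ are primitive, \Cref{lemma: tropical vertex sd equals cluster sd} gives a direct identification, and the non-primitive case is handled by the rescaling described in \Cref{rmk: change of lattice} which preserves the product structure of wall-crossings.
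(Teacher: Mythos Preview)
Your overall plan---reduce to a rank-two scattering problem and appeal to a known rank-two positivity result---is the same as the paper's, and Step~1 is correct (it is essentially the content of the $N^+$-grading of $\mathfrak g$).

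The paper's execution differs in one structural respect: it does not perturb. Instead it follows \cite[Appendix C.3]{gross2018canonical} directly, building a representative of $\mathfrak D_\mathbf s$ as a nested union $\mathfrak D_1\subset\mathfrak D_2\subset\cdots$ order by order, and at each \emph{perpendicular joint} $\mathfrak j$ of $\mathfrak D_k$ quotienting by $\Lambda_{\mathfrak j}\otimes\mathbb R$ to obtain a rank-two scattering diagram. The rank-two input is then \cite[Proposition C.13]{gross2018canonical}, a purely algebraic positivity statement for rank-two scattering diagrams whose incoming wall-crossing functions are already of the form $(1+tz^m)^c$. This is cleaner than perturbation here because the walls are cones through the origin, so the ``translation by generic vectors'' you invoke in Step~2 does not literally make sense without passing to an auxiliary affine or asymptotic setting.

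Your Step~3 contains a genuine misidentification. \Cref{lemma: tropical vertex sd equals cluster sd} compares the \emph{global} generalized cluster diagram $\mathfrak D_\mathbf s$ with the Arg\"uz--Gross diagram $\mathfrak D_{(X_\Sigma,H)}$, under a primitivity hypothesis on the $w_i$; it says nothing about the local rank-two problem arising at a joint (or after perturbation). At such a joint the two colliding walls are \emph{not} the initial walls of any generalized cluster scattering diagram---they are arbitrary outgoing walls produced at earlier stages, with functions $(1+tz^m)^c$ for various $t$, $m$, $c$---so neither \Cref{lemma: tropical vertex sd equals cluster sd} nor the Reineke--Weist computation of \Cref{ex: example for k2} applies. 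What you actually need is a rank-two positivity statement for \emph{general} incoming data of this shape, which is precisely \cite[Proposition C.13]{gross2018canonical}. Invoking Gromov--Witten theory via GPS would in principle also give this, but it imports heavy geometric machinery that the paper (and GHKK's Appendix C) deliberately avoids; the algebraic route is both shorter and free of the primitivity constraint you flag in your ``Main obstacle''.
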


\begin{proof}
	This theorem essentially follows from \cite[Appendix C.3]{gross2018canonical}, the proof of the positivity of $\mathfrak D^\mathrm{GHKK}_\mathbf s$. We use a representative of $\mathfrak D_\mathbf s$ constructed in the same algorithm used to produce $\mathfrak D^\mathrm{GHKK}_\mathbf s$ in the proof of \cite[Theorem 1.28]{gross2018canonical}. We will construct order by order a sequence of finite scattering diagrams $\mathfrak D_1\subset \mathfrak D_2 \subset \cdots$ (over $\widehat{k[P^\oplus_\mathbf s]}$ or the graded Lie algebra $\mathfrak g_\mathbf s$) such that their union
	\[
	\mathfrak D = \bigcup_{k=1}^\infty \mathfrak D_k
	\]
	is equivalent to $\mathfrak D_\mathbf s$. We then prove inductively that every wall in $\mathfrak D_k$ has the positivity property.
	
	Let $\mathfrak D_1 = \mathfrak D_{\mathbf s, \mathrm{in}}$. Note that $\mathfrak D_1$ is equivalent to $\mathfrak D_\mathbf s$ modulo $(P^+)^2$. Suppose that we have defined up to $\mathfrak D_k$ which is equivalent to $\mathfrak D$ modulo $(P^+)^{k+1}$, and assume that every wall in $\mathfrak D_k$ has wall-crossing function of the form $(1 + t z^m)^c$ for some positive integer $c$. We construct $\mathfrak D_{k+1}$ as follows, and show that it is equivalent to $\mathfrak D$ modulo $(P^+)^{k+2}$ and furthermore that it still has the same positivity property for its wall-crossing functions.
	
	There is a finite rational polyhedral cone complex that underlies the support of $\mathfrak D_k$ (which is true for any scattering diagram with finitely many walls). We call the codimension two cells \emph{joints}. Let $\mathfrak j$ be a joint of $\mathfrak D_k$. Then by \cite[Definition-Lemma C.2]{gross2018canonical}, it falls into two classes
	\begin{enumerate}
		\item \emph{parallel}, if every wall with the normal vector $n$ containing $\mathfrak j$ has $\omega(-,n)$ tangent to $\mathfrak j$;
		\item \emph{perpendicular}, if every wall with the normal vector $n$ containing $\mathfrak j$ has $\omega(-,n)$ not tangent to $\mathfrak j$. 
	\end{enumerate}
	Let $\gamma_\mathfrak j$ be a simple loop around $\mathfrak j$ small enough so that it only intersects walls containing $\mathfrak j$. By our assumption, the path-ordered product $\mathfrak p_{\gamma_\mathfrak j, \mathfrak D_k}$ is identity modulo $(P^+)^{k+1}$, but modulo $(P^+)^{k+2}$, it can be written as
	\[
	\mathfrak p_{\gamma_\mathfrak j, \mathfrak D_k} = \exp\left(\sum_{d(t,m) = k+1} c_{t,m}tz^m \partial_{n(t,m)} \right),
	\]
	where $c_{t,m}\in \Bbbk$. Here we define the degree $d(t,m) \coloneqq k+1$ if $(t,m)\in (P^+)^{k+1} \setminus (P^+)^{k+2}$, and $n(t,m)$ is primitive in $N^+$ uniquely determined by $(t,m)$.

	If $\mathfrak j$ is perpendicular, we define a set of walls
	\[
	\mathfrak D[\mathfrak j]\coloneqq \{ (\mathfrak j - \mathbb R_{\geq 0}m, (1 + tz^m)^{\pm c_{t,m}}) \mid d(t,m) = k+1 \},
	\]
	where $\mathfrak j-\mathbb R_{\geq 0}m$ is of codimension one since $m$ is not tangent to $\mathfrak j$.
	Here the function $(1 + tz^m)^{\pm c_{t,m}}$ makes sense as a power series. The sign $\pm$ in the power is chosen so that when $\gamma_\mathfrak j$ crosses $\mathfrak j-\mathbb R_{\geq 0}m$, the wall-crossing automorphism is $$\exp(-c_{t,m}tz^m \partial_{n(t,m)}).$$
	In this way, if we add the walls in $\mathfrak D[\mathfrak j]$ to $\mathfrak D_k$, we have the path-ordered product $\mathfrak p_{\gamma_\mathfrak j, \mathfrak D_k \cup \mathfrak D[\mathfrak j]} = \mathrm{id}$ modulo $(P^+)^{k+2}$. We then define
	\[
	\mathfrak D_{k+1} = \mathfrak D_k \cup \bigcup_{\mathfrak j} \mathfrak D[\mathfrak j],
	\]
	where the union is over all perpendicular joints of $\mathfrak D_k$.
	
	There are two things we need to show in the induction:
	\begin{enumerate}
		\item $\mathfrak D_{k+1}$ is equivalent to $\mathfrak D_\mathbf s$ modulo $(P^+)^{k+2}$.
		\item All the walls in $\mathfrak D_{k+1}$ have wall-crossing functions of the form
		$(1 + t z^m)^c$ for some positive integer $c$.
	\end{enumerate}
	
	Part (1) follows from the argument in \cite[Lemma C.6 and Lemma C.7]{gross2018canonical}. This part guarantees that the constructed union $\mathfrak D$ is equivalent to $\mathfrak D_\mathbf s$. 
	
	Part (2) is about the positivity of wall-crossings. By the construction of $\mathfrak D_{k+1}$, we only need to examine the new walls emerging from perpendicular joints of $\mathfrak D_k$. Let $\mathfrak j$ be a perpendicular joint of $\mathfrak D_k$.
	The integral normal space $\mathfrak j^\perp \cap N$ is a rank two saturated sublattice $O$ of $N$. Locally at $\mathfrak j$, $\mathfrak D_k\cup \mathfrak D[\mathfrak j]$ induces a scattering diagram living in $O_\mathbb R^\vee = M_\mathbb R/(\Lambda_j\otimes \mathbb R)$. Precisely, consider the set of walls
	\[
	\mathfrak D' = \{ ((\mathfrak d + \Lambda_\mathfrak j \otimes \mathbb R)/(\Lambda_\mathfrak j\otimes \mathbb R), f_\mathfrak d) \mid \mathfrak j\subset \mathfrak d,\ (\mathfrak d, f_\mathfrak d)\in \mathfrak D_k\cup \mathfrak D[\mathfrak j]\}.
	\]
	The wall-crossing functions $f_\mathfrak d$ are all of the form
	\[
	(1 + t z^m)^c,
	\]
	$c\in \Bbbk$ ($f_\mathfrak d$ makes sense as a power series). The wall $\mathfrak d$ has some primitive normal vector $o\in O\cap N^+$, and $m$ is proportional to $\omega(-,o)$. We also know since $\mathfrak j$ is perpendicular, $\bar m\neq 0$ (the image of $m$ under the quotient $M\rightarrow O^\vee$) in $O_\mathbb R^\vee$. And the one dimensional wall $\bar {\mathfrak d} = (\mathfrak d + \Lambda_\mathfrak j \otimes \mathbb R)/(\Lambda_\mathfrak j\otimes \mathbb R)$ is contained in $\mathbb R(\bar m)$, orthogonal to the normal vector $o$. Then $\mathfrak D'$ is a rank two scattering diagram in $O_\mathbb R^\vee$ over $\widehat{\Bbbk[P^+]}$, with the monoid map from $P^+$ to $O^\vee$ being $r\colon P\rightarrow M$ post-composed by the quotient from $M$ to $O^\vee$. It is consistent up to modulo $(P^+)^{k+2}$. Then by \cite[Proposition C.13]{gross2018canonical}, the wall-crossing functions admit the positivity property, i.e. the power $c$ is always a positive integer. This shows the positivity for $\mathfrak D_{k+1}$ assuming that of $\mathfrak D_k$. Therefore, the union $\mathfrak D$ is also positive by induction, hence so is $\mathfrak D_\mathbf s$.
\end{proof}

\section{The cluster complex structure}\label{section: cluster complex structure}

In this section, we study the \emph{cluster complex structure} of the scattering diagram $\mathfrak D_\mathbf s$, which is a description of parts of the walls of $\mathfrak D_\mathbf s$. The construction of such a structure of $\mathfrak D_\mathbf s$ is analogous to \cite[Construction 1.30]{gross2018canonical}.

\subsection{The cluster complex}\label{subsection: cluster complex}
Take a representative for the scattering diagram $\mathfrak D_\mathbf s$ with minimal support (which always exists). By \Cref{thm: sd with a slab}, one can choose such a representative $\mathfrak D_\mathbf s$ so that there are no other walls contained in the initial incoming ones $\mathfrak d_i$.

Define
\begin{align*}
	\mathcal C^+ = \mathcal C_\mathbf s^+ \coloneqq \{ m\in M_\mathbb R \mid \langle e_i, m \rangle \geq 0\quad \forall i \in I\},\\
	\mathcal C^- = \mathcal C_\mathbf s^- \coloneqq \{ m\in M_\mathbb R \mid \langle e_i, m \rangle \leq 0\quad \forall i \in I\}.
\end{align*}
The closed cones $\mathcal C_\mathbf s^\pm$ are closures of connected components of $M_\mathbb R \setminus \mathrm{Supp}(\mathfrak D_\mathbf s)$. They are thus called \emph{chambers}. By the mutation invariance \Cref{thm: mutation invariance}, we have that the cones 
\[
T_k^{-1}\left( \mathcal C_{\mu_k^+(\mathbf s)}^\pm \right) \subset M_\mathbb R \setminus \mathrm{Supp}(\mathfrak D_\mathbf s)
\]
are also closures of connected components. Applying mutations on seeds provides an iterative way to construct chambers of  $M_\mathbb R \setminus \mathrm{Supp}(\mathfrak D_\mathbf s)$ as follows.

Note again that the coefficients part of $\mathbf s = (\mathbf e, \mathbf t)$ does not mutate as in \Cref{def: mutation of geometric seed}, which requires setting the tropical semifield $\mathbb P$ from the initial seed and once for all. Instead, we regard the coefficients part $\mathbf t$ as in the multiplicative group $\mathbb P$ and mutates in the way specified by \Cref{subsection: mutation invariance}. In this way, we can apply mutations iteratively on $\mathbf s$. 

Let us consider the rooted tree $\mathfrak T_n$ from \Cref{def: rooted tree GHK}. There is an association $v\mapsto \mathbf s_v$ such that $v_0 \mapsto \mathbf s$ and adjacent seeds with coefficients are related by the corresponding mutation (in the sense of \Cref{subsection: mutation invariance}) of the labeled edges. Once this association is done, we denote the rooted tree by $\mathfrak T_\mathbf s$.

Suppose the unique path from $v_0$ to a vertex $v$ goes through the arrows labeled by $\{k_1, k_2, \dots, k_l\}$. Define the piecewise linear map
\[
	T_{v_0, v} = T_{k_l}\circ \cdots \circ T_{k_2}\circ T_{k_1} \colon M_\mathbb R \rightarrow M_\mathbb R.
\]
Since $\mathcal C_\mathbf s^\pm$ are chambers of the scattering diagram $\mathfrak D_\mathbf s$, then again due to the mutation invariance, we have that 
\[
	\mathcal C_v^\pm \coloneqq T_{v_0, v}^{-1}\left(\mathcal C_{\mathbf s_v}^\pm \right)
\]
are chambers of $\mathfrak D_\mathbf s$.

Each $\mathcal C_v^\pm$ is a simplicial (rational polyhedral) cone of maximal dimension, as each $T_k$ is a linear isomorphism on its domains of linearity. The intersection $\mathcal C_\mathbf s^+ \cap \mathcal C_{\mu_k^+(\mathbf s)}^+$ is their common facet generated by $\{e_i^* \mid i\neq k\}$. Each facet of $\mathcal C_v$ is canonically labeled by an index $i\in I$. Inductively, for any two vertices $v$ and $v'$ connected by an arrow labeled by $k\in I$, then $\mathcal C_v^+$ and $\mathcal C_{v'}^+$ share a common facet labeled by $k$.

We borrow the following notation from \cite{gross2018canonical}: we use the short-hand subscription notation $v\in \mathbf s$ for an object parametrized by a vertex $v\in \mathfrak T_\mathbf s$ with the root $v_0$ labeled by $\mathbf s$. This is done to emphasize the dependence on the initial seed $\mathbf s$.

\begin{definition}
	We denote by $\mathcal C_{v\in \mathbf s}^\pm$ the chambers $\mathcal C_v^\pm$ of $\subset M_\mathbb R \setminus \mathrm{Supp}(\mathfrak D_\mathbf s)$. We write $\Delta_\mathbf s^\pm$ for the set of chambers $\mathcal C_{v\in \mathbf s}^\pm$ for $v$ running over all vertices of $\mathfrak T_\mathbf s$. We call elements in $\Delta_\mathbf s^+$ \emph{cluster chambers}.
 \end{definition}

\begin{remark}
	As we have pointed out, $\mathcal C_v^+\cap \mathcal C^+_{v'}$ is a common facet if $v$ and $v'$ are adjacent in $\mathfrak T_\mathbf s$. More generally, by adding all the faces of every $\mathcal C^+_v$ to the set $\Delta_\mathbf s^+$, we obtain a collection of cones which form a cone complex, still denoted by $\Delta_\mathbf s^+$. For this reason, we call $\Delta_\mathbf s^+$ \emph{the cluster (cone) complex} and $\Delta_\mathbf s^-$ \emph{the negative cluster (cone) complex}.
\end{remark}

The simplicial cone $\mathcal C_{v\in \mathbf s}^\pm$ is determined by (the generators of) its one-dimensional faces. The cone $\mathcal C_{\mathbf s_v}^+$ is generated by the dual vectors $\{e_{i;v}^*\mid i\in I\}$. These are pulled back by $T_{v_0,v}^{-1}$ to be the generators of $\mathcal C_{v\in \mathbf s}^+$.

\begin{definition}
	We define the \emph{$g$-vectors} for $v\in \mathfrak T_\mathbf s$ as a tuple
	\[
	\mathbf g_v = (g_{i;v}\mid i\in I),\quad \text{where}\quad g_{i;v} \coloneqq T_{v_0,v}^{-1}\left(e_{i;v}^*\right)\in M.
	\]
	We will use the notation $\mathbf g_{v\in \mathbf s}$ to emphasize the initial seed $\mathbf s$.
\end{definition}

\begin{remark}\label{rmk: sign coherence of normal vector}
Denote the dual vectors (in $N$) of $\mathbf g_v$ by $\mathbf g^*_v = (g_{i;v}^*\mid i\in I)$. They are normal vectors of the facets of $\mathcal C_{v}^+$. Since the walls of $\mathfrak D_\mathbf s$ only have normal vectors in $N_\mathbf s^+$ or $-N_\mathbf s^+$, the vector $g_{i;v}^*$ has a well-defined sign
\[
	\varepsilon_{i;v} = \mathrm{sgn}(g_{i;v}^*) = \begin{cases}
		+ \quad &\text{if $g_{i;v}^*\in N^+_\mathbf s$}\\
		- \quad &\text{if $g_{i;v}^*\in N^-_\mathbf s$}.
	\end{cases} 
\]
\end{remark}

We will show later the vectors $\mathbf g_v$ can be calculated iteratively by a variant of mutations as defined below.	

\begin{definition}
	Let $\mathbf e = (e_i\mid i\in I)$ be a seed (without coefficients) for $\Gamma$. Define the \emph{signed mutation} $\mu_k^\varepsilon(\mathbf e) = (e_i'\mid i \in I)$ for $\varepsilon\in \pm$ as follows.
	\[
	e_i' = \begin{cases}
		- e_k,\quad &\text{if $i = k$}\\
		e_i + [-\varepsilon \omega (e_i, d_k e_k)]_+e_k, \quad &\text{if $i\neq k$}.
	\end{cases}
	\]
	So the signed mutation $\mu_k^+$ coincides with our previous \Cref{def: a new definition of mutation} (ignoring the coefficients part). 
	
	On the mutation of the dual of $\mathbf e$, we use the same notation $\mu_k^\varepsilon(\mathbf e^*) = (f'_i \mid i\in I)$ where $\mathbf e^* = (f_i \mid i\in I)$. Then
	\[
	f_i' = \begin{cases}
		f_i,\quad &\text{if $i \neq k$}\\
		-f_k + \sum_{i\in I}[-\varepsilon \omega (e_i, d_k e_k)]_+f_k, \quad &\text{if $i =  k$}.
	\end{cases}
	\]
\end{definition}

There is another tuple of vectors in $M$ that changes under signed mutations. For a seed $\mathbf s$, let $\mathbf w = (w_{i}\mid i\in I)$ where
\[
w_i \coloneqq \omega\left(-, \frac{d_k}{r_k}e_k \right) = \sum_{j\in I} b_{ji} f_i \in M.
\]
Let $\mathbf w' = (w_i')$ associated to $\mu_k^\varepsilon(\mathbf e)$. Then we have
\[
w_i' = \begin{cases}
		- w_k,\quad &\text{if $i = k$}\\
		w_i + [\varepsilon \omega (e_k, d_k e_i)]_+w_k, \quad &\text{if $i\neq k$}.
	\end{cases}
\]
We will later denote $\mu_k^\varepsilon (\mathbf w) = \mathbf w'$.

There are also signed mutations for coefficients. Recall we have fixed a multiplicative abelian group $\mathbb P = \prod_{i\in I} \mathbb Z^{r_i}$. The coefficients $\mathbf t = (t_{i,j}\mid i\in I, j\in [1,r_i])$ are a basis of $\mathbb P$.
\begin{definition}
	For $\mathbf s = (\mathbf e, \mathbf t)$, a seed $\mathbf e$ together with coefficients $\mathbf t = (t_{i,j})$ in $\mathbb P$, we define its \emph{signed mutation in direction $k$}, $\mu_k^\varepsilon(\mathbf e, (t_{i,j})) = (\mathbf e', (t_{i,j}'))$ for $\varepsilon \in \pm$ by setting $\mathbf s' = \mu_k^\varepsilon(\mathbf s)$ and 
			\[
		t_{i,j}' = \begin{cases}
			t_{k,j}^{-1} \quad &\text{if $i = k$}\\
 			t_{i,j} \cdot \prod\limits_{l=1}^{r_k} t_{k,l}^{[\varepsilon \omega(e_k , e_i)]_+} \quad &\text{if $i\neq k$}.
 			\end{cases}
 			\]
\end{definition}

\begin{proposition}[cf. {\cite[Proposition 4.4.9]{mou2020wall}}]\label{prop: mutation of g vectors}
For every $v\in \mathfrak T_\mathbf s$, the dual of $g$-vectors $\mathbf g_v^*$ is a seed of $N$. These seeds and their duals, i.e. the $g$-vectors, can obtained iteratively as follows.
\begin{enumerate}
		\item $\mathbf g_{v_0} = \mathbf e^*$ and $\mathbf g_{v_0}^* = \mathbf e$;
		\item for any $v\overset{k}{\longrightarrow}v'$ in $\mathfrak T_\mathbf s$, we have
		\[
		\mathbf g_{v'}^* = \mu_{k}^{\varepsilon_{k;v}}(\mathbf g_{v}^*),\quad \mathbf g_{v'} = \mu_{k}^{\varepsilon_{k;v}}(\mathbf g_{v}).
		\]
\end{enumerate}
\end{proposition}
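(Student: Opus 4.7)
The proof proceeds by induction on the distance $d(v, v_0)$ in the tree $\mathfrak T_\mathbf s$. The base case $v = v_0$ is immediate: $T_{v_0, v_0}$ is the identity, so $g_{i;v_0} = e_i^*$ and $\mathbf g_{v_0}^* = \mathbf e$ is a seed of $N$ by assumption. In the inductive step I fix an edge $v \overset{k}{\to} v'$ and use the factorization $T_{v_0, v'}^{-1} = T_{v_0, v}^{-1} \circ (T_k^{\mathbf s_v})^{-1}$ together with the mutation rule for the dual basis, which gives $e_{i;v'}^* = e_{i;v}^*$ for $i \neq k$ and $e_{k;v'}^* = -e_{k;v}^* + \sum_j [-b_{jk}^{v}]_+ e_{j;v}^*$.

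For $i \neq k$, the vector $e_{i;v}^*$ lies on $e_{k;v}^\perp$ and both linear pieces of $(T_k^{\mathbf s_v})^{-1}$ are the identity there, hence $g_{i;v'} = T_{v_0,v}^{-1}(e_{i;v}^*) = g_{i;v}$. This matches $\mu_k^{\varepsilon_{k;v}}(\mathbf g_v)_i$ once one verifies that the correction term $[-\varepsilon_{k;v}\,\omega(g_{i;v}^*, d_k g_{k;v}^*)]_+$ vanishes, which reflects the sign-coherent fact that the wall of $\mathfrak D_\mathbf s$ shared by the cluster chambers $\mathcal C_v^+$ and $\mathcal C_{v'}^+$ has normal direction $\varepsilon_{k;v}\cdot g_{k;v}^*\in N^+$.

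For $i = k$, the vector $e_{k;v'}^*$ still lies in $\mathcal H_{k,-}^{\mathbf s_v}$ (it pairs to $-1$ against $e_{k;v}$), so $(T_k^{\mathbf s_v})^{-1}$ acts trivially on it and $g_{k;v'} = T_{v_0,v}^{-1}(e_{k;v'}^*)$. The key geometric observation is that the chambers $\mathcal C_{\mathbf s_v}^+$ and $\mathcal C_{\mathbf s_{v'}}^+$ are adjacent in $\mathfrak D_{\mathbf s_v}$ across the facet $e_{k;v}^\perp$, so the linear pieces $L_v \coloneqq T_{v_0,v}^{-1}|_{\mathcal C_{\mathbf s_v}^+}$ and $L_{v'} \coloneqq T_{v_0,v}^{-1}|_{\mathcal C_{\mathbf s_{v'}}^+}$ agree on this facet and differ by $(L_{v'} - L_v)(m) = \phi \cdot \langle e_{k;v}, m\rangle$ for a single vector $\phi \in M_\mathbb R$. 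Expanding $g_{k;v'} = L_{v'}(e_{k;v'}^*)$ using this decomposition, writing the result in the basis $\mathbf g_v$, and converting the exchange-matrix entries $b_{jk}^{v} = \omega(e_{j;v}, d_k e_{k;v})$ into the pairings $\omega(g_{i;v}^*, d_k g_{k;v}^*)$ via the inductive hypothesis and the chain rule for $T_{v_0,v}$, one obtains
\[
g_{k;v'} = -g_{k;v} + \sum_{i \in I} [-\varepsilon_{k;v}\,\omega(g_{i;v}^*, d_k g_{k;v}^*)]_+ g_{i;v},
\]
matching $\mu_k^{\varepsilon_{k;v}}(\mathbf g_v)_k$; dually, $\mathbf g_{v'}^* = \mu_k^{\varepsilon_{k;v}}(\mathbf g_v^*)$.

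It remains to check that $\mathbf g_{v'}^*$ is again a seed of $N$, but this is automatic: the signed mutation $\mu_k^\varepsilon$ is an involution up to the intrinsic sign flip at index $k$ (exactly like ordinary mutation), so it preserves the basis property, and the other seed axioms for $\Gamma$ are preserved entrywise. The main technical obstacle is the identification step in the $i = k$ case, where the sign prefactor $\varepsilon_{k;v}$ must emerge naturally from the piecewise-linear geometry of the long composition $T_{v_0,v}$, rather than being imposed by hand. The sign encodes on which side of the shared wall the adjacent cluster chamber $\mathcal C_{v'}^+$ lies, and its very well-definedness rests on the sign coherence recorded in \Cref{rmk: sign coherence of normal vector}, which ultimately follows from the positivity in \Cref{thm: generalized positivity}.
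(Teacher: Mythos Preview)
Your approach differs from the paper's in a way that matters. Both argue by induction on the distance from $v_0$, but you peel off the \emph{last} edge $v\overset{k}{\to}v'$ and try to compute $\mathbf g_{v'}$ directly from $\mathbf g_v$ using the long map $T_{v_0,v}^{-1}$, whereas the paper peels off the \emph{first} edge $v_0\overset{i}{\to}v_1$, applies the inductive hypothesis to the shorter tree rooted at $v_1$ with seed $\mathbf s_1=\mu_i^+(\mathbf s)$ (so that $\mathbf g_{v'\in\mathbf s_1}=\mu_k^{\varepsilon}(\mathbf g_{v\in\mathbf s_1})$ is already known), and then pulls the single map $(T_i^{\mathbf s})^{-1}$ across the signed mutation. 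The paper's version reduces everything to a commutation identity
\[
(T_i^{\mathbf s})^{-1}\circ \mu_k^{\varepsilon}(\mathbf g_{v\in\mathbf s_1}) = \mu_k^{\delta}\circ (T_i^{\mathbf s})^{-1}(\mathbf g_{v\in\mathbf s_1}),
\]
which is checked by a two-case analysis (the chambers $\mathcal C_{v\in\mathbf s_1}^+$ and $\mathcal C_{v'\in\mathbf s_1}^+$ separated by $e_i^\perp$, or on the same side), using only that the dual of each linear piece $T_{i,\pm}$ is an automorphism of $(N,\omega)$.

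Your $i=k$ step has a genuine gap. You correctly observe that the two linear pieces $L_v,L_{v'}$ of $T_{v_0,v}^{-1}$ on adjacent chambers differ by $\phi\cdot\langle e_{k;v},\,\cdot\,\rangle$ for \emph{some} vector $\phi$, but you never compute $\phi$; the phrase ``converting \dots\ via the inductive hypothesis and the chain rule'' hides exactly this. In fact the proposition is equivalent to the statement that $\phi=0$ when $\varepsilon_{k;v}=+$ and $\phi=-\sum_j b_{jk}^v\, g_{j;v}$ when $\varepsilon_{k;v}=-$, and neither your inductive hypothesis at $v$ nor the single factor $(T_k^{\mathbf s_v})^{-1}$ determines $\phi$: it depends on how the bends of \emph{all} the individual $T_{k_i}$ in the composition accumulate across this particular facet. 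Resolving that is precisely what the paper's root-shift accomplishes, since after re-rooting only one $T_i$ remains to analyze. A secondary issue: it is not automatic that $T_{v_0,v}^{-1}$ is linear on the adjacent chamber (so that $L_{v'}$ is even well defined as a linear map); this follows from mutation invariance but deserves a sentence. Finally, in the $i\neq k$ case the dual formula $\mu_k^{\varepsilon}(\mathbf g_v)_i=g_{i;v}$ holds on the nose, so there is no ``correction term'' to verify.
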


\begin{proof}
	We prove this proposition by induction on the distance from $v$ to $v_0$. The base case is when $v = v_0$, in which we have
	\[
	\mathbf g_{v'}^* = \mu_k^+(\mathbf e) = \mu_k^+(\mathbf g^*_{v_0}),\quad \mathbf g_{v'} = \mu_k^+(\mathbf e^*) = \mu_k^+(\mathbf g_{v_0}).
	\]
	
	Now assuming that $v\neq v_0$ and suppose that the unique path from $v_0$ to $v$ starts with $v_0\overset{i}{\longrightarrow}v_1$ for some $i\in I$. Write $\mathbf s_1 = \mathbf s_{v_1} = \mu_i^+(\mathbf s)$. By induction, we assume that the proposition holds for $g$-vectors with respect to the seed $\mathbf s_1$:
	\[
	\mathbf g_{v'\in \mathbf s_1} = \mu_k^\varepsilon (\mathbf g_{v\in \mathbf s_1})
	\]
	where $\varepsilon = \varepsilon_{k;v\in \mathbf s_1} = \mathrm{sgn}(g_{k; v\in \mathbf s_1}^*)$ with respect to $\mathbf s_1$.
	Note that by definition
	\[
	\mathbf g_{v'\in \mathbf s} = (T_i^{\mathbf s})^{-1}(\mathbf g_{v'\in \mathbf s_1}),\quad \mathbf g_{v\in \mathbf s} = (T_i^{\mathbf s})^{-1}(\mathbf g_{v\in \mathbf s_1}),
	\]
	and we want to prove
	\[
	\mathbf g_{v'\in \mathbf s} = \mu_k^\delta (\mathbf g_{v\in \mathbf s})
	\]
	where $\delta = \varepsilon_{k;v\in \mathbf s}$ with respect to $\mathbf s$.
	
	Then it amounts to show that
	\begin{equation}\label{eq: induction g vector}
	(T_i^{\mathbf s})^{-1}\circ \mu_k^{\varepsilon}(\mathbf g_{v\in \mathbf s_1}) = \mu_k^{\delta}\circ (T_i^{\mathbf s})^{-1}(\mathbf g_{v\in \mathbf s_1}).	
	\end{equation}
	We split the discussion in the following two cases. The codimension one skeletons of the chambers $\mathcal C_{v\in \mathbf s_1}^+$ and $\mathcal C_{v'\in \mathbf s_1}^+$ are in the essential support of $\mathfrak D_{\mathbf s_1}$. As $v$ and $v'$ are adjacent, these two chambers share a common facet. Therefore they are either separated by the hyperplane $e_i^\perp$ or contained in the same half space (since the hyperplane is also in the essential support).  
	
	\textbf{Case 1.} The two groups of $g$-vectors $\mathbf g_{v\in \mathbf s_1}$ and $\mathbf g_{v'\in \mathbf s_1}$ are separated by $e_i^\perp$. In this case, the normal vector $g_{k;v\in \mathbf s_1}^*$ is in the direction of $e_i$. The signs $\delta$ and $\varepsilon$ on the two sides of (\ref{eq: induction g vector}) are then different. We assume that $\varepsilon = \mathrm{sgn}(g_{k;v\in\mathbf s_1}^*) = +$; the other case is analogous. By our assumption, $\mathbf g_{v\in \mathbf s_1}^*$ qualifies as a seed of fixed data $\Gamma$, thus forming a basis of $N$, which implies $g_{k;v\in \mathbf s_1}^* = e_i$. Since $\{\lambda_j g_{j;v\in \mathbf s_1}^* \mid j\in I\}$ form a basis of the sublattice $N^\circ$, we have $d_i = d_k$. We note that the map $T_i^\mathbf s$ is actually determined by the vectors $e_i$ and $d_ie_i$. On the left hand side of (\ref{eq: induction g vector}), $T_i^\mathbf s$ is the identity, while on the right hand side, it is $T_{i,+}^\mathbf s$. So we need to show the equality
	\[
	\mu_k^+(\mathbf g_{v\in \mathbf s_1}) = \mu_k^-\circ (T_{i,+}^\mathbf s)^{-1}(\mathbf g_{v\in \mathbf s_1}).
	\]
	To simplify the notation, we denote $\mathbf g = \mathbf g_{v\in \mathbf s_1}$ and $g_i = g_{i;v\in \mathbf s_1}$. On the left side of the equality, the tuple $\mu_k^+(\mathbf g) = (g_i')$ differs with $\mathbf g$ by only one vector 
	\[
		g_k' = -g_k + \sum_{i\in I} [- b_{ik}^v]_+g_i.
	\]
	On the right hand side, we first have
	\[
		(T_{i,+}^\mathbf s)^{-1}(g_k) = -g_k + \sum_{i\in I} -b_{ik}^vg_i,
	\]
	while other $g$-vectors remain unchanged under $(T_{i,+}^\mathbf s)^{-1}$. It is easy to check that the dual of $(T_{i,+}^\mathbf s)^{-1}$ is an automorphism of $(N, \omega)$, that is, it is a linear automorphism on $N$ preserving the form $\omega$. Thus we have, if writing $\mu_k^-\circ (T_{i,+}^\mathbf s)^{-1}(\mathbf g_{v\in \mathbf s_1}) = (g''_i)$,
	\[
	g''_k = -g_k + \sum_{i\in I} -b_{ik}^v g_i + \sum_{i\in I} [b_{ik}^v]_+ g_i = g_k', 
	\quad \text{and}\quad
	g''_i = g_i\quad \text{for $i\neq k$}.
	\]
	This finishes the proof of the desired equality.
	
\textbf{Case 2.} The $g$-vectors $\mathbf g_{v\in \mathbf s_1}$ and $\mathbf g_{v'\in \mathbf s_1}$ are all contained in the same half $\mathcal H_{i,+}^\mathbf s$ or $\mathcal H_{i,-}^\mathbf s$. Again we need to prove (\ref{eq: induction g vector}). We observe that the two signs $\delta$ and $\varepsilon$ are equal. In fact, the sign $\varepsilon$ of $g_{k;v\in \mathbf s_1}^*$ depends on its coordinates in $e_{j;v_1}$ for $j\neq i$ since $g_{k;v\in \mathbf s_1}^*$ is not purely proportional to $e_i$. The same is true for the sign $\delta$ which only depends on $g_{k;v\in \mathbf s}^*$'s coordinates in $e_{j}$ for $j\neq i$. Since  $g_{k;v\in \mathbf s}^*$ only differ in the direction of $e_{i}$, and also because $e_{j;v_1}$ and $e_{j}$ also differ by multiples of $e_i$, we conclude that $\varepsilon = \delta$. The equality (\ref{eq: induction g vector}) then directly follows from a fact we already mentioned in Case 1 that the dual of $(T_{i,\varepsilon}^\mathbf s)^{-1}$ acts as an automorphism on $(N, \omega)$.
\end{proof}

A direct corollary of \Cref{prop: mutation of g vectors} is another description of $c$-vectors mentioned in \Cref{subsection: principal coefficients}. Recall that we have $\pi\colon \mathbb P\rightarrow \overline{\mathbb P}$, $p_{i,j}\mapsto \bar p_i$. We write the group operation in $\mathbb P$ and $\overline{\mathbb P}$ by addition instead of multiplication.

\begin{corollary}\label{cor: from cone complex to c vectors}
	We identify the lattice $\overline N$ with $\overline {\mathbb P}$ by $\bar e_i = \frac{d_i}{r_i}e_i\mapsto \bar p_i$. Then we have for any $i\in I$ and $v\in \mathfrak T_\mathbf s$,
	\[ \frac{d_i}{r_i}g_{i;v}^* = \bar p_{i;v}, \quad d_i g_{i;v}^* = r_i \bar p_{i;v} = p_{i;v}. \]
\end{corollary}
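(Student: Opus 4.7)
The plan is to proceed by induction on the distance from the root $v_0$ in the rooted tree $\mathfrak T_\mathbf s$, verifying the equivalent integral form $d_ig_{i;v}^* = p_{i;v}$ in $\overline{\mathbb P}$ (under the identification $\bar e_j \leftrightarrow \bar p_j$), from which the first equality follows upon division by $r_i$. The base case $v = v_0$ is immediate since $g_{i;v_0}^* = e_i$ and $p_{i;v_0} = r_i\bar p_i$ both correspond to $d_ie_i = r_i\bar e_i$.

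For the inductive step, consider an edge $v \xrightarrow{k} v'$ in $\mathfrak T_\mathbf s$. \Cref{prop: mutation of g vectors} provides the mutation $g_{k;v'}^* = -g_{k;v}^*$ and, for $i \neq k$,
\[
g_{i;v'}^* \;=\; g_{i;v}^* + [-\varepsilon b_{ik}^v]_+\, g_{k;v}^*,
\]
where $\varepsilon = \varepsilon_{k;v}$; here the identification $\omega(g_{i;v}^*, d_jg_{j;v}^*) = b_{ij}^v$ uses the fact that the dual of $T_{v_0,v}^{-1}$ is a composition of $\omega$-preserving maps, as established in the proof of that proposition. Multiplying by $d_i$, applying the induction hypothesis $d_kg_{k;v}^* = p_{k;v}$, and invoking the skew-symmetrizability identity $(d_i/d_k)\,b_{ik}^v = -b_{ki}^v$ transforms the mutation formula into
\[
d_ig_{i;v'}^* \;=\; p_{i;v} + [\varepsilon b_{ki}^v]_+\, p_{k;v}.
\]
The case $i=k$ yields $d_kg_{k;v'}^* = -p_{k;v} = p_{k;v'}$ directly from \Cref{lemma: mutation of p_i}.

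It remains to match with $p_{i;v'}$ for $i \neq k$. By \Cref{lemma: mutation of p_i}, $p_{i;v'} - p_{i;v} = b_{ki}^v\, p_{k;v}^\sigma$, where $\sigma = -$ if $\beta_{ik}^v > 0$ and $\sigma = +$ otherwise. The desired identity thus reduces to $[\varepsilon b_{ki}^v]_+ p_{k;v} = b_{ki}^v\, p_{k;v}^\sigma$. The induction hypothesis forces $\mathrm{sgn}(p_{k;v}) = \mathrm{sgn}(g_{k;v}^*) = \varepsilon$, so the sign coherence of \Cref{prop: generalized sign coherence} gives $p_{k;v}^+ = p_{k;v}$ and $p_{k;v}^- = 0$ when $\varepsilon = +$, and $p_{k;v}^+ = 0$ and $p_{k;v}^- = -p_{k;v}$ when $\varepsilon = -$ (written additively). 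A direct four-way case check on the signs of $\varepsilon$ and $b_{ki}^v$ then confirms the identity, completing the induction. The only delicate bookkeeping is the consistent tracking of the factors $d_i, d_k, r_i$ and the sign $\varepsilon$; no deeper ingredient beyond the three cited results is needed.
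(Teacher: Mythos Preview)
Your proof is correct and follows essentially the same approach as the paper's: both argue by induction on the distance from $v_0$, comparing the signed-mutation formula for $g_{i;v}^*$ from \Cref{prop: mutation of g vectors} with the mutation formula for $p_{i;v}$ from \Cref{lemma: mutation of p_i}, and using that $p_{k;v}$ inherits a definite sign from $g_{k;v}^*$. The paper's version is terser, simply asserting that ``they have the same mutation formula when $p_{k;v}$ has a well-defined sign,'' whereas you carry out the skew-symmetrizer bookkeeping and the four-way sign check explicitly; one minor remark is that the citation of \Cref{prop: generalized sign coherence} is not strictly needed, since the sign of $p_{k;v}\in\overline{\mathbb P}$ already follows from the induction hypothesis $p_{k;v}=d_kg_{k;v}^*$ together with \Cref{rmk: sign coherence of normal vector}.
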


\begin{proof}
	For the initial vertex $v_0$, this is given by the identification $\bar e_i \mapsto \bar p_i$. The iteration of $g_{i;v}^*$ is provided by signed mutations according to \Cref{prop: mutation of g vectors}. We have if $v \overset{k}{\longrightarrow} v'$ in $\mathfrak T_\mathbf s$,
	\[
	g_{i;v'}^* = \begin{cases}
		-g_{k;v}^* \quad & \text{if $i = k$}\\
		g_{i;v}^* + [ - \varepsilon b_{ik}^v ]_+ g_{i;v}^* \quad &\text{if $i\neq k$}.
	\end{cases}
	\]
	where $\varepsilon = \mathrm{sgn} (g_{k;v}^*)$. What is implicit is that we have already known that $g_{i;k}^*$ is either non-negative or non-positive. On the other hand, the mutation of $p_{i;v}$ is given by
	\[
	p_{i;v'} = \begin{cases}
		-p_{k;v} \quad & \text{if $i = k$}\\
		p_{i;v} + b_{ki}^v \cdot p_{k;v}^+ \quad &\text{if $i\neq k$ and $b_{ik}\leq 0$}\\
		p_{i;v} + b_{ki}^v \cdot p_{k;v}^- \quad &\text{if $i\neq k$ and $b_{ik}> 0$}.
	\end{cases}
	\]
	Thus assuming $d_i g_{i;v}^* =  p_{i;v}$ for all $i\in I$ would imply $d_i g_{i;v'}^* = p_{i;v'}$ for all $i\in I$ as they have the same mutation formula when $p_{k;v}$ has a well-defined sign. Therefore the result is proved by induction on the distance from $v$ to $v_0$.
\end{proof}

\begin{lemma}\label{lemma: signed mutation for coefficients}
	The generalized coefficients $p_{i,j;v}$ have the following signed mutation formula. If $v\overset{k}{\longrightarrow} v'$ in $\mathfrak T_\mathbf s$, then
	\[
	p_{i,j;v'} = \begin{cases}
		-p_{k,j;v} \quad &\text{if $i = k$}\\
		p_{i,j;v} + [\varepsilon \beta_{ki}^v]_+\cdot \sum\limits_{j=1}^{r_k}{p_{k,j;v}} \quad &\text{if $i \neq k$}\\
	\end{cases}
	\]
	where $\varepsilon = \mathrm{sgn}(g_{k;v}^*)$.
\end{lemma}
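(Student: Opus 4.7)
The plan is to derive the signed mutation formula directly from the original mutation rule in \Cref{def: mutation of seed} (written additively in the tropical semifield $\mathbb P$), using the sign coherence of the generalized coefficients to collapse the two-case formula into a single uniform expression depending on $\varepsilon$.

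First I would invoke \Cref{cor: from cone complex to c vectors}, which identifies $d_k g_{k;v}^* = p_{k;v} = \sum_{j=1}^{r_k} p_{k,j;v}$ under $\overline N \cong \overline{\mathbb P}$. Combined with \Cref{lemma: property generalized c matrix}, this implies that all the individual coefficients $p_{k,1;v}, \dots, p_{k,r_k;v}$ share the common sign $\varepsilon = \mathrm{sgn}(g_{k;v}^*)$ as tropical elements. Consequently, the half-sums behave very simply:
\[
p_{k;+;v} = \begin{cases} \sum_{l=1}^{r_k} p_{k,l;v} & \varepsilon = + \\ 0 & \varepsilon = - \end{cases}, \qquad p_{k;-;v} = \begin{cases} 0 & \varepsilon = + \\ -\sum_{l=1}^{r_k} p_{k,l;v} & \varepsilon = - \end{cases}.
\]

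Next I would translate \Cref{def: mutation of seed}(3) into additive notation on $\mathbb P$, giving, for $i \neq k$, the two cases $p'_{i,j} = p_{i,j} + \beta_{ki}\, p_{k;-;v}$ when $\beta_{ik} > 0$ and $p'_{i,j} = p_{i,j} + \beta_{ki}\, p_{k;+;v}$ when $\beta_{ik} \leq 0$. Then a straightforward case check across the four combinations of $\varepsilon \in \{+,-\}$ and $\mathrm{sgn}(\beta_{ik})$, using that $\beta_{ik}$ and $\beta_{ki}$ have opposite signs by the skew-symmetrizability of $B$, shows that the contribution added to $p_{i,j}$ equals $[\varepsilon \beta_{ki}]_+ \cdot \sum_{l} p_{k,l;v}$ in every case. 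For instance, when $\varepsilon = -$ and $\beta_{ik} > 0$ (so $\beta_{ki} < 0$), the original rule gives $p_{i,j} + \beta_{ki}(-\sum_l p_{k,l;v}) = p_{i,j} + (-\beta_{ki})\sum_l p_{k,l;v} = p_{i,j} + [-\beta_{ki}]_+ \sum_l p_{k,l;v}$, which is exactly $p_{i,j} + [\varepsilon \beta_{ki}]_+ \sum_l p_{k,l;v}$. The remaining three cases are analogous, with the formula degenerating to $p_{i,j;v'} = p_{i,j;v}$ whenever $[\varepsilon \beta_{ki}]_+ = 0$.

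The case $i = k$ is immediate from \Cref{def: mutation of seed}(2). There is no real obstacle here: the entire content of the lemma is the bookkeeping observation that sign coherence allows one to absorb the two-branched original mutation formula into a single expression indexed by the sign $\varepsilon$, and the key input is the identification of signs provided by \Cref{cor: from cone complex to c vectors}.
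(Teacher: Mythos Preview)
Your proof is correct and follows essentially the same approach as the paper: both use \Cref{cor: from cone complex to c vectors} together with the sign coherence of generalized $c$-vectors (the paper cites \Cref{prop: generalized sign coherence}, you cite the closely related \Cref{lemma: property generalized c matrix}) to collapse the two-branched mutation rule of \Cref{def: mutation of seed} into the single signed formula. The paper's proof is terser, concluding with ``the result follows by induction,'' whereas you spell out the four-case verification explicitly; your version is in fact the more complete of the two.
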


\begin{proof}
	By \Cref{cor: from cone complex to c vectors}, $p_{i;v}$ is sign coherent because $g_{i;v}^*$ is so. As we have already shown in \Cref{prop: generalized sign coherence} that the sign coherence of $p_{i;v}$ implies that of $p_{i,j;v}$, the result follows by induction.
\end{proof}

\subsection{Wall-crossings}
We next study the wall-crossing functions attached to walls of the cluster chambers. Each cluster chamber $\mathcal C_{v\in \mathbf s}^+$ has exactly $n$ facets $\mathfrak d_{i; v\in \mathbf s}$ naturally indexed by $I$ (a facet has the same index as its normal vector $g_{i,v\in \mathbf s}^*$). The wall $(\mathfrak d_{i; v\in \mathbf s}, f_{i;v\in \mathbf s})$ is pulled back by $T_{v_0,v}^{-1}$ from the scattering diagram $\mathfrak D_{\mathbf s_v}$ (with coefficients $\mathbf t_v$). The wall-crossing function $f_{i;v}$ has the following description. Here we identify the initial coefficients $t_{i,j}$ with $p_{i,j}$, and endow $\mathbb P$ the semifield structure $\mathrm{Trop}(\mathbf p)$.

\begin{theorem}\label{thm: wall crossing on cluster chamber}
	The scattering diagram $\mathfrak D_\mathbf s$ has a representative in its equivalent class such that it is the union of the scattering diagram
	\[
	\mathfrak D({\Delta_\mathbf s^+}) \coloneqq \{(\mathfrak d_{i;v}, f_{i;v}) \mid i\in I, v\in \mathfrak T_\mathbf s\}\quad where \quad
	f_{i;v} = \prod_{j=1}^{r_i} \left(1 + p_{i,j;v}^{\varepsilon_{i;v}}\cdot z^{\varepsilon_{i;v} \sum\limits_{j=1}^n \beta_{ji}^v g_{j;v}}\right)
	\]
	and another one whose support is disjoint from $\Delta_\mathbf s^+$.
\end{theorem}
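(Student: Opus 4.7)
The strategy is to iterate the mutation invariance of $\mathfrak D_\mathbf s$ along the rooted tree $\mathfrak T_\mathbf s$. By \Cref{thm: mutation invariance}, $T_k(\mathfrak D_\mathbf s) \equiv \mathfrak D_{\mu_k^+(\mathbf s)}$; composing along the unique path from $v_0$ to a vertex $v$ yields
\[
\mathfrak D_\mathbf s \;\equiv\; T_{v_0,v}^{-1}(\mathfrak D_{\mathbf s_v}).
\]
In $\mathfrak D_{\mathbf s_v}$, the initial incoming wall in direction $i$ is $\bigl(e_{i;v}^\perp,\, \prod_{j=1}^{r_i}(1 + p_{i,j;v}\, z^{w_{i;v}})\bigr)$, and its intersection with the initial chamber $\mathcal C^+_{\mathbf s_v}$ is precisely the $i$-th facet of that chamber. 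Since the restriction of $T_{v_0,v}$ to the half-space containing $\mathcal C^+_{v\in\mathbf s}$ is a linear bijection onto the half-space containing $\mathcal C^+_{\mathbf s_v}$ sending $g_{k;v}\mapsto e^*_{k;v}$ (by the very definition of the $g$-vectors), pulling this incoming wall back via $T_{v_0,v}^{-1}$ yields a wall of $\mathfrak D_\mathbf s$ supported on the facet $\mathfrak d_{i;v}$ of $\mathcal C^+_{v\in\mathbf s}$. Doing this for every pair $(i,v)$ produces the walls constituting $\mathfrak D(\Delta_\mathbf s^+)$.

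The next step is to verify that the pulled-back wall-crossing function is exactly $f_{i;v}$ as stated. On the relevant half-space the dual of the linear piece of $T_{v_0,v}$ maps $e_{k;v}\mapsto g^*_{k;v}$, so $w_{i;v} = \sum_k \beta_{ki}^v e^*_{k;v}$ pulls back to $\sum_k \beta_{ki}^v g_{k;v}\in M$, and the normal of the pulled-back wall is $g^*_{i;v}$. To express the wall with a normal in $N^+_\mathbf s$ (the convention adopted in \Cref{subsection: generalized cluster sd} via \Cref{rmk: change of lattice}) one uses $\varepsilon_{i;v}g^*_{i;v}$, which forces a compensating inversion in the wall-crossing function when $\varepsilon_{i;v}=-$. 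The remaining bookkeeping is on the $\mathbb P$-factor: the action of $\tilde T_{k,+}$ in \Cref{construction: T_k action on sd} twists $\mathbb P$-coefficients by powers of $t_k$. Inducting on $d(v_0,v)$ and using the signed mutation formulas for $g$-vectors (\Cref{prop: mutation of g vectors}) and for coefficients (\Cref{lemma: signed mutation for coefficients}, compatible with \Cref{cor: from cone complex to c vectors}), one checks by a direct calculation that the combined effect is precisely the substitution $p_{i,j;v}\,z^{w_{i;v}} \longmapsto p_{i,j;v}^{\varepsilon_{i;v}}\, z^{\varepsilon_{i;v}\sum_k \beta_{ki}^v g_{k;v}}$, yielding the claimed form of $f_{i;v}$.

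To conclude, choose a representative of $\mathfrak D_\mathbf s$ with minimal support (possible by \Cref{thm: sd with a slab}). Since the $\mathcal C^+_{v\in\mathbf s}$ are by definition closures of connected components of $M_\mathbb R\setminus\mathrm{Supp}(\mathfrak D_\mathbf s)$, no wall of this representative lies in the interior of any cluster chamber. Every wall meeting some $\mathcal C^+_{v\in\mathbf s}$ must therefore lie on one of its facets, and the mutation invariance combined with the uniqueness in \Cref{thm: KS reconstruction theorem} identifies it with one of the walls of $\mathfrak D(\Delta_\mathbf s^+)$ produced above; any remaining walls of the representative have support disjoint from $\Delta_\mathbf s^+$, giving the stated decomposition.

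\textbf{Main obstacle.} The central technical point is the explicit computation in the second paragraph, i.e.\ verifying the identity $\tilde T_{v_0,v}^{-1}(p_{i,j;v}z^{w_{i;v}}) = p_{i,j;v}^{\varepsilon_{i;v}}\, z^{\varepsilon_{i;v}\sum_k \beta_{ki}^v g_{k;v}}$ inside the monoid $P^\oplus_\mathbf s$. This demands synchronising three effects along each mutation step of the path from $v_0$ to $v$: (a) the piecewise linear transformation of the $M$-factor $w_{i;v}$ into an expression in $g$-vectors, (b) the twisting of the $\mathbb P$-factor by the $t_k$-powers produced by $\tilde T_{k,+}$, and (c) the sign $\varepsilon_{i;v}$ recording whether the image normal $g^*_{i;v}$ lies in $N^+_\mathbf s$ or in $-N^+_\mathbf s$, compatibly with the signed mutation formulas for the generalized $c$-vectors and $g$-vectors.
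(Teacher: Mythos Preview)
Your approach is correct and essentially the same as the paper's: both arguments pull back the initial incoming walls of $\mathfrak D_{\mathbf s_v}$ to the facets $\mathfrak d_{i;v}$ via iterated mutation invariance and then verify the resulting wall-crossing functions by induction on $d(v_0,v)$, using the signed-mutation formulas for $g$-vectors (\Cref{prop: mutation of g vectors}) and for the coefficients (\Cref{lemma: signed mutation for coefficients}). The paper organizes the induction by peeling off only the \emph{first} edge $v_0\overset{i_0}{\to}v_1$ of the path (applying the induction hypothesis to $\mathfrak D_{\mathbf s_1}$ and then a single $T_{i_0}^{-1}$) and carries out exactly the two-case analysis you describe as your ``main obstacle'', according to whether the two adjacent chambers lie on the same side of $e_{i_0}^\perp$ or are separated by it.
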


\begin{proof}

We prove this theorem by induction on the distance from $v$ to $v_0$. We first note that by \Cref{lemma: signed mutation for coefficients} the coefficients $p_{i,j;v}\in \mathbb P$ can be computed iteratively by signed mutations. The vectors
\[
	w_{i;v} \coloneqq \sum_{j =1 }^n \beta_{ji}^v g_{j;v} = \omega(-, \frac{d_i}{r_i} g_{i;v}^*)\in M
\]
can also be computed iteratively by signed mutations since the $g$-vectors do by \Cref{prop: mutation of g vectors}. 

Assume that the result is true for the distance between two vertices no greater $v_0$ and $v$. Suppose we have that $v\overset{k}{\longrightarrow}v'\in \mathfrak T_\mathbf s$ and that the unique path from $v_0$ to $v_1$ starts from $v_0\overset{i_0}{\longrightarrow} v_1$.

Let's look at the chambers $\tau \coloneqq \mathcal C^+_{v\in \mathbf s_1}$ and $\tau' \coloneqq \mathcal C^+_{v'\in \mathbf s_1}$ in $\mathfrak D_{\mathbf s_1}$. They have $g$-vectors satisfying
\[
	\mathbf g_{v'\in \mathbf s_1} = \mu_k^\varepsilon (\mathbf g_{v\in \mathbf s_1})
\]
where $\varepsilon = \varepsilon_{k;v\in \mathbf s_1} \coloneqq \mathrm{sgn}(g_{k;v\in \mathbf s_1}^*)$. For the wall-crossing functions, by our assumption, for $i\in I$, we have 
\[
f_{i;v\in \mathbf s_1} = \prod_{j=1}^{r_i} \left(1 +   p_{i,j; v\in \mathbf s_1} ^{\varepsilon_{i; v\in \mathbf s_1}}  z^{\varepsilon_{i; v\in \mathbf s_1}w_{i;v\in \mathbf s_1}} \right)
\]
and
\[
f_{i;v'\in \mathbf s_1} = \prod_{j=1}^{r_i} \left(1 +   p_{i,j; v'\in \mathbf s_1} ^{\varepsilon_{i; v'\in \mathbf s_1}}  z^{\varepsilon_{i; v'\in \mathbf s_1}w_{i;v'\in \mathbf s_1}} \right).
\]
These two functions are related by the signed mutation $\mu_k^\varepsilon$. More precisely, we have
\[
\mu_k^\varepsilon(\mathbf g_{v\in \mathbf s_1}^*, \mathbf p_{v\in \mathbf s_1}) = (\mathbf g_{v'\in \mathbf s_1}^*, \mathbf p_{v'\in \mathbf s_1}),\quad \mu_k^\varepsilon(\mathbf w_{v\in \mathbf s_1} ) = \mathbf w_{v'\in \mathbf s_1}.
\]
We want to pull back the chambers $\mathcal C_{v\in \mathbf s_1}^+$ and $\mathcal C_{v'\in \mathbf s_1}^+$, as well as the wall-crossing functions $f_{i;v\in \mathbf s_1}$ and $f_{i;v'\in \mathbf s_1}$ to $\mathfrak D_\mathbf s$ via the operation $(T_{i_0}^{\mathbf s})^{-1}$ to get the chambers $\sigma\coloneqq \mathcal C_{v\in \mathbf s}^+$, $\sigma' \coloneqq \mathcal C_{v'\in \mathbf s}^+$ and the wall-crossing functions $f_i \coloneqq f_{i;v\in \mathbf s}$ and $f_i'\coloneqq f_{i;v'\in \mathbf s}$ by the mutation invariance \Cref{thm: mutation invariance}. We want to show that $f_i$ and $f_i'$ are also related by signed mutations. In the following, we calculate $f_i$ and $f_i'$ in detail by applying $\tilde T_{i_0}^{-1}$ to $f_{i;v\in \mathbf s_1}$ and $f_{i;v'\in \mathbf s_1}$. This depends on the following two cases as in the proof of \Cref{prop: mutation of g vectors}: 
\begin{enumerate}
\item The two chambers $\tau$ and $\tau'$ are separated by the hyperplane $e_{i_0}^\perp$;
\item They are contained in the same half space $\mathcal H_{i_0,+}$ or $\mathcal H_{i_0,-}$.
\end{enumerate}

\textbf{Case 1}. In this case, the normal vector $g_{k; v\in \mathbf s_1}^*$ is either $e_{i_0}$ or $-e_{i_0}$. Assume it is $e_{i_0}$; the other case is similar. Then the chamber $\tau$ is in $\mathcal H_{i_0,+}$ while $\tau'$ is in $\mathcal H_{i_0,-}$. First of all, we have $f_k = f_k'$ obtained simply by reversing the monomials in $f_{k;v\in \mathbf s_1} = f_{k;v'\in \mathbf s_1}$. Since $T_{i_0}$ (as well as $\tilde T_{i_0}$) is identity on $\mathcal H_{i_0,-}$, we have for $i\neq k$, $f_i' = f_{i; v'\in \mathbf s_1}$. Note that for the signs, for $i\in I$,
\[
\varepsilon_{i;v'\in \mathbf s_1} = \varepsilon_{i;v'\in \mathbf s}
\]
unless $g_{i;v'\in \mathbf s_1}^*$ is proportional to $e_{i_0}$, which only happens for $g_{k;v'\in \mathbf s_1}^* = -g_{k;v\in \mathbf s_1}^*$, where we have
\[
\varepsilon_{k;v'\in \mathbf s_1} = -, \quad \varepsilon_{k;v'\in \mathbf s} = +.
\]
So we conclude for any $i\in I$,
\[
f_{i}' = \prod_{j=1}^{r_i}\left( 1 + (p_{i,j;v'\in \mathbf s_1}z^{w_{i;v'\in \mathbf s_1}})^{\varepsilon_{i;v'\in \mathbf s}} \right).
\]

For $f_{i; v\in \mathbf s_1}$ and $f_i$, we first consider the signs $\varepsilon_{i;v\in \mathbf s_1}$ and $\varepsilon_{i;v\in \mathbf s}$. Since the dual of $T_{i_0}^{-1}$ on $N$ only shifts in the direction of $e_{i_0}$, we have for $i\neq k$
\[
    \varepsilon_{i;v\in \mathbf s_1} = \varepsilon_{i; v\in \mathbf s},
\]
as the vectors $g_{i;v\in \mathbf s_1}^*$ and $g_{i;v\in \mathbf s}^*$ must have the same sign in all the other directions except for $e_{i_0}$, and the only one proportional to $e_{i_0}$ is $g_{k;v\in \mathbf s_1}^*$. Thus we have for $i\neq k$,
\[
    f_i = \prod_{j=1}^{r_i}\left( 1 + \tilde T_{i_0}^{-1}(p_{i,j;v\in \mathbf s_1} z^{w_{i;v\in \mathbf s_1}})^{\varepsilon_{i;v\in \mathbf s}}\right)
\]

We want to show that $f_i$ and $f_i'$ are related by the mutation $\mu_k^\delta$. Precisely, it amounts to show that
\begin{align}\label{eq: an identity for mutation of wall-crossing}
    \mu_k^\delta \left(\tilde T_{i_0}^{-1}  \left(p_{i,j;v\in\mathbf s_1}z^{w_{i;v\in \mathbf s_1}}\mid i\in I, j\in [1,r_i]  \right) \right) = \mu_k^\varepsilon \left( p_{i,j;v\in\mathbf s_1}z^{w_{i;v\in \mathbf s_1}}\mid i\in I, j\in [1,r_i] \right)	
\end{align}
where $\delta$ is the sign $\varepsilon_{k;v\in \mathbf s}$. Here we abuse the notation $\mu_k^\pm$ which acts on a tuple of functions, but it should be clear what it means. By our assumption, $\varepsilon = +$ and $\delta = -\varepsilon = -$. Then this follows from the following general fact that for any seed $(\mathbf e, \mathbf t)$ and $k\in I$, we have
\[
    \mu_k^-(\tilde T_k^{-1}(t_{i,j}z^{w_i}\mid i\in I, j\in [1,r_i]) = \mu_k^+(t_{i,j}z^{w_i}\mid i\in I, j\in [1,r_i]).
\]

\textbf{Case 2}. Suppose $\tau$ and $\tau'$ are both contained in the same half space. According to our above discussion, as in the notation of (\ref{eq: an identity for mutation of wall-crossing}), it then amounts to check that
\[
\tilde T_{i_0}^{-1}(\mu_k^\varepsilon (p_{i,j;v\in\mathbf s_1}z^{w_{i;v\in \mathbf s_1}}\mid i\in I, j\in [1,r_i])) = \mu_k^\delta (\tilde T_{i_0}^{-1}(p_{i,j;v\in\mathbf s_1}z^{w_{i;v\in \mathbf s_1}}\mid i\in I, j\in [1,r_i])).
\]
where $\delta = \varepsilon_{k; v\in \mathbf s}$. As we have discussed in the \textbf{Case 2} of the proof of \Cref{prop: mutation of g vectors}, the signs are equal: $\delta = \varepsilon$. Then the rest follows immediately from the fact that the dual of $T_{i_0, \varepsilon}$ acts as an automorphism on the data $(N,\omega)$. 
\end{proof}

\section{\texorpdfstring{Reconstruct $\mathscr A^\mathrm{prin}$}{Reconstruct A prin}}\label{section: reconstruct cluster algebra}

In this section, we see how to reconstruct the generalized cluster algebra $\mathscr A^\mathrm{prin}(\mathbf s)$ as well as the variety $\mathcal A^\mathrm{prin}(\mathbf s)$ from $\mathcal X_{\mathbf s, \lambda}$ through $\mathfrak D_\mathbf s$.

\subsection{Reconstruct $\mathscr A^\mathrm{prin}(\mathbf s)$ from $\mathfrak D_\mathbf s$}
Given fixed data $\Gamma$ and an $\mathcal A$-seed with principal coefficients $\mathbf s = (\mathbf e, \mathbf p)$, denote by $\mathscr A^\mathrm{prin}(\mathbf s)$ the corresponding generalized cluster algebra. Recall that we denote by $x_{i;v}$ the cluster variables associated to the seed $\mathbf s_v$.

Consider the generalized cluster scattering diagram $\mathfrak D_\mathbf s$, whose wall-crossings act on $\widehat{\Bbbk[P]}$ by automorphisms. For two vertices $v,v'\in \mathfrak T_\mathbf s$, let $\gamma$ be a path from the chamber $\mathcal C_{v\in \mathbf s}^+$ to $\mathcal C_{v'\in \mathbf s}^+$ and consider the path-ordered product
\[
    \mathfrak p_{v,v'} = \mathfrak p_{v,v'}^\mathbf s \coloneqq \mathfrak p_{\gamma, \mathfrak D_\mathbf s}\colon \widehat{\Bbbk[P]}\rightarrow \widehat{\Bbbk[P]}.
\] 
Since $\mathfrak D_\mathbf s$ is consistent and one can always choose some $\gamma$ contained in the cluster complex, the path-ordered product $\mathfrak p_{v,v'}$ can also be viewed as an automorphism of $\mathrm{Frac}(M \oplus \mathbb P)$.

\begin{proposition}\label{prop: cluster variable by path ordered product}
	Let $\mathcal C_{v\in \mathbf s}^+$ be a cluster chamber and $\mathbf g_{v}$ the set of $g$-vectors. Then for any $i\in I$,
	\[
	   x_{i;v} = \mathfrak p_{v,v_0}(z^{g_{i;v}}) \in \mathrm{Frac}(M\oplus \mathbb P).
	\]
\end{proposition}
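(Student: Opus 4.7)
The plan is to induct on the graph distance from $v$ to the root $v_0$ in the rooted tree $\mathfrak T_\mathbf s$. The base case $v=v_0$ is immediate: $\mathfrak p_{v_0,v_0}=\mathrm{id}$ (empty path), the $g$-vectors at the root are $g_{i;v_0}=e_i^*$, and $z^{e_i^*}=x_{i;v_0}$ is the initial cluster variable.

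For the inductive step, fix an edge $v\overset{k}{\longrightarrow}v'$ in $\mathfrak T_\mathbf s$ and assume the formula at $v$. The key input is that by consistency of $\mathfrak D_\mathbf s$, we may compute $\mathfrak p_{v',v_0}$ along a piecewise-linear path that goes straight across the common facet of the two adjacent cluster chambers $\mathcal C^+_{v\in\mathbf s}$ and $\mathcal C^+_{v'\in\mathbf s}$ and then stays inside the cluster complex all the way back to $\mathcal C^+_{v_0\in\mathbf s}$. Thus $\mathfrak p_{v',v_0}$ factors as the composition of the single wall-crossing at the common facet and the path-ordered product $\mathfrak p_{v,v_0}$. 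By \Cref{thm: wall crossing on cluster chamber} the wall attached to that common facet carries the function
\[
f_{k;v}\;=\;\prod_{j=1}^{r_k}\bigl(1+p_{k,j;v}^{\varepsilon_{k;v}}z^{\varepsilon_{k;v}w_{k;v}}\bigr),\qquad w_{k;v}=\sum_{j}\beta^{v}_{jk}\,g_{j;v},
\]
with (signed) normal proportional to $g^*_{k;v}$. Now I split into the two cases dictated by $i$.

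For $i\ne k$, \Cref{prop: mutation of g vectors} gives $g_{i;v'}=g_{i;v}$, and because $\langle g^*_{k;v},g_{i;v}\rangle=0$ the wall-crossing fixes $z^{g_{i;v}}$; hence $\mathfrak p_{v',v_0}(z^{g_{i;v'}})=\mathfrak p_{v,v_0}(z^{g_{i;v}})=x_{i;v}=x_{i;v'}$. For $i=k$, \Cref{prop: mutation of g vectors} gives $g_{k;v'}=-g_{k;v}+\sum_{i}[-\varepsilon_{k;v}b^{v}_{ik}]_+\,g_{i;v}$, so $\langle g^*_{k;v},g_{k;v'}\rangle=-1$. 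The wall-crossing therefore sends $z^{g_{k;v'}}$ to $z^{g_{k;v'}}\cdot f_{k;v}^{\pm 1}$ (the sign of the exponent being fixed once and for all by the convention relating $\gamma'$ to the chosen generator of $\Lambda_{\mathfrak d_{k;v}}^{\perp}\cap N$). Applying $\mathfrak p_{v,v_0}$, using that this automorphism acts trivially on the $\mathbb P$-part so that $\mathfrak p_{v,v_0}(p_{k,j;v})=p_{k,j;v}$, and invoking the inductive hypothesis for all $g_{i;v}$, yields
\[
\mathfrak p_{v,v_0}(z^{g_{k;v'}})\;=\;x_{k;v}^{-1}\prod_{i}x_{i;v}^{[-\varepsilon_{k;v}b^{v}_{ik}]_+},\qquad
\mathfrak p_{v,v_0}(f_{k;v})\;=\;\prod_{j=1}^{r_k}\!\Bigl(1+p_{k,j;v}^{\varepsilon_{k;v}}\prod_{i}x_{i;v}^{\varepsilon_{k;v}\beta^{v}_{ik}}\Bigr).
\]

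To finish, I will match this product with the generalized exchange relation of \Cref{def: mutation of seed},
\[
x_{k;v'}\;=\;x_{k;v}^{-1}\prod_{j=1}^{r_k}\Bigl(p_{k,j;v}^{+}\prod_{i}x_{i;v}^{[\beta^{v}_{ik}]_+}+p_{k,j;v}^{-}\prod_{i}x_{i;v}^{[-\beta^{v}_{ik}]_+}\Bigr),
\]
by invoking sign-coherence (\Cref{thm: generalized sign coherence}): exactly one of $p_{k,j;v}^{+}$, $p_{k,j;v}^{-}$ equals $1$ and the other equals $p_{k,j;v}^{\pm 1}$, according to $\varepsilon_{k;v}=\pm$. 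In the $\varepsilon_{k;v}=+$ case, factoring out $\prod_i x_{i;v}^{[-\beta^{v}_{ik}]_+}$ from each bracket gives exactly $x_{k;v}^{-1}\prod_i x_{i;v}^{[-b^{v}_{ik}]_+}\cdot\mathfrak p_{v,v_0}(f_{k;v})$; the $\varepsilon_{k;v}=-$ case is symmetric with $\prod_i x_{i;v}^{[b^{v}_{ik}]_+}$ factored out instead.

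The routine parts are the two half-space mutation formulas and the ring-theoretic computation $\mathfrak p_{v,v_0}(z^{\sum a_ig_{i;v}})=\prod x_{i;v}^{a_i}$. The main obstacle — really the only bookkeeping hurdle — is to pin down the signs consistently: matching the direction convention in \Cref{def: path-ordered product} (the choice of $n_0$ with $\langle n_0,\gamma'\rangle>0$) with the signed mutation formulas for $\mathbf g_v$ and $\mathbf p_v$ of \Cref{prop: mutation of g vectors} and \Cref{lemma: signed mutation for coefficients}, so that the $\pm 1$ exponent on $f_{k;v}$ and the sign $\varepsilon_{k;v}$ in the exchange polynomial conspire to produce the same Laurent polynomial in both cases. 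Once this sign bookkeeping is carried out, the identity $x_{i;v'}=\mathfrak p_{v',v_0}(z^{g_{i;v'}})$ falls out algebraically, completing the induction.
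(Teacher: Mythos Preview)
Your proposal is correct and follows essentially the same approach as the paper: induction on the distance from $v$ to $v_0$, factoring $\mathfrak p_{v',v_0}=\mathfrak p_{v,v_0}\circ\mathfrak p_{v',v}$ through the single wall $\mathfrak d_{k;v}$, invoking the explicit wall-crossing function from \Cref{thm: wall crossing on cluster chamber} and the $g$-vector mutation of \Cref{prop: mutation of g vectors}, and then matching the result with the exchange relation via sign coherence. The paper's proof is terser on the sign bookkeeping you flag as the main hurdle (it writes the exponent directly as $-\langle g_{i;v'},g_{i;v}^*\rangle$ and asserts the identification with the exchange relation), but the substance is the same.
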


\begin{proof}
	We prove this by induction on the distance from $v$ to $v_0$ in $\mathfrak T_\mathbf s$. Suppose the statement is true for a vertex $v\in \mathfrak T_\mathbf s$ and we have $v\overset{i}{\longrightarrow} v'$ in $\mathfrak T_\mathbf s$. Then the chambers $\mathcal C_v^+$ and $\mathcal C_{v'}^+$ are separated by the wall $\mathfrak d_{i;v}$ with the wall-crossing $f_{i;v}$ given in \Cref{thm: wall crossing on cluster chamber}. Denote $\varepsilon = \mathrm{sgn}(g_{i;v}^*) \in \{+, -\}$. Then we have
	\[
	   \mathfrak p_{v',v}(z^{g_{i;v'}}) = z^{g_{i;v'}} \prod_{j=1}^{r_i} \left(1 + p_{i,j;\mathbf s_v}^{\varepsilon}\cdot z^{\sum\limits_{j=1}^n \varepsilon \beta_{ji}^v g_{j;v}}\right)^{-\langle g_{i;v'}, g_{i;v}^* \rangle}.
	\]
	By \cref{prop: mutation of g vectors}, we have
	\[
	   g_{i;v'} = -g_{i;v} + \sum_{j=1}^n \left[-\varepsilon r_i\beta_{ji}^v \right]_+ g_{j;v}.
	\]
	This leads to
	\[
	   \mathfrak p_{v',v}(z^{g_{i;v'}}) = z^{-g_{i;v}} \prod_{j=1}^{r_i} \left(z^{\sum\limits_{j\in I}[-\varepsilon \beta_{ji}^v]_+g_{j;v}} + p_{i,j;\mathbf s_v}^\varepsilon \cdot z^{ \sum\limits_{j\in I} [\varepsilon \beta_{ji}^v]_+ g_{j;v}}\right).
	\]
	Note that by sign coherence, $p_{i,j; \mathbf s_v}$ has the same sign as $\varepsilon$. So the above equation is exactly the exchange relation of cluster variables. Applying the path-ordered product $\mathfrak p_{v,v_0}$ on both sides of the above equality finishes the induction.
\end{proof}

By the generalized Laurent phenomenon \Cref{thm: generalized laurent}, we know that $x_{i;v}$ actually lives in $\Bbbk[M\oplus \mathbb P]$.

\begin{corollary}\label{cor: cluster variable parametrized by g-vector}
	The set of cluster variables of $\mathscr A^\mathrm{prin}(\mathbf s)$ is in bijection with the set of $g$-vectors.
\end{corollary}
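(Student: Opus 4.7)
The plan is to extract the bijection directly from Proposition~\ref{prop: cluster variable by path ordered product}, which supplies a natural candidate map $g_{i;v} \mapsto x_{i;v} = \mathfrak p_{v,v_0}(z^{g_{i;v}})$. Surjectivity will be tautological, so the remaining tasks are to check that this prescription descends to a well-defined map on the set of $g$-vectors, and that the resulting map is injective.

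For well-definedness, I would argue as follows. Suppose $g \coloneqq g_{i;v} = g_{j;v'}$ for two (possibly different) vertices $v, v'$ of $\mathfrak T_\mathbf s$. Then $g$ lies in the common boundary of the cluster chambers $\mathcal C_{v\in\mathbf s}^+$ and $\mathcal C_{v'\in\mathbf s}^+$, so one can choose a path $\gamma$ from a point of $\mathcal C_{v\in\mathbf s}^+$ arbitrarily close to $g$ to a point of $\mathcal C_{v'\in\mathbf s}^+$ arbitrarily close to $g$ that meets only walls of $\mathfrak D_\mathbf s$ whose underlying cone contains $g$. Every such wall $(\mathfrak d, f_\mathfrak d)$ has primitive normal vector $n_0$ with $\langle g, n_0\rangle = 0$ (since $g\in\mathfrak d\subset n_0^\perp$), so the associated wall-crossing automorphism, which sends $z^g$ to $z^g \cdot f_\mathfrak d^{\langle g, \bar n_0/\mathrm{ind}(\bar n_0)\rangle}$, fixes $z^g$. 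Consistency of $\mathfrak D_\mathbf s$ then yields $\mathfrak p_{v,v'}(z^g) = z^g$, whence
\[
x_{i;v} = \mathfrak p_{v,v_0}(z^g) = \mathfrak p_{v',v_0}\circ\mathfrak p_{v,v'}(z^g) = \mathfrak p_{v',v_0}(z^g) = x_{j;v'}.
\]

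For injectivity, I would exploit the $\mathfrak m_P$-adic structure. Each wall-crossing automorphism in $\mathfrak D_\mathbf s$ multiplies a monomial by a power series congruent to $1$ modulo $\mathfrak m_P$, the maximal monomial ideal generated by all coefficients $p_{i,j}$. Hence $\mathfrak p_{v,v_0}(z^{g_{i;v}}) \equiv z^{g_{i;v}} \pmod{\mathfrak m_P}$. By Theorem~\ref{thm: separation formula}(1) the cluster variable $x_{i;v}$ lies in $\mathbb Z[x_1^{\pm},\dots,x_n^{\pm}, p_{i,j}]$ (polynomial, not Laurent, in the coefficients), so the ring homomorphism specializing all $p_{i,j} = 0$ is well-defined and sends $x_{i;v}$ to the Laurent monomial $z^{g_{i;v}}$. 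Since distinct integral vectors in $M$ give rise to distinct Laurent monomials, distinct $g$-vectors yield distinct cluster variables.

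The delicate point, which I expect to be the main obstacle, is the choice of local path $\gamma$ near a common $g$-vector: a priori there may be infinitely many walls of $\mathfrak D_\mathbf s$ accumulating near $g$, so one must truncate modulo $\mathfrak m_P^k$ for each $k$ (where only finitely many walls are non-trivial) and pass to the limit, invoking the order-by-order consistency built into Definition~\ref{def: tropical vertex sd}. The fact that cluster variables are honest Laurent polynomials (Theorem~\ref{thm: generalized laurent}) then upgrades the formal equality in the completion to an equality of elements in the cluster algebra.
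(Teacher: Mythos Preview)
Your proof is correct and your well-definedness argument (the local path near the ray $\mathbb R_{\geq 0}g$ crossing only walls that contain $g$, hence fixing $z^g$) is exactly the argument the paper gives. The paper in fact stops there: it sets up the map $x_{i;v}\mapsto g_{i;v}$ and checks only that $g_{i;v}$ determines $x_{i;v}$, leaving the converse implicit. Your additional injectivity step---specializing all $p_{i,j}\to 0$ so that $x_{i;v}\mapsto z^{g_{i;v}}$ as a Laurent monomial---is not in the paper but is a clean way to close that gap; it relies only on Theorem~\ref{thm: separation formula}(1) and is entirely sound. Your remark about handling the infinitude of walls order by order is also on point and is precisely how the paper's informal ``close enough to the ray'' should be parsed.
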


\begin{proof}
We send a cluster variable $x_{i;v}$ to the $g$-vector $g_{i;v}$. To show that $x_{i;v}$ is uniquely determined by $g_{i;v}$, we observe that the formula $\mathfrak p_{v,v_0}(z^{g_{i;v}})$ is independent of the choice of $v$. Suppose there is another chamber $\mathcal C_{v'\in \mathbf s}^+$ such that $g_{i;v}$ is one of the generators. Choose a path $\gamma$ from $\mathcal C_{v\in \mathbf s}^+$ to $\mathcal C_{v'\in \mathbf s}^+$ close enough to the ray $\mathbb R_+g_{i;v}$ so that it only crosses walls containing $\mathbb R_+g_{i;v}$. The two path-ordered products $\mathfrak p_{v,v_0}$ and $\mathfrak p_{v',v_0}$ differ by $\mathfrak p_\gamma$, which acts on $z^{g_{i;v}}$ by identity. Thus $\mathfrak p_{v,v_0}(z^{g_{i;v}}) = \mathfrak p_{v',v_0}(z^{g_{i;v}})$.
\end{proof}

\subsection{Reconstruct $\mathcal A_{\mathbf s}^\mathrm{prin}$ from $\mathfrak D_\mathbf s$}

Recall that there is a surjective map from $\mathfrak T_\mathbf s$ to $\Delta_\mathbf s^+$ (the set of cluster chambers) sending $v$ to $\mathcal C_{v\in \mathbf s}^+$. For each vertex $v\in \mathfrak T_\mathbf s$, we associate a torus $T_{N,v}(R) = T_{N}(R)$. To a pair of vertices $v$ and $v'$, we associate the birational morphism
\[
\mathfrak q_{v, v'} = \mathfrak q_{v,v'}^\mathbf s\colon T_{N,v}(R)\dasharrow T_{N,v'}(R),\quad  \mathfrak q_{v,v'}^*\coloneqq \mathfrak p_{v',v}.
\] 
Then there is an $R$-scheme obtained by glueing $T_{N,v}(R)$, $v\in \mathfrak T_\mathbf s$ via these birational morphisms
\[
\mathcal A^\mathrm{prin}_{\mathrm{scat},\mathbf s} \coloneqq \bigcup_{v\in \mathfrak T_\mathbf s} T_{N,v}(R).
\]
One can actually relate $\mathcal A^\mathrm{prin}_{\mathrm{scat},\mathbf s}$ to the previously defined cluster variety 
$$ \mathcal A_\mathbf s^\mathrm{prin} \coloneqq \bigcup_{v\in \mathfrak T_\mathbf s} T_{N,\mathbf s_v}(R),$$ which is obtained by  glueing together the same set of tori via $\mathcal A$-cluster mutations.

Recall the piecewise linear map $T_{v_0,v} \colon M_\mathbb R \rightarrow M_\mathbb R$ that sends the cluster chamber $\mathcal C_{v\in \mathbf s}^+$ to $\mathcal C_{\mathbf s_v}^+$. When restricted to a domain of linearity, $T_{v_0,v}$ becomes a linear automorphism on $M$. Denote the restriction of $T_{v_0,v}$ on $\mathcal C^+_{v\in \mathbf s}$ by $T_{v_0,v}\vert_{\mathcal C_{v\in \mathbf s}^+}$. In particular, $T_{v_0,v_0}\vert_{\mathcal C_\mathbf s^+}$ is the identity map. These linear isomorphisms induce isomorphisms (or $R$-schemes) between tori
\[
	\psi_{v_0,v}\colon T_{N,\mathbf s_v}(R) \rightarrow T_{N,v\in \mathbf s}(R),\quad \psi_{v_0,v}^*(z^m) = z^{T_{v_0,v}\vert_{\mathcal C_{v\in \mathbf s}^+}(m)}.
\]

\begin{proposition}\label{prop: reconstruct cluster variety}
	The isomorphisms $\psi_{v_0,v}$ glue to be an isomorphism
	\[
	\psi_{v_0}\colon  \mathcal A_\mathbf s^\mathrm{prin} \rightarrow \mathcal A^\mathrm{prin}_{\mathrm{scat},\mathbf s}.
	\]
\end{proposition}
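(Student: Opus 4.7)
The plan is to reduce the global gluing problem to a per-edge compatibility check and then verify that compatibility using \Cref{thm: wall crossing on cluster chamber} together with the computation already present in the proof of \Cref{prop: cluster variable by path ordered product}. Since $\mathcal A_\mathbf s^\mathrm{prin}$ and $\mathcal A_{\mathrm{scat},\mathbf s}^\mathrm{prin}$ are both obtained by gluing the same collection of $R$-tori $T_N(R)$ indexed by $\mathfrak T_\mathbf s$, differing only in the birational maps along edges ($\mathcal A$-cluster mutations on one side, the $\mathfrak q_{v,v'}$ on the other), the standard gluing lemma reduces the problem to showing that, for every edge $v\xrightarrow{k}v'$ of $\mathfrak T_\mathbf s$, the square
\[\begin{tikzcd}
T_{N,\mathbf s_v}(R) \ar[r, dashed, "\mu_k"]\ar[d, "\psi_{v_0,v}"'] & T_{N,\mathbf s_{v'}}(R)\ar[d, "\psi_{v_0,v'}"]\\
T_{N,v\in\mathbf s}(R) \ar[r, dashed, "\mathfrak q_{v,v'}"'] & T_{N,v'\in\mathbf s}(R)
\end{tikzcd}\]
commutes as birational maps. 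On pullbacks this becomes the identity $\psi_{v_0,v}^*\circ\mathfrak p_{v',v}=\mu_k^*\circ\psi_{v_0,v'}^*$ of $R$-algebra automorphisms of $\mathrm{Frac}(\Bbbk[M]\otimes R)$, which it suffices to check on each generator $z^m$, $m\in M$.

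For this local check the inputs are already in place. By \Cref{thm: wall crossing on cluster chamber} the only wall separating the adjacent cluster chambers $\mathcal C_{v\in\mathbf s}^+$ and $\mathcal C_{v'\in\mathbf s}^+$ is $(\mathfrak d_{k;v},f_{k;v})$ with primitive normal $g_{k;v}^*$, so $\mathfrak p_{v',v}$ is simply the wall-crossing across this single wall. Since $\psi_{v_0,v}^*$ is $R$-linear and acts on monomials via the linear map $L_v=T_{v_0,v}|_{\mathcal C_{v\in\mathbf s}^+}\colon M\to M$, \Cref{prop: mutation of g vectors} gives $L_v(g_{j;v})=e_{j;\mathbf s_v}^*$ and hence $L_v(w_{k;v})=w_{k;\mathbf s_v}$. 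Pushing $\psi_{v_0,v}^*$ through the wall-crossing formula and comparing with the explicit $\mathcal A$-mutation
\[\mu_k^*(z^{L_{v'}(m)})=z^{L_{v'}(m)}\,f_{k;\mathbf s_v}^{-\langle e_{k;\mathbf s_v},\,L_{v'}(m)\rangle},\qquad f_{k;\mathbf s_v}=\prod_{j=1}^{r_k}\left(p_{k,j;\mathbf s_v}^-+p_{k,j;\mathbf s_v}^+\,z^{w_{k;\mathbf s_v}}\right),\]
reduces the required identity to the same exchange-relation manipulation already carried out in the proof of \Cref{prop: cluster variable by path ordered product} to verify $\mathfrak p_{v,v_0}(z^{g_{i;v}})=x_{i;v}=\mu_{v_0,v}^*(z^{e_{i;v}^*})$.

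The main technical obstacle will be sign bookkeeping. \Cref{cor: from cone complex to c vectors} combined with the sign-coherence \Cref{thm: generalized sign coherence} must be used to identify $\varepsilon_{k;v}=\mathrm{sgn}(g_{k;v}^*)$ with the sign of $p_{k;v}$, and to rewrite the cluster exchange polynomial $f_{k;\mathbf s_v}$ in the same shape as the wall-crossing function $f_{k;v}=\prod_{j=1}^{r_k}\bigl(1+p_{k,j;v}^{\varepsilon_{k;v}}z^{\varepsilon_{k;v}w_{k;v}}\bigr)$ of \Cref{thm: wall crossing on cluster chamber}; the two expressions can differ by an overall monomial factor in $\mathbb P$ when $\varepsilon_{k;v}=-$, and this factor must be absorbed by the discrepancy between $L_v$ and $L_{v'}$ in the $g_{k;v}$-direction coming from the signed mutation formula of \Cref{prop: mutation of g vectors}. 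Once these signs are aligned the monomial-by-monomial identity holds, the gluing cocycle is verified, and since each $\psi_{v_0,v}$ is already a torus isomorphism the glued morphism $\psi_{v_0}$ is an isomorphism of $R$-schemes.
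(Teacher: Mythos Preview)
Your proposal is correct but takes a different route from the paper. You reduce to an edge-by-edge check of the square for each $v\xrightarrow{k}v'$ and verify it by unpacking the wall-crossing formula from \Cref{thm: wall crossing on cluster chamber} against the $\mathcal A$-mutation, doing explicit sign bookkeeping via \Cref{cor: from cone complex to c vectors} and \Cref{thm: generalized sign coherence}. The paper instead checks the single square from $v_0$ to an arbitrary $v$ (noting that the $\mu_{v,v'}$ and $\mathfrak q_{v,v'}$ are generated by those from the root) and then simply invokes \Cref{prop: cluster variable by path ordered product} as a black box: both $\mathfrak q_{v_0,v}^*(z^{g_{i;v}})$ and $\mu_{v_0,v}^*\circ\psi_{v_0,v}^*(z^{g_{i;v}})=\mu_{v_0,v}^*(z^{e_{i;v}^*})$ equal the cluster variable $x_{i;v}$, and since $\{g_{i;v}\}$ is a basis of $M$ the maps agree. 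Your approach is more self-contained and makes the local mechanism transparent, but it redoes work already packaged in \Cref{prop: cluster variable by path ordered product}; the paper's approach is shorter precisely because it leverages that proposition and avoids the sign alignment you flag as the main obstacle.
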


\begin{proof}
The morphisms $\mu_{v,v'}$ (resp. $\mathfrak q_{v,v'}$) are generated $\mu_{v_0,v}$ (resp. $\mathfrak q_{v_0,v}$) for all $v$ in $\mathfrak T_\mathbf s$. So the statement is equivalent to the commutativity of the following diagram (for any $v$).
	\[ \begin{tikzcd}[sep = huge]
	T_{N, \mathbf s} \ar[d, dashed, "\mu_{v_0,v}"] \ar[r, "\psi_{v_0,v_0} = \mathrm{id}" ] & T_{N, v_0\in \mathbf s} \ar[d, dashed, "\mathfrak q_{v_0,v}"]\\
	T_{N, \mathbf s_{v}} \ar[r, "\psi_{v_0, v}"]	 & T_{N, v\in \mathbf s}
	\end{tikzcd} \]

To show $\mathfrak q_{v_0,v} = \psi_{v_0,v}\circ \mu_{v_0,v}$, we pull back the functions $z^{g_{i;v}}$ (for all $i\in I$) via these birational morphisms. On the left hand side, we get the cluster variables
\[
x_{i;v} = \mathfrak q_{v_0,v}^*(z^{g_{i;v}})
\]
by \Cref{prop: cluster variable by path ordered product}. On the right hand side, these $z^{g_{i;v}}$ get pulled back to $z^{e_{i;v}^*}$ by $\psi_{v_0,v}^*$ as $T_{v_0,v}\vert_{\mathcal C^+_{v\in \mathbf s}}$ sends the chamber $\mathcal C_{v\in \mathbf s}^+$ to the chamber $\mathcal C_{\mathbf s_v}^+$. Then via $\mu_{v_0,v}^*$, we still get cluster variables
\[
x_{i;v} = \mu_{v_0,v}^*(z^{e_{i;v}^*}).
\]
As $\{g_{i;v}\mid i\in I\}$ form a basis of $M$, we conclude that $\mathfrak q_{v_0,v} = \psi_{v_0,v}\circ \mu_{v_0,v}$, which finishes the proof.

\end{proof}

We next see in a certain sense the variety $\mathcal A^\mathrm{prin}_\mathbf s$ is independent of $\mathbf s$. This is a subtle issue as for the cluster algebra $\mathscr A^\mathrm{prin}(\mathbf s)$, the initial seed $\Sigma(\mathbf s)$ is distinguished from others since it has principal coefficients.

To resolve this, we again treat $\mathbb P$ as only a multiplicative abelian group. Consider $\mathbf s' = \mu_k^+(\mathbf s)$ in the sense of \Cref{thm: mutation invariance}. The tree $\mathfrak T_{\mathbf s'}$ is naturally embedded in $\mathfrak T_{\mathbf s}$, along with the association of seeds with coefficients. First of all, it is clear that the inclusion
\[
\bigcup_{v\in \mathfrak T_{\mathbf s'}} T_{N,v\in \mathbf s} \subset \mathcal A_{\mathrm{scat},\mathbf s}^\mathrm{prin}
\]
is an equality. The glueing maps are given by path-ordered products of $\mathfrak D_\mathbf s$.

Consider for $v\in \mathfrak T_{\mathbf s'}$, the isomorphism (of $R$-schemes)
\[
\varphi_v\colon T_{N,v\in \mathbf s'} \rightarrow T_{N, v\in \mathbf s}
\]
such that $\varphi_v^* \colon \Bbbk[M\oplus \mathbb P] \rightarrow \Bbbk[M\oplus \mathbb P]$ is given by the linear transformation
\[
T_k \mid_{\mathcal C_{v\in \mathbf s}^+} \colon M\oplus \mathbb P \rightarrow M\oplus \mathbb P.
\]

\begin{proposition}\label{prop: cluster variety change of seed}
The maps $\varphi_v$ for $v\in \mathfrak T_{\mathbf s'}$ glue together to have an isomorphism of $\Bbbk[\mathbb P]$-schemes
\[
\varphi \colon \mathcal A_{\mathrm{scat}, \mathbf s'}^\mathrm{prin} \rightarrow \mathcal A_{\mathrm{scat}, \mathbf s}^\mathrm{prin}.
\]	
\end{proposition}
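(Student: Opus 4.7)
The plan is to check the gluing condition on adjacent pairs in $\mathfrak T_{\mathbf s'}$; essentially all the required algebraic identities have already been established in the proof of the mutation invariance \Cref{thm: mutation invariance}. First I would verify that each $\varphi_v$ is an isomorphism of $R$-schemes (with $R = \Bbbk[\mathbb P]$). This follows from the fact that $\tilde T_{k,\pm}\colon M\oplus \mathbb P \to M\oplus \mathbb P$ fixes $\{0\}\oplus \mathbb P$ pointwise (since $\tilde T_{k,+}(0,p) = (0,p\cdot t_k^{0}) = (0,p)$), so the induced algebra automorphism $\varphi_v^*$ is the identity on $R$; invertibility is automatic.

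To show the $\varphi_v$ glue, it suffices to verify, for every edge $v \xrightarrow{i} v'$ of $\mathfrak T_{\mathbf s'}$, the naturality relation
\[
\mathfrak p^{\mathbf s'}_{v',v}\circ \varphi_{v'}^{\,*} \;=\; \varphi_v^{\,*}\circ \mathfrak p^{\mathbf s}_{v',v}.
\]
Since $T_k^{\mathbf s}$ sends the cluster complex of $\mathfrak D_\mathbf s$ to that of $\mathfrak D_{\mathbf s'}$, the chambers $\mathcal C^+_{v\in\mathbf s}$ and $\mathcal C^+_{v'\in\mathbf s}$ share a common facet in $\mathfrak D_\mathbf s$. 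I would choose a short path $\gamma$ in $\mathfrak D_\mathbf s$ crossing only this facet, so that its image $T_k(\gamma)$ is a short path in $\mathfrak D_{\mathbf s'}$ crossing only the corresponding facet; by \Cref{thm: mutation invariance}, the path-ordered products of $T_k(\gamma)$ computed in $\mathfrak D_{\mathbf s'}$ and in $T_k(\mathfrak D_\mathbf s)$ coincide.

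A case split then finishes the verification. If $\mathcal C^+_{v\in\mathbf s}$ and $\mathcal C^+_{v'\in\mathbf s}$ lie on the same side $\mathcal H_{k,\varepsilon}$ of $e_k^\perp$, then $\varphi_v^{\,*} = \varphi_{v'}^{\,*} = \tilde T_{k,\varepsilon}$ and the naturality reduces to the conjugation formula $\mathfrak p^{T_k(\mathfrak D_\mathbf s)}_{T_k(\gamma)} = \tilde T_{k,\varepsilon}\circ \mathfrak p^{\mathfrak D_\mathbf s}_{\gamma}\circ \tilde T_{k,\varepsilon}^{-1}$ appearing in the proof of \Cref{thm: mutation invariance}. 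Otherwise the chambers lie on opposite sides of $e_k^\perp$, forcing the common facet to be the slab $\mathfrak d_k$ (the unique wall of $\mathfrak D_\mathbf s$ contained in $e_k^\perp$, so in particular $i=k$), whose image is the slab $\mathfrak d_k'$ of $\mathfrak D_{\mathbf s'}$; here one of $\varphi_v^{\,*}, \varphi_{v'}^{\,*}$ is the identity and the other is $\tilde T_{k,+}$, so the required naturality is exactly the slab identity $\tilde T_{k,+}^{-1}\circ \mathfrak p_{\mathfrak d_k'} = \mathfrak p_{\mathfrak d_k}$ derived in the proof of \Cref{thm: mutation invariance}. Compatibility for non-adjacent vertices then follows by concatenation along the unique path in $\mathfrak T_{\mathbf s'}$.

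The main obstacle is purely organizational: the case analysis must carefully track which half-space of $e_k^\perp$ each chamber occupies, together with sign conventions, across the piecewise linear transformation $T_k$. The underlying algebraic identities (conjugation on each half-space and the slab compatibility) have already been extracted in the proof of \Cref{thm: mutation invariance}, so no genuinely new computation is needed.
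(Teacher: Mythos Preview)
Your proposal is correct and follows essentially the same route as the paper: both reduce the gluing condition to the commutativity $T_k\mid_{\mathcal C_{v\in\mathbf s}^+}\circ\mathfrak p_{v,v'}^\mathbf s=\mathfrak p_{v,v'}^{\mathbf s'}\circ T_k\mid_{\mathcal C_{v'\in\mathbf s}^+}$, split into the two cases according to whether the chambers lie on the same side of $e_k^\perp$, and then invoke precisely the conjugation formula~(\ref{1}) and the slab identity~(\ref{eq: a mutation invariance equation}) from the proof of \Cref{thm: mutation invariance}. The only cosmetic difference is that you first restrict to adjacent vertices and then extend by concatenation, whereas the paper states the diagram for an arbitrary pair $v,v'$ directly.
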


\begin{proof}
	Let $v$ and $v'$ be two vertices in $\mathfrak T_{\mathbf s'}$. Since each $\phi_v$ is an isomorphism, the statement is equivalent to the commutativity of the following diagram (for any $v$ and $v'$).
	\[ \begin{tikzcd}[sep = huge]
	T_{N, v\in \mathbf s'} \ar[d, dashed, "\mathfrak q_{v,v'}^{\mathbf s'}"] \ar[r, "\varphi_{v}" ] & T_{N, v\in \mathbf s} \ar[d, dashed, "\mathfrak q_{v,v'}^\mathbf s"]\\
	T_{N, v'\in \mathbf s'} \ar[r, "\varphi_{v'}"]	 & T_{N, v'\in \mathbf s}
	\end{tikzcd} \]
In terms of algebras, this amounts to show 
\[
T_k\mid_{\mathcal C_{v\in \mathbf s}^+} \circ \mathfrak p_{v,v'}^\mathbf s = \mathfrak p_{v,v'}^{\mathbf s'}\circ T_k\mid_{\mathcal C_{v'\in \mathbf s}^+} \colon \Bbbk[M\oplus \mathbb P] \dashrightarrow \Bbbk[M\oplus \mathbb P].
\]	
If the two chambers $\mathcal C_{v\in \mathbf s}^+$ and $\mathcal C_{v'\in \mathbf s}^+$ are on the same side of the hyperplane $e_k^\perp$, the above equality is just (\ref{1}) . If they are separated by $e_k^\perp$, it is the same as (\ref{eq: a mutation invariance equation}) and has been check in (\ref{eq: an identity for mutation invariance}).
\end{proof}

Combined with \Cref{prop: reconstruct cluster variety}, we see that the construction $\mathcal A_\mathbf s^\mathrm{prin}$ is independent of $\mathbf s$. In terms of the corresponding cluster algebra $\mathscr A^\mathrm{prin}(\mathbf s)$, once it has principal coefficients on some seed $\mathbf s$, it can be made to do so at any seed mutation equivalent to $\mathbf s$.

\subsection{Broken lines and theta functions}

This section is a recast of \cite[Section 3]{gross2018canonical} in the generalized situation. Recall the setting of scattering diagrams in \Cref{def: tropical vertex sd}.

\begin{definition}[Broken line, cf. {\cite[Definition 3.1]{gross2018canonical}}]
	Let $\mathfrak D$ be a scattering diagram over $\widehat{\Bbbk[P]}$ with a monoid map $r\colon P\rightarrow M$. Let $p_0\in P\setminus \ker(r)$ and $Q\in M_\mathbb R\setminus \mathrm{Supp}(\mathfrak D)$. A \emph{broken line} for $p_0$ with endpoint $Q$ is a piecewise linear continuous proper map $$\gamma\colon (-\infty, 0]\rightarrow M_\mathbb R\setminus \mathrm{Sing}(\mathfrak D)$$ with a finite number of domains of linearity $L_1, L_2, \dots, L_k$ (open intervals in $(-\infty, 0]$), where each $L = L_i\subset (-\infty, 0]$ is labeled by a monomial $c_L z^{p_L} \in \Bbbk[P]$ with $p_L\in P$. This data should satisfy:
	\begin{enumerate}
		\item $\gamma(0) = Q$.
		\item If $L = L_1$ is the first domain of linearity of $\gamma$, i.e. $L = (-\infty, t)$ for some $t\leq 0$, then $c_Lz^{p_L} = z^{p_0}$.
		\item For $t\in L$ any domain of linearity, $m_L\coloneqq r(p_L)  = - \gamma'(t)$.
		\item For two consecutive domains of linearity $L = (a, t)$ ($a$ can be $-\infty$) and $L' = (t, b)$, the monomial $c_{L'}z^{p_{L'}}$ is a term in the formal power series 
			\[ \mathfrak p_{\gamma(t), \mathfrak D}(c_Lz^{p_L}) = c_Lz^{p_L} \prod_{\substack{(\mathfrak d, f_\mathfrak d)\\ \gamma(t)\in \mathfrak d}} f_\mathfrak d^{-\langle n_0, m_L\rangle}. \]
			Here $n_0\in N$ is primitive, serving as a normal vector of every $\mathfrak d$ appearing in the product such that $\langle n_0, \gamma'(t) \rangle >0$. So the power $-\langle n_0, m_L\rangle$ is always a positive integer. 
	\end{enumerate}
\end{definition}

\begin{definition}[Theta function, {\cite[Definition 3.3]{gross2018canonical}}] Let $\mathfrak D$ be a scattering diagram over $\widehat{\Bbbk[P]}$. Let $p_0\in P \setminus \ker(r)$ and $Q\in M_\mathbb R\setminus \mathrm{Supp}(\mathfrak D)$. For a broken line $\gamma$ for $p_0$ with end point $Q$, define
\[
\mathrm{Mono}(\gamma) \coloneqq c_Q z^{p_Q}
\]
where (by abuse of notation) $Q$ stands for the last linear segment of $\gamma$.	
We define the \emph{theta function} for $p_0$ with endpoint $Q$ as the formal sum
\[
\vartheta_{Q, p_0} \coloneqq \sum_{\gamma} \mathrm{Mono}(\gamma)
\]
where the sum is over the set of all broken lines for $p_0$ with endpoint $Q$.

For $p_0 = \ker (r)$, we define for any endpoint $Q$
\[
\vartheta_{Q,p_0} = z^{p_0}.
\]
\end{definition}

We collect some important properties for theta functions from \cite{gross2018canonical}.

\begin{theorem}\label{thm: theta function consistency}
	\begin{enumerate}
		\item The theta function $\vartheta_{Q,p_0}$ is in $\widehat{\Bbbk[P]}$.
		\item Suppose that $\mathfrak D$ is consistent. Then for $Q, Q'\in M_\mathbb R\setminus \mathrm{Supp}(\mathfrak D)$ whose coordinates are linearly independent over $\mathbb Q$, and $p_0\in P$,
	\[ \vartheta_{Q', p_0} = \mathfrak p_{\gamma, \mathfrak D}(\vartheta_{Q, p_0}) \]
	where $\gamma$ is a path in $\mathfrak D$ from $Q$ to $Q'$ such that its path-ordered product is well-defined.
	\end{enumerate}
\end{theorem}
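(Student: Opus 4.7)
The plan is to mirror the argument for the ordinary case in \cite[Theorem 3.5]{gross2018canonical}, with essentially no change: both statements are about the formal combinatorics of broken lines in an $N^+$-graded scattering diagram and do not care whether the initial wall-crossings are binomials or products of binomials. First, for part (1), I would prove finiteness order by order. Fix $k \geq 1$; I need to show that only finitely many broken lines for $p_0$ with endpoint $Q$ have $\mathrm{Mono}(\gamma) \not\in \mathfrak m_P^k$. At each bend the monomial is replaced by a term of $\mathfrak p_{\gamma(t), \mathfrak D}(c_L z^{p_L})$, and by the definition of a scattering diagram there are only finitely many walls contributing modulo $\mathfrak m_P^k$ and each wall-crossing automorphism, when applied to $c_L z^{p_L}$, produces terms strictly higher in the $P^+$-filtration than $c_L z^{p_L}$ (unless they coincide with it). Thus the additive valuation of $c_L z^{p_L}$ is non-decreasing, and any monomial not in $\mathfrak m_P^k$ can arise from at most finitely many bends; combined with the finiteness of relevant walls, this bounds the number of broken lines contributing modulo $\mathfrak m_P^k$.

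For part (2), the strategy is to reduce to a local statement at a single wall crossing and then propagate along $\gamma$. Working modulo $\mathfrak m_P^k$, choose the path $\gamma$ so that it meets the (finitely many relevant) walls transversally at finitely many times $0<t_1<\cdots<t_s<1$ and stays away from $\mathrm{Sing}(\mathfrak D)$; concatenation reduces us to the case $s=1$, i.e.\ $\gamma$ crosses a single wall $(\mathfrak d, f_\mathfrak d)$. I then choose $Q$ and $Q'$ very close to this wall on opposite sides, with $\mathbb Q$-linearly independent coordinates, so that broken lines ending at $Q'$ can be compared one-by-one with broken lines ending at $Q$ followed by a final small segment crossing $\mathfrak d$.

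The heart of the proof is then the following bijection. For each broken line $\gamma'$ ending at $Q'$ with last monomial $c' z^{p'}$, either $\gamma'$ does not meet $\mathfrak d$ in a small neighborhood of $Q'$ (in which case we obtain a broken line ending at $Q$ with the same final monomial), or the final segment of $\gamma'$ arises by bending at $\mathfrak d$ a broken line $\bar\gamma$ ending at some point on $\mathfrak d$ near $Q'$. In the second case the final monomial $c'z^{p'}$ appears as a specific term in $\mathfrak p_{\mathfrak d}(\bar c z^{\bar p})$, where $\bar c z^{\bar p}$ is the monomial of $\bar\gamma$ on its last domain of linearity; by extending $\bar\gamma$ straight across $\mathfrak d$ to $Q$ we obtain a broken line ending at $Q$ whose image under $\mathfrak p_\mathfrak d = \mathfrak p_{\gamma,\mathfrak D}$ contains $c'z^{p'}$ as a term. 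Summing gives $\vartheta_{Q',p_0} \equiv \mathfrak p_{\gamma,\mathfrak D}(\vartheta_{Q,p_0}) \pmod{\mathfrak m_P^k}$, and letting $k\to\infty$ finishes part (2).

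The main obstacle is ensuring that this local bijection is clean: one must rule out broken lines whose bends accumulate near the wall or broken lines that touch $\mathfrak d$ tangentially. The $\mathbb Q$-linear independence of the coordinates of $Q,Q'$ ensures that broken lines ending at $Q$ (whose monomials have exponents in $M$) reach $Q$ transversally to every wall and never land on $\mathrm{Sing}(\mathfrak D)$, and working modulo $\mathfrak m_P^k$ reduces to a finite scattering diagram so accumulation cannot occur. With these standard genericity arguments in place, the bijection is forced and the proof goes through exactly as in \cite[Theorem 3.5]{gross2018canonical}; no new ingredient beyond the positivity and finiteness already established for $\mathfrak D_\mathbf s$ is required.
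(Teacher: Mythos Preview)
Your proposal is correct and follows essentially the same route as the paper: the paper's proof simply defers Part (1) to the argument of \cite[Proposition 3.4]{gross2018canonical} and Part (2) to \cite[Lemmas 4.7 and 4.9]{carl2010tropical} (via \cite[Theorem 3.5]{gross2018canonical}), and what you have sketched is precisely the content of those results. One small remark: the theorem is stated for a general consistent scattering diagram over $\widehat{\Bbbk[P]}$, so your closing reference to the positivity of $\mathfrak D_\mathbf s$ is irrelevant here---only the finiteness modulo $\mathfrak m_P^k$ and the $\mathbb Q$-genericity of $Q,Q'$ are needed.
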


\begin{proof}
	Part (1) essentially follows from the proof of \cite[Proposition 3.4]{gross2018canonical}. We are using a different monoid $P$ here, but the same proof still works with $J \coloneqq \mathfrak m_P = P\setminus M$.
		
	Part (2), as pointed out in the proof of \cite[Theorem 3.5]{gross2018canonical}, is again a special case of \cite[Section 4]{carl2010tropical}. Here the generic condition on the coordinates of $Q$ and $Q'$ is just to make sure that any broken line does not cross any joint of $\mathfrak D$. Modulo $\mathfrak m_P^k$, the independence of $\vartheta_{Q,m_0}$ on $Q$ within one chamber follows from \cite[Lemma 4.7]{carl2010tropical}. The compatibility between $Q$ and $Q'$ in different chambers follows from \cite[Lemma 4.9]{carl2010tropical}. See also a more general discussion on the globalness of theta functions in \cite[Section 3.3]{gross2016theta}.
\end{proof}

In the case of generalized cluster scattering diagrams $\mathfrak D_{\mathbf s}$ (see \Cref{def: generalized cluster scattering}), the monoid $P$ is $M\oplus \bigoplus_{i\in I}\mathbb N^{r_i}$ (contained in $M\oplus \mathbb P$) with the natural projection $r$ to the direct summand $M$. We have the following properties of theta functions.

\begin{proposition}[Mutation invariance of broken line, cf. {\cite[Proposition 3.6]{gross2018canonical}}]\label{prop: mutation of broken line}
	The piecewise linear transformation $T_k\colon M_\mathbb R\rightarrow M_\mathbb R$ (with a lift on $M\oplus \mathbb P$) defines a one-to-one correspondence $\gamma\mapsto T_k(\gamma)$ between broken lines for $p_0$ with endpoint $Q$ for $\mathfrak D_\mathbf s$ and broken lines for $T_k(p_0)$ with endpoint $T_k(Q)$ for $\mathfrak D_{\mu_k(\mathbf s)}$. This correspondence satisfies, depending on whether $Q\in \mathcal H_{k,+}$ or $\mathcal H_{k,-}$,
\[ \mathrm{Mono}(T_k(\gamma)) = T_{k,\pm}(\mathrm{Mono}(\gamma)) \]
	where $T_{k,\pm}$ acts on a monomial as in \Cref{thm: mutation invariance}. In particular, we have
	\[ \vartheta_{T_k(Q), T_k(p_0)}^{\mu_k(\mathbf s)} = T_{k,\pm}(\vartheta_{Q,p_0}^{\mathbf s}). \]
\end{proposition}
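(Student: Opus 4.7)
The plan is to follow the strategy of [GHKK, Proposition 3.6]. By the mutation invariance of generalized cluster scattering diagrams (\Cref{thm: mutation invariance}), the diagram $\mathfrak D_{\mu_k^+(\mathbf s)}$ is equivalent to $T_k(\mathfrak D_\mathbf s)$, so it suffices to exhibit a bijection $\gamma\mapsto T_k(\gamma)$ between broken lines for $p_0$ with endpoint $Q$ in $\mathfrak D_\mathbf s$ and broken lines for $T_k(p_0)$ with endpoint $T_k(Q)$ in $T_k(\mathfrak D_\mathbf s)$ intertwining the monomial assignment with $\tilde T_{k,\pm}$. The map is the obvious one: apply $T_k$ to the underlying path and, on each maximal linear segment, apply $\tilde T_{k,+}$ or $\tilde T_{k,-}=\mathrm{id}$ to the labeling monomial depending on whether the segment lies in $\mathcal H_{k,+}$ or $\mathcal H_{k,-}$.

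Next I would verify that $T_k(\gamma)$ is a valid broken line. Most of the axioms follow immediately from the linearity of $T_{k,\pm}$ on each half-space together with the intertwining $r\circ \tilde T_{k,\pm} = T_{k,\pm}\circ r$: the initial monomial becomes $\tilde T_k(z^{p_0})$ as required, the endpoint lands on $T_k(Q)$ by construction, and the slope condition $-(T_k\circ\gamma)'(t) = r(\tilde T_{k,\pm}(p_L))$ holds on each linear segment. For a bend at a wall $(\mathfrak d, f_\mathfrak d)$ of $\mathfrak D_\mathbf s$ fully contained in a single half-space $\mathcal H_{k,\varepsilon}$, the corresponding wall in $T_k(\mathfrak D_\mathbf s)$ is $(T_{k,\varepsilon}(\mathfrak d), \tilde T_{k,\varepsilon}(f_\mathfrak d))$ by construction, and the wall-crossing rule at this bend is preserved because $\tilde T_{k,\varepsilon}$ is an algebra automorphism that also transforms the corresponding primitive normal vector compatibly with the natural pairing.

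The hard part will be handling bends where $\gamma$ crosses the initial slab $\mathfrak d_k$; such a crossing necessarily moves $\gamma$ between the two half-spaces, so the incoming and outgoing monomials along $T_k(\gamma)$ are transformed by different $\tilde T_{k,\pm}$, one of which is the identity. The slab $\mathfrak d_k$ is replaced in $T_k(\mathfrak D_\mathbf s)$ by the new slab $\mathfrak d_k'$ with a different wall-crossing function. To close this step I would invoke the key identity already established during the proof of \Cref{thm: mutation invariance},
\[
\tilde T_{k,+}^{-1}\circ \mathfrak p_{\mathfrak d_k'} = \mathfrak p_{\mathfrak d_k},
\]
which shows that if $c_{L'}z^{p_{L'}}$ is a term in $\mathfrak p_{\mathfrak d_k}(c_Lz^{p_L})$ (the old slab's rule applied to the incoming monomial), then $\tilde T_{k,+}(c_{L'}z^{p_{L'}})$ is a term in $\mathfrak p_{\mathfrak d_k'}(c_Lz^{p_L})$, which is precisely the rule required at $\mathfrak d_k'$; the opposite crossing direction follows by applying the same identity in reverse. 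Bijectivity of $\gamma\mapsto T_k(\gamma)$ is then immediate from the invertibility of $T_k$ and $\tilde T_{k,\pm}$, and the claimed formula $\vartheta^{\mu_k(\mathbf s)}_{T_k(Q), T_k(p_0)} = T_{k,\pm}(\vartheta^{\mathbf s}_{Q, p_0})$ follows by summing $\mathrm{Mono}$ over the matched sets.
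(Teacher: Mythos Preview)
Your proposal is correct and follows essentially the same approach as the paper's proof, which is quite terse: it defines $T_k(\gamma)$ as $T_k\circ\gamma$ with monomials transformed by $\tilde T_{k,\pm}$ according to the half-space, splits any linear segment that crosses $e_k^\perp$, and then refers to \cite[Proposition 3.6]{gross2018canonical} for the verification that the result is a broken line. Your write-up is more explicit than the paper's, in particular in isolating the slab-crossing as the nontrivial step and invoking the identity $\tilde T_{k,+}^{-1}\circ\mathfrak p_{\mathfrak d_k'}=\mathfrak p_{\mathfrak d_k}$ from the proof of \Cref{thm: mutation invariance} to justify it; this is exactly the ingredient hidden behind the paper's reference to GHKK.
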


\begin{proof}
	We use $T_k(\gamma)$ to denote the piecewise linear map $T_k \circ \gamma \colon (-\infty, 0]\rightarrow M_\mathbb R$. Suppose $L$ is a domain of linearity of $\gamma$ labeled with monomial $c_Lz^{p_L}$. If $\gamma(L)$ is contained in one of the half spaces $\mathcal H_{k,\pm}$, $L$ is also a domain of linearity for $T_k(\gamma)$. We apply the action of $T_{k,\pm}$ on the monomial $c_Lz^{p_L}$ (where the sign is chosen depending on which half space $L$ is in). If $\gamma(L)$ crosses $e_k^\perp$, split $L$ into $L^+$ and $L^-$, and apply $T_{k,\pm}$ respectively to the monomial $c_Lz^{p_L}$. One then needs to check the piecewise linear path $T_k \circ \gamma$ together with the new monomial data we just obtained is a broken line for $T_k(p_0)$ with endpoint $T_k(Q)$ in $\mathfrak D_{\mu_k(\mathbf s)}$ as in \cite[Proposition 3.6]{gross2018canonical}. The inverse of the operation $\gamma \mapsto T_k\circ \gamma$ is also clear. The rest of the statement follows easily.
\end{proof}

\begin{proposition}[cf. {\cite[Proposition 3.8]{gross2018canonical}}]\label{prop: only one broken line} Consider the scattering diagram $\mathfrak D_\mathbf s$.
	\begin{enumerate}
		\item Let $Q\in \mathrm{Int}(\mathcal C_\mathbf s^+) $ be an end point, and let $p\in P$ such that $r(p) \in \mathcal C_\mathbf s^+ \cap M$. Then $\vartheta_{Q,p} = z^p$. 
		\item Let $\mathcal C_{v}^+ \in \Delta_\mathbf s^+$ be a cluster chamber for some $v\in \mathfrak T_\mathbf s$, and $Q\in \mathrm{int}(\mathcal C_{v}^+)$ and $m\in \mathcal C_v^+ \cap M$. Then $\vartheta_{Q,p} = z^p$ if $r(p) = m$.
	\end{enumerate}
\end{proposition}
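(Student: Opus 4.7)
The strategy is to reduce part (2) to part (1) via mutation invariance, and then to prove part (1) by showing that the trivial straight broken line is the only broken line for $p$ ending at $Q \in \mathrm{Int}(\mathcal C_\mathbf s^+)$.

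For part (2), the piecewise linear map $T_{v_0, v} \colon M_\mathbb R \to M_\mathbb R$ carries the cluster chamber $\mathcal C_v^+$ to $\mathcal C_{\mathbf s_v}^+$, and by \Cref{thm: mutation invariance} identifies $\mathfrak D_\mathbf s$ with $\mathfrak D_{\mathbf s_v}$ up to equivalence. Iterating \Cref{prop: mutation of broken line} along the path in $\mathfrak T_\mathbf s$ from $v_0$ to $v$ yields a bijection between broken lines for $p$ in $\mathfrak D_\mathbf s$ ending at $Q \in \mathrm{Int}(\mathcal C_v^+)$ and broken lines for $T_{v_0, v}(p)$ in $\mathfrak D_{\mathbf s_v}$ ending at $T_{v_0, v}(Q) \in \mathrm{Int}(\mathcal C_{\mathbf s_v}^+)$, with matching $\mathrm{Mono}$ data. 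Since $r(T_{v_0, v}(p)) = T_{v_0, v}(m) \in \mathcal C_{\mathbf s_v}^+$, part (1) applied to the seed $\mathbf s_v$ yields $\vartheta^{\mathbf s_v}_{T_{v_0, v}(Q), T_{v_0, v}(p)} = z^{T_{v_0, v}(p)}$, and pulling back gives $\vartheta_{Q, p} = z^p$.

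For part (1), observe first that the trivial broken line $\gamma_0(t) = Q - t m$ is valid: since $m \in \mathcal C_\mathbf s^+$ is a cone direction and $Q \in \mathrm{Int}(\mathcal C_\mathbf s^+)$, the ray $\{Q + s m : s \geq 0\}$ stays in $\mathrm{Int}(\mathcal C_\mathbf s^+)$, and by \Cref{thm: wall crossing on cluster chamber} this region meets no walls, so $\gamma_0$ has no bends and contributes $z^p$ to $\vartheta_{Q, p}$. By \Cref{thm: theta function consistency}(2), $\vartheta_{Q, p}$ is independent of $Q$ in this wall-free interior, so it suffices to prove no other broken line exists for any single convenient $Q$.

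To rule out non-trivial broken lines, suppose $\gamma$ has last bend at $\gamma(t^*)$ on a wall $(\mathfrak d^*, f_{\mathfrak d^*})$. By \Cref{thm: generalized positivity}, $\mathfrak d^* \subset \bar n^\perp$ for some $n \in N^+$ and the monomial exponent in $f_{\mathfrak d^*}$ is $\omega(-, \bar n)$. From $\gamma(t^*) = Q + s^* m_{\text{last}}$ with $s^* > 0$ and $\langle \bar n, \gamma(t^*) \rangle = 0$, one obtains $\langle \bar n, m_{\text{last}} \rangle < 0$, because $\langle \bar n, Q \rangle > 0$ (as $\bar n \in \overline N^+ \setminus \{0\}$ and $Q \in \mathrm{Int}(\mathcal C_\mathbf s^+)$). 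Accumulating the direction changes across bends gives $m_{\text{last}} = m + \sum_{j=1}^{\ell} c_j \, \omega(-, \bar n_j)$ with $c_j \geq 1$, $n_j \in N^+$, and $\bar n_\ell = \bar n$; combined with $\omega(\bar n, \bar n) = 0$ and $\langle \bar n, m \rangle \geq 0$, this forces $\sum_{j=1}^{\ell - 1} c_j \, \omega(\bar n, \bar n_j) < 0$.

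The main obstacle will be producing a contradiction from this inequality, since $\omega(\bar n, \bar n_j)$ can take either sign. My plan is to proceed by induction on the number of bends $\ell$ following the strategy of \cite[Proposition 3.8]{gross2018canonical}: the restriction $\gamma|_{(-\infty, t^*]}$ is a broken line terminating on the wall $\mathfrak d^*$, and iterating the last-bend analysis to its preceding bend produces increasingly restrictive constraints on the sequence of normals $(\bar n_1, \ldots, \bar n_\ell)$. The correct formulation of the inductive hypothesis — tracking the position $\gamma(t^*)$ relative to all half-spaces $\{\langle e_i, \cdot \rangle \geq 0\}$ and exploiting the accumulated positivity $\sum_{j} c_j \bar n_j \in \overline N^+$ — should ultimately contradict $\langle \bar n, Q\rangle > 0$, completing the proof.
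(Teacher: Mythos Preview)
Your treatment of part~(2) --- reducing to part~(1) for the seed $\mathbf s_v$ via iterated application of \Cref{prop: mutation of broken line} --- is correct and is precisely the paper's argument. For part~(1) the paper does not reprove the result; it simply observes that the bending behaviour of broken lines in $\mathfrak D_\mathbf s$ matches that in \cite{gross2018canonical} and invokes \cite[Proposition~3.8]{gross2018canonical} directly.

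Your attempt to spell out part~(1) has a gap. You correctly exhibit the trivial broken line, correctly derive the last-bend inequality, and correctly isolate the relevant ingredient (the accumulated normal $\sum_j c_j\bar n_j\in\overline N^+$), but the plan you sketch --- backward iteration of the last-bend analysis, tracking each $\langle e_i,\gamma(t^*)\rangle$ --- does not close as stated: the truncated broken line no longer ends in $\mathrm{Int}(\mathcal C^+)$, so the last-bend estimate does not iterate, and the individual pairings $\langle e_i,\cdot\rangle$ have no useful monotonicity along segments. The argument that does work runs \emph{forward}. Write $\bar n^{(j)}=\sum_{i\leq j}c_i\bar n_i$ and let $m^{(j)}=m+\omega(-,\bar n^{(j)})$ be the exponent after $j$ bends. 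One proves by induction on $j$ that $\langle\bar n^{(j)},\gamma(t_j)\rangle\leq 0$: along segment~$j$ the derivative of $t\mapsto\langle\bar n^{(j)},\gamma(t)\rangle$ equals $-\langle\bar n^{(j)},m^{(j)}\rangle=-\langle\bar n^{(j)},m\rangle-\omega(\bar n^{(j)},\bar n^{(j)})=-\langle\bar n^{(j)},m\rangle\leq 0$, and at the next bend $\langle\bar n_{j+1},\gamma(t_{j+1})\rangle=0$ gives $\langle\bar n^{(j+1)},\gamma(t_{j+1})\rangle=\langle\bar n^{(j)},\gamma(t_{j+1})\rangle\leq 0$. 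At $j=\ell$ this contradicts the final-segment identity $\langle\bar n^{(\ell)},\gamma(t_\ell)\rangle=\langle\bar n^{(\ell)},Q\rangle+|t_\ell|\,\langle\bar n^{(\ell)},m\rangle>0$.
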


\begin{proof}
	 Part (1) is essentially \cite[Proposition 3.8]{gross2018canonical}, although there the scattering diagram is actually different from $\mathfrak D_\mathbf s$ in terms of wall-crossing functions. However, the bending behavior of a broken line on a wall is totally analogous, so the exact same argument still applies.
	 
	 Part (2) is the generalized version of \cite[Corollary 3.9]{gross2018canonical}. By \Cref{prop: mutation of broken line}, the transformation $T_{v_0, v}\colon M_\mathbb R \rightarrow M_\mathbb R$ defines a one-to-one correspondence between the broken lines for $p$ with $r(p)\in \mathcal C_v^+\cap M$ and $Q\in \mathrm{int}(C_v^+)$ in $\mathfrak D_\mathbf s$, and the ones for $T_{v_0,v}(p)$ with $r(T_{v_0,v}(p)) \in \mathcal C_{\mathbf s_v}^+ \cap M$ and $T_{v_0,v}(Q) \in \mathrm{int}(\mathcal C_{\mathbf s_v}^+)$. However the only broken lines of the later is labeled by the final monomial $z^{p'}$ for $p' = T_{v_0,v}(p)$ by part (1). The result follows. 
\end{proof}

\subsection{Cluster monomials as theta functions}

\begin{definition}
Let $\mathbf s$ be a generalized $\mathcal A$-seed with principal coefficients. Then for $v\in \mathfrak T_\mathbf s$, a cluster monomial in this seed is a monomial on the torus $T_{N,v}(R) \subset \mathcal A_\mathbf s^\mathrm{prin}$ of the form $z^m$ where $m$ is a non-negative $\mathbb N$-linear combination of $\{e_{i;v}^*\mid i\in I\}$. By the Laurent phenomenon, such a monomial extends to a regular function on the whole cluster variety $\mathcal A_\mathbf s^\mathrm{prin}$. 
\end{definition}

\begin{remark}
	One may regard a cluster monomial as a function on the initial torus $T_{N,v_0}(R)$. While being a monomial on the cluster variables ${x_{i;v}}$, it is also a Laurent polynomial in the initial cluster variables $x_{i}$ by the Laurent phenomenon.
\end{remark}

The following description of cluster monomials is a generalized version of \cite[Theorem 4.9]{gross2018canonical}. It proves the positivity (see \Cref{thm: positivity introduction}) of generalized cluster monomials.

\begin{theorem}\label{thm: cluster variable as theta function}
	Let $\mathfrak D_\mathbf s$ be the generalized cluster scattering diagram of a seed $\mathbf s$. Let $Q\in \mathrm{int}(\mathcal C_\mathbf s^+)$ a general end point and $m\in \mathcal C_v^+ \cap M$ for some $v\in \mathfrak T_\mathbf s$. Then the theta function $\vartheta_{Q,m}$ is an element in $z^m\cdot \mathbb N[P]$ which expresses the cluster monomial associated to $m$ of the algebra $\mathscr A^\mathrm{prin}(\mathbf s)$ in the initial seed $\mathbf s$.
\end{theorem}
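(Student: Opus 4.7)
The statement combines two assertions: (i) identification of $\vartheta_{Q,m}$ with the cluster monomial, and (ii) its positivity. My approach is to reduce (i) to the cluster-variable case via multiplicativity of path-ordered products, and to deduce (ii) from the positivity of the scattering diagram established in \Cref{thm: generalized positivity}.

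First, I would fix $Q' \in \mathrm{int}(\mathcal{C}_v^+)$ and apply \Cref{prop: only one broken line}(2) to the element $p = m \in M \hookrightarrow P$ (so that $r(p) = m$). This yields $\vartheta_{Q',m} = z^m$: in the cluster chamber containing $m$, the only broken line for $m$ is the straight line. Next, because $\mathfrak{D}_\mathbf{s}$ is consistent, \Cref{thm: theta function consistency}(2) applies (for a generic $Q$, any path $\gamma$ from $Q'$ to $Q$ lying inside the cluster complex has a well-defined path-ordered product), giving
\[
\vartheta_{Q,m} \;=\; \mathfrak{p}_{v,v_0}(z^m),
\]
where $\mathfrak{p}_{v,v_0} = \mathfrak{p}_{\gamma,\mathfrak{D}_\mathbf{s}}$ is the path-ordered product transporting $\mathcal{C}_{v\in\mathbf{s}}^+$ to $\mathcal{C}_\mathbf{s}^+$.

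Second, since $\mathcal{C}_v^+$ is the simplicial cone spanned by $\{g_{i;v} \mid i \in I\}$, I would write $m = \sum_{i\in I} a_i g_{i;v}$ with $a_i \in \mathbb{Z}_{\geq 0}$. The path-ordered product $\mathfrak{p}_{v,v_0}$ is a ring automorphism (of the appropriate fraction field), so
\[
\mathfrak{p}_{v,v_0}(z^m) \;=\; \prod_{i\in I} \mathfrak{p}_{v,v_0}\bigl(z^{g_{i;v}}\bigr)^{a_i}.
\]
By \Cref{prop: cluster variable by path ordered product}, each factor equals $x_{i;v}$, so $\vartheta_{Q,m} = \prod_i x_{i;v}^{a_i}$, which is by definition the cluster monomial associated to $m$.

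For the positivity assertion, I would argue directly from broken lines. By \Cref{thm: generalized positivity}, $\mathfrak{D}_\mathbf{s}$ has a representative whose walls carry functions of the form $(1+tz^{w})^c$ with $c \in \mathbb{Z}_{>0}$ and $t \in P^\oplus$. Any broken line $\gamma$ for $m$ starts with monomial $z^m$, and at each bending the new monomial is obtained by extracting a term from $c_L z^{p_L}\cdot f_\mathfrak{d}^{-\langle n_0, m_L\rangle}$; positivity of $f_\mathfrak{d}$ together with the positivity of the bending exponent $-\langle n_0, m_L\rangle > 0$ guarantees that every such term has a non-negative integer coefficient and that the shift $p_{L'} - p_L$ lies in the monoid $P^\oplus$. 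Inductively the final monomial $c_Q z^{p_Q}$ satisfies $c_Q \in \mathbb{Z}_{\geq 0}$ and $p_Q - m \in P^\oplus \subset P$. Summing over broken lines (finite in number once we know the theta function is the Laurent polynomial $\prod x_{i;v}^{a_i}$) yields $\vartheta_{Q,m} \in z^m \cdot \mathbb{N}[P]$, which is precisely the claimed Laurent polynomial with non-negative coefficients in the $x_i$ and $p_{i,j}$.

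The substantive content has been packaged into earlier results (\Cref{prop: cluster variable by path ordered product}, \Cref{prop: only one broken line}, \Cref{thm: theta function consistency}, \Cref{thm: generalized positivity}), so the proof is essentially an assembly. The main conceptual hurdle lies not in this theorem itself but in \Cref{thm: generalized positivity}, on which the $\mathbb{N}[P]$-statement ultimately depends. A minor technical point to verify is that the generic endpoint $Q$ may be chosen so that every broken line contributing to $\vartheta_{Q,m}$ avoids $\mathrm{Sing}(\mathfrak{D}_\mathbf{s})$; this is standard and already implicit in the definition of $\vartheta_{Q,m}$.
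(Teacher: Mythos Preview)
Your proposal is correct and follows essentially the same approach as the paper's proof: both compute $\vartheta_{Q',m}=z^m$ via \Cref{prop: only one broken line}(2), transport it to $\mathcal{C}_\mathbf{s}^+$ via \Cref{thm: theta function consistency}(2), factor through $g$-vectors using \Cref{prop: cluster variable by path ordered product}, and invoke \Cref{thm: generalized positivity} together with the Laurent phenomenon for the $z^m\cdot\mathbb{N}[P]$ conclusion. The only cosmetic difference is that the paper states the positivity step more tersely (deducing $\vartheta_{Q,m}\in z^m\widehat{\mathbb{N}[P]}$ directly, then upgrading to $\mathbb{N}[P]$ via \Cref{thm: generalized laurent}), whereas you spell out the broken-line bending argument explicitly.
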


\begin{proof}
	We first note that $m$ is regarded as a point in $P$ through the inclusion of $M$ in $P$. Let $Q'$ be a base point in $\mathrm{int}(\mathcal C_v^+)$ and $\gamma$ be a path going from $Q'$ to $Q$. By part (2) of \Cref{thm: theta function consistency}, we have
	\[
	\vartheta_{m, Q} = \mathfrak p_{\gamma} (\vartheta_{m, Q'}).
	\]
	As a theta function, $\vartheta_{m, Q}$ is a (formal) sum of monomials belonging to $z^m \widehat{\Bbbk[P]}$. By the positivity \Cref{thm: generalized positivity} of $\mathfrak D_\mathbf s$, $\vartheta_{m, Q}$ has positive integer coefficients, thus an element in $z^m \widehat {\mathbb N[P]}$. By part (2) of \Cref{prop: only one broken line}, $\vartheta_{m, Q'} = z^m$. We know that the cone $\mathcal C_v^+$ has integral generators $\{g_{i;v} \mid i\in I\}$ in $M$. Thus $m$ is a non-negative linear combination of these $g$-vectors.
	
	On the other hand, by \Cref{prop: cluster variable by path ordered product}, we have the following expression of a cluster variable
	\[
	x_{i;v} = \mathfrak p_\gamma(z^{g_{i;v}}).
	\]
	It follows immediately that $\vartheta_{m, Q}$ is a monomial of these $x_{i;v}$, thus expressing a cluster monomial. Finally by the generalized Laurent phenomenon \Cref{thm: generalized laurent}, we have $\vartheta_{m,Q}\in  z^m\cdot \mathbb N[P]$.
\end{proof}

Since $\vartheta_{m,Q}$ does not depend on $Q$ as long as it is chosen generally in the positive chamber, we simply write it as $\vartheta_m$. Consider the set of functions
\[
\{\vartheta_m \mid m\in \Delta_\mathbf s^+(\mathbb Z)\},
\]
where $\Delta_\mathbf s^+(\mathbb Z) = \bigcup_{v\in \mathfrak T_\mathbf s}\mathcal C_v^+ \cap M$. These are all cluster monomials. In general, they do not form an $R$-basis of the cluster algebra $\mathscr A^\mathrm{prin}(\mathbf s)$ or the upper cluster algebra $\overline{\mathscr A}^\mathrm{prin}(\mathbf s)$. But one can follow \cite[Section 7.1]{gross2018canonical} to define the set $\Theta\subset M$ such that for any $m\in \Theta$, $\vartheta_m$ is only a sum of monomials from finitely many broken lines. Consider the free $R$-module
\[
\mathrm{mid}(\mathcal A_\mathbf s^\mathrm{prin}) \coloneqq \bigoplus_{m\in \Theta} R\cdot \vartheta_m.
\]
It is shown in \cite[Theorem 7.5]{gross2018canonical} that in the ordinary case there are natural inclusions of $R$-modules
\[
\mathscr A^\mathrm{prin}(\mathbf s)\subset \mathrm{mid}(\mathcal A_\mathbf s^\mathrm{prin}) \subset \overline{\mathscr A}^\mathrm{prin}(\mathbf s)
\]
such that for the first inclusion, cluster monomials are sent to the corresponding theta functions, and for the second inclusion, any theta function is sent to the corresponding universal Laurent polynomials on $\mathcal A_\mathbf s^\mathrm{prin}$ (see \cite[Proposition 7.1]{gross2018canonical}). We expect that this is also true in the generalized case.

\subsection{More on positivity}\label{subsection: further on positivity}

In \cite{chekhov2011teichmller}, the authors proposed a positivity conjecture which is stronger than \Cref{thm: positivity introduction}. We formulate a version here.

A generalized cluster algebra in the sense of \cite{chekhov2011teichmller} (see \Cref{subsection: chekhov-shapiro}) is called \emph{reciprocal} if any of its exchange polynomials $\theta_i(u,v)$ is monic and palindromic, i.e. $\theta_i(u,v) = \theta_i(v,u)$ and has leading coefficient $1$. In this way, the exchange polynomials do not change under mutations. Note that $\theta_i(u,v)$ can have coefficients in $\mathbb {ZP}$ (rather than just in $\mathbb P$) in general.

\begin{conjecture}[c.f. {\cite[Conjecture 5.1]{chekhov2011teichmller}}]\label{conj: chekhov shapiro}
Any cluster variable of a reciprocal generalized cluster algebra whose exchange polynomials have coefficients in $\mathbb{P}$ (or more generally in $\mathbb {NP}$) is expressed as a positive Laurent polynomial in the initial cluster, i.e. an element in $\mathbb{NP}[x_1^\pm, \dots, x_n^\pm]$ where the $x_i$'s are the initial cluster variables.
\end{conjecture}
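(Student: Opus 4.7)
The plan is to synthesize several of the structural results already established earlier in the paper, in the same spirit as the proof of \cite[Theorem 4.9]{gross2018canonical}, but with the generalized cluster scattering diagram $\mathfrak D_\mathbf s$ in place of the ordinary one. First I would fix a generic base point $Q'\in \mathrm{int}(\mathcal C_v^+)$ inside the cluster chamber containing $m$, and invoke part (2) of \Cref{prop: only one broken line} to conclude that the only broken line for $m$ ending at $Q'$ is the straight one, so $\vartheta_{Q',m}=z^m$. Then I would apply part (2) of \Cref{thm: theta function consistency} to a path $\gamma$ joining $Q'$ to $Q$, yielding
\[
	\vartheta_{Q,m}=\mathfrak p_{\gamma,\mathfrak D_\mathbf s}(z^m).
\]

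Next I would use the cluster complex structure: since $m\in \mathcal C_v^+\cap M$, I can write $m=\sum_{i\in I} a_i g_{i;v}$ with $a_i\in \mathbb N$, because $\mathcal C_v^+$ is a simplicial cone generated by the $g$-vectors $g_{i;v}$. Since $\mathfrak p_{\gamma,\mathfrak D_\mathbf s}$ is an algebra homomorphism on $\mathrm{Frac}(\Bbbk[M\oplus\mathbb P])$, this gives
\[
	\vartheta_{Q,m}=\prod_{i\in I}\mathfrak p_{v,v_0}(z^{g_{i;v}})^{a_i}=\prod_{i\in I} x_{i;v}^{a_i},
\]
where the last equality is \Cref{prop: cluster variable by path ordered product}. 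So $\vartheta_{Q,m}$ is exactly the cluster monomial attached to $m$.

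For the positivity, I would observe that, by construction, $\vartheta_{Q,m}$ is a (a priori infinite) sum of monomials in $z^m\cdot\widehat{\Bbbk[P]}$ coming from broken lines, and that the bending of a broken line at a wall $(\mathfrak d,f_\mathfrak d)$ uses a monomial summand of a positive integer power of $f_\mathfrak d$. By the generalized positivity \Cref{thm: generalized positivity}, every wall-crossing function in $\mathfrak D_\mathbf s$ is (up to equivalence) a product of factors of the form $(1+t z^{m'})^c$ with $c\in\mathbb Z_{>0}$ and $t\in\mathbb P$, whose integer powers have coefficients in $\mathbb N[P]$. Therefore each $\mathrm{Mono}(\gamma)$ lies in $z^m\cdot \mathbb N[P]$, and so does $\vartheta_{Q,m}$ as a formal sum. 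Combining this with the fact that $\vartheta_{Q,m}$ equals a cluster monomial---which by the generalized Laurent phenomenon \Cref{thm: generalized laurent} is already known to be a Laurent polynomial with coefficients in $\mathbb Z[P]$---forces $\vartheta_{Q,m}\in z^m\cdot \mathbb N[P]$ in particular (in particular, only finitely many broken lines contribute).

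The only mildly delicate point will be ensuring that the three inputs---consistency of theta functions, uniqueness of broken lines in a cluster chamber, and the positivity of $\mathfrak D_\mathbf s$---really do all hold in the present generalized set-up with the monoid $P=M\oplus\prod_i\mathbb N^{r_i}$ and the enriched wall-crossing functions $\prod_j(1+p_{i,j}z^{w_i})$. Since these have been established (respectively in \Cref{thm: theta function consistency}, \Cref{prop: only one broken line}, and \Cref{thm: generalized positivity}) by verbatim adaptations of the arguments of \cite{gross2018canonical} and \cite{carl2010tropical}, the theorem follows by assembling these ingredients; no genuinely new combinatorial input is needed.
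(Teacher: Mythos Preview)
Your argument is a correct proof of \Cref{thm: cluster variable as theta function}, but that is not the statement you were asked to prove. The target is \Cref{conj: chekhov shapiro}, which the paper states as an \emph{open conjecture} and does not prove; indeed \Cref{rmk: a stronger positivity} says explicitly ``We do not know how to show this stronger positivity in general.'' So there is no ``paper's own proof'' to compare against, and the relevant question is whether your argument closes the gap. It does not.

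The conjecture concerns a \emph{reciprocal} generalized cluster algebra over an arbitrary semifield $\mathbb P$, where each exchange polynomial $\theta_i$ is monic and palindromic with coefficients in $\mathbb{NP}$. To access such an algebra via scattering diagrams one must, as in \Cref{subsection: further on positivity}, choose an evaluation $\lambda$ sending the principal-coefficient parameters $p_{i,j}$ into an algebraic closure $\overline{\mathbb{QP}}$ so that $\prod_j(1+\lambda(p_{i,j})z^{w_i})$ recovers $\theta_i$. The individual $\lambda(p_{i,j})$ are roots of $\theta_i$ and typically do \emph{not} lie in $\mathbb{NP}$. Consequently the positivity \Cref{thm: generalized positivity} for $\mathfrak D_\mathbf s$---which asserts wall-crossings of the form $(1+tz^m)^c$ with $t$ a monomial in the $p_{i,j}$---does not survive the specialization: after applying $\lambda$, a wall-crossing function need not have coefficients in $\mathbb{NP}$, and your broken-line positivity argument breaks down precisely here. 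This is exactly why the paper only obtains the conditional \Cref{thm: stronger positivity}, whose hypothesis is that $\lambda(\mathfrak D_\mathbf s)$ admits a representative with all wall-crossings in $\widehat{\mathbb{NP}[M\oplus\prod_i\mathbb N]}$. Your proposal silently assumes this hypothesis by invoking \Cref{thm: generalized positivity} as if it applied after specialization; it does not, and that is the missing idea.
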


Chekhov and Shapiro pointed out that this conjecture is true for any generalized cluster algebra associated to a surface with arbitrary orbifold points \cite[Section 5]{chekhov2011teichmller} (see also \cite{banaian2020snake} for a proof using snake graphs). The rank two case of this conjecture has been resolved by Rupel in \cite{rupel2013greedy}.

We consider here a related situation where the reciprocal assumption is not required. Let $\mathbb P$ be an abelian group of finite rank. Consider an algebraic closure $\Bbbk = \overline{\mathbb {QP}}$ of the field of rational functions $\mathbb {QP}$. Let $\mathscr A^\mathrm{prin}(\Sigma)$ be a generalized cluster algebra with principal coefficients as of \Cref{def: principal coefficients}. The coefficients group is the tropical semifield $\mathrm{Trop}(\mathbf p)$. Recall that the initial exchange polynomials have the form
\[
    \theta_i(u,v) = \prod_{j=1}^{r_i} (p_{i,j}u+v).
\]
Let $\lambda\colon \mathrm{Trop}(\mathbf p) \rightarrow \Bbbk^*$ be an evaluation (as in \Cref{subsection: specialized coefficients}) such that each $\lambda(\theta_i(u,v))$ satisfies
\begin{enumerate}
	\item[(\textbf A)] All its coefficients are in $\mathbb{ZP}$ (in $\mathbb {NP}$ if assuming positivity);
	\item[(\textbf B)] $\lambda(\prod_{j=1}^{r_i} p_{i,j})$ is an element in $\mathbb P$.
\end{enumerate}
By the mutation formula of coefficients, the exchange polynomials after any steps of mutations still satisfy these two conditions. Therefore the cluster algebra with special coefficients $\mathscr A^\mathrm{prin}(\Sigma,\lambda)$ can be viewed as a generalized cluster algebra of \cite{chekhov2011teichmller} (with the coefficients group $\mathbb P$). Note that any reciprocal generalized cluster algebra can be obtained this way.

The scattering diagram $\lambda(\mathfrak D_\mathbf s)$ (see \Cref{subsection: sd with special coefficients}) is responsible for $\mathscr A^\mathrm{prin}(\Sigma, \lambda)$. It is over $\widehat{\Bbbk[M\oplus \prod_{i\in I}\mathbb N]}$ with formal parameters $t_i$. Note that by the generalized Laurent phenomenon, the cluster variables of $\mathscr A^\mathrm{prin}(\Sigma, \lambda)$ are all in $\mathbb {ZP}[x_1^\pm, \dots, x_n^\pm]$.

\begin{theorem}\label{thm: stronger positivity}
	Let $\mathscr A^\mathrm{prin}(\Sigma,\lambda)$ be a generalized cluster algebra as above assuming (\textbf A), (\textbf B), and that the initial exchange polynomials have coefficients in $\mathbb {NP}$. Let $\mathbf s$ be an $\mathcal A$-seed such that $\Sigma(\mathbf s) = \Sigma$. If there exists a representative of $\lambda(\mathfrak D_\mathbf s)$ such that every wall-crossing function is in $\widehat{\mathbb {NP}[M\oplus \prod_{i\in I}\mathbb N]}$, then any cluster variable is expressed as a positive Laurent polynomial in the initial cluster, i.e. an element in $\mathbb {NP}[x_1^\pm, \dots, x_n^\pm]$.
\end{theorem}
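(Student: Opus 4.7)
The plan is to transfer the theta-function expression of cluster monomials from \Cref{thm: cluster variable as theta function} to the specialized scattering diagram $\lambda(\mathfrak D_\mathbf s)$, and then read the claimed positivity off directly from the hypothesis on the wall-crossing functions.

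First I would verify that the broken-line/theta-function machinery of the previous sections is compatible with the evaluation $\lambda\colon \Bbbk[P]\to \Bbbk[Q]$, where $Q = M\oplus \prod_{i\in I}\mathbb N$. The cluster chamber complex $\Delta_\mathbf s^+$ only depends on the $M_\mathbb R$-part of the data and is preserved; for any generic path $\gamma$, the specialization intertwines path-ordered products, i.e.\ $\lambda\circ \mathfrak p_{\gamma,\mathfrak D_\mathbf s} = \mathfrak p_{\gamma,\lambda(\mathfrak D_\mathbf s)}\circ \lambda$, because $\lambda$ is an algebra homomorphism and each wall-crossing is given by substitution into the wall function. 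Consequently broken lines transport to broken lines, and $\lambda\!\left(\vartheta_m^{\mathfrak D_\mathbf s}\right) = \vartheta_m^{\lambda(\mathfrak D_\mathbf s)}$. Combined with \Cref{prop: cluster variable by path ordered product} and \Cref{prop: only one broken line}(2), this yields the $\lambda$-analogue of \Cref{thm: cluster variable as theta function}: for any $m\in \Delta_\mathbf s^+(\mathbb Z)$ and any general endpoint $Q\in \mathrm{int}(\mathcal C_\mathbf s^+)$, the theta function $\vartheta_m$ computed inside $\lambda(\mathfrak D_\mathbf s)$ is the Laurent expansion of the corresponding cluster monomial of $\mathscr A^\mathrm{prin}(\Sigma,\lambda)$ in the initial cluster $(x_1,\ldots,x_n)$.

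Next I would unpack this theta function as a sum over broken lines and propagate the positivity hypothesis. At each bend, the new monomial is selected from a product $c_L z^{p_L}\cdot f_\mathfrak d^{-\langle n_0, m_L\rangle}$; since $m_L = -\gamma'(t)$ and the normal $n_0$ is chosen with $\langle n_0,\gamma'(t)\rangle>0$, the exponent $-\langle n_0, m_L\rangle$ is a strictly positive integer. The assumption $f_\mathfrak d\in \widehat{\mathbb{NP}[Q]}$ then implies $f_\mathfrak d^{-\langle n_0, m_L\rangle}\in \widehat{\mathbb{NP}[Q]}$, so inductively every coefficient $c_Q$ picked up by a broken line lies in $\mathbb{NP}$. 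Summing over all broken lines gives $\vartheta_m\in \widehat{\mathbb{NP}[Q]}$. By \Cref{thm: generalized laurent} the cluster variable already lies in $\mathbb{ZP}[x_1^\pm,\ldots,x_n^\pm]$, and by hypothesis (\textbf B) the relevant monomials in the formal parameters $t_i$ are identified with elements of $\mathbb P$; intersecting, the cluster variable lies in $\mathbb{NP}[x_1^\pm,\ldots,x_n^\pm]$, as asserted.

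The hard part will be the careful bookkeeping needed to justify that $\lambda$ is fully transparent to the theta-function machinery: in particular, that the broken-line decomposition survives the specialization without hidden cancellations between distinct broken lines, and that \Cref{prop: only one broken line}(2) still forces $\vartheta_m = z^m$ on $\mathrm{int}(\mathcal C_v^+)$ inside $\lambda(\mathfrak D_\mathbf s)$. Both reduce to the observation that $\lambda$ preserves the pro-nilpotent structure underlying the scattering diagram, so that the proofs of \Cref{prop: mutation of broken line}, \Cref{prop: only one broken line}, and \Cref{prop: cluster variable by path ordered product} transfer essentially verbatim; no new positivity result about $\mathfrak D_\mathbf s$ itself is required beyond what the hypothesis of the theorem already supplies for $\lambda(\mathfrak D_\mathbf s)$.
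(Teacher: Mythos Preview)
Your proposal is correct and follows essentially the same approach as the paper: express cluster variables as theta functions for the specialized scattering diagram $\lambda(\mathfrak D_\mathbf s)$, use the positivity hypothesis on wall-crossing functions to ensure every broken line carries a monomial with coefficient in $\mathbb{NP}$, and then pass from $\widehat{\mathbb{NP}[Q]}$ to $\mathbb{NP}[x_1^\pm,\dots,x_n^\pm]$ via the Laurent phenomenon. The paper's own proof is much terser---it simply says ``as in \Cref{thm: cluster variable as theta function}'' and evaluates at $t_i=1$---whereas you spell out explicitly why the theta-function machinery (path-ordered products, \Cref{prop: only one broken line}, \Cref{prop: cluster variable by path ordered product}) survives the specialization $\lambda$; this extra care is justified and the arguments you sketch for it are sound.
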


\begin{proof}
	As in \Cref{thm: cluster variable as theta function}, the positivity of cluster variables follows from the positivity of the scattering diagram $\lambda(\mathfrak D_\mathbf s)$ since every broken line ends with a monomial with coefficients in $\mathbb {NP}\subset \Bbbk$. Expressing a cluster variable as a theta function for $\lambda(\mathfrak D_\mathbf s)$ (and evaluated at $t_i=1$ where the $t_i$'s are the standard generators of $\prod_{i\in I}\mathbb N$), the result follows.
\end{proof}

If $\mathscr A^\mathrm{prin}(\Sigma,\lambda)$ is of finite type (i.e. there are only finitely many distinguished cluster variables), then the cluster complex $\Delta_\mathbf s^+$ is finite and complete in $M_\mathbb R$ by \Cref{cor: cluster variable parametrized by g-vector}. By \Cref{thm: wall crossing on cluster chamber}, we have that $\mathfrak D_\mathbf s = \mathfrak D(\Delta_\mathbf s^+)$ and the wall-crossing function on any facet of any cluster chamber has coefficients in $\mathbb {NP}$ under the evaluation $\lambda$ if assuming so for the initial ones. Then the positivity follows in this case from \Cref{thm: stronger positivity}. It is not hard to check that in \Cref{ex: example for k2} the expansion of the wall-crossing function $f_{\mathbb R_{\geq 0}(1, -1)}$ has every coefficient in $\mathbb N[s_1s_2, s_1 + s_2, t_1t_2, t_1 + t_2]$. By the description in \Cref{ex: example for k2} of all other walls, all wall-crossings functions in this scattering diagram are positive in this sense. This then implies all cluster variables are positive, i.e., have coefficients in $\mathbb N[s_1s_2, s_1 + s_2, t_1t_2, t_1 + t_2]$.

\bibliographystyle{amsalpha-fi-arxlast}
%\bibliography{bibliography.bib}

\providecommand{\bysame}{\leavevmode\hbox to3em{\hrulefill}\thinspace}
\providecommand{\MR}{\relax\ifhmode\unskip\space\fi MR }
% \MRhref is called by the amsart/book/proc definition of \MR.
\providecommand{\MRhref}[2]{%
  \href{http://www.ams.org/mathscinet-getitem?mr=#1}{#2}
}
\providecommand{\href}[2]{#2}

\end{document}